\documentclass[12pt]{amsart}
\setlength{\textwidth}{16cm}
\setlength{\textheight}{23cm}
\setlength{\oddsidemargin}{-0cm}
\setlength{\evensidemargin}{-0cm}
\setlength{\topmargin}{-0cm}
\usepackage{amsmath}
\usepackage{amsfonts}
\usepackage{amssymb}
\begin{document}
\title[Extensions of the tensor algebra and their applications]
{Extensions of the tensor algebra and \\ their applications}
\author{Minoru ITOH}
\date{}
\address{Department of Mathematics and Computer Science, 
          Faculty of Science,
          Kagoshima University, Kagoshima 890-0065, Japan}
\email{itoh@sci.kagoshima-u.ac.jp }
\keywords{tensor algebra, Weyl algebra, Clifford algebra,
symmetric group, Schur--Weyl duality, 
Capelli identity, quantum immanants, 
central elements of universal enveloping algebras}
\subjclass[2000]{Primary 15A72; Secondary 15A15, 17B35, 20C30}
\thanks{This research was partially supported by 
JSPS Grant-in-Aid for Young Scientists (B) 20740020.}
\begin{abstract}
This article presents a natural extension of the tensor algebra.
In addition to ``left multiplications'' by vectors,
we can consider ``derivations'' by covectors 
as basic operators on this extended algebra.
These two types of operators satisfy 
an analogue of the canonical commutation relations.
This algebra and these operators have some applications:
(i) applications to invariant theory related to tensor products,
and
(ii) applications to immanants.
The latter one includes a new method to study
the quantum immanants 
in the universal enveloping algebras of the general linear Lie algebras
and their Capelli type identities (the higher Capelli identities).
\end{abstract}
\maketitle
\theoremstyle{plain}
   \newtheorem{theorem}{Theorem}[section]
   \newtheorem{proposition}[theorem]{Proposition}
   \newtheorem{lemma}[theorem]{Lemma}
   \newtheorem{corollary}[theorem]{Corollary}
\theoremstyle{remark}
   \newtheorem*{remark}{Remark}
   \newtheorem*{remarks}{Remarks}
\numberwithin{equation}{section}
\newcommand{\mybinom}[2]{\left(\!\genfrac{}{}{0pt}{}{#1}{#2}\!\right)}
\newcommand{\bibinom}[2]{\left(\!\!\left(\!\genfrac{}{}{0pt}{}{#1}{#2}\!\right)\!\!\right)}
%
%
\section*{Introduction}
%
%
In this article, 
we introduce some extensions of the tensor algebra.
The most basic one is constructed as a vector space as follows:
$$
   \bar{T}(V) 
   = \bigoplus_{p \geq 0}
   V^{\otimes p} \otimes_{\mathbb{C}S_p} \mathbb{C}S_{\infty}.
$$
For this $\bar{T}(V)$, we can naturally define an associative algebra structure.
The ordinary tensor algebra $T(V)$ can be regarded as a subalgebra of this algebra.
This extended algebra $\bar{T}(V)$ is remarkable,
because we can consider a natural ``derivation'' $L(v^*)$
determined from any covector $v^* \in V^*$ as an operator on $\bar{T}(V)$.
An analogue of the canonical commutation relations holds
between these derivations and 
the left multiplications $L(v)$ by vectors $v \in V$
(Theorem~\ref{thm:analogue_of_CCR}).
It is also natural to call these multiplications
and derivations ``creation operators'' and ``annihilation operators,''
respectively
(namely, we can regard this $\bar{T}(V)$ as an analogue of
the Boson and Fermion Fock spaces).
The algebra $\mathcal{L}(V)$ generated by these two types of operators
is naturally isomorphic to 
$$
   \bigoplus_{p,q \geq 0}
   V^{\otimes p} \otimes_{\mathbb{C}S_p} \mathbb{C}S_{\infty}
   \otimes_{\mathbb{C}S_q} V^{* \otimes q}
$$
as vector spaces,
and we can regard this operator algebra $\mathcal{L}(V)$ as an analogue 
of the Weyl algebra and the Clifford algebra
(actually this contains these algebras naturally as quotient algebras).

This framework has some applications to representation theory and
invariant theory related to tensor products.
For example, 
we can prove the Schur--Weyl duality and its generalization 
by a simple calculation in $\mathcal{L}(V)$ (Theorem~\ref{thm:generalization_of_the_Schur_Weyl_duality}).
We also have
an analogue of the $(GL_n(\mathbb{C}), GL_{n'}(\mathbb{C}))$ duality due to Howe \cite{Ho},
and this prodives us a natural correspondence between 
the center of the universal enveloping algebra $U(\mathfrak{gl}_n)$
and some invariant ``differential operators'' on $T(\mathbb{C}^n \otimes \mathbb{C}^{n'})$
(Theorem~\ref{thm:analogue_of_Howe_duality}).
We can describe this correspondence 
as a Capelli type identity on the tensor algebra
$T(\mathbb{C}^n \otimes \mathbb{C}^{n'})$
(Theorem~\ref{thm:Capelli_identity_in_T}).
Moreover, using this Capelli type identity,
we can determine the $SL_n(\mathbb{C})$-invariants in 
$T(\mathbb{C}^n \otimes \mathbb{C}^{n'})$ (Theorem~\ref{thm:FFT_for_SL_in_T}).

Our extensions of the tensor algebra 
are also useful to treat a matrix function called ``immanant.''
This is parallel to the fact
that the exterior algebras and the symmetric tensor algebras
are useful to treat the determinant and the permanent.
This method can be developed to study the ``quantum immanants,''
a basis of the center of the universal enveloping algebra $U(\mathfrak{gl}_n)$
introduced by Okounkov \cite{O1}.
The quantum immanants have been studied by using the $R$-matrix method,
the fusion procedure, and representation theory of the Yangian $Y(\mathfrak{gl}_n)$ 
(see \cite{O1}, \cite{O2}, \cite{OO}, \cite{M1}, \cite{N2};
Section~7.4 of \cite{M2} is helpful to look at the whole picture on this issue).
Instead of these traditional approaches, we can use our extensions of the tensor algebra.
Namely, making use of our algebras as formal variables,
we can prove various fundamental relations for the quantum immanants 
by simple calculations.
It is not to say that our approach is more powerful than established approaches.
However, we can regard this approach as an advanced version 
of the exterior calculus used to study Capelli type identities 
in \cite{IU}, \cite{I1}--\cite{I6}, \cite{U2}--\cite{U5}, \cite{Ha}, \cite{Wa} 
(and also in Section~\ref{sec:Capelli_and_FFT} of this article),
and we can manipulate the noncommutativity with similar skills.
The author thinks that this approach is one of the best ways to study the quantum immanants,
and expects further developments.

Finally, combining several ideas in this article, 
we give Capelli type identities for the quantum immanants 
on $T(\mathbb{C}^n \otimes \mathbb{C}^{n'})$
as higher generalizations of the Capelli type identity given in Section~\ref{sec:Capelli_and_FFT}
(Theorems~\ref{thm:higher_Capelli_on_T} and \ref{thm:higher_Capelli_for_preimmanants_on_T}).

This article is organized as follows.
In Section~\ref{sec:def_of_our_alg}, 
we introduce the algebra $\bar{T}(V)$ as an extension of the tensor algebra $T(V)$.
This is the base of all studies in this article.
In Section~\ref{sec:multiplications_and_derivations}, 
we define multiplication and derivation operators acting on $\bar{T}(V)$,
and give their quick applications.
The algebra $\mathcal{L}(V)$ generated 
by these operators contains the Weyl algebra and the Clifford algebra
naturally as quotient algebras.
In Section~\ref{sec:commutants}, we use these operators to study 
a generalization of the Schur--Weyl duality and an analogue of Howe duality.
In Section~\ref{sec:Capelli_and_FFT}, 
we describe the action of the Capelli element
on the tensor algebra $T(\mathbb{C}^n \otimes \mathbb{C}^{n'})$ 
using the multiplication and derivation operators.
This description can be regarded as an analogue of the Capelli identity
(we also give its ``higher'' generalization in Section~\ref{sec:higher_Capelli}).
Moreover we give the first fundamental theorem of invariant theory for 
the action of $SL_n(\mathbb{C})$ on $T(\mathbb{C}^n \otimes \mathbb{C}^{n'})$ 
as an application of this Capelli type identity.
In Section~\ref{sec:def_of_variants}, 
we give some variants of the algebras $\bar{T}(V)$ and $\mathcal{L}(V)$.
These variants work as a stage to study immanants and quantum immanants
in later sections.
In Section~\ref{sec:immanants}, 
we introduce some noncommutative immanants and see their fundamental properties.
We also introduce the notion of ``preimmanants.''
In Section~\ref{sec:immanants_and_algebras}, 
we express these noncommutative immanants using our algebras.
In Section~\ref{sec:quantum_immanants}, we develop this method to treat the quantum immanants,
and show their fundamental properties.
Finally, in Section~\ref{sec:higher_Capelli}, 
we prove the higher Capelli identity and its analogue on $T(\mathbb{C}^n \otimes \mathbb{C}^{n'})$.

Recently, the author constructed $q$-analogues of $\bar{T}(V)$ and $\mathcal{L}(V)$,
which yield
a simple proof of the $q$-Schur--Weyl duality between 
the quantum enveloping algebra $U_q(\mathfrak{gl}_n)$ 
and the Iwahori--Hecke algebra of type $A$.
This result will be written somewhere else.

The author hopes that these extensions of the tensor algebra 
will be useful to study noncommutative invariant theory,
various issues related to tensors,
and furthermore supersymmetry theory.

\medskip

The author is grateful to Professor T\^oru Umeda for fruitful discussions.
He is also grateful to Professor Alexander Molev for pointing out 
some developments in the study of the quantum immanants and the higher Capelli identities.
Finally he thanks to the referee for valuable comments which improved this paper.

%
%
\section{Definition of the algebra $\bar{T}(V)$}\label{sec:def_of_our_alg}
%
%
First we define an algebra $\bar{T}(V)$.
This is the most fundamental algebra among the extensions of the tensor algebra
which we discuss in this article.

\subsection{}
%
Let us consider an $n$ dimensional $\mathbb{C}$-vector space $V$
and its tensor algebra $T(V) = \bigoplus_{p \geq 0}T_p(V)$.
The homogeneous part $T_p(V)$ of $T(V)$ is the $p$ fold tensor product of $V$:
$T_p(V) = V^{\otimes p}$.
We consider the natural (right) action of the symmetric group $S_p$ 
on this $V^{\otimes p}$. 
Namely $\sigma \in S_p$ acts on $v_p \cdots v_1 \in V^{\otimes p}$ by
$$
   v_p \cdots v_1 \sigma = v_{\sigma(p)} \cdots v_{\sigma(1)}.
$$
Here, we omit the symbol ``$\otimes$'' for elements of $T(V)$.
Moreover, we often employ the numbering of vectors running from right to left, 
when considering a right action of the symmetric group.
We regard $T_p(V) = V^{\otimes p}$ as a right $\mathbb{C}S_p$-module with this action.
In addition, for $q \geq 0$,
we regard $\mathbb{C}S_{p+q}$ 
as a $(\mathbb{C}S_p, \mathbb{C}S_{p+q})$-bimodule
with left and right multiplications
(we embed $\mathbb{C}S_p$ in $\mathbb{C}S_{p+q}$ according to 
the canonical inclusions $S_0 \subset S_1 \subset \cdots$ of symmetric groups).
We consider the tensor product of these right module and bimodule 
(namely an induced representation):
$$
   T^{(q)}_p(V) 
   = V^{\otimes p} \otimes_{\mathbb{C}S_p} \mathbb{C}S_{p+q}
   \simeq \operatorname{Ind}_{\mathbb{C}S_p}^{\mathbb{C}S_{p+q}}V^{\otimes p}.
$$
Since $T^{(0)}_p(V)$
is naturally isomorphic to $T_p(V) = V^{\otimes p}$,
we have the inclusions
\begin{equation}\label{eq:inclusions_of_T}
   T_p(V) = T^{(0)}_p(V) \subset T^{(1)}_p(V) \subset \cdots.
\end{equation}

Let us consider a much larger space
$$
   T^{(\infty)}_p(V) = V^{\otimes p} \otimes_{\mathbb{C}S_p} \mathbb{C}S_{\infty}
   \simeq \operatorname{Ind}_{\mathbb{C}S_p}^{\mathbb{C}S_{\infty}}V^{\otimes p}.
$$
Here $S_{\infty}$ denotes the infinite symmetric group,
namely the inductive limit of 
the sequence $S_0 \subset S_1 \subset \cdots$.
Let us denote this $T^{(\infty)}_p(V)$ by $\bar{T}_p(V)$ simply.
We can regard $\bar{T}_p(V)$
as the inductive limit of the sequence (\ref{eq:inclusions_of_T}).

Noting that $\bar{T}_0(V) \simeq \mathbb{C}S_{\infty}$,
we consider the direct sum of $\bar{T}_0(V), \bar{T}_1(V),\ldots$:
$$
   \bar{T}(V) 
   = \bigoplus_{p \geq 0} \bar{T}_p(V).
$$
For this $\bar{T}(V)$, 
we can naturally define a structure of graded algebra.
That is, for 
$$
   \varphi = v_p \cdots v_1 \sigma \in \bar{T}_p(V), \qquad
   \varphi' = v'_{p'} \cdots v'_{1} \sigma' \in \bar{T}_{p'}(V),
$$
we define the product $\varphi \varphi' \in \bar{T}_{p+p'}(V)$ by
$$
   \varphi \varphi'
   = (v_p \cdots v_1 \sigma) (v'_{p'} \cdots v'_{1} \sigma') 
   = 
   v_p \cdots v_1 
   v'_{p'} \cdots v'_{1} 
   \alpha^{p'}(\sigma) \sigma'.
$$
Here $\alpha$ is the group endomorphism of $S_{\infty}$ defined by
\begin{align*}
   \alpha(\sigma) \colon 
   k & \mapsto \sigma(k-1)+1 \quad \text{for $k \geq 2$}, \\
   1 & \mapsto 1,
\end{align*}
namely we put $\alpha(s_i) = s_{i+1}$ for the adjacent transposition $s_i = (i \,\,\, i+1)$.
We can easily see that 
this multiplication is well defined and moreover associative.

Let us see some relations for this multiplication.
First,  for $v$, $w \in V$, we have
\begin{equation}\label{eq:relation_1_in_T}
   w v = v w s_1, \qquad
   s_i v = v s_{i+1}.
\end{equation}
Moreover the defining relations of the symmetric group also hold in $\bar{T}(V)$:
\begin{equation}\label{eq:relation_2_in_T}
   s_i^2 = 1, \qquad
   s_i s_{i+1} s_i = s_{i+1} s_i s_{i+1}, \qquad
   s_i s_j = s_j s_i \text{ for $|i-j| > 1$}.
\end{equation}
Actually, these relations form the defining relations of the algebra $\bar{T}(V)$
as seen in Theorem~\ref{thm:def_by_gen_and_relation} below.

It is easily seen that
$T^{(q)}(V) = \bigoplus_{p \geq 0}T^{(q)}_p(V)$ 
is a subalgebra of $\bar{T}(V)$.
In particular, $T(V) \simeq T^{(0)}(V)$ is also a subalgebra,
and the restriction of the multiplication of $\bar{T}(V)$ is equal
to the ordinary multiplication of the tensor algebra $T(V)$.
Thus, we can regard $T^{(q)}(V)$ and $\bar{T}(V)$ as extensions of the ordinary tensor algebra.

\subsection{}
%
Let us consider a ``canonical form'' of elements in  $T^{(q)}(V)$.
We denote the set $\{ 1,\ldots,n \}$ by $[n]$,
and consider a sequence $I = (i_1,\ldots,i_p) \in [n]^p$
(we assume that $I$, $J,\ldots$ mean the sequences 
$I = (i_1,\ldots,i_p)$, $J = (j_1,\ldots,j_p), \ldots$ throughout this article).
We put $I! = m_1! \cdots m_n!$,
where $m_1,\ldots,m_n$ are the multiplicities of $1,\ldots,n$ in the sequence
$I = (i_1,\ldots,i_p) \in [n]^p$, respectively.
The symmetric group $S_p$ naturally acts on $[n]^p$ by
$\sigma(I) = (i_{\sigma(1)},\ldots,i_{\sigma(p)})$.
We denote by $(S_p)_I$
the stabilizer subgroup of the sequence $I \in [n]^p$.
Namely, we put $(S_p)_I = \{ \sigma \in S_p \,|\, \sigma(I) = I \}$.
Then the order of this group $(S_p)_I$ is equal to $I!$,
and the element
$$
   s_I = \frac{1}{I!} 
   \sum_{\sigma \in (S_p)_I} \sigma
$$
in $\mathbb{C}S_p$ is idempotent.
Moreover, for a basis $e_1,\ldots,e_n$ of $V$, we have the relation
\begin{equation}\label{eq:symmetric_expression}
   e_{i_p} \cdots e_{i_1}
   = e_{i_p} \cdots e_{i_1} s_I.
\end{equation}
From this, we see the following assertion:

\begin{proposition}\label{prop:canonical_form_of_elements_of_T}\sl
   By fixing a basis $e_1,\ldots,e_n$ of $V$,
   any element in $T^{(q)}_p(V)$ can be expressed uniquely 
   as a sum of elements in the form $e_{i_p} \cdots e_{i_1} t$.
   Here $I = (i_1,\ldots,i_p)$ is a sequence in $\bibinom{[n]}{p}$,
   and $t$ is an element of the left ideal 
   $s_I \mathbb{C}S_{p+q}$ of $\mathbb{C}S_{p+q}$.
\end{proposition}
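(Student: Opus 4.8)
The plan is to reduce the statement to a standard fact about induced representations, namely that $V^{\otimes p} \otimes_{\mathbb{C}S_p} \mathbb{C}S_{p+q}$ decomposes as a direct sum indexed by representatives of the double cosets $(S_p)_I \backslash S_p / \cdots$ — but more concretely, to exhibit an explicit basis. First I would fix the basis $e_1,\dots,e_n$ of $V$ and observe that $\{e_{i_p}\cdots e_{i_1}\}_{I \in [n]^p}$ is a basis of $V^{\otimes p}$ as a vector space, and that the right $S_p$-action permutes this basis: $e_{i_p}\cdots e_{i_1}\,\sigma = e_{i_{\sigma(p)}}\cdots e_{i_{\sigma(1)}}$, which in our indexing is the basis vector labelled by $\sigma^{-1}(I)$ (the precise bookkeeping of $\sigma$ versus $\sigma^{-1}$ and the right-to-left numbering is routine but must be pinned down carefully). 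Thus the $S_p$-orbits on the basis of $V^{\otimes p}$ are in bijection with the set of multisets of size $p$ from $[n]$, i.e. with $\bibinom{[n]}{p}$, and choosing the sorted representative $I$ in each orbit, the stabilizer of $e_{i_p}\cdots e_{i_1}$ is exactly $(S_p)_I$.

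Next I would invoke the general principle that if a right $\mathbb{C}G$-module $M$ has a basis permuted by $G$ with orbit representatives $m_1,\dots,m_r$ and stabilizers $H_1,\dots,H_r$, then for any overgroup $G \subset G'$ we have $M \otimes_{\mathbb{C}G}\mathbb{C}G' \cong \bigoplus_\alpha m_\alpha \otimes \mathbb{C}G' $ where the summand $m_\alpha \otimes \mathbb{C}G'$ is isomorphic, as a vector space, to $\mathbb{C}[H_\alpha \backslash G']$, the left ideal... — more precisely, $m_\alpha \otimes t \mapsto m_\alpha \otimes t$ identifies the $\alpha$-th summand with $e_\alpha \mathbb{C}G'$ where $e_\alpha = |H_\alpha|^{-1}\sum_{\sigma \in H_\alpha}\sigma$ is the symmetrizing idempotent, because $m_\alpha \otimes t = m_\alpha h \otimes t = m_\alpha \otimes h t$ for $h \in H_\alpha$ forces the tensor to factor through $e_\alpha \mathbb{C}G'$, and conversely the elements $m_\alpha \otimes e_\alpha \mathbb{C}G'$ are linearly independent across $\alpha$ since distinct orbits give linearly independent contributions. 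Applying this with $G = S_p$, $G' = S_{p+q}$, $m_\alpha = e_{i_p}\cdots e_{i_1}$ for $I \in \bibinom{[n]}{p}$, $H_\alpha = (S_p)_I$, and $e_\alpha = s_I$ gives exactly the claimed decomposition, with relation \eqref{eq:symmetric_expression} being the concrete incarnation of $m_\alpha \otimes t = m_\alpha \otimes s_I t$.

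The main obstacle — really the only place requiring care — is the combinatorial/notational bookkeeping in the first step: one must verify that $(S_p)_I$ as defined via $\sigma(I) = (i_{\sigma(1)},\dots,i_{\sigma(p)})$ genuinely equals the stabilizer of the tensor $e_{i_p}\cdots e_{i_1}$ under the right action $v_p\cdots v_1\,\sigma = v_{\sigma(p)}\cdots v_{\sigma(1)}$, keeping straight the reversed indexing convention. Once that matches (and it does: $e_{i_p}\cdots e_{i_1}\sigma = e_{i_p}\cdots e_{i_1}$ iff $i_{\sigma(k)} = i_k$ for all $k$ iff $\sigma \in (S_p)_I$), the rest is a direct application of how extension of scalars along $\mathbb{C}S_p \subset \mathbb{C}S_{p+q}$ interacts with a permutation module, together with the elementary fact that $s_I$ is idempotent of rank making $s_I\mathbb{C}S_{p+q}$ the right size. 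Uniqueness of the expression follows because the summands $e_{i_p}\cdots e_{i_1}\,s_I\mathbb{C}S_{p+q}$ for distinct $I \in \bibinom{[n]}{p}$ are independent (different basis orbits) and internally the map $t \mapsto e_{i_p}\cdots e_{i_1}\,t$ from $s_I\mathbb{C}S_{p+q}$ is injective by a dimension count against the induced module.
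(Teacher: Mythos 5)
Your proposal is correct and is essentially the argument the paper leaves implicit (the paper offers no explicit proof, merely pointing to relation~(\ref{eq:symmetric_expression}) and inviting the reader to observe the claim): you identify $V^{\otimes p}$ as a permutation module for $S_p$ with orbit representatives the sorted sequences $I \in \bibinom{[n]}{p}$ and stabilizers $(S_p)_I$, and then invoke the standard decomposition $e_\alpha\mathbb{C}G \otimes_{\mathbb{C}G} \mathbb{C}G' \cong e_\alpha\mathbb{C}G'$ summand by summand. One small notational slip: with the convention $e_{i_p}\cdots e_{i_1} \leftrightarrow I=(i_1,\ldots,i_p)$ and the right action $v_p\cdots v_1\,\sigma = v_{\sigma(p)}\cdots v_{\sigma(1)}$, the result $e_{i_{\sigma(p)}}\cdots e_{i_{\sigma(1)}}$ is labelled by $\sigma(I)$, not $\sigma^{-1}(I)$ (the map $J\mapsto\sigma(J)$ with $\sigma(J)_k = J_{\sigma(k)}$ is already a right action on sequences); you flagged this as the delicate spot and your subsequent stabilizer computation $i_{\sigma(k)} = i_k \Leftrightarrow \sigma\in(S_p)_I$ is correct, so the slip does not propagate.
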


Here we put
\begin{align*}
   \mybinom{[n]}{r} 
   & = \{ (i_1,\ldots,i_r) \in [n]^r 
   \,|\, i_1 < \cdots < i_r \}, \\
   \bibinom{[n]}{r} 
   & = \{ (i_1,\ldots,i_r) \in [n]^r 
   \,|\, i_1 \leq \cdots \leq i_r \}.
\end{align*}

\begin{proof}[Proof of Proposition~{\sl\ref{prop:canonical_form_of_elements_of_T}}]
It suffices to show the uniqueness.
Indeed we can change the order of $p$ vectors freely by applying a suitable element of $S_p$ from right,
and we can assume that the factor in $\mathbb{C}S_{p+q}$
belongs $s_I \mathbb{C}S_{p+q}$ by (\ref{eq:symmetric_expression}).

The uniqueness is seen as follows.
Let $I_1,\ldots,I_r$ be distinct elements of $\bibinom{[n]}{p}$.
Our task is to show $s_{I_i} t_i = 0$ on the assumption 
$\sum_{i=1}^r e_{I_i} t_i = \sum_{i=1}^r e_{I_i} s_{I_i} t_i = 0$
with $t_1,\ldots,t_r \in \mathbb{C}S_{p+q}$.
Here we denote $e_{i_p} \cdots e_{i_1}$ simply by $e_I$.
First of all, we see $e_{I_1} t_1 = \cdots = e_{I_r} t_r = 0$
from this assumption.
Indeed the right action of $S_p$ on $V^{\otimes p}$ only changes the order of $p$ vectors.
Thus it suffices to deduce $s_I t = 0$ from the equality $e_I t = e_I s_I t = 0$.
This is done by considering the right cosets of $(S_p)_I$ in $S_{p+q}$.
First $e_{\sigma_1(I)}, \ldots, e_{\sigma_r(I)}$ are all equal  
if $\sigma_1,\ldots,\sigma_r \in S_{p+q}$ are in a same coset.
Secondly $e_{\sigma_1(I)}, \ldots, e_{\sigma_r(I)}$ are linearly independent
if $\sigma_1,\ldots,\sigma_r$ are in different cosets.
Thus, when $e_I t = e_I s_I t = 0$
with $t = \sum_{\sigma \in S_{p+q}} c_{\sigma} \sigma \in \mathbb{C}S_{p+q}$, 
we have $\sum_{\sigma \in C} c_{\sigma} = 0$
for each coset $C  \in (S_p)_I \backslash S_{p+q}$,
namely $s_I t = 0$.
\end{proof}

\subsection{}
%
We can also define the algebra $\bar{T}(V)$ 
in terms of generators and relations:

\begin{theorem}\label{thm:def_by_gen_and_relation}\sl
   The algebra $\mathcal{A}_n$ defined by the following generators and relations 
   is isomorphic to~$\bar{T}(V)$ by regarding $e_1,\ldots,e_n$ as 
   a basis of $V$:
   \begin{align*}
      \text{generators: } & e_1,\ldots,e_n,s_1,s_2,\ldots, \\
      \text{relations: } 
      & e_b e_a =  e_a e_b s_1, \quad
      s_i e_a=  e_a s_{i+1}, \\
      & s_i^2 = 1, \quad
      s_i s_{i+1} s_i = s_{i+1} s_i s_{i+1}, \quad
      s_i s_j = s_j s_i \text{ when $|i-j| > 1$}.
   \end{align*}
\end{theorem}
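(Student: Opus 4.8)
The plan is to construct mutually inverse algebra homomorphisms between $\mathcal{A}_n$ and $\bar{T}(V)$. In one direction, I observe that the relations (\ref{eq:relation_1_in_T}) and (\ref{eq:relation_2_in_T}) which hold in $\bar{T}(V)$ are exactly the defining relations of $\mathcal{A}_n$ (with $e_a$'s identified with the basis vectors); hence sending each generator of $\mathcal{A}_n$ to the corresponding element of $\bar{T}(V)$ extends, by the universal property of a presentation, to a well-defined algebra homomorphism $\Phi \colon \mathcal{A}_n \to \bar{T}(V)$. This map is clearly surjective, since the $e_a$ together with $s_1, s_2, \ldots$ generate $\bar{T}(V)$: the elements $s_1,s_2,\ldots$ generate $S_\infty$ (hence all of $\bigoplus_p \bar{T}_0(V) = \mathbb{C}S_\infty$ sits in the image), and multiplying by $e_{i_p}\cdots e_{i_1}$ reaches every $\bar{T}_p(V)$ by Proposition~\ref{prop:canonical_form_of_elements_of_T}.

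The heart of the argument is injectivity of $\Phi$, and for this the natural strategy is to exhibit a spanning set of $\mathcal{A}_n$ of the ``right size,'' matching the canonical form of Proposition~\ref{prop:canonical_form_of_elements_of_T}. Using only the defining relations of $\mathcal{A}_n$, I would show that every element of $\mathcal{A}_n$ can be rewritten as a linear combination of monomials $e_{i_p}\cdots e_{i_1}\,\sigma$ with $I = (i_1,\ldots,i_p) \in \bibinom{[n]}{p}$ (weakly increasing) and $\sigma \in S_\infty$: the relations $s_i e_a = e_a s_{i+1}$ let one push all the $s$'s to the right, the relations $e_b e_a = e_a e_b s_1$ (together with the push-through relations) let one sort the $e$'s into weakly increasing order at the cost of introducing permutations on the right, and the braid/involution relations for the $s_i$ collapse the right-hand factor into $\mathbb{C}S_\infty$. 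Moreover, once $I$ is weakly increasing, relation (\ref{eq:symmetric_expression})-type manipulations (i.e. $e_{i_p}\cdots e_{i_1} = e_{i_p}\cdots e_{i_1} s_I$, which follows from $e_be_a = e_ae_bs_1$ when $a=b$) show that $e_{i_p}\cdots e_{i_1}\sigma$ depends only on the coset $s_I \sigma$, so the spanning set is indexed by pairs $(I, t)$ with $I \in \bibinom{[n]}{p}$ and $t \in s_I\mathbb{C}S_\infty$. Then $\Phi$ carries this spanning set onto the canonical-form basis of $\bar{T}(V)$ from Proposition~\ref{prop:canonical_form_of_elements_of_T} (in the $q = \infty$ version), so $\Phi$ maps a spanning set bijectively onto a basis, forcing the spanning set to be a basis and $\Phi$ to be an isomorphism.

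The main obstacle I anticipate is the bookkeeping in the rewriting argument: one must be careful that the sorting procedure on the $e$'s terminates and that pushing $s_i$'s past $e_a$'s interacts correctly with the index shift $s_i \mapsto s_{i+1}$ (equivalently, with the endomorphism $\alpha$), and one must verify that the resulting spanning set is not merely a surjection onto but genuinely in bijection with the index set — i.e. that no further relations among the $(I,t)$ are forced by $\mathcal{A}_n$'s presentation. The latter point is precisely what is rescued by having the map $\Phi$ to $\bar{T}(V)$ already in hand: since $\Phi$ is surjective and sends a spanning set of size $|\bibinom{[n]}{p}|\cdot|s_I\mathbb{C}S_\infty|$ onto a set of the same cardinality that is linearly independent in $\bar{T}(V)$, no collapse can occur, and injectivity follows without needing an independent diamond-lemma style confluence check. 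So in practice the proof reduces to: (1) construct $\Phi$ from the relations; (2) run the normal-form rewriting in $\mathcal{A}_n$ to get the spanning set; (3) match it against Proposition~\ref{prop:canonical_form_of_elements_of_T} to conclude $\Phi$ is bijective.
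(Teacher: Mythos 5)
Your proposal is correct and follows essentially the same route as the paper's proof: build the surjection $\mathcal{A}_n \twoheadrightarrow \bar T(V)$ from the known relations in $\bar T(V)$, use the defining relations of $\mathcal{A}_n$ to reduce every element to the canonical form of Proposition~\ref{prop:canonical_form_of_elements_of_T}, and then observe that since the images of that spanning set are linearly independent in $\bar T(V)$ the map must be injective. The paper compresses the normal-form reduction to one sentence ("by using the first and second relations"), whereas you spell it out and correctly note that no independent confluence (diamond-lemma) check is needed because surjectivity onto the known basis rescues linear independence — this is exactly the logic the paper is implicitly relying on.
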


\begin{proof}
Noting (\ref{eq:relation_1_in_T}) and (\ref{eq:relation_2_in_T}),
we can consider a natural homomorphism from $\mathcal{A}_n$ onto $\bar{T}(V)$.
Moreover, any element in $\mathcal{A}_n$ can be expressed in
the form as in Proposition~\ref{prop:canonical_form_of_elements_of_T}
by using the first and second relations.
Thus this homomorphism is injective.
\end{proof}

%
\section{Multiplications and derivations}
\label{sec:multiplications_and_derivations}
%
%
Some interesting operators naturally act on $\bar{T}(V)$.
In this section, we introduce the ``multiplication'' by $v \in V$
and the ``derivation'' by $v^* \in V^*$
($V^*$ is the linear dual of $V$),
and discuss their fundamental properties and quick applications.

\subsection{}
%
We denote by $L(\varphi)$ the left multiplication by $\varphi \in \bar{T}(V)$,
i.e., we put $L(\varphi)\psi = \varphi \psi$ for $\varphi$ and $\psi \in \bar{T}(V)$.
The cases
$\varphi = \sigma \in S_{\infty} \subset \mathbb{C}S_{\infty} = \bar{T}_0(V)$
and $\varphi = v \in V \subset \bar{T}_1(V)$ are particularly fundamental,
because the other cases are generated by these two cases.

In addition to these operators $L(\sigma)$ and $L(v)$,
let us introduce the ``derivation'' $L(v^*)$ by a covector $v^* \in V^*$.
Namely, for $v^* \in V^*$, we define the operator $L(v^*)$ on $\bar{T}(V)$ by
$$
   L(v^*) v_p \cdots v_1 t
   = \sum_{k=1}^p \langle v^*, v_k \rangle 
   v_p \cdots \hat{v}_k \cdots v_1 \cdot (p \,\, p-1 \,\cdots\, k+1 \,\, k) t.
$$
Here $v_p,\ldots,v_1$ are elements of $V$,
and $t$ is an element of $\mathbb{C}S_{\infty}$.
Moreover $\hat{v}_k$ means that we omit $v_k$.
When $p=0$, we put $L(v^*) t = 0$.

This definition seems natural, 
because each term in the right hand side is obtained by moving $v_k$ to the left end 
and taking the coupling with $v^*$. 
The permutation $(p \,\, p-1 \,\cdots\, k+1 \,\, k)$ appears following this movement of $v_k$.

By a direct calculation, we see that this operation is well defined.
Note that this well-definedness is equivalent with
the fact that $L(v^*)$ is commutative with the right multiplication by $\mathbb{C}S_{\infty}$.

The following relations are immediate from the definition of $L$:

\begin{proposition}\label{prop:L_and_R_commute}\sl
   The operators $L(\sigma)$, $L(v)$, and $L(v^*)$ 
   with $\sigma \in S_{\infty}$, $v \in V$, $v^* \in V^*$
   commute with the right multiplication by $\mathbb{C}S_{\infty}$.
\end{proposition}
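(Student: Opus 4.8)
The statement asserts that the three operators $L(\sigma)$, $L(v)$, $L(v^*)$ on $\bar{T}(V)$ commute with every right multiplication $R(\tau)$ by $\tau \in \mathbb{C}S_\infty$. Let me think about how to prove each case.

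For $L(\sigma)$ with $\sigma \in S_\infty$: this is left multiplication by a degree-0 element $\sigma \in \mathbb{C}S_\infty = \bar{T}_0(V)$, and right multiplication by $\tau$ is also an operation on the $\mathbb{C}S_\infty$ part. The claim $L(\sigma) R(\tau) = R(\tau) L(\sigma)$ is just the associativity of the product in $\bar{T}(V)$ combined with the fact that on $\bar{T}_0(V) = \mathbb{C}S_\infty$, left and right multiplications commute (left and right regular actions of a group commute). More precisely, for $\varphi = v_p \cdots v_1 t$, we have $L(\sigma)R(\tau)\varphi = \sigma \cdot (v_p\cdots v_1 t\tau) = v_p \cdots v_1 \alpha^p(\sigma) t \tau$ and $R(\tau)L(\sigma)\varphi = (v_p \cdots v_1 \alpha^p(\sigma) t)\tau = v_p\cdots v_1 \alpha^p(\sigma) t\tau$; these agree. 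So this case is immediate from the definition of the multiplication on $\bar{T}(V)$ (in fact from associativity).

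For $L(v)$ with $v \in V$: again $L(v)$ is a left multiplication by an element of $\bar{T}(V)$, so $L(v)R(\tau) = R(\tau)L(v)$ follows directly from associativity of the algebra: $(v\cdot \varphi)\cdot \tau = v\cdot(\varphi\cdot\tau)$. Indeed, writing out: $L(v)R(\tau)(v_p\cdots v_1 t) = v v_p \cdots v_1 t\tau$ and $R(\tau)L(v)(v_p\cdots v_1 t) = (v v_p\cdots v_1 t)\tau = v v_p \cdots v_1 t\tau$. No new content here; it is associativity.

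**The one case needing a genuine check.** For $L(v^*)$ with $v^* \in V^*$, there is something to verify, because $L(v^*)$ is not a left multiplication in $\bar{T}(V)$. Here I would invoke the remark already made in the excerpt right after the definition of $L(v^*)$: ``this well-definedness is equivalent with the fact that $L(v^*)$ is commutative with the right multiplication by $\mathbb{C}S_\infty$.'' So the plan is to note that when we defined $L(v^*)$ we tacitly (or explicitly) checked well-definedness; well-definedness means precisely that the formula $L(v^*)(v_p\cdots v_1 t) = \sum_k \langle v^*, v_k\rangle\, v_p\cdots\hat v_k\cdots v_1\cdot(p\,\,p{-}1\cdots k{+}1\,\,k)\,t$ depends only on the element $v_p\cdots v_1 t \in \bar{T}_p(V)$, not on the representative. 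Since $\bar{T}_p(V) = V^{\otimes p}\otimes_{\mathbb{C}S_p}\mathbb{C}S_\infty$, the relation to check is that replacing $t$ by $t\tau$ multiplies the whole expression on the right by $\tau$ — that is $L(v^*)(v_p\cdots v_1 t\tau) = (L(v^*)(v_p\cdots v_1 t))\tau$ — which is exactly $L(v^*)R(\tau) = R(\tau)L(v^*)$. This is visible from the formula since $\tau$ sits to the right of the cycle $(p\,\,p{-}1\cdots k)$ in every term and plays no role in the coupling $\langle v^*, v_k\rangle$ nor in which vectors are deleted; formally, one verifies it when $\tau = s_i$ ($i \geq 1$) since these generate $S_\infty$ and $s_i$ commutes past the cycle $(p \cdots k)$ appropriately when $i \geq 1$ — but in fact the cleanest statement is simply that the defining sum is manifestly right-$\mathbb{C}S_\infty$-linear in $t$.

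**Expected obstacle.** There is essentially no obstacle: the first two cases are associativity of $\bar{T}(V)$, and the third is a restatement of the well-definedness of $L(v^*)$ already asserted. The only mild care needed is to be sure that the definition of $L(v^*)$ was indeed given on $\mathbb{C}S_\infty$-equivalence classes (i.e.\ factors through the tensor product over $\mathbb{C}S_p$); once that is granted, right-$\mathbb{C}S_\infty$-equivariance is immediate from the placement of $t$ on the far right of the formula. So the proof is a one- or two-line remark: for $L(\sigma)$ and $L(v)$ invoke associativity of $\bar{T}(V)$, and for $L(v^*)$ invoke (the argument behind) its well-definedness.
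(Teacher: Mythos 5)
Your proposal is correct and matches the paper, which offers no separate argument beyond declaring the relations immediate from the definition of $L$ (with the remark that well-definedness of $L(v^*)$ is equivalent to its commuting with right multiplication by $\mathbb{C}S_{\infty}$). Your treatment of $L(\sigma)$ and $L(v)$ via associativity of $\bar{T}(V)$ and of $L(v^*)$ via the placement of $t$ in the defining formula, granted well-definedness, is exactly the intended reasoning.
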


\begin{proposition}\label{prop:initial_sp_and_final_sp}\sl
   For $v \in V$ and $v^* \in V^*$,
   the operators $L(v)$ and $L(v^*)$ map $T^{(q)}_p(V)$ as 
   \begin{align*}
      L(v) \colon & T^{(q)}_p(V) \to T^{(q-1)}_{p+1}(V) 
	  \quad \text{for $q \geq 1$}, \\
	  & T^{(0)}_p(V) \to T^{(0)}_{p+1}(V), \\
	  L(v^*) \colon & T^{(q)}_p(V) \to T^{(q+1)}_{p-1}(V) 
	  \quad \text{for $p \geq 1$}, \\
	  & T^{(q)}_0(V) \to \{ 0 \}.
   \end{align*}
\end{proposition}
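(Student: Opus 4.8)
The plan is to use the explicit description of $T^{(q)}_p(V)$ as the subspace of $\bar{T}_p(V)$ spanned by (images of) elements of the form $v_p \cdots v_1\, t$ with $v_1,\dots,v_p \in V$ and $t \in \mathbb{C}S_{p+q}$, and then simply track, for each of the two operators, the number of tensor factors and the smallest symmetric group containing the group-algebra part (which is $p+q$ for $T^{(q)}_p(V)$). Since $L(v)$ and $L(v^*)$ are already known to be well defined on $\bar{T}_p(V)$, no further well-definedness issue arises, and it suffices to compute on the spanning elements $v_p \cdots v_1\, t$.

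First I would treat $L(v)$. By the definition of the multiplication in $\bar{T}(V)$, and since $\alpha^p(1) = 1$, we have $L(v)(v_p \cdots v_1\, t) = v\, v_p \cdots v_1\, t$: the group-algebra part $t$ is unchanged, while the number of vectors increases from $p$ to $p+1$. If $q \geq 1$, then $t \in \mathbb{C}S_{p+q} = \mathbb{C}S_{(p+1)+(q-1)}$, so the result lies in $T^{(q-1)}_{p+1}(V)$; if $q = 0$, then $t \in \mathbb{C}S_p \subseteq \mathbb{C}S_{p+1}$, so the result lies in $T^{(0)}_{p+1}(V)$. This settles both lines for $L(v)$.

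Next I would treat $L(v^*)$. For $p = 0$ the operator is zero by definition, which gives the last line. For $p \geq 1$, the defining formula gives
$$
   L(v^*)(v_p \cdots v_1\, t)
   = \sum_{k=1}^{p} \langle v^*, v_k \rangle\,
   v_p \cdots \hat{v}_k \cdots v_1 \cdot c_k\, t,
   \qquad c_k = (p \,\, p-1 \,\cdots\, k+1 \,\, k) \in S_p .
$$
Each summand has $p-1$ tensor factors, and its group-algebra part is $c_k\, t$; since $c_k \in S_p \subseteq S_{p+q}$ and $t \in \mathbb{C}S_{p+q}$, we get $c_k\, t \in \mathbb{C}S_{p+q} = \mathbb{C}S_{(p-1)+(q+1)}$. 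Hence each summand, and therefore $L(v^*)(v_p \cdots v_1\, t)$, lies in $T^{(q+1)}_{p-1}(V)$.

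If there is any delicate point, it is only the initial identification of $T^{(q)}_p(V)$ inside $\bar{T}_p(V)$ and the observation that the extra cycle $c_k$ produced by the derivation still lies in $S_p$ (it moves the letter $p$ but no letter beyond it), so it does not push the group-algebra part past $S_{p+q}$. Once this bookkeeping is in place the statement is immediate, and no genuine obstacle is expected.
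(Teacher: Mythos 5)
Your argument is correct and is exactly the bookkeeping the paper has in mind: the paper states Proposition~\ref{prop:initial_sp_and_final_sp} as ``immediate from the definition of $L$'' and gives no proof, and your computation on the spanning elements $v_p\cdots v_1 t$, tracking the tensor degree and the symmetric group containing the group-algebra part (noting $\alpha^p(1)=1$ for $L(v)$ and $c_k\in S_p\subseteq S_{p+q}$ for $L(v^*)$), is precisely the verification that makes this immediate.
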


Moreover, we have the following commutation relations. 
The proof is straightforward.

\begin{theorem}\label{thm:analogue_of_CCR}\sl
   For $v$, $w \in V$ and $v^*$, $w^* \in V^*$, we have 
   \begin{align*}
      L(w) L(v) & = L(v) L(w) L(s_1), \\
      L(w^*) L(v^*) & = L(s_1) L(v^*) L(w^*), \\
      L(w^*) L(v) & = L(v) L(s_1) L(w^*) + \langle w^*, v \rangle
   \end{align*}
   and moreover
   \begin{gather*}
      L(s_i) L(v) = L(v) L(s_{i+1}), \qquad
      L(v^*) L(s_i) = L(s_{i+1}) L(v^*), \\
      L(s_i)^2 = 1, \qquad
      L(s_i) L(s_{i+1}) L(s_i) = L(s_{i+1}) L(s_i) L(s_{i+1}), \\
      L(s_i) L(s_j) = L(s_j) L(s_i) \text{ when $|i-j| > 1$}.
   \end{gather*}
\end{theorem}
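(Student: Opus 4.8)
The plan is to verify each identity by evaluating both sides on a spanning set of $\bar{T}(V)$, namely on elements of the form $v_p \cdots v_1 t$ with $v_i \in V$ and $t \in \mathbb{C}S_\infty$. By Proposition~\ref{prop:L_and_R_commute}, every operator appearing here commutes with the right multiplication by $\mathbb{C}S_\infty$, so it suffices to test on elements $v_p \cdots v_1$ (i.e.\ take $t = 1$) and then the general case follows by right-translating. The relations among the $L(s_i)$ alone (the last three lines) are immediate: $L$ restricted to $\mathbb{C}S_\infty$ is just left multiplication, which is an algebra homomorphism, so these are exactly the braid and involution relations of $S_\infty$ transported through $L$. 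Similarly $L(s_i)L(v) = L(v)L(s_{i+1})$ and its dual $L(v^*)L(s_i) = L(s_{i+1})L(v^*)$ reduce to the relation $s_i v = v s_{i+1}$ in $\bar{T}(V)$ (see~(\ref{eq:relation_1_in_T})) together with the definition of $L(v^*)$, which shifts the indices of permutations up by one when a vector is removed from the left.

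Next I would handle the first relation, $L(w)L(v) = L(v)L(w)L(s_1)$. Applying the left side to $v_p \cdots v_1$ gives $w\, v\, v_p \cdots v_1$; applying the right side gives $v\, w\, v_p \cdots v_1\, s_1$, and these agree by the first relation $wv = vw\,s_1$ in~(\ref{eq:relation_1_in_T}) (extended by associativity and the well-definedness of the product). The second relation $L(w^*)L(v^*) = L(s_1)L(v^*)L(w^*)$ is the ``dual'' statement; I would prove it by a direct computation with the defining formula for $L(v^*)$. Writing out $L(w^*)L(v^*)\, v_p \cdots v_1$ produces a double sum over ordered pairs of distinct indices $k > \ell$ (after $v_\ell$ is removed first, the index of $v_k$ drops by one), with a product of two cycles as the trailing permutation; doing the same on the right side, $L(v^*)$ picks out the same pairs but in the opposite order, and the extra $L(s_1)$ in front precisely accounts for the resulting transposition of the two cycle factors. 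The bookkeeping with cycle notation is the one genuinely fiddly point, but it is mechanical.

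The main relation, and the one I expect to be the real obstacle, is the mixed commutator $L(w^*)L(v) = L(v)L(s_1)L(w^*) + \langle w^*, v\rangle$. Applying $L(w^*)L(v)$ to $v_p \cdots v_1$ yields $L(w^*)(v\, v_p \cdots v_1)$; in the defining sum for $L(w^*)$ there is now a $k=0$-type term in which the newly prepended $v$ is the one removed — this contributes $\langle w^*, v\rangle\, v_p \cdots v_1$ with trivial trailing permutation (the cycle of length one) — plus $p$ further terms in which some $v_k$ is removed, each carrying a cycle of length $p{+}1{-}k{+}1$ because of the extra vector $v$ sitting at the front. On the other side, $L(v)L(s_1)L(w^*)$ applied to $v_p\cdots v_1$ first runs $L(w^*)$ (giving $p$ terms with cycles of length $p{-}k$), then inserts $s_1$, then left-multiplies by $v$; one checks that prepending $v$ and conjugating/multiplying by $s_1$ converts each length-$(p{-}k)$ cycle into exactly the length-$(p{+}2{-}k)$ cycle appearing on the left side. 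So the two sides match term-by-term except for the single leftover $\langle w^*, v\rangle$ term, which is the scalar on the right. Thus the crux is the identity in $\mathbb{C}S_\infty$ relating the cycle $(p\,\,p{-}1\cdots k)$ produced by $L(w^*)$ after a vector has been prepended to the composite $s_1 \cdot \alpha(\text{shorter cycle})$; once that permutation identity is isolated the whole theorem falls out, and everything else is a routine check on the spanning set $v_p\cdots v_1 t$.
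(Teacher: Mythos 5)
Your proposal is correct and follows the only natural approach, which is exactly what the paper intends by recording ``The proof is straightforward'': one verifies each relation directly from the definition of $L(v^*)$ and the multiplication rule in $\bar{T}(V)$, after reducing via Proposition~\ref{prop:L_and_R_commute} to testing on monomials $v_p\cdots v_1$. The only flaw is a small arithmetic slip in your last paragraph: the cycle $(p\,\,p{-}1\,\cdots\,k)$ has length $p-k+1$ (not $p-k$), and left-multiplying by $s_p=\alpha^{p-1}(s_1)$ turns it into $(p{+}1\,\,p\,\cdots\,k)$ of length $p-k+2$, so the increment is one, not two; this does not affect the validity of the argument.
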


We can regard the first three relations 
as an analogue of the canonical commutation relations (CCR)
and the canonical anticommutation relations (CAR).
The positions of $L(s_1)$ in the right hand sides are interesting.
We will discuss this analogue further in Section~\ref{subsec:comparison_with_CCR}.

Moreover we note the commutation relation with the natural action $\pi$ of $GL(V)$
on $\bar{T}(V)$ as an algebra automorphism.
We have the following relations for $v \in V$, $\sigma \in S_{p+q}$, $v^* \in V^*$
as seen from the definition of $L$:
\begin{equation}\label{eq:comm_relation_between_pi_and_L}
   \pi(g) L(v) = L(gv) \pi(g), \quad
   \pi(g) L(\sigma) = L(\sigma) \pi(g), \quad
   \pi(g) L(v^*) = L({}^t\!g^{-1}v^*) \pi(g).
\end{equation}

The following formula is also fundamental:
$$
   L(e^*_{i_1}) \cdots L(e^*_{i_p}) e_{i_p} \cdots e_{i_1} 
   = I! s_I.
$$
Moreover, 
when $I \in \bibinom{[n]}{r}$ and $J \in \bibinom{[n]}{p}$,
we have
\begin{equation}\label{eq:pairing}
   L(e^*_{i_1}) \cdots L(e^*_{i_r}) e_{j_p} \cdots e_{j_1} 
   = 
   \begin{cases}
   I! s_I, & \quad I = J, \\
   0, & \quad \text{$r \geq p$ and $I \ne J$}.
   \end{cases}
\end{equation}
Indeed, when $r \geq p$ and $I \ne J$,
there exists $k \in \{ 1,\ldots,p \}$ such that 
the multiplicity of $e_k$ in $I$ is greater than that in $J$.

\begin{remark}
   For $\varphi \in T_k(V)$ and $\psi \in \bar{T}(V)$,
   we have
   \begin{align*}
      L(v^*) (\varphi \psi) 
      & = (L(v^*) \varphi) \psi +
      \varphi s_k s_{k-1} \cdots s_1 (L(v^*) \psi) \\
      & = (L(v^*) \varphi) \psi +
      \varphi \cdot (k+1 \,\,\, k \,\,\, k-1 \,\, \ldots \,\, 2 \,\,\, 1) 
	  \, (L(v^*) \psi).
   \end{align*}
   This can be regarded as an analogue of the Leibniz rule.
\end{remark}

\subsection{}
%
Let $\mathcal{L}(V)$ be the subalgebra 
of $\operatorname{End}_{\mathbb{C}}(\bar{T}(V))$
generated by $L(v)$, $L(v^*)$, and $L(\sigma)$
with $v \in V$, $v^* \in V^*$, and $\sigma \in S_{\infty}$.
This algebra $\mathcal{L}(V)$ can be identified with the algebra $\mathcal{B}_n$
defined by the following generators and relations:
\begin{align}\label{eq:gen_and_rel_of_L(V)}
   \text{generators: } &
   e_1,\ldots,e_n,e^*_1,\ldots,e^*_n,s_1,s_2,\ldots, \\
   \text{relations: } 
   &e_b e_a = e_a e_b s_1, \quad
   e^*_b e^*_a = s_1 e^*_a e^*_b, \quad
   e^*_b e_a = e_a s_1 e^*_b + \delta_{ab}, \notag \\
   &s_i e_a = e_a s_{i+1}, \quad
   e^*_a s_i = s_{i+1} e^*_a, \notag \\
   &s_i^2 = 1, \quad
   s_i s_{i+1} s_i = s_{i+1} s_i s_{i+1}, \quad
   s_i s_j = s_j s_i \text{ when $|i-j|>1$}. \notag 
\end{align}
Indeed, regarding  $e_1,\ldots,e_n$ and $e^*_1,\ldots,e^*_n$ 
as a basis of $V$ and its dual basis of $V^*$,
we have the following theorem:

\begin{theorem}\label{thm:def_of_L(V)_in_terms_of_gen_and_rel} \sl
   The following correspondence induces an isomorphism from $\mathcal{B}_n$ onto $\mathcal{L}(V)$:
   $$
      s_i \mapsto L(s_i), \qquad
	  e_a \mapsto L(e_a), \qquad 
      e^*_a \mapsto L(e^*_a).
   $$
\end{theorem}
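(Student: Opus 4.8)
The plan is to leverage Theorem~\ref{thm:analogue_of_CCR}. That theorem shows the operators $L(e_a)$, $L(e^*_a)$, $L(s_i)$ on $\bar T(V)$ satisfy every defining relation of $\mathcal{B}_n$ listed in (\ref{eq:gen_and_rel_of_L(V)}), taking $\langle e^*_b, e_a\rangle = \delta_{ab}$; hence the stated correspondence extends to an algebra homomorphism $\Phi\colon \mathcal{B}_n \to \operatorname{End}(\bar T(V))$, and by the very definition of $\mathcal{L}(V)$ its image equals $\mathcal{L}(V)$, so $\Phi\colon \mathcal{B}_n \twoheadrightarrow \mathcal{L}(V)$ is surjective. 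What remains is injectivity, which I would establish by the same two-step pattern as in the proof of Theorem~\ref{thm:def_by_gen_and_relation}: first produce a spanning set of $\mathcal{B}_n$ consisting of ``normal form'' monomials, then check that $\Phi$ carries it to a linearly independent set.

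\emph{Spanning set.} Using the relations, I would rewrite an arbitrary monomial in the generators as a linear combination of the monomials
$$
   e_{i_p}\cdots e_{i_1}\, t\, e^*_{j_1}\cdots e^*_{j_q},
   \qquad I \in \bibinom{[n]}{p},\quad J \in \bibinom{[n]}{q},
$$
where $t$ runs over a fixed basis of $s_I\,\mathbb{C}S_\infty\, s_J$. The rewriting proceeds in stages: move every $e^*$ to the right of every $e$ using $e^*_b e_a = e_a s_1 e^*_b + \delta_{ab}$ (the $\delta_{ab}$ term has fewer $e$- and $e^*$-letters, so one inducts on that number); funnel all the $s$-letters into a single block between the $e$'s and the $e^*$'s using $s_i e_a = e_a s_{i+1}$ and $e^*_a s_i = s_{i+1} e^*_a$; sort the $e$-block and the $e^*$-block into weakly increasing order using $e_b e_a = e_a e_b s_1$ and $e^*_b e^*_a = s_1 e^*_a e^*_b$, evacuating into the middle block the $s$-letters they create; collapse the middle block to a single element of $S_\infty$ by the Coxeter relations; and finally absorb it, using $e_{i_p}\cdots e_{i_1} = e_{i_p}\cdots e_{i_1} s_I$ (an instance of (\ref{eq:symmetric_expression})) and its dual $e^*_{j_1}\cdots e^*_{j_q} = s_J\, e^*_{j_1}\cdots e^*_{j_q}$, to land in $s_I\,\mathbb{C}S_\infty\, s_J$, then expand in the chosen basis.

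\emph{Linear independence.} Suppose $\sum_m c_m\,\Phi(m) = 0$ over normal form monomials $m$, with some $c_m \ne 0$, and let $Q$ be the smallest $e^*$-length occurring among such $m$. Fix a weakly increasing $K \in \bibinom{[n]}{Q}$ and evaluate the operator identity on $e_{k_Q}\cdots e_{k_1} \in \bar T_Q(V)$. By (\ref{eq:pairing}), a monomial with $e^*$-length $> Q$ kills this vector (its $e^*$-index sequence is too long to match $K$), and by minimality no monomial has $e^*$-length $< Q$; a monomial $m$ of $e^*$-length exactly $Q$ sends $e_{k_Q}\cdots e_{k_1}$ to $\delta_{J,K}\,K!\,e_{i_p}\cdots e_{i_1}\,t$ by (\ref{eq:pairing}) (using $t s_J = t$ since $t \in s_I\,\mathbb{C}S_\infty\, s_J$ and $s_J$ is idempotent). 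Hence $\sum_{m:\, q=Q,\, J=K} c_m\, e_{i_p}\cdots e_{i_1}\,t = 0$ in $\bar T(V)$ for each such $K$; since the vectors $e_{i_p}\cdots e_{i_1}\,t$ with $I \in \bibinom{[n]}{p}$ and $t$ in a basis of $s_I\,\mathbb{C}S_\infty\, s_K \subseteq s_I\,\mathbb{C}S_\infty$ are linearly independent by Proposition~\ref{prop:canonical_form_of_elements_of_T}, all those $c_m$ vanish, contradicting the minimality of $Q$. Therefore $\Phi$ is injective, hence an isomorphism $\mathcal{B}_n \xrightarrow{\ \sim\ } \mathcal{L}(V)$.

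\emph{Main obstacle.} The delicate point is the rewriting in the first step: because $e_b e_a = e_a e_b s_1$ and its analogues manufacture extra $s$-letters, one must both choose the order of operations carefully (clear $e^*/e$ inversions, then funnel all $s$-letters to the middle, then sort each block while continually evacuating the $s$-letters it produces) and equip the monomials with a well-founded order — for instance the lexicographic order on the tuple (number of $e$- and $e^*$-letters; number of $e^*/e$ inversions; number of inversions inside the $e$- and the $e^*$-blocks; total displacement of $s$-letters away from the middle block) — that strictly decreases at every rewriting step. Everything else is routine bookkeeping with (\ref{eq:pairing}) and Proposition~\ref{prop:canonical_form_of_elements_of_T}.
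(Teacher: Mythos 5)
Your argument is correct and takes essentially the same route as the paper: establish surjectivity via Theorem~\ref{thm:analogue_of_CCR}, normalize an arbitrary element so that all $e^*$'s sit on the right, and then kill the term of minimal $e^*$-length by evaluating the operator on the appropriate monomial $e_{j_p}\cdots e_{j_1}$ and invoking~(\ref{eq:pairing}). The paper streamlines the last step slightly by keeping $\psi_I \in \mathcal{B}^{\dagger}_n s_I$ undecomposed (identifying $\mathcal{B}^{\dagger}_n$ with $\bar T(V)$ via a preliminary lemma) and testing against a single vector, and it does not spell out the termination argument for the normal-form rewriting that you rightly flag as the one delicate point.
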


This correspondence establishes a homomorphism $f$ from $\mathcal{B}_n$ onto $\mathcal{L}(V)$
as seen from Theorem~\ref{thm:analogue_of_CCR}.
Thus, to prove this theorem, it suffices to show that this $f$ is injective.
As a preparation for this, we note the following lemma:

\begin{lemma}\sl
   The algebra $\bar{T}(V)$ is isomorphic to the subalgebra $\mathcal{B}^{\dagger}_n$ of $\mathcal{B}_n$
   generated by $e_1,\ldots,e_n$ and $s_1,s_2,\ldots$
   through the natural correspondence $e_a \mapsto e_a$, $s_i \mapsto s_i$.
\end{lemma}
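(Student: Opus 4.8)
The plan is to realize the stated correspondence as a surjective algebra homomorphism and then deduce injectivity by composing it with the homomorphism $f$ of Theorem~\ref{thm:analogue_of_CCR}, in such a way that the composite becomes the left regular representation of $\bar{T}(V)$, which is injective for trivial reasons.

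First I would check that $e_a \mapsto e_a$, $s_i \mapsto s_i$ extends to an algebra homomorphism $g\colon \bar{T}(V) \to \mathcal{B}_n$. By Theorem~\ref{thm:def_by_gen_and_relation}, $\bar{T}(V)$ is presented by the generators $e_1,\dots,e_n,s_1,s_2,\dots$ subject to $e_b e_a = e_a e_b s_1$, $s_i e_a = e_a s_{i+1}$, and the braid and involution relations among the $s_i$; all of these occur verbatim among the defining relations (\ref{eq:gen_and_rel_of_L(V)}) of $\mathcal{B}_n$, so $g$ is well defined, and its image is by construction the subalgebra $\mathcal{B}^{\dagger}_n$. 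Hence $g\colon \bar{T}(V) \to \mathcal{B}^{\dagger}_n$ is surjective.

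It then remains to show that $g$ is injective. For this I would use the left regular representation $L\colon \bar{T}(V) \to \operatorname{End}(\bar{T}(V))$, given by $L(\varphi)\psi = \varphi\psi$: it is an algebra homomorphism, and it is injective because $L(\varphi)(1) = \varphi$, where $1$ denotes the identity of $\bar{T}(V)$. The point is that $L$ factors through $g$. Indeed, the homomorphism $f\colon \mathcal{B}_n \to \mathcal{L}(V) \subseteq \operatorname{End}(\bar{T}(V))$ sends $e_a \mapsto L(e_a)$ and $s_i \mapsto L(s_i)$, so $f\circ g$ and $L$ are algebra homomorphisms on $\bar{T}(V)$ that agree on the generators $e_a$ and $s_i$, whence $f\circ g = L$. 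Since $L$ is injective, so is $g$; being also surjective, $g$ is the asserted isomorphism $\bar{T}(V) \cong \mathcal{B}^{\dagger}_n$.

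The one place that needs to be seen clearly — and the conceptual crux of the argument — is the identity $f\circ g = L$; once it is observed, injectivity of $g$ is automatic, and no analysis of normal forms inside $\mathcal{B}_n$ (which would be delicate because of the presence of the generators $e^*_a$) is required. The remaining verifications — that the relations of $\bar{T}(V)$ hold in $\mathcal{B}_n$, and that the left regular representation is faithful — are entirely formal.
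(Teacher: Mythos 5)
Your proof is correct and is essentially the paper's argument, just spelled out in more detail: the paper's terse statement that ``the inverse map is given by $\psi \mapsto f(\psi)1$'' is precisely the observation that $f \circ g = L$ together with $L(\varphi)(1) = \varphi$, which is what you make explicit.
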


Indeed,
Theorem~\ref{thm:def_by_gen_and_relation} tells that
this correspondence determines a homomorphism from $\bar{T}(V)$ onto $\mathcal{B}^{\dagger}_n$.
This is also injective,
because the inverse map is given by $\mathcal{B}^{\dagger}_n \to \bar{T}(V)$,
$\psi \mapsto f(\psi)1$.

\begin{proof}[Proof of Theorem~{\sl \ref{thm:def_of_L(V)_in_terms_of_gen_and_rel}}]
It suffices to show that $f \colon \mathcal{B}_n \to \mathcal{L}(V)$ is injective.
Let us assume that there exists a nonzero $\Phi \in \operatorname{Ker} f$.
By using the first five relations in~(\ref{eq:gen_and_rel_of_L(V)}),
this can be expressed in the following form 
with $\psi_I \in \mathcal{B}^{\dagger}_n \simeq \bar{T}(V)$:
$$
   \Phi
   = \sum_{r=0}^m
   \sum_{I \in \bibinom{[n]}{r}}
   \psi_I e^*_{i_1} \cdots e^*_{i_r}.
$$
Actually, we can take $\psi_I$ to be an element of $\mathcal{B}^{\dagger}_n s_I$, 
because we have $e^*_{i_1} \cdots e^*_{i_r} = s_I e^*_{i_1} \cdots e^*_{i_r}$
(recall (\ref{eq:symmetric_expression})).
Let $J$ be one of the shortest sequences
among $\{ I \,|\, \psi_I \ne 0 \}$
(this set is not empty, because $\Phi$ is nonzero).
Namely we fix $J \in \bibinom{[n]}{p}$
such that $\psi_J \ne 0$ 
and $\psi_I = 0$ for any $I \in \bibinom{[n]}{r}$, $r < p$.
Then, using (\ref{eq:pairing}), 
we can calculate $f(\Phi) e_{j_p} \cdots e_{j_1}$ as
$$
   f(\Phi) e_{j_p} \cdots e_{j_1}
   = \sum_{r=0}^m \sum_{I \in \bibinom{[n]}{r}} 
   L(\psi_I) L(e^*_{i_1}) \cdots L(e^*_{i_r}) 
   e_{j_p} \cdots e_{j_1}
   = J! \psi_J.
$$
This $\psi_J$ must be equal to zero.
This is a contradiction,
so that $\operatorname{Ker}f = \{ 0 \}$.
\end{proof}

We now see another realization of the algebra $\mathcal{B}_n \simeq \mathcal{L}(V)$:

\begin{corollary}\label{cor:def_of_L(V)_in_terms_of_tensors}\sl
   The vector space
   $$
      \bigoplus_{p,q \geq 0} 
      V^{\otimes p} \otimes_{\mathbb{C}S_p} \mathbb{C}S_{\infty} 
      \otimes_{\mathbb{C}S_q} V^{* \otimes q}
      = \bigoplus_{q \geq 0} 
      \bar{T}(V) \otimes_{\mathbb{C}S_q} V^{* \otimes q}
   $$
   is identified with $\mathcal{L}(V)$ as a vector space
   through the correspondence
   $$
      v_{p} \cdots v_1 \sigma v^*_1 \cdots v^*_q
      \mapsto L(v_{p}) \cdots L(v_1) L(\sigma) L(v^*_1) \cdots L(v^*_q).
   $$
   Here $v_i$, $v^*_i$, and $\sigma$ are elements of 
   $V$, $V^*$, and $S_{\infty}$, respectively.
\end{corollary}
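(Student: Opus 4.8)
The plan is to deduce Corollary~\ref{cor:def_of_L(V)_in_terms_of_tensors} directly from the proof of Theorem~\ref{thm:def_of_L(V)_in_terms_of_gen_and_rel} by making the spanning/linear-independence dichotomy precise. Write $W = \bigoplus_{p,q \geq 0} V^{\otimes p} \otimes_{\mathbb{C}S_p} \mathbb{C}S_{\infty} \otimes_{\mathbb{C}S_q} V^{*\otimes q}$ and let $\Theta \colon W \to \mathcal{L}(V)$ be the linear map sending $v_p \cdots v_1 \sigma v^*_1 \cdots v^*_q$ to $L(v_p)\cdots L(v_1) L(\sigma) L(v^*_1) \cdots L(v^*_q)$. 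The first thing I would check is that $\Theta$ is \emph{well defined}: the tensor products over $\mathbb{C}S_p$ and $\mathbb{C}S_q$ impose relations that must be respected on the operator side, and these are exactly the relations $L(s_i) L(v) = L(v) L(s_{i+1})$ and $L(v^*) L(s_i) = L(s_{i+1}) L(v^*)$ from Theorem~\ref{thm:analogue_of_CCR} (together with the braid relations among the $L(s_i)$), which let one absorb a symmetric-group element across the vector factors from either side. So $\Theta$ factors through the tensor-product identifications.

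Next I would show $\Theta$ is \emph{surjective}. Since $\mathcal{L}(V)$ is generated by the $L(v)$, $L(v^*)$, and $L(\sigma)$, it suffices to observe that the span of the image is closed under left multiplication by each generator. Using the commutation relations of Theorem~\ref{thm:analogue_of_CCR} one can push any newly multiplied generator into normal-ordered position: an extra $L(v)$ on the left is already in place; an extra $L(\sigma)$ on the left can be moved past the leading block of $L(v_i)$'s (turning $\sigma$ into $\alpha^p(\sigma)$) and merged with the central symmetric-group factor; and an extra $L(v^*)$ on the left is the only interesting case — it must be moved rightward past all the $L(v_i)$'s, and each time it crosses an $L(v_i)$ the third relation $L(w^*)L(v) = L(v)L(s_1)L(w^*) + \langle w^*, v\rangle$ produces a ``contraction'' term of lower degree in both $p$ and $q$, plus a term where an $s_1$ is inserted, which again gets absorbed into the central factor. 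Inducting on $p$ handles this, so every product of generators lies in the span of $\Theta(W)$.

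Finally, \emph{injectivity} of $\Theta$ is the heart of the matter, and it is essentially the argument already written for Theorem~\ref{thm:def_of_L(V)_in_terms_of_gen_and_rel}: fix a basis $e_1,\dots,e_n$ and its dual, and reduce a hypothetical element of $\operatorname{Ker}\Theta$ to normal form $\sum_{r}\sum_{I \in \bibinom{[n]}{r}} \psi_I\, L(e^*_{i_1})\cdots L(e^*_{i_r})$ with $\psi_I$ a left multiplication by an element of $\bar{T}(V)$ lying in $\mathcal{B}^{\dagger}_n s_I$. Pick $J$ of shortest length among the nonzero $\psi_I$ and apply the operator to $e_{j_p}\cdots e_{j_1}$; by the pairing formula~(\ref{eq:pairing}) all shorter and all non-matching terms either vanish or would have been shorter, and the surviving contribution is $J!\,\psi_J(1)$, which the Lemma identifies with a nonzero element of $\bar{T}(V)$ — contradiction. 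Thus $\Theta$ is a linear bijection, and it is an algebra isomorphism onto $\mathcal{L}(V)$ by construction since the generators and their products are matched.

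The main obstacle is the surjectivity/normal-ordering bookkeeping: one has to be confident that repeatedly applying the three CCR/CAR-type relations terminates and that the $s_1$-insertions genuinely get swallowed by the central $\mathbb{C}S_\infty$ factor rather than proliferating — this is where the precise placement of $L(s_1)$ in Theorem~\ref{thm:analogue_of_CCR} and the module structure of $W$ are doing real work. Injectivity, by contrast, is already done in substance in the proof above; the only care needed is to note that the same reduction and the same use of~(\ref{eq:pairing}) apply verbatim once one records that $\Theta(W)$ and $\mathcal{L}(V)$ have the same spanning set, so that the kernel computation there is literally a computation of $\operatorname{Ker}\Theta$.
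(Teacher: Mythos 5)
Your proposal is correct and follows essentially the same route the paper intends: the Corollary is recorded as a byproduct of the proof of Theorem~\ref{thm:def_of_L(V)_in_terms_of_gen_and_rel}, where the spanning step (normal ordering via the relations of Theorem~\ref{thm:analogue_of_CCR}) and the injectivity step (shortest-$J$ argument using the pairing~(\ref{eq:pairing}) and the Lemma identifying $\mathcal{B}^{\dagger}_n$ with $\bar{T}(V)$) together give exactly the unique-normal-form statement that your map $\Theta$ encodes. You merely make explicit the well-definedness check and the surjectivity bookkeeping that the paper leaves implicit; the one superfluous clause is the closing claim about an algebra isomorphism, since the Corollary only asserts (and $W$ only a priori carries) a vector-space structure.
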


Indeed, Theorem~\ref{thm:analogue_of_CCR} teaches
that this map is well-defined and surjective.
Moreover we can check that this is injective
in the same way as the proof of Theorem~\ref{thm:def_of_L(V)_in_terms_of_gen_and_rel}.

\subsection{}\label{subsec:comparison_with_CCR}
%
The first three relations of Theorem~\ref{thm:analogue_of_CCR} 
are similar to the canonical commutation relations (CCR)
and the canonical anticommutation relations (CAR).
Thus we can regard $\bar{T}(V)$ as an analogue of the Boson and Fermion Fock spaces.
It is also natural to call $L(v)$ and $L(v^*)$ the ``creation operator'' by~$v$
and the ``annihilation operator'' by~$v^*$, respectively.

More precisely, $\bar{T}(V)$ contains the Boson and Fermion Fock spaces as quotient algebras:
$$
   \bar{T}(V) / (\sigma - 1) \simeq S(V), \qquad
   \bar{T}(V) / (\sigma - \operatorname{sgn}(\sigma)) \simeq \Lambda(V).
$$
Similarly, $\mathcal{L}(V)$ contains 
the Weyl algebra and the Clifford algebra as quotient algebras:
$$
   \mathcal{L}(V) / (\sigma - 1) 
   \simeq \operatorname{Weyl}(V^* \oplus V), \qquad
   \mathcal{L}(V) / (\sigma - \operatorname{sgn}(\sigma)) 
   \simeq \operatorname{Clifford}(V^* \oplus V).
$$
Here $(\sigma - 1)$ and $(\sigma - \operatorname{sgn}(\sigma))$ 
are the two-sided ideals of $\bar{T}(V)$ or $\mathcal{L}(V)$ generated 
by $\{ \sigma - 1 \,|\, \sigma \in S_{\infty} \}$ and 
$\{ \sigma - \operatorname{sgn}(\sigma) \,|\, \sigma \in S_{\infty} \}$, respectively.
The author wonders if the algebras $\bar{T}(V)$ and $\mathcal{L}(V)$ 
might be useful to deepen supersymmetry theory.

\subsection{}\label{subsec:polarizations}
%
These operators $L(v)$ and $L(v^*)$ are useful to express polarization operators on $T(V)$.

Let $V$ be a vector space.
The canonical action of $GL(V)$ on $V$ is naturally
extended to the actions on $T(V)$ and $\bar{T}(V)$.
The infinitesimal action of $\mathfrak{gl}(V)$ on $\bar{T}(V)$ is expressed as
\begin{equation}\label{eq:polarization}
   \pi(E_{ij}) = L(e_i) L(e^*_j).
\end{equation}
Here $E_{ij}$ ($1 \leq i,j \leq n$) means the canonical basis of $\mathfrak{gl}(V)$,
and $e_1,\ldots,e_n$ and $e^*_1,\ldots,e^*_n$ mean a basis of $V$
and its dual basis of $V^*$, respectively.
This relation (\ref{eq:polarization}) is easily seen by a direct calculation:
\begin{equation}\label{eq:action_of_L(e)L(e*)}
   L(e_i) L(e^*_j) v_k \cdots v_1 
   = \sum_{a=1}^k \langle e_j, v_a \rangle 
   v_k \cdots v_{a+1} e_i v_{a-1} \cdots v_1. 
\end{equation}
It is natural to regard this as the counterpart of 
the polarization operator $x_i \frac{\partial}{\partial x_j}$ on the polynomial space
$\mathbb{C}[x_1,\ldots,x_n]$ \cite{We}.
As seen from Proposition~\ref{prop:initial_sp_and_final_sp}, 
we can interpret both sides of the relation (\ref{eq:polarization}) 
as linear transformations on~$T(V)$.

\subsection{}
%
We can also consider an analogue of the Euler operator:
\begin{equation}\label{eq:Euler_op}
   A = \sum_{i=1}^n L(e_i) L(e^*_i).
\end{equation}
This satisfies $A \varphi = p \varphi$ for $\varphi \in \bar{T}_p(V)$
as seen from (\ref{eq:action_of_L(e)L(e*)}).
We will use this operator (and its variants) to calculate some commutants in Section~\ref{sec:commutants}.

The following variant is also interesting:
$$
   D 
   = \sum_{i,j=1}^n L(e_i) L(e_j) L(e^*_i) L(e^*_j)
   = \sum_{i,j=1}^n L(e_j) L(e_i) L(s_1) L(e^*_i) L(e^*_j).
$$
This operator acts as the operation
``exchange arbitrary two vectors and sum up all results.''
Namely we have
$$
   D e_{k_p} \cdots e_{k_1} 
   = \sum_{\begin{subarray}{c} 1 \leq a,b \leq p \\ a \ne b\end{subarray}}
   e_{k_p} \cdots 
   \underbrace{e_{k_a}}_{\text{$b$th}} \cdots 
   \underbrace{e_{k_b}}_{\text{$a$th}} \cdots e_{k_1}
$$
(here we count vectors from right).
Thus this operator can be rewritten as
$$
   D = \frac{1}{(p-2)!} R(\sum_{\sigma \in S_p} \sigma^{-1} s_1 \sigma).
$$
Here $R(t)$ is the right multiplication by $t \in \mathbb{C}S_{\infty}$.
More generally, for $\tau \in S_r$,
we have 
$$
   \sum_{I \in [n]^r} 
   L(e_{i_r}) \cdots L(e_{i_1}) 
   L(\tau) 
   L(e^*_{i_1}) \cdots L(e^*_{i_r})
   = \frac{1}{(p-r)!} R(\sum_{\sigma \in S_p} \sigma^{-1} \tau \sigma).
$$

%
\section{Descriptions of commutants}
\label{sec:commutants}
%
%
Using the multiplications and derivations defined in the previous section, 
we can describe some commutants of fundamental operators on tensor spaces.

\subsection{}
%
First we can prove the Schur--Weyl duality and its generalization
by a simple calculation in $\mathcal{L}(V)$.

\begin{theorem}\label{thm:generalization_of_the_Schur_Weyl_duality}\sl
   For $l = 0,1,2,\ldots$,
   we denote by $\mathcal{L}^{(q)}_l = \mathcal{L}^{(q)}_l(V)$
   the set of all linear combinations 
   of the following operators on $T^{(q)}_p(V)$:
   $$
	  L(v_l) \cdots L(v_1) 
	  L(\sigma)
	  L(v^*_1) \cdots L(v^*_l).
   $$
   Here $v_i$, $v^*_j$, and $\sigma$ are elements of $V$, $V^*$, and $S_{l+q}$, respectively.
   Then we have the following assertions:
   \begin{enumerate}
      \item[(i)]
      The inclusions
      $\mathcal{L}^{(q)}_0 \subset
      \mathcal{L}^{(q)}_1 \subset \cdots \subset
      \mathcal{L}^{(q)}_p$
      hold as operators on $T^{(q)}_p(V)$.
      Moreover we have $\mathcal{L}^{(q)}_l = \{ 0 \}$ for $l > p$.
      \item[(ii)]
      Let $R$ be the right multiplication by $\mathbb{C}S_{p+q}$ on $T^{(q)}_p(V)$.
      Then $R(\mathbb{C}S_{p+q})$ and $\mathcal{L}^{(q)}_p$
      are mutual commutants of each other in $\operatorname{End}_{\mathbb{C}}(T^{(q)}_p(V))$.
   \end{enumerate}
\end{theorem}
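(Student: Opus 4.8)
The plan is to recognize Theorem~\ref{thm:generalization_of_the_Schur_Weyl_duality}(ii) as an instance of the double commutant theorem applied to the semisimple algebra $\mathbb{C}S_{p+q}$ acting on the module $T^{(q)}_p(V) = V^{\otimes p} \otimes_{\mathbb{C}S_p} \mathbb{C}S_{p+q}$. Since $\mathbb{C}S_{p+q}$ is semisimple (characteristic zero) and the right action $R$ makes $T^{(q)}_p(V)$ a right $\mathbb{C}S_{p+q}$-module, the classical double centralizer theorem gives at once that $R(\mathbb{C}S_{p+q})$ and its commutant $\operatorname{End}_{\mathbb{C}S_{p+q}}(T^{(q)}_p(V))$ are mutually commutants in $\operatorname{End}(T^{(q)}_p(V))$. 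So the whole content of (ii) is the identification $\mathcal{L}^{(q)}_p = \operatorname{End}_{\mathbb{C}S_{p+q}}(T^{(q)}_p(V))$, i.e. that every right-$\mathbb{C}S_{p+q}$-linear endomorphism of $T^{(q)}_p(V)$ is a linear combination of operators $L(v_p)\cdots L(v_1) L(\sigma) L(v^*_1)\cdots L(v^*_p)$. One inclusion is free: by Proposition~\ref{prop:L_and_R_commute} each $L(v)$, $L(v^*)$, $L(\sigma)$ commutes with $R$, so $\mathcal{L}^{(q)}_p \subseteq \operatorname{End}_{\mathbb{C}S_{p+q}}(T^{(q)}_p(V))$, and (i) (which should be proved first) gives that it suffices to use length exactly $p$.

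For the reverse inclusion I would argue by a dimension count. First, prove (i): the inclusions $\mathcal{L}^{(q)}_l \subseteq \mathcal{L}^{(q)}_{l+1}$ follow from inserting $\sum_i L(e_i) L(e^*_i)$ and using the Euler-operator identity~(\ref{eq:Euler_op}) together with the commutation relations of Theorem~\ref{thm:analogue_of_CCR} to absorb the extra pair past the permutation; the vanishing $\mathcal{L}^{(q)}_l = \{0\}$ for $l > p$ follows from Proposition~\ref{prop:initial_sp_and_final_sp} since $L(v^*)$ lowers the tensor degree and $p - l < 0$. Then, on the one hand, using the canonical form of Proposition~\ref{prop:canonical_form_of_elements_of_T} and the decomposition of the induced module $\operatorname{Ind}_{\mathbb{C}S_p}^{\mathbb{C}S_{p+q}} V^{\otimes p}$ into isotypic components, $\dim \operatorname{End}_{\mathbb{C}S_{p+q}}(T^{(q)}_p(V)) = \sum_\lambda m_\lambda^2$, where $m_\lambda$ is the multiplicity of the irreducible $S_{p+q}$-module $S^\lambda$ in $T^{(q)}_p(V)$. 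On the other hand, I would bound $\dim \mathcal{L}^{(q)}_p$ from above using Corollary~\ref{cor:def_of_L(V)_in_terms_of_tensors}: the operators $L(v_p)\cdots L(v_1) L(\sigma) L(v^*_1)\cdots L(v^*_p)$ span a quotient of $V^{\otimes p} \otimes_{\mathbb{C}S_p} \mathbb{C}S_{2p+q} \otimes_{\mathbb{C}S_p} V^{*\otimes p}$, and after restricting to act on $T^{(q)}_p(V)$ one should get that this spanning set, modulo the relations forced by acting on a specific degree, has dimension exactly $\sum_\lambda m_\lambda^2$ as well.

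A cleaner way to close the gap, which I would prefer, is to exhibit enough operators directly: for each pair of sequences $I, J \in \bibinom{[n]}{p}$ and each $\sigma \in S_{p+q}$, the operator $L(e_{i_p})\cdots L(e_{i_1}) L(\sigma) L(e^*_{j_1})\cdots L(e^*_{j_p})$ acts on the canonical-form basis element $e_{j_p}\cdots e_{j_1} t$ (with $t \in s_J \mathbb{C}S_{p+q}$) by the pairing formula~(\ref{eq:pairing}), sending it essentially to $e_{i_p}\cdots e_{i_1}\,\sigma t$ up to the idempotent $s_J$ and a scalar $J!$, and killing basis elements built from sequences $J' \neq J$ of length $\le p$. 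Running over all $I$, $J$, $\sigma$ this realizes every ``matrix unit'' $e_{i_p}\cdots e_{i_1} s_I \mapsto e_{i_p}\cdots e_{i_1} s_I \sigma$ between the blocks $s_I \mathbb{C}S_{p+q}$, which is precisely a spanning set for $\operatorname{End}_{\mathbb{C}S_{p+q}}(T^{(q)}_p(V))$ in the decomposition $T^{(q)}_p(V) = \bigoplus_{I \in \bibinom{[n]}{p}} e_{i_p}\cdots e_{i_1} s_I \mathbb{C}S_{p+q}$ of Proposition~\ref{prop:canonical_form_of_elements_of_T}. The main obstacle is bookkeeping: making the pairing~(\ref{eq:pairing}) interact correctly with a general right multiplication $R(t)$ sandwiched inside, and checking that the permutations $(p\cdots k)$ that appear in the definition of $L(v^*)$ reassemble exactly into the full $\mathbb{C}S_{p+q}$-bimodule structure rather than something smaller — in other words, verifying that no ``hidden relations'' shrink the span of the $\mathcal{L}^{(q)}_p$-operators below $\operatorname{End}_{\mathbb{C}S_{p+q}}(T^{(q)}_p(V))$. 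Once that surjectivity onto the bimodule matrix units is established, the double commutant theorem finishes both directions of (ii).
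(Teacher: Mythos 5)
Your overall strategy is correct — reduce (ii) to the single inclusion $R(\mathbb{C}S_{p+q})' \subseteq \mathcal{L}^{(q)}_p$ via the double commutant theorem, and prove (i) by inserting the Euler operator $A = \sum_i L(e_i)L(e^*_i)$ and absorbing it using the commutation relations — and this matches the paper for (i) and for the easy inclusion of (ii). But the key step, the reverse inclusion, is left with a gap that you yourself flag and do not close. Both routes you offer are incomplete: the dimension count requires an upper bound on $\dim\mathcal{L}^{(q)}_p$ that you do not actually obtain (the span of those operators inside $\operatorname{End}(T^{(q)}_p(V))$ could a priori be smaller than $\sum_\lambda m_\lambda^2$, which is precisely the ``hidden relations'' worry), and the matrix-unit route is asserted only ``essentially'' — you never verify that the operators you write down really act as the claimed block endomorphisms on the decomposition $T^{(q)}_p(V) = \bigoplus_{I} e_{i_p}\cdots e_{i_1} s_I\mathbb{C}S_{p+q}$, nor that they span.

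The missing idea is the higher Euler idempotent $A_p = \frac{1}{p!}\sum_{I\in[n]^p} L(e_{i_p})\cdots L(e_{i_1})L(e^*_{i_1})\cdots L(e^*_{i_p})$, which acts as the identity on $T^{(q)}_p(V)$. The paper never needs a spanning argument or a dimension count: for $f\in R(\mathbb{C}S_{p+q})'$ one writes $f(\varphi) = f(A_p\varphi)$, expands $A_p\varphi$ as $\frac{1}{p!}\sum_I e_{i_p}\cdots e_{i_1}\,t_I$ with $t_I = L(e^*_{i_1})\cdots L(e^*_{i_p})\varphi \in \mathbb{C}S_{p+q}$, and then uses right-equivariance of $f$ to pull the $t_I$ out: $f(\varphi) = \frac{1}{p!}\sum_I f(e_{i_p}\cdots e_{i_1})\,t_I = \frac{1}{p!}\sum_I L\bigl(f(e_{i_p}\cdots e_{i_1})\bigr)L(e^*_{i_1})\cdots L(e^*_{i_p})\varphi$. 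This exhibits $f$ itself in the required form, directly and for an arbitrary commutant element, so there is nothing left to count and no hidden-relations concern. (As the paper remarks, this is the same principle as: any map on a group commuting with all right translations is a left translation — one evaluates at an ``identity,'' here implemented by the idempotent $A_p$.) Your sketch circles around this but lacks the one device that makes the argument close in a few lines.
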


\begin{proof}
To prove (i), we use the Euler type operator $A = \sum_{i=1}^n L(e_i) L(e^*_i)$
defined in~(\ref{eq:Euler_op}).
As a consequence of Theorem~\ref{thm:analogue_of_CCR},
we have the relation 
\begin{equation}\label{eq:comm_rel_for_Euler}
   L(v) A = (A-1) L(v)
\end{equation}
for $v \in V$.
Using this relation, we have
\begin{align*}
   & \sum_{k=1}^n
   L(v_l) \cdots L(v_1) 
   L(e_k) L(\alpha(\sigma)) L(e^*_k)
   L(v^*_1) \cdots L(v^*_l) \\
   & \qquad = 
   L(v_l) \cdots L(v_1) 
   A L(\sigma)
   L(v^*_1) \cdots L(v^*_l) \\
   & \qquad = 
   (A - l) 
   L(v_l) \cdots L(v_1) 
   L(\sigma)
   L(v^*_1) \cdots L(v^*_l).
\end{align*}
As operators on $T^{(q)}_p(V)$, this is equal to
$$
   (p-l)L(v_l) \cdots L(v_1) 
   L(\sigma)
   L(v^*_1) \cdots L(v^*_l),
$$
because $A|_{T^{(q)}_p(V)} = p \operatorname{id}_{T^{(q)}_p(V)}$.
This calculation means the inclusion $\mathcal{L}^{(q)}_l \subset \mathcal{L}^{(q)}_{l+1}$
for $l < p$.
The relation $\mathcal{L}^{(q)}_l = \{ 0 \}$ for $l > p$ is obvious 
from Proposition~\ref{prop:initial_sp_and_final_sp}.
Thus (i) is proved.

To prove (ii),
it suffices to show $R(\mathbb{C}S_{p+q})' = \mathcal{L}^{(q)}_p$
by the double commutant theorem (Theorem~3.3.7 in~\cite{GW}).
Here we denote by $\mathcal{D}'$
the commutant of $\mathcal{D} \subset \operatorname{End}_{\mathbb{C}}(T^{(q)}_p(V))$.
The inclusion $R(\mathbb{C}S_{p+q})' \supset \mathcal{L}^{(q)}_p$ 
is immediate from Proposition~\ref{prop:L_and_R_commute},
so that we only have to show the reverse inclusion
$R(\mathbb{C}S_{p+q})' \subset \mathcal{L}^{(q)}_p$.
To show this,
we consider a higher analogue of the Euler type operator $A$:
\begin{equation}\label{eq:higher_Euler_op}
   A_p
   = \frac{1}{p!} \sum_{I \in [n]^p} 
   L(e_{i_p}) \cdots L(e_{i_1}) L(e^*_{i_1}) \cdots L(e^*_{i_p}).
\end{equation}
Using (\ref{eq:comm_rel_for_Euler}) repeatedly,  
we can express this $A_p$ as
$$
   A_p = \frac{1}{p!} (A-p+1) (A-p+2) \cdots (A-1) A,
$$
so that $A_p \varphi = \varphi$ for $\varphi \in T^{(q)}_p(V)$.
Thus, for any $f \in R(\mathbb{C}S_{p+q})'$, we have
\begin{align*}
   f(\varphi)
   &= f(A_p \varphi) \displaybreak[0]\\
   &= f(\frac{1}{p!} \sum_{I \in [n]^p} 
   L(e_{i_p}) \cdots L(e_{i_1}) L(e^*_{i_1}) \cdots L(e^*_{i_p}) \varphi) \displaybreak[0]\\
   &= f(\frac{1}{p!} \sum_{I \in [n]^p} 
   e_{i_p} \cdots e_{i_1} t_I) \displaybreak[0]\\
   &= \frac{1}{p!} \sum_{I \in [n]^p} 
   f(e_{i_p} \cdots e_{i_1} t_I),
\end{align*}
where we denote $L(e^*_{i_1}) \cdots L(e^*_{i_p}) \varphi \in \mathbb{C} S_{p+q}$ by $t_I$.
Since $f \in R(\mathbb{C}S_{p+q})'$, we have
$$
   f(\varphi)
   = \frac{1}{p!} \sum_{I \in [n]^p} 
   f(e_{i_p} \cdots e_{i_1}) t_I 
   = \frac{1}{p!} \sum_{I \in [n]^p} 
   L(f(e_{i_p} \cdots e_{i_1})) L(e^*_{i_1}) \cdots L(e^*_{i_p}) \varphi,
$$
namely
$$
   f = \frac{1}{p!} \sum_{I \in [n]^p} 
   L(f(e_{i_p} \cdots e_{i_1})) L(e^*_{i_1}) \cdots L(e^*_{i_p}).
$$
This means that $f$ is an element of $\mathcal{L}^{(q)}_p$.
\end{proof}

When $q=0$, Theorem~\ref{thm:generalization_of_the_Schur_Weyl_duality}
is equal to the Schur--Weyl duality,
because we have the following relation:

\begin{proposition}\sl
   We have
   $\mathcal{L}^{(0)}_p = \pi(U(\mathfrak{gl}(V)))$.
\end{proposition}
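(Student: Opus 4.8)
The plan is to unwind the definitions on both sides and check that they coincide as subspaces of $\operatorname{End}(T^{(0)}_p(V)) = \operatorname{End}(V^{\otimes p})$. Recall from~(\ref{eq:polarization}) that $\pi(E_{ij}) = L(e_i)L(e_j^*)$, so $\pi(U(\mathfrak{gl}(V)))$ is the associative subalgebra generated by the operators $L(e_i)L(e_j^*)$. On the other side, $\mathcal{L}^{(0)}_p$ consists of all linear combinations of operators $L(v_p)\cdots L(v_1)L(\sigma)L(v_1^*)\cdots L(v_p^*)$ with $\sigma\in S_p$. First I would observe that, by part~(i) of Theorem~\ref{thm:generalization_of_the_Schur_Weyl_duality} applied with $q=0$, the chain $\mathcal{L}^{(0)}_0\subset\cdots\subset\mathcal{L}^{(0)}_p$ stabilizes in the sense that every lower stage sits inside $\mathcal{L}^{(0)}_p$; so it is enough to compare $\pi(U(\mathfrak{gl}(V)))$ with the union $\bigcup_l \mathcal{L}^{(0)}_l$ acting on $V^{\otimes p}$.

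For the inclusion $\pi(U(\mathfrak{gl}(V)))\subset\mathcal{L}^{(0)}_p$, the key point is that a product $\pi(E_{i_1 j_1})\cdots\pi(E_{i_l j_l}) = L(e_{i_1})L(e_{j_1}^*)\cdots L(e_{i_l})L(e_{j_l}^*)$ can be rearranged, using the commutation relations of Theorem~\ref{thm:analogue_of_CCR}, into the standard ``normal-ordered'' shape $L(e_{i_1})\cdots L(e_{i_l})L(\sigma)L(e_{j_{?}}^*)\cdots$ modulo lower-degree terms: each time an annihilation operator is moved past a creation operator one picks up $L(v)L(s_1)L(w^*)$ plus a scalar $\langle w^*,v\rangle$, and the scalar term drops the number of $L$-pairs by one, hence lands in a lower $\mathcal{L}^{(0)}_{l-1}\subset\mathcal{L}^{(0)}_p$. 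Thus by induction on $l$ every element of $\pi(U(\mathfrak{gl}(V)))$ lies in $\mathcal{L}^{(0)}_p$. Conversely, for $\mathcal{L}^{(0)}_p\subset\pi(U(\mathfrak{gl}(V)))$ the natural route is the double commutant / Schur--Weyl theorem: part~(ii) of Theorem~\ref{thm:generalization_of_the_Schur_Weyl_duality} identifies $\mathcal{L}^{(0)}_p$ as the full commutant $R(\mathbb{C}S_p)'$ in $\operatorname{End}(V^{\otimes p})$, and the classical Schur--Weyl duality says exactly that this commutant equals the image of $U(\mathfrak{gl}(V))$; alternatively one checks directly that any $L(\sigma)$ with $\sigma\in S_p$, conjugated appropriately by the creation/annihilation operators, is already expressible through the $\pi(E_{ij})$, again using Theorem~\ref{thm:analogue_of_CCR} to push all $s_i$'s into place.

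The main obstacle I anticipate is the bookkeeping in the first inclusion: showing that after normal-ordering a monomial $L(e_{i_1})L(e_{j_1}^*)\cdots L(e_{i_l})L(e_{j_l}^*)$ the residual $S$-group element that appears is genuinely of the allowed form $L(\sigma)$ with $\sigma\in S_l$ (and that the index bookkeeping of which $e^*$ survives is consistent), and that the correction terms really are of strictly lower $\mathcal{L}$-degree. This is purely a matter of carefully iterating the three relations $L(w)L(v)=L(v)L(w)L(s_1)$, $L(w^*)L(v^*)=L(s_1)L(v^*)L(w^*)$, and $L(w^*)L(v)=L(v)L(s_1)L(w^*)+\langle w^*,v\rangle$, together with the intertwining relations $L(s_i)L(v)=L(v)L(s_{i+1})$ and $L(v^*)L(s_i)=L(s_{i+1})L(v^*)$; no new idea is needed, but the indices must be tracked with care. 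Given that Theorem~\ref{thm:generalization_of_the_Schur_Weyl_duality}(ii) is already available, the cleanest writeup is probably: prove $\pi(U(\mathfrak{gl}(V)))\subset\mathcal{L}^{(0)}_p$ by the normal-ordering induction, then invoke $\mathcal{L}^{(0)}_p = R(\mathbb{C}S_p)' = \pi(U(\mathfrak{gl}(V)))$ via classical Schur--Weyl to close the loop, so that only one of the two inclusions requires any computation.
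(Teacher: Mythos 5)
Your proposal is correct, but it distributes the work differently from the paper. The paper's own proof attacks the substantive inclusion $\mathcal{L}^{(0)}_p\subset\pi(U(\mathfrak{gl}(V)))$ directly: from Theorem~\ref{thm:analogue_of_CCR} it extracts a commutator relation of the form $[\pi(E_{ij}),L(e_a)]=[L(e_i)L(e^*_j),L(e_a)]=\delta_{ja}L(e_i)$, pulls the pair $L(e_{i_1})L(e^*_{j_1})=\pi(E_{i_1j_1})$ out of a typical generator $L(e_{i_l})\cdots L(e_{i_1})L(e^*_{j_1})\cdots L(e^*_{j_l})$ to the far left at the cost of correction terms lying in $\mathcal{L}^{(0)}_{l-1}$, and concludes by induction on $l$ that every element of $\mathcal{L}^{(0)}_l$ is a polynomial in the polarizations $\pi(E_{ab})$; the reverse (easy) inclusion is left implicit, following from Theorem~\ref{thm:generalization_of_the_Schur_Weyl_duality}(i) by exactly the kind of normal-ordering you describe. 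You instead prove the easy inclusion $\pi(U(\mathfrak{gl}(V)))\subset\mathcal{L}^{(0)}_p$ by hand and outsource the hard one to Theorem~\ref{thm:generalization_of_the_Schur_Weyl_duality}(ii) plus the classical Schur--Weyl duality. That is logically sound, since Schur--Weyl is established independently in the literature, but it works against the role of this proposition in the paper: the remark immediately after it says that Theorem~\ref{thm:generalization_of_the_Schur_Weyl_duality} at $q=0$ together with this proposition \emph{recovers} the Schur--Weyl duality, so the paper's proof is deliberately self-contained, and your route would make that recovery circular. The paper's commutator induction buys this independence at essentially no extra cost (one identity iterated), while your route buys brevity on the hard inclusion at the price of an external input. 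One small caution about your suggested ``alternative'' direct check: an individual $L(\sigma)$ with $\sigma\in S_p$ does not by itself preserve $T^{(0)}_p(V)$ (it raises $q$), so it cannot literally be ``expressed through the $\pi(E_{ij})$''; only the full sandwiched products $L(v_l)\cdots L(v_1)L(\sigma)L(v^*_1)\cdots L(v^*_l)$ can be so rewritten, which is precisely what the paper's induction accomplishes.
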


\begin{proof}
As a consequence of Theorem~\ref{thm:analogue_of_CCR},
we have
$$
   [L(e_i)L(e^*_j), L(e_a)] = \delta_{ai} L(e_i).
$$
Using this commutation relation, we have
\begin{align*}
   & L(e_{i_l}) \cdots L(e_{i_2}) L(e_{i_1}) 
   L(e^*_{j_1}) L(e^*_{j_2}) \cdots L(e^*_{j_l}) \\
   & \qquad 
   = L(e_{i_1}) L(e^*_{j_1})
     \cdot
     L(e_{i_l}) \cdots L(e_{i_2}) L(e^*_{j_2}) \cdots L(e^*_{j_l}) 
     + \text{an element of $\mathcal{L}^{(0)}_{l-1}$}.
\end{align*}
Repeating this, we see that any element of $\mathcal{L}^{(0)}_l$ can be
expressed as a polynomial in $\pi(E_{ab}) = L(e_a)L(e^*_b)$, $1 \leq a,b \leq n$.
Thus we see the inclusion $\mathcal{L}^{(0)}_p \subset \pi(U(\mathfrak{gl}(V)))$.

Conversely, using the same commutation relation,
we can express any element in $\pi(U(\mathfrak{gl}(V)))$
as a sum of elements in $\mathcal{L}^{(0)}_0, \mathcal{L}^{(0)}_1,\ldots$.
Furthermore, this can be regarded as an element of $\mathcal{L}^{(0)}_p$ 
by Theorem~\ref{thm:generalization_of_the_Schur_Weyl_duality} (i).
\end{proof}

\begin{remarks}
   (1)
   When $p=0$, Theorem~\ref{thm:generalization_of_the_Schur_Weyl_duality}
   is equal to the fact that
   $L(\mathbb{C}S_q)$ and $R(\mathbb{C}S_q)$ are mutual commutants of each other
   in $\operatorname{End}_{\mathbb{C}}(\mathbb{C}S_q)$.
   
\medskip

   \noindent
   (2)
   The results in this subsection are generalized to their $q$-analogues.
   Namely we can construct $q$-analogues of $\bar{T}(V)$ and $\mathcal{L}(V)$,
   which produce a simple proof of the $q$-Schur--Weyl duality
   between the quantum enveloping algebra $U_q(\mathfrak{gl}_n)$ and 
   the Iwahori--Hecke algebra of type $A$.
   This result will be written somewhere else.

\medskip

   \noindent
   (3)
   For any group $G$, every map $f \colon G \to G$ commuting 
   with all right translations is equal to a left translation.
   This fact is proved quickly as follows.
   Let $x$ be an element of $G$.
   Then we have $f(x) = f(ex)$ with the identity element $e$.
   Since $f$ commutes with the right multiplication by $x$,
   this $f(ex)$ is equal to $f(e)x$.
   This means that $f$ is equal to the left multiplication by $f(e)$.
   It should be noted that
   the proof of Theorem~\ref{thm:generalization_of_the_Schur_Weyl_duality}
   is based on the same principle.
\end{remarks}

\subsection{}
%
We have a similar relation about operators on $\bar{T}(V)$:

\begin{theorem}\label{thm:duality_between_M(V)_and_S} \sl
   Let $R$ be the right multiplication by $\mathbb{C}S_{\infty}$
   on $\bar{T}(V)$.
   In addition, we denote by $\mathcal{M}_p = \mathcal{M}_p(V)$ 
   the set of all operators on $\bar{T}(V)$ in the form
   $$
      \sum_{I \in [n]^p} L(\psi_I) L(e^*_{i_1}) \cdots L(e^*_{i_p}),
   $$
   where $\psi_I$'s are elements of $\bar{T}(V)$.
   Moreover, we denote by $\mathcal{M} = \mathcal{M}(V)$
   the set of all operators in the form $\sum_{p \geq 0} D_p$ with $D_p \in \mathcal{M}_p$. 
   This is an infinite sum in general,
   but this actually acts as a finite sum, 
   when it is applied to any element of $\bar{T}(V)$.
   Then these $R(\mathbb{C}S_{\infty})$ and $\mathcal{M}$ 
   are mutual commutants of each other as operators on $\bar{T}(V)$.
\end{theorem}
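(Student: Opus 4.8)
The plan is to follow the same two-step strategy used in the proof of Theorem~\ref{thm:generalization_of_the_Schur_Weyl_duality}: first establish that $R(\mathbb{C}S_{\infty})$ and $\mathcal{M}$ commute with each other, and then show that each side is the full commutant of the other. The commuting property is essentially immediate: every generator $L(\psi_I)$ and $L(e^*_j)$ of $\mathcal{M}$ commutes with the right multiplication by $\mathbb{C}S_{\infty}$ by Proposition~\ref{prop:L_and_R_commute}, so $\mathcal{M} \subset R(\mathbb{C}S_{\infty})'$; and conversely $R(\mathbb{C}S_{\infty})$ clearly lies in $\mathcal{M}'$ since any $R(t)$ with $t \in \mathbb{C}S_{\infty}$ commutes with all the $L$-operators. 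The content is therefore in the two reverse inclusions $R(\mathbb{C}S_{\infty})' \subset \mathcal{M}$ and $\mathcal{M}' \subset R(\mathbb{C}S_{\infty})$.

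For the inclusion $R(\mathbb{C}S_{\infty})' \subset \mathcal{M}$, I would adapt the argument from Theorem~\ref{thm:generalization_of_the_Schur_Weyl_duality}(ii). Given $f \in \operatorname{End}(\bar T(V))$ commuting with all right multiplications by $\mathbb{C}S_{\infty}$, it suffices to reconstruct $f$ on each homogeneous piece $\bar T_p(V)$. Using the higher Euler operator $A_p$ from~(\ref{eq:higher_Euler_op}), which acts as the identity on $T^{(q)}_p(V)$ for every $q$ and hence on all of $\bar T_p(V)$, we write for $\varphi \in \bar T_p(V)$
$$
   f(\varphi) = f(A_p \varphi)
   = \frac{1}{p!}\sum_{I \in [n]^p} f\bigl(e_{i_p}\cdots e_{i_1}\, t_I\bigr),
   \qquad t_I = L(e^*_{i_1})\cdots L(e^*_{i_p})\varphi \in \mathbb{C}S_{\infty}.
$$
Since $f$ commutes with right multiplication by $t_I$, this equals $\frac{1}{p!}\sum_I L\bigl(f(e_{i_p}\cdots e_{i_1})\bigr) L(e^*_{i_1})\cdots L(e^*_{i_p})\varphi$. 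Thus on $\bar T_p(V)$ the operator $f$ agrees with $D_p := \frac{1}{p!}\sum_{I} L\bigl(f(e_{i_p}\cdots e_{i_1})\bigr) L(e^*_{i_1})\cdots L(e^*_{i_p}) \in \mathcal{M}_p$; summing over $p$ gives $f = \sum_{p\ge 0} D_p \in \mathcal{M}$, with the sum acting as a finite sum on each element by Proposition~\ref{prop:initial_sp_and_final_sp} (the operators $L(e^*)$ lower the tensor degree, so $D_p$ kills $\bar T_q(V)$ for $q<p$).

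For the inclusion $\mathcal{M}' \subset R(\mathbb{C}S_{\infty})$, I would use that an operator $g$ commuting with all of $\mathcal{M}$ in particular commutes with $\pi(E_{ij}) = L(e_i)L(e^*_j)$ and with all left multiplications $L(v)$ and $L(\sigma)$ (these lie in $\mathcal{M}_0 \subset \mathcal{M}$ — indeed $L(\psi) \in \mathcal{M}_0$ for every $\psi \in \bar T(V)$). So $g$ commutes with all left multiplications by $\bar T(V)$, hence is determined by $g(1) \in \bar T(V)$: for any $\psi \in \bar T(V)$ we get $g(\psi) = g(L(\psi)1) = L(\psi)g(1) = \psi\, g(1)$, i.e.\ $g$ is right multiplication by $g(1)$. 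It remains to see $g(1) \in \mathbb{C}S_{\infty} = \bar T_0(V)$, and that right multiplication by $g(1)$ commutes with \emph{all} of $\mathcal{M}$ forces $g(1)$ to centralize the $L(e^*_j)$ in the sense that $\psi\, g(1)$ behaves correctly; testing $g$ on degree-$0$ elements using the known duality $L(\mathbb{C}S_q) \leftrightarrow R(\mathbb{C}S_q)$ in $\operatorname{End}(\mathbb{C}S_q)$ (Remark~(1) after Theorem~\ref{thm:generalization_of_the_Schur_Weyl_duality}) pins $g(1)$ down to an element of $\mathbb{C}S_{\infty}$, and one checks right multiplication by such an element is automatically in $\mathcal{M}'$.

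The main obstacle I anticipate is the last point: verifying carefully that the reconstructed $g$, a priori only right multiplication by some $g(1) \in \bar T(V)$, actually has $g(1) \in \mathbb{C}S_{\infty}$ and genuinely commutes with the full $\mathcal{M}$ rather than just with $\mathcal{M}_0$ — this requires tracking how $R(g(1))$ interacts with the degree-lowering operators $L(e^*_j)$ through the induced-module structure, which is where the noncommutativity of $\bar T(V)$ and the twisting by $\alpha$ enter. The first inclusion, by contrast, is a routine transcription of the Schur--Weyl-type argument and should present no difficulty.
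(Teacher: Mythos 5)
Your outline correctly reduces the theorem to the two inclusions $R(\mathbb{C}S_{\infty})'\subset\mathcal{M}$ and $\mathcal{M}'\subset R(\mathbb{C}S_{\infty})$, and the use of the Euler-type operator $A_p$ to reconstruct an operator commuting with $R(\mathbb{C}S_{\infty})$ on each $\bar T_p(V)$ is the right idea. However, the assembly step has a genuine error. You define $D_p\in\mathcal{M}_p$ so that $f$ agrees with $D_p$ on $\bar T_p(V)$ and then assert $f=\sum_{p\ge 0}D_p$. This fails because although $D_p$ kills $\bar T_q(V)$ for $q<p$ (as you note), $D_p$ does \emph{not} kill $\bar T_q(V)$ for $q>p$: by Proposition~\ref{prop:initial_sp_and_final_sp} the composite $L(e^*_{i_1})\cdots L(e^*_{i_p})$ sends $\bar T_q(V)$ onto a nonzero piece when $q>p$. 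Hence for $\varphi\in\bar T_q(V)$ the sum $\sum_p D_p\varphi=D_0\varphi+\cdots+D_q\varphi$ is finite, but only the top term equals $f(\varphi)$ and the lower $D_k\varphi$ are extraneous. The paper repairs exactly this via an inductive orthogonalization (its Lemma~\ref{lem:lemma_for_duality} plus the proof of Proposition~\ref{prop:inclusion_2}): choose $D_0$ matching $f$ on $\bar T_0(V)$, then $D_1$ matching $f-D_0$ on $\bar T_1(V)$, and in general $D_r$ matching $f-D_0-\cdots-D_{r-1}$ on $\bar T_r(V)$, using that each residual $f-D_0-\cdots-D_{r-1}$ still lies in $R(\mathbb{C}S_{\infty})'$ and that $D_k\bar T_l(V)=\{0\}$ for $k>l$; only then does $\sum_p D_p=f$.

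For $\mathcal{M}'\subset R(\mathbb{C}S_{\infty})$, you correctly observe that $g\in\mathcal{M}'$ commutes with every $L(\psi)\in\mathcal{M}_0$, so $g=R(g(1))$; what is missing is the reason that $g(1)\in\mathbb{C}S_{\infty}$, and your suggested route of restricting to degree zero and invoking the group-algebra duality does not directly apply, because $R(g(1))$ need not preserve $\bar T_0(V)$ until you already know $g(1)$ has degree zero. The missing step is also far simpler than the ``induced-module bookkeeping'' you flag as the obstacle: each $L(e^*_j)$ lies in $\mathcal{M}_1$ (take $\psi_i=\delta_{ij}\in\mathbb{C}\subset\bar T_0(V)$), so $g$ commutes with it, and therefore $L(e^*_j)\,g(1)=g\bigl(L(e^*_j)1\bigr)=g(0)=0$ for every $j$. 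The Euler operator $A=\sum_i L(e_i)L(e^*_i)$ then annihilates $g(1)$, forcing every homogeneous component of $g(1)$ of positive degree to vanish, so $g(1)\in\bar T_0(V)=\mathbb{C}S_{\infty}$ and $g\in R(\mathbb{C}S_{\infty})$, which is exactly the paper's argument for Proposition~\ref{prop:inclusion_1}.
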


The inclusions $R(\mathbb{C}S_{\infty})' \supset \mathcal{M}$ and 
$R(\mathbb{C}S_{\infty}) \subset \mathcal{M}'$ are obvious,
so that it suffices to show the following two propositions:

\begin{proposition}\label{prop:inclusion_1} \sl
   We have $\mathcal{M}' \subset R(\mathbb{C}S_{\infty})$.
\end{proposition}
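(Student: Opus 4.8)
The plan is to mimic the argument used in part (ii) of Theorem~\ref{thm:generalization_of_the_Schur_Weyl_duality}, reconstructing any $f \in \mathcal{M}'$ from its values on the "monomial" generators $e_{i_p}\cdots e_{i_1}$ and checking that this reconstruction lands inside $R(\mathbb{C}S_{\infty})$. First I would observe that on the homogeneous piece $\bar{T}_p(V)$ the higher Euler operator $A_p$ from~(\ref{eq:higher_Euler_op}) acts as the identity (this was already established, since the computation $A_p = \frac{1}{p!}(A-p+1)\cdots(A-1)A$ and $A|_{\bar T_p(V)} = p\,\mathrm{id}$ takes place verbatim in $\bar T(V)$). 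Hence for $\varphi \in \bar{T}_p(V)$,
$$
   \varphi = A_p \varphi = \frac{1}{p!}\sum_{I \in [n]^p} e_{i_p}\cdots e_{i_1}\, t_I,
   \qquad t_I := L(e^*_{i_1})\cdots L(e^*_{i_p})\varphi \in \mathbb{C}S_\infty.
$$

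Next, the key point is that each operator $\varphi \mapsto e_{i_p}\cdots e_{i_1}\,t_I$ lies in $\mathcal{M}$: indeed $L(e^*_{i_1})\cdots L(e^*_{i_p})\varphi$ is an element of $\mathbb{C}S_\infty \subset \bar T_0(V)$, and then left-multiplying by the fixed element $e_{i_p}\cdots e_{i_1} \in \bar T_p(V)$ is the operator $L(e_{i_p}\cdots e_{i_1})L(e^*_{i_1})\cdots L(e^*_{i_p})$, which is one of the summands defining $\mathcal{M}_p$. So for $f \in \mathcal{M}'$ I may commute $f$ past each such operator. Concretely, applying $f$ to the expansion above and using that $f$ commutes with $L(e_{i_p}\cdots e_{i_1})L(e^*_{i_1})\cdots L(e^*_{i_p})$,
$$
   f(\varphi) = \frac{1}{p!}\sum_{I \in [n]^p} L\big(f(e_{i_p}\cdots e_{i_1})\big)\, t_I
   = \Big(\frac{1}{p!}\sum_{I \in [n]^p} L\big(f(e_{i_p}\cdots e_{i_1})\big) L(e^*_{i_1})\cdots L(e^*_{i_p})\Big)\varphi.
$$
This already shows $f \in \mathcal{M}$; to get $f \in R(\mathbb{C}S_\infty)$ I must further identify $f(e_{i_p}\cdots e_{i_1})$. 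Since $e_{i_p}\cdots e_{i_1} = e_{i_p}\cdots e_{i_1}\, s_I$ by~(\ref{eq:symmetric_expression}) and $f$ commutes with right multiplication by $s_I$, the element $f(e_{i_p}\cdots e_{i_1})$ lies in $\bar T_p(V)\, s_I$. The main work is then to show $f(e_{i_p}\cdots e_{i_1}) = (e_{i_p}\cdots e_{i_1})\cdot c$ for a single element $c \in \mathbb{C}S_\infty$ independent of $I$: this follows because $f$ commutes with right multiplication by the permutations that send one monomial to another (arguing as in Remark~(2) after Theorem~\ref{thm:generalization_of_the_Schur_Weyl_duality}, applied to the $S_\infty$-set of monomials $e_{i_p}\cdots e_{i_1}$). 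Taking $\varphi = 1$ forces $c = f(1) \in \bar T_0(V) = \mathbb{C}S_\infty$, and then the displayed formula collapses to $f(\varphi) = A_p(\varphi\cdot c) = \varphi\cdot c = R(c)\varphi$ on each $\bar T_p(V)$; hence $f = R(f(1))$.

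The step I expect to be the main obstacle is the bookkeeping needed to pass from "$f(e_{i_p}\cdots e_{i_1}) \in \bar T_p(V)\,s_I$ for each $I$" to "$f$ is a single right multiplication by a fixed $c \in \mathbb{C}S_\infty$ that works simultaneously for all $p$ and all $I$." One has to be careful that the embedding $\mathbb{C}S_{p+q}\hookrightarrow \mathbb{C}S_\infty$ and the endomorphism $\alpha$ interact correctly when comparing different homogeneous degrees, and that the element $c$ produced in degree $p$ is the same as the one in degree $p'$; the cleanest route is to fix $c := f(1)$ at the outset using $f \in \mathcal{M}'$ (so that $f$ commutes with $R(t)$ for $t \in \mathbb{C}S_\infty \subset \mathcal{M}$, giving $f(t) = f(1\cdot t) = f(1)t$ on $\bar T_0$) and then verify $f(e_{i_p}\cdots e_{i_1}) = e_{i_p}\cdots e_{i_1}\,f(1)$ by writing $e_{i_p}\cdots e_{i_1} = e_{i_p}\cdots e_{i_1}\cdot 1$ and — once more — invoking commutation of $f$ with the appropriate operator in $\mathcal{M}$; but this last commutation needs $e_{i_p}\cdots e_{i_1}\mapsto e_{i_p}\cdots e_{i_1}$ followed by left-multiplying the scalar part, which is exactly the composite $L(e_{i_p}\cdots e_{i_1})L(e^*_{i_1})\cdots L(e^*_{i_p}) \in \mathcal{M}_p$ applied after shifting, so the argument does close. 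Everything else is the same double-commutant-style manipulation already carried out in the proof of Theorem~\ref{thm:generalization_of_the_Schur_Weyl_duality}, now with $p+q$ replaced by the inductive limit $S_\infty$.
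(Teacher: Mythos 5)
Your proof confuses the two hypotheses $f \in \mathcal{M}'$ and $f \in R(\mathbb{C}S_\infty)'$, and this makes the main steps invalid. The bulk of your argument transplants the method of Lemma~\ref{lem:lemma_for_duality}, but that lemma is proved under $f \in R(\mathbb{C}S_\infty)'$: the crucial move $f(e_{i_p}\cdots e_{i_1}\, t_I) = f(e_{i_p}\cdots e_{i_1})\, t_I$ uses that $f$ commutes with right multiplications. For Proposition~\ref{prop:inclusion_1} the hypothesis is $f \in \mathcal{M}'$, which gives commutation with the operators $L(\psi_I)L(e^*_{i_1})\cdots L(e^*_{i_p})$, not with $R(t_I)$. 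Thus your first displayed line, which peels $t_I$ off the right of $f$, is unjustified: from $f \in \mathcal{M}'$ you only get the tautology $f(A_p\varphi) = A_p f(\varphi)$. The same confusion surfaces again when you write that ``$R(t)$ for $t \in \mathbb{C}S_\infty \subset \mathcal{M}$'': what lies in $\mathcal{M}$ (namely $\mathcal{M}_0$) are the \emph{left} multiplications $L(\psi)$ with $\psi \in \bar{T}(V)$, not the right multiplications $R(t)$. Indeed the whole content of Theorem~\ref{thm:duality_between_M(V)_and_S} is that $R(\mathbb{C}S_\infty)$ and $\mathcal{M}$ are each other's commutants, so generic $R(t)$ does not belong to $\mathcal{M}$.

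The paper's argument is far shorter and uses exactly the correct hypothesis. If $f \in \mathcal{M}'$, then $f$ commutes with $L(v^*) \in \mathcal{M}_1$ for every $v^* \in V^*$, whence $L(v^*)f(1) = f(L(v^*)1) = f(0) = 0$; since this holds for all $v^*$, the element $f(1)$ lies in $\bar{T}_0(V) = \mathbb{C}S_\infty$. Also $f$ commutes with $L(\varphi) \in \mathcal{M}_0$ for every $\varphi \in \bar{T}(V)$, so $f(\varphi) = f(L(\varphi)1) = L(\varphi)f(1) = \varphi\, f(1) = R(f(1))\varphi$, i.e.\ $f = R(f(1))$. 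You never establish the first fact $f(1) \in \bar{T}_0(V)$, and that is precisely where commutation with the derivations $L(v^*) \in \mathcal{M}$ — the hypothesis you actually have but do not use — enters.
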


\begin{proposition}\label{prop:inclusion_2} \sl
   We have $R(\mathbb{C}S_{\infty})' \subset \mathcal{M}$.
\end{proposition}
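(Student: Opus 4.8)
The plan is to adapt the argument used in the proof of Theorem~\ref{thm:generalization_of_the_Schur_Weyl_duality}(ii), peeling off the homogeneous components of $\bar{T}(V)$ one degree at a time. Fix $f \in R(\mathbb{C}S_{\infty})'$. I would construct operators $D_0, D_1, D_2, \ldots$ with $D_k \in \mathcal{M}_k$ recursively, in such a way that $f_k := f - (D_0 + \cdots + D_{k-1})$ annihilates $\bar{T}_0(V), \ldots, \bar{T}_{k-1}(V)$ for every $k$ (with $f_0 = f$). Note that each $f_k$ still commutes with $R$, since $\mathcal{M}_j \subset R(\mathbb{C}S_{\infty})'$ by Proposition~\ref{prop:L_and_R_commute}.

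The key tool is the higher Euler type operator $A_k = \frac{1}{k!}\sum_{I \in [n]^k} L(e_{i_k})\cdots L(e_{i_1}) L(e^*_{i_1})\cdots L(e^*_{i_k})$. Exactly as in the proof of Theorem~\ref{thm:generalization_of_the_Schur_Weyl_duality}, relation (\ref{eq:comm_rel_for_Euler}) gives $A_k = \frac{1}{k!}(A-k+1)\cdots(A-1)A$, so $A_k$ acts as $\binom{p}{k}\,\mathrm{id}$ on $\bar{T}_p(V)$; in particular it is the identity on $\bar{T}_k(V)$, while by Proposition~\ref{prop:initial_sp_and_final_sp} the derivation part $L(e^*_{i_1})\cdots L(e^*_{i_k})$ kills $\bar{T}_p(V)$ for $p < k$. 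Assuming $f_k$ has been constructed, I would write any $\varphi \in \bar{T}_k(V)$ as $\varphi = A_k\varphi = \frac{1}{k!}\sum_I (e_{i_k}\cdots e_{i_1})\, t_I$ with $t_I := L(e^*_{i_1})\cdots L(e^*_{i_k})\varphi$ lying in $\bar{T}_0(V) \simeq \mathbb{C}S_{\infty}$; since $f_k$ commutes with right multiplication by $t_I$, this yields
\[
   f_k(\varphi) = \frac{1}{k!}\sum_{I \in [n]^k} L\bigl(f_k(e_{i_k}\cdots e_{i_1})\bigr)\, L(e^*_{i_1})\cdots L(e^*_{i_k})\,\varphi .
\]
So I would set $D_k := \frac{1}{k!}\sum_{I \in [n]^k} L\bigl(f_k(e_{i_k}\cdots e_{i_1})\bigr)\, L(e^*_{i_1})\cdots L(e^*_{i_k}) \in \mathcal{M}_k$. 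It agrees with $f_k$ on $\bar{T}_k(V)$ and vanishes on $\bar{T}_0(V),\dots,\bar{T}_{k-1}(V)$ because of its $k$ derivation factors, hence $f_{k+1} = f_k - D_k$ vanishes on $\bar{T}_0(V),\dots,\bar{T}_k(V)$, which completes the recursion.

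Finally I would assemble $f = \sum_{k\ge 0} D_k$. For $\varphi \in \bar{T}_p(V)$ one has $D_k\varphi = 0$ whenever $k > p$ (again by the derivation factors), so $\sum_k D_k$ acts on $\varphi$ as the finite sum $\sum_{k=0}^p D_k\varphi$, which equals $f(\varphi)$ because $f_{p+1}$ kills $\bar{T}_p(V)$; since $\bar{T}(V)$ is the direct sum of its homogeneous components, $f = \sum_{k\ge 0} D_k$ as operators, and therefore $f \in \mathcal{M}$. The argument needs no hard estimate; the only point requiring care is the bookkeeping of the recursion — one must use $f_k$, not $f$ itself, in the definition of $D_k$ so that the already-subtracted terms are correctly accounted for — together with the observation that the nominally infinite sum $\sum_k D_k$ is locally finite on $\bar{T}(V)$, which is precisely what makes it a legitimate element of $\mathcal{M}$.
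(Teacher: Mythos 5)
Your proof is correct and matches the paper's essentially step by step: the paper also peels off homogeneous degrees recursively, taking $D_k\in\mathcal{M}_k$ so that $f-(D_0+\cdots+D_{k-1})$ agrees with $D_k$ on $\bar{T}_k(V)$, and the operator-valued construction you inline (via $A_k$) is exactly the paper's Lemma~\ref{lem:lemma_for_duality}. The only cosmetic difference is that the paper factors that construction out as a separate lemma rather than re-deriving it inside the proof.
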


To prove Proposition~\ref{prop:inclusion_2}, we use the following lemma:

\begin{lemma}\label{lem:lemma_for_duality} \sl
   For any $f \in R(\mathbb{C}S_{\infty})'$,
   there exists $D_p \in \mathcal{M}_p$
   such that $f(\varphi) = D_p \varphi$
   for $\varphi \in \bar{T}_p(V)$.
\end{lemma}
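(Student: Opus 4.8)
\textbf{Proof proposal for Lemma~\ref{lem:lemma_for_duality}.}
The plan is to mimic the argument already used in the proof of Theorem~\ref{thm:generalization_of_the_Schur_Weyl_duality}, where the key device was the higher Euler operator $A_p$ that acts as the identity on $T^{(q)}_p(V)$. Here I would first construct the analogous operator on $\bar{T}(V)$, namely
$$
   A_p = \frac{1}{p!} \sum_{I \in [n]^p}
   L(e_{i_p}) \cdots L(e_{i_1}) L(e^*_{i_1}) \cdots L(e^*_{i_p}),
$$
and note (exactly as before, via the commutation relation $L(v)A = (A-1)L(v)$ from Theorem~\ref{thm:analogue_of_CCR} and the fact that $A$ acts by the scalar $p$ on $\bar{T}_p(V)$) that $A_p$ acts as the identity on $\bar{T}_p(V)$. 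In particular, by construction $A_p \in \mathcal{M}_p$.

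Next, fix $f \in R(\mathbb{C}S_{\infty})'$ and $\varphi \in \bar{T}_p(V)$. Write $t_I = L(e^*_{i_1}) \cdots L(e^*_{i_p}) \varphi$; by Proposition~\ref{prop:initial_sp_and_final_sp} each $t_I$ lies in $\bar{T}_0(V) = \mathbb{C}S_{\infty}$. Then
$$
   f(\varphi) = f(A_p \varphi)
   = \frac{1}{p!} \sum_{I \in [n]^p} f\bigl(e_{i_p} \cdots e_{i_1} \, t_I\bigr)
   = \frac{1}{p!} \sum_{I \in [n]^p} f(e_{i_p} \cdots e_{i_1}) \, t_I,
$$
where the last equality uses that $f$ commutes with right multiplication by $t_I \in \mathbb{C}S_{\infty}$. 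Setting $\psi_I = \tfrac{1}{p!} f(e_{i_p} \cdots e_{i_1}) \in \bar{T}(V)$, this reads $f(\varphi) = \sum_{I \in [n]^p} L(\psi_I) L(e^*_{i_1}) \cdots L(e^*_{i_p}) \varphi$, so the operator $D_p = \sum_{I \in [n]^p} L(\psi_I) L(e^*_{i_1}) \cdots L(e^*_{i_p}) \in \mathcal{M}_p$ does the job. Note that the $\psi_I$ may fail to be homogeneous of degree $p$ — $f$ need not preserve degree — but that is irrelevant since $\mathcal{M}_p$ allows arbitrary elements of $\bar{T}(V)$ as coefficients.

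The only point requiring a little care — and the place I would expect a reader to pause — is the justification that $A_p$ acts as the identity on $\bar{T}_p(V)$ rather than merely on the finite-dimensional pieces $T^{(q)}_p(V)$; but since any element of $\bar{T}_p(V)$ lies in some $T^{(q)}_p(V)$ and $A_p$ respects this filtration (again by Proposition~\ref{prop:initial_sp_and_final_sp}), the computation $A_p = \tfrac{1}{p!}(A-p+1)\cdots(A-1)A$ together with $A|_{\bar{T}_p(V)} = p\,\mathrm{id}$ settles it. Everything else is a transcription of the earlier argument, the new feature being simply that we no longer truncate the $S_{\infty}$-tail.
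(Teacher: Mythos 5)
Your proposal is correct and follows essentially the same route as the paper's own proof: both exploit the higher Euler operator $A_p$ acting as the identity on $\bar{T}_p(V)$ and then commute $f$ past the right multiplication by $t_I \in \mathbb{C}S_{\infty}$, exactly as in the proof of Theorem~\ref{thm:generalization_of_the_Schur_Weyl_duality}~(ii). Your added remark that the coefficients $\psi_I$ need not be homogeneous is a fair clarification but does not change the substance.
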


\begin{proof}[Proof of Proposition~{\sl\ref{prop:inclusion_1}}]
Assume that $f \in \mathcal{M}'$.
Then we have
$$
   L(v^*) f(1) = f(L(v^*) 1) = f(0) = 0,
$$
so that $f(1)$ belongs to $\bar{T}_0(V) = \mathbb{C}S_{\infty}$.
Moreover, we have $f(\varphi) = f(\varphi \cdot 1) = \varphi f(1)$ for $\varphi \in \bar{T}(V)$.
From these we see $f \in R(\mathbb{C}S_{\infty})$.
\end{proof}

\begin{proof}[Proof of Lemma~{\sl\ref{lem:lemma_for_duality}}]
In a way similar to the proof of Theorem~\ref{thm:generalization_of_the_Schur_Weyl_duality} (ii),
we have
$$
   f(\varphi)
   = f(A_p \varphi)
   = \frac{1}{p!} \sum_{I \in [n]^p} 
   L(f(e_{i_p} \cdots e_{i_1})) L(e^*_{i_1}) \cdots L(e^*_{i_p}) \varphi
$$
for $\varphi \in \bar{T}_p(V)$.
Here $A_p$ is the analogue of the Euler operator defined in (\ref{eq:higher_Euler_op}).
This means the assertion.
\end{proof}

\begin{proof}[Proof of Proposition~{\sl\ref{prop:inclusion_2}}]
Fix an arbitrary $f \in R(\mathbb{C}S_{\infty})'$. 
We take $D_0 \in \mathcal{M}_0$
such that $f(\varphi_0) = D_0 \varphi_0$ for all $\varphi_0 \in \bar{T}_0(V)$.
Next, we take $D_1 \in \mathcal{M}_1$
such that $(f - D_0)(\varphi_1) = D_1 \varphi_1$ for all $\varphi_1 \in \bar{T}_1(V)$.
In this way, we take $D_k \in \mathcal{M}_k$ for $k=0,1,2,\ldots$ such that 
$$
   (f - D_0 - \cdots - D_{r-1}) (\varphi_r) = D_r \varphi_r
$$
for all $\varphi_r \in \bar{T}_r(V)$.
Here we used Lemma~\ref{lem:lemma_for_duality} and the fact 
$f - D_0 - \cdots - D_{r-1} \in R(\mathbb{C}S_{\infty})'$.
From this, we can deduce the following relation for $\varphi \in \bigoplus_{k=0}^r \bar{T}_k(V)$:
$$
   f(\varphi) = (D_0 + \cdots + D_r) \varphi.
$$
This is proved by induction on $r$ 
by noting the relation $D_k \varphi_l = 0$ for $\varphi_l \in \bar{T}_l(V)$ when $k > l$.
This means that $f= \sum_{k \geq 0} D_k \in \mathcal{M}$.
\end{proof}

\subsection{}
%
We can also consider an analogue of the Howe duality.
The general linear group $GL_n(\mathbb{C})$ naturally acts on 
the tensor product $W = \mathbb{C}^n \otimes \mathbb{C}^{n'}$
(canonically on the first component $\mathbb{C}^n$
and trivially on the second component $\mathbb{C}^{n'}$).
This can be extended to the action 
$\nu$ on $T^{(q)}_p(W)$ naturally.
Then we have the following theorem
as an analogue of the $(GL_n(\mathbb{C}), GL_{n'}(\mathbb{C}))$ duality 
on the polynomial space $\mathcal{P}(W)$ due to Howe \cite{Ho}.

\begin{theorem}\label{thm:analogue_of_Howe_duality} \sl
   Let $\mathcal{Q}_1$ be
   the operator algebra in 
   $\operatorname{End}_{\mathbb{C}}(T^{(q)}_p(\mathbb{C}^n \otimes \mathbb{C}^{n'}))$ 
   generated by $\nu(GL_n(\mathbb{C}))$ 
   and the right multiplication by $S_{p+q}$.
   Moreover, we denote by $\mathcal{Q}_2$ the set of 
   all linear combinations of the operators in the form
   $$
      \Phi_{a_1,\ldots,a_p;b_1,\ldots,b_p}(\sigma) = 
      \sum_{I \in [n]^p}
	  L(w_{i_p a_p}) \cdots L(w_{i_1 a_1}) L(\sigma)
	  L(w^*_{i_1 b_1}) \cdots L(w^*_{i_p b_p}).
   $$
   Here $w_{ij}$ and $w^*_{ij}$ mean the canonical basis of 
   $W = \mathbb{C}^n \otimes \mathbb{C}^{n'}$ and the dual basis, respectively.
   Moreover, $\sigma$ is an element of $S_{p+q}$.
   Then $\mathcal{Q}_1$ and $\mathcal{Q}_2$ are mutual commutants of each other.
\end{theorem}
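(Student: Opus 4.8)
The plan is to deduce Theorem~\ref{thm:analogue_of_Howe_duality} from Theorem~\ref{thm:generalization_of_the_Schur_Weyl_duality} applied to the vector space $V = \mathbb{C}^n \otimes \mathbb{C}^{n'}$, by taking the commutant one more time against the action of $GL_n(\mathbb{C})$. First I would observe that $GL_n(\mathbb{C})$ acts on $V = \mathbb{C}^n \otimes \mathbb{C}^{n'}$ only through the first factor, so its image in $\operatorname{End}(T^{(q)}_p(V))$ is contained in $\mathcal{L}^{(q)}_p = \mathcal{L}^{(q)}_p(V)$; indeed the infinitesimal action of $\mathfrak{gl}_n$ is given by the polarization-type operators $\sum_{j=1}^{n'} L(w_{ij'})L(w^*_{kj'})$ for $1 \leq i,k \leq n$, using (\ref{eq:polarization}) for the $\mathbb{C}^n$-factor. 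Together with the right multiplication $R(\mathbb{C}S_{p+q})$, which by Proposition~\ref{prop:L_and_R_commute} commutes with everything in $\mathcal{L}^{(q)}_p$, we get $\mathcal{Q}_1 = \langle GL_n(\mathbb{C}), R(\mathbb{C}S_{p+q})\rangle$. By the double commutant theorem it suffices to prove that $\mathcal{Q}_1' = \mathcal{Q}_2$.

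Next I would compute $\mathcal{Q}_1'$ in two stages. Since $\mathcal{Q}_1$ is generated by $GL_n(\mathbb{C})$ and $R(\mathbb{C}S_{p+q})$, we have $\mathcal{Q}_1' = GL_n(\mathbb{C})' \cap R(\mathbb{C}S_{p+q})'$. By Theorem~\ref{thm:generalization_of_the_Schur_Weyl_duality}(ii), $R(\mathbb{C}S_{p+q})' = \mathcal{L}^{(q)}_p(V)$, i.e.\ the span of the operators $L(u_p)\cdots L(u_1)L(\sigma)L(u^*_1)\cdots L(u^*_p)$ with $u_i \in V$, $u^*_j \in V^*$, $\sigma \in S_{p+q}$; equivalently, by the argument in the proof of Lemma~\ref{lem:lemma_for_duality}, every such operator has the canonical form $\frac{1}{p!}\sum_{I \in [n']^p \times \text{stuff}} L(\cdot)L(w^*_{\cdot})\cdots$. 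So I must intersect $\mathcal{L}^{(q)}_p(V)$ with the $GL_n(\mathbb{C})$-commutant. The key step is to decompose $V = \mathbb{C}^n \otimes \mathbb{C}^{n'}$ and hence $V^{*\otimes p}$, $V^{\otimes p}$ according to the $\mathbb{C}^n$-indices: writing $w_{ia}$ with $i \in [n]$ (the $\mathbb{C}^n$-index, on which $GL_n$ acts) and $a \in [n']$ (the $\mathbb{C}^{n'}$-index, inert), an operator in $\mathcal{L}^{(q)}_p(V)$ lies in $GL_n(\mathbb{C})'$ exactly when the $\mathbb{C}^n$-indices on the creation operators are contracted against those on the annihilation operators in a $GL_n$-invariant pattern. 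By the first fundamental theorem for $GL_n$ (vector invariants of $\mathbb{C}^n$ and $(\mathbb{C}^n)^*$ are generated by the pairings), the $GL_n$-invariant elements of $\mathbb{C}^n{}^{\otimes p}\otimes(\mathbb{C}^n{}^*)^{\otimes p}$ are spanned by products of $\delta$'s, $\langle w^*_i, w_{i'}\rangle = \delta_{ii'}$, indexed by a permutation matching the $p$ upper with the $p$ lower $\mathbb{C}^n$-slots. Tracking this through the canonical form of $\mathcal{L}^{(q)}_p(V)$ — where the annihilation operators already appear in a fixed order $L(e^*_{i_1})\cdots L(e^*_{i_p})$ summed over $I$ with a left-multiplication coefficient — contracting each $\mathbb{C}^n$-index forces the summation $\sum_{I \in [n]^p}$ to run precisely over the $\mathbb{C}^n$-part, with the matching permutation absorbed into $L(\sigma)$ (using the braid relations of Theorem~\ref{thm:analogue_of_CCR} to move the $s_i$'s past the $L(w_{\cdot})$'s), leaving exactly the operators
$$
   \sum_{I \in [n]^p} L(w_{i_p a_p})\cdots L(w_{i_1 a_1}) L(\sigma) L(w^*_{i_1 b_1})\cdots L(w^*_{i_p b_p})
$$
that define $\mathcal{Q}_2$. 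This shows $\mathcal{Q}_1' \subseteq \mathcal{Q}_2$; the reverse inclusion $\mathcal{Q}_2 \subseteq \mathcal{Q}_1'$ is a direct check using (\ref{eq:polarization}) for the $GL_n$-invariance (the summation $\sum_I$ over the $\mathbb{C}^n$-indices is exactly the contraction that makes each term $GL_n$-invariant) and Proposition~\ref{prop:L_and_R_commute} for commuting with $R(\mathbb{C}S_{p+q})$.

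The main obstacle I expect is the bookkeeping in the invariant-theory step: carefully matching the ``abstract'' FFT description of $GL_n$-invariants (products of $\delta_{ii'}$ indexed by a permutation) with the specific \emph{canonical form} of elements of $\mathcal{L}^{(q)}_p(V)$ coming from Theorem~\ref{thm:generalization_of_the_Schur_Weyl_duality}, and confirming that the permutation produced by the FFT can indeed be absorbed cleanly into the $S_{p+q}$-factor $L(\sigma)$ using the commutation relations $L(s_i)L(v) = L(v)L(s_{i+1})$ without creating spurious terms or over/under-counting the sum over $I \in [n]^p$. A clean way to organize this is to first handle the baby case $n' = 1$, $q = 0$ (where $V = \mathbb{C}^n$, $\mathcal{Q}_1 = \langle GL_n, R(\mathbb{C}S_p)\rangle$, and $\mathcal{Q}_2$ should reduce to $R(\mathbb{C}S_p)$ by classical Schur--Weyl, consistent with Theorem~\ref{thm:generalization_of_the_Schur_Weyl_duality}), then bootstrap to general $n'$ by regarding the $\mathbb{C}^{n'}$-indices $a_i, b_i$ as free labels carried along passively, and finally restore general $q$ exactly as in the proof of Theorem~\ref{thm:generalization_of_the_Schur_Weyl_duality}, where $\sigma$ simply ranges over the larger group $S_{p+q}$. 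Alternatively, one can avoid the explicit FFT entirely: since $R(\mathbb{C}S_{p+q})' = \mathcal{L}^{(q)}_p(V)$ is already known, it suffices to show $\mathcal{Q}_2 = \mathcal{L}^{(q)}_p(V)^{GL_n(\mathbb{C})}$ (the $GL_n$-fixed points for the conjugation action on operators), and this can be checked directly by averaging an arbitrary element of $\mathcal{L}^{(q)}_p(V)$ over $GL_n$ — the Reynolds operator, realized concretely through integration/Weyl's unitarian trick — and observing that the average of $L(u_p)\cdots L(u_1)L(\sigma)L(u^*_1)\cdots L(u^*_p)$ over $GL_n$ acting on the $\mathbb{C}^n$-slots is a linear combination of exactly the operators listed in $\mathcal{Q}_2$.
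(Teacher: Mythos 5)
Your proposal is correct and follows essentially the same route as the paper's proof: reduce by the double commutant theorem (the paper justifies this by noting $\mathcal{Q}_1$ is semisimple) to showing $\mathcal{Q}_1'=\mathcal{Q}_2$, identify $R(\mathbb{C}S_{p+q})'=\mathcal{L}^{(q)}_p$ via Theorem~\ref{thm:generalization_of_the_Schur_Weyl_duality}\,(ii), and then determine the $GL_n(\mathbb{C})$-commutant inside it by the first fundamental theorem for tensor invariants. The bookkeeping you flag as the main obstacle is packaged in the paper by Lemma~\ref{lem:natural_intertwiner} (the surjective equivariant map $V^{\otimes p}\otimes\mathbb{C}S_{p+q}\otimes V^{*\otimes p}\to\mathcal{L}^{(q)}_p(V)$), which plays exactly the role of your averaging/Reynolds step: invariant operators are images of invariant tensors, and the FFT describes the latter.
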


\begin{proof}
It is sufficient to show $\mathcal{Q}_1' = \mathcal{Q}_2$,
because $\mathcal{Q}_1$ is semisimple.

The proof of the inclusion $\mathcal{Q}_1' \supset \mathcal{Q}_2$ is plane. 
First, $\mathcal{Q}_2$ commutes with the right multiplication by $\mathbb{C}S_{p+q}$,
because $\mathcal{Q}_2 \subset \mathcal{L}^{(q)}_p(W)$.
Secondly, $\mathcal{Q}_2$ also commutes with $\nu(GL_n(\mathbb{C}))$
as seen the following calculation for $g \in GL_n(\mathbb{C})$:
\begin{align*}
   & \nu(g) 
   \Phi_{a_1,\ldots,a_p;b_1,\ldots,b_p}(\sigma) 
   \nu(g^{-1})\\
   &\qquad =
   \sum_{I\in [n]^p}
   L(\tilde{g} w_{i_p a_p}) \cdots L(\tilde{g} w_{i_1 a_1}) L(\sigma)
   L({}^t\tilde{g}^{-1} w^*_{i_1 b_1}) \cdots L({}^t\tilde{g}^{-1} w^*_{i_p b_p}) \\
   & \qquad = \Phi_{a_1,\ldots,a_p;b_1,\ldots,b_p}(\sigma).
\end{align*}
The first equality is a consequence of (\ref{eq:comm_relation_between_pi_and_L}).
Here we regard $g \in GL_n(\mathbb{C})$ as an element of $GL(W)$
through the action of $GL_n(\mathbb{C})$ on $W$,
and denote this image by $\tilde{g}$.

The converse inclusion $\mathcal{Q}_1' \subset \mathcal{Q}_2$ is shown as follows.
The first fundamental theorem of the invariant theory for tensor spaces (Theorem~4.3.1 in~\cite{GW})
states that
all $GL_n(\mathbb{C})$-invariants in $W^{\otimes p} \otimes W^{* \otimes p}$
are expressed as a linear combination of elements in the form
$$
   \sum_{I \in [n]^p}
   w_{i_p a_p} \otimes \cdots \otimes w_{i_1 a_1} 
   \otimes
   w^*_{i_{\sigma(1)} b_{\sigma(1)}} \otimes \cdots 
   \otimes w^*_{i_{\sigma(p)} b_{\sigma(p)}}.
$$
Thus the following elements with $\tau \in S_{p+q}$ and $\sigma \in S_p$ 
expand all $GL_n(\mathbb{C})$-invariants in 
$W^{\otimes p} \otimes \mathbb{C}S_{p+q} \otimes W^{* \otimes p}$:
$$
   \Psi_{a_1,\ldots,a_p;b_1,\ldots,b_p}(\tau,\sigma) =
   \sum_{I \in [n]^p}
   w_{i_p a_p} \otimes \cdots \otimes w_{i_1 a_1} 
   \otimes \tau \otimes
   w^*_{i_{\sigma(1)} b_{\sigma(1)}} \otimes \cdots 
   \otimes w^*_{i_{\sigma(p)} b_{\sigma(p)}}.
$$
Here we consider the trivial action of $GL_n(\mathbb{C})$ on $\mathbb{C}S_{p+q}$.
Next we consider the linear map 
$F \colon W^{\otimes p} \otimes \mathbb{C}S_{p+q} \otimes W^{* \otimes p} \to \mathcal{L}^{(q)}_p(W)$
defined by the following correspondence:
$$
   w_p \otimes \cdots \otimes w_1 \otimes
   \tau
   \otimes w^*_1 \otimes \cdots \otimes w^*_p
   \mapsto
   L(w_p) \cdots L(w_1) L(\tau) L(w^*_1) \cdots L(w^*_p).
$$
This map is surjective and $GL(W)$-equivariant (hence also $GL_n(\mathbb{C})$-equivariant)
as seen from (\ref{eq:comm_relation_between_pi_and_L}).
Here $GL(W)$ acts on $ \mathcal{L}^{(q)}_p(W)$ by conjugation.
Thus all $GL_n(\mathbb{C})$-invariants in $\mathcal{L}^{(q)}_p(W)$
comes from $GL_n(\mathbb{C})$-invariants 
in $W^{\otimes p} \otimes \mathbb{C}S_{p+q} \otimes W^{* \otimes p}$.
This inplies $\mathcal{Q}_1' \subset \mathcal{Q}_2$,
because the image of $\Psi_{a_1,\ldots,a_p;b_1,\ldots,b_p}(\tau,\sigma)$ under $F$ is equal to
$\Phi_{a_1,\ldots,a_p;b_1,\ldots,b_p}(\tau\sigma)$.
\end{proof}

The following relation is immediate from Theorem~\ref{thm:analogue_of_Howe_duality}:

\begin{corollary}\label{cor:relation_between_ZU_and_Q2}\sl
   For any central element $C$ in the universal enveloping algebra $U(\mathfrak{gl}_n)$,
   the action of $C$ on $T^{(q)}_p(\mathbb{C}^n \otimes \mathbb{C}^{n'})$ 
   is equal to an operator in $\mathcal{Q}_2$,
   and this operator is also central in $\mathcal{Q}_2$.
\end{corollary}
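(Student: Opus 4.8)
The plan is to deduce Corollary~\ref{cor:relation_between_ZU_and_Q2} directly from Theorem~\ref{thm:analogue_of_Howe_duality} by reading off what it says about the two commuting algebras $\mathcal{Q}_1$ and $\mathcal{Q}_2$. Since $C$ is central in $U(\mathfrak{gl}_n)$, its image under the natural action $\pi$ of $GL_n(\mathbb{C})$ on $T^{(q)}_p(\mathbb{C}^n\otimes\mathbb{C}^{n'})$ commutes with the whole image $\pi(GL_n(\mathbb{C}))$, hence with $\pi(U(\mathfrak{gl}_n))$; indeed $\pi(C)$ commutes with all of $\mathcal{Q}_1$, because $\mathcal{Q}_1$ is generated by the $GL_n(\mathbb{C})$-action together with the right multiplication $R(\mathbb{C}S_{p+q})$, and $\pi(C)$ commutes with the latter by Proposition~\ref{prop:L_and_R_commute} (the $\mathfrak{gl}_n$-action is built from the operators $L(\cdot)$, which all commute with $R$). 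Therefore $\pi(C)\in\mathcal{Q}_1'=\mathcal{Q}_2$ by Theorem~\ref{thm:analogue_of_Howe_duality}. This gives the first assertion.

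For the second assertion, I would argue that $\pi(C)$ lies in the center of $\mathcal{Q}_2$. We already know $\pi(C)$ commutes with every element of $\mathcal{Q}_1$; but $\mathcal{Q}_2=\mathcal{Q}_1'$, so in particular $\pi(C)$ commutes with every element of $\mathcal{Q}_1$ that also happens to lie in $\mathcal{Q}_2=\mathcal{Q}_1'$ --- that is, with $\mathcal{Q}_1\cap\mathcal{Q}_2$, which is not yet what we want. The correct route is the reverse inclusion of the duality: by Theorem~\ref{thm:analogue_of_Howe_duality} we also have $\mathcal{Q}_2'=\mathcal{Q}_1$ (the commutant relation is symmetric), so any element commuting with all of $\mathcal{Q}_2$ lies in $\mathcal{Q}_1$, and conversely any element of $\mathcal{Q}_1$ commutes with all of $\mathcal{Q}_2$. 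Since $\pi(C)\in\mathcal{Q}_1$ as shown above, $\pi(C)$ commutes with every element of $\mathcal{Q}_2$; combined with $\pi(C)\in\mathcal{Q}_2$, this says exactly that $\pi(C)$ is central in $\mathcal{Q}_2$.

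The only genuinely non-formal point is the claim $\pi(C)\in\mathcal{Q}_1$, i.e. that the action of a central element of $U(\mathfrak{gl}_n)$ is actually an element of the operator algebra $\mathcal{Q}_1$ --- one must observe that $\mathcal{Q}_1$, being generated by the image of $GL_n(\mathbb{C})$, contains $\pi(U(\mathfrak{gl}_n))$ (the enveloping algebra image sits inside the associative algebra generated by the group image on a finite-dimensional space), hence contains $\pi(C)$. After that, everything is an application of the symmetry of the commutant relation in Theorem~\ref{thm:analogue_of_Howe_duality}. I expect no real obstacle here; the corollary is a formal consequence of the duality, and the proof is a short paragraph.

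\begin{proof}
Since $C$ is central in $U(\mathfrak{gl}_n)$, the operator $\pi(C)$ commutes with $\pi(U(\mathfrak{gl}_n))$, and in particular with the action of $GL_n(\mathbb{C})$; it also commutes with the right multiplication by $\mathbb{C}S_{p+q}$ by Proposition~\ref{prop:L_and_R_commute}, since the $\mathfrak{gl}_n$-action is built from operators of the form $L(e_i)L(e^*_j)$. Hence $\pi(C)$ commutes with every generator of $\mathcal{Q}_1$, so $\pi(C)\in\mathcal{Q}_1'$; moreover $\pi(C)\in\pi(U(\mathfrak{gl}_n))\subset\mathcal{Q}_1$. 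By Theorem~\ref{thm:analogue_of_Howe_duality}, $\mathcal{Q}_1'=\mathcal{Q}_2$ and $\mathcal{Q}_2'=\mathcal{Q}_1$. The first of these gives $\pi(C)\in\mathcal{Q}_2$, proving the first assertion. For the second, $\pi(C)\in\mathcal{Q}_1=\mathcal{Q}_2'$ means that $\pi(C)$ commutes with every element of $\mathcal{Q}_2$; together with $\pi(C)\in\mathcal{Q}_2$, this shows $\pi(C)$ is central in $\mathcal{Q}_2$.
\end{proof}
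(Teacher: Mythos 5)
Your proof is correct and fills in precisely the formal argument the paper leaves implicit when it calls the corollary ``immediate'' from Theorem~\ref{thm:analogue_of_Howe_duality}: show $\pi(C)\in\mathcal{Q}_1\cap\mathcal{Q}_1'$ and then use both directions of the mutual-commutant relation. One small simplification you could make: once you know $\pi(C)\in\mathcal{Q}_1$, centrality in $\mathcal{Q}_2$ already follows from $\mathcal{Q}_2=\mathcal{Q}_1'$ alone (every element of $\mathcal{Q}_2$ commutes with every element of $\mathcal{Q}_1$ by the definition of commutant), so you do not actually need the reverse inclusion $\mathcal{Q}_2'=\mathcal{Q}_1$.
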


In the next section,
we will give an explicit description of this relation in terms of generators
as an analogue of the Capelli identity.

%
\section{Capelli identity on $T(\mathbb{C}^n \otimes \mathbb{C}^{n'})$
and its application to invariant theory}\label{sec:Capelli_and_FFT}
%
%
In this section,
we give a Capelli type identity on $T(\mathbb{C}^n \otimes \mathbb{C}^{n'})$
using the multiplication and derivation operators
defined in Section~\ref{sec:multiplications_and_derivations}
(we will give a more general relation, namely a ``higher version''
in Section~\ref{subsec:higher_Capelli_identity_on_T}).
This can be regarded as a description of the correspondence
of invariant operators in Corollary~\ref{cor:relation_between_ZU_and_Q2}.

The original Capelli identity played an important role in invariant theory.
Our Capelli type identity also has an application to invariant theory.
That is,
we can determine the $SL_n(\mathbb{C})$-invariants in $T(\mathbb{C}^n \otimes \mathbb{C}^{n'})$
using this Capelli type identity. 
This application can be regarded as a noncommutative version of a classical result,
namely the description of $SL_n(\mathbb{C})$-invariants in the polynomial algebra
$\mathcal{P}(\mathbb{C}^n \otimes \mathbb{C}^{n'})$ 
using the original Capelli identity (Theorem 2.6.A in \cite{We}).

\subsection{}
%
Let us recall some central elements of $U(\mathfrak{gl}_n)$
called the Capelli elements (\cite{C1}, \cite{C2}, \cite{HU}, \cite{U1}).
Let $E_{ij}$ be the standard basis of $\mathfrak{gl}_n$,
and consider the matrix $E = (E_{ij})_{1 \leq i,j \leq n}$ in 
$\operatorname{Mat}_n(U(\mathfrak{gl}_n))$.
We consider the following element in $U(\mathfrak{gl}_n)$:
$$
   C_n = \operatorname{column-det}(E + \operatorname{diag}(n-1,n-2,\ldots,0)).
$$
Here ``$\operatorname{column-det}$'' means the ``column-determinant.''
Namely, for a square matrix $X = (X_{ij})_{1 \leq i,j \leq n}$ 
whose entries are not necessarily commutative,
we put
$$
   \operatorname{column-det} X
   = \sum_{\sigma \in S_n} \operatorname{sgn}(\sigma) 
   X_{\sigma(1)1} X_{\sigma(2)2} \cdots X_{\sigma(n)n}.
$$
This $C_n$ is known to be central in $U(\mathfrak{gl}_n)$.
We can generalize this to sums of minors:
\begin{equation}\label{eq:definition_of_Capelli_el}
   C_r
   = \sum_{I \in \mybinom{[n]}{r}}
     \operatorname{column-det}(E_{II} + \operatorname{diag}(r-1,r-2,\ldots,0)).
\end{equation}
Here we put $X_{IJ} = (X_{i_a j_b})_{1 \leq a,b \leq r}$
for a matrix $X = (X_{ij})_{1 \leq i \leq n, \, 1 \leq j \leq n'}$
and sequences $I = (i_1,\ldots,i_r) \in [n]^r$, $J = (j_1,\ldots,j_r) \in [n']^r$.
It is known that $C_r$ is also central in $U(\mathfrak{gl}_n)$,
and $C_1,\ldots,C_n$ generate the center of $U(\mathfrak{gl}_n)$.
We call this $C_r$ the ``Capelli element'' of degree $r$.

We work in the same situation 
as in Theorem~\ref{thm:analogue_of_Howe_duality}.
Namely, we consider the natural action of $GL_n(\mathbb{C})$ 
on $\mathbb{C}^n \otimes \mathbb{C}^{n'}$.
This action can be extended to the actions on $T^{(q)}(\mathbb{C}^n \otimes \mathbb{C}^{n'})$ 
and moreover on $\bar{T}(\mathbb{C}^n \otimes \mathbb{C}^{n'})$.
The infinitesimal action of $\mathfrak{gl}_n$ on $\bar{T}(\mathbb{C}^n \otimes \mathbb{C}^{n'})$ 
is expressed as follows
(cf. Section~\ref{subsec:polarizations}):
\begin{equation}\label{eq:expression_of_action_of_gl}
   \nu(E_{ij}) = \sum_{a=1}^{n'} L(w_{ia}) L(w^*_{ja}).
\end{equation}
Here $w_{ij}$ and $w^*_{ij}$ mean the canonical basis of $\mathbb{C}^n \otimes \mathbb{C}^{n'}$
and its dual basis, respectively.
It is convenient to consider the matrices
$Z$ and $Z^*$ in $\operatorname{Mat}_{n,n'}(\mathcal{L}(\mathbb{C}^n \otimes \mathbb{C}^{n'}))$
defined by
$$
   Z = (L(w_{ij}))_{1 \leq i \leq n, \, 1 \leq j \leq n'},\qquad
   Z^* = (L(w^*_{ij}))_{1 \leq i \leq n, \, 1 \leq j \leq n'}.
$$
Then,
we can rewrite (\ref{eq:expression_of_action_of_gl}) 
simply as $\nu(E) = Z \, {}^t\!Z^*$.

In this situation, we have the following analogue of the Capelli identity
(cf. \cite{HU}, \cite{U1}):

\begin{theorem}\label{thm:Capelli_identity_in_T}\sl
   For $1 \leq r \leq p$, we have
   \begin{align*}
      \nu(C_r) 
	  &= \sum_{I \in \mybinom{[n]}{r}} 
	  \sum_{J \in \bibinom{[n']}{r}} 
	  \frac{1}{J!} 
	  \operatorname{column-det}Z_{I^{\circ}J^{\circ}}
	  \operatorname{column-det}Z^*_{IJ} \\
	  &=
      \frac{1}{r!^2}	  
	  \sum_{I \in [n]^r} 
	  \sum_{J \in [n']^r} 
	  \operatorname{column-det}Z_{I^{\circ}J^{\circ}}
	  \operatorname{column-det}Z^*_{IJ}.
   \end{align*}
   Here we put $I^{\circ} = (i_r,\ldots,i_1)$ and $J^{\circ} = (j_r,\ldots,j_1)$
   for $I = (i_1,\ldots,i_r)$ and $J = (j_1,\ldots,j_r)$.
\end{theorem}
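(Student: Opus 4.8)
The plan is to reduce the identity to the action of the Capelli element on the tensor space and then to identify that action with the stated generating-function-type expression in $Z$ and $Z^*$, using the commutation relations of Theorem~\ref{thm:analogue_of_CCR}. First I would recall that, by \eqref{eq:expression_of_action_of_gl}, the matrix of the $\mathfrak{gl}_n$-action is $\pi(E) = Z\,{}^t\!Z^*$, and observe that because $L(w_{ia})$ and $L(w^*_{jb})$ satisfy an analogue of CCR with the permutation $L(s_1)$ inserted, the entries of $Z$ mutually ``commute up to $L(s_1)$'' (and likewise for $Z^*$), while $L(w^*_{jb})L(w_{ia}) = L(w_{ia})L(s_1)L(w^*_{jb}) + \delta_{ij}\delta_{ab}$. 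The point is that these are exactly the relations needed so that the column-determinant manipulations familiar from the classical Capelli identity go through. I would therefore set up a ``quantum'' version of the Cauchy--Binet / column-expansion argument: write $\operatorname{column-det}(E_{II} + \operatorname{diag}(r-1,\dots,0))$ after substituting $\pi(E) = Z\,{}^t\!Z^*$, and expand.

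The key computational step is the following. For a fixed $I \in \binom{[n]}{r}$, one shows
$$
\operatorname{column-det}\bigl((Z\,{}^t\!Z^*)_{II} + \operatorname{diag}(r-1,\dots,0)\bigr)
= \sum_{J \in \bibinom{[n']}{r}} \frac{1}{J!}\,
\operatorname{column-det} Z_{I^{\circ}J^{\circ}}\,\operatorname{column-det} Z^{*}_{IJ},
$$
and then sum over $I$. The shift by $\operatorname{diag}(r-1,r-2,\dots,0)$ is precisely the correction term that absorbs the ``extra'' contributions coming from the $\delta_{ij}\delta_{ab}$ term in $L(w^*_{jb})L(w_{ia}) = L(w_{ia})L(s_1)L(w^*_{jb}) + \delta_{ij}\delta_{ab}$ when one moves all the $Z^*$ entries to the right of all the $Z$ entries inside the column-determinant; this is the standard mechanism in Capelli-type proofs, and here the bookkeeping is controlled by Proposition~\ref{prop:initial_sp_and_final_sp} together with the $L(s_1)$-twisted relations. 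The second equality in the theorem is then purely combinatorial: summing $\frac{1}{J!}$ over $J \in \bibinom{[n']}{r}$ versus averaging $\frac{1}{r!^2}$ over $J \in [n']^r$ (and over orderings of $I$) match because of the symmetry relation $\operatorname{column-det}Z^*_{IJ}$ picks up the appropriate stabilizer factors, using \eqref{eq:symmetric_expression} and the idempotents $s_I$; this amounts to the identity that a function symmetric under $(S_r)_J$ summed over weakly increasing $J$ with weight $1/J!$ equals $1/r!$ times the sum over all $J \in [n']^r$.

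The natural way to make the first equality rigorous is to evaluate both sides on a monomial $w_{k_p a_p}\cdots w_{k_1 a_1} t$ in $\bar{T}(\mathbb{C}^n\otimes\mathbb{C}^{n'})$ (it suffices to work on $T^{(q)}_p$ with $p \geq r$, by Corollary~\ref{cor:relation_between_ZU_and_Q2} the operator lives there) and to use the pairing formula \eqref{eq:pairing} to reduce everything to an identity in $\mathbb{C}S_{p+q}$; alternatively, one can work entirely inside $\mathcal{L}(\mathbb{C}^n\otimes\mathbb{C}^{n'})$ using the presentation of Theorem~\ref{thm:def_of_L(V)_in_terms_of_gen_and_rel}. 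The main obstacle I anticipate is the careful handling of the $L(s_1)$-twists: unlike the classical Weyl-algebra case, the entries of $Z$ do not literally commute, so one must verify that the ``column-determinant'' formalism is compatible with the twisted commutation — i.e.\ that reordering factors inside $\operatorname{column-det}Z_{I^{\circ}J^{\circ}}$ produces exactly the right inserted permutations so that the product $\operatorname{column-det}Z_{I^{\circ}J^{\circ}}\cdot\operatorname{column-det}Z^*_{IJ}$ is independent of the chosen expansion order and matches $\pi(C_r)$. Once the correct placement of the $s_i$'s is pinned down (mirroring the positions of $L(s_1)$ in Theorem~\ref{thm:analogue_of_CCR}), the remainder is a routine, if lengthy, induction on $r$ peeling off one column of the determinant at a time.
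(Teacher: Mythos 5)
Your proposal identifies the right \emph{shape} of the argument --- expand $\pi(E) = Z\,{}^t\!Z^*$ inside the column-determinant, use the twisted commutation relations of Theorem~\ref{thm:analogue_of_CCR} to move the $Z^*$ entries to the right, and argue that the diagonal shift $\operatorname{diag}(r-1,\ldots,0)$ absorbs the resulting $\delta$-terms --- but it leaves a genuine gap: you never supply the mechanism that makes this bookkeeping tractable, and the induction you call ``routine'' is not actually routine when the entries only commute up to $L(s_1)$. The paper does not manipulate $\operatorname{column-det}\bigl((Z\,{}^t\!Z^*)_{II}+\operatorname{diag}(r-1,\ldots,0)\bigr)$ directly. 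It works in $\Lambda(V\oplus V^*)\otimes\mathcal{L}(\mathbb{C}^n\otimes\mathbb{C}^{n'})$, expresses the column-determinants through the couplings \eqref{eq:expression_of_column-det2} and \eqref{eq:expression_of_symm_det}, introduces $\eta_j=\sum_a e_a L(w_{aj})$ and $\eta^*_j=\sum_a e^*_a L(w^*_{aj})$ so that $\Xi(0)=\sum_j\eta_j\eta^*_j$, and then peels $\Xi(r-1)\cdots\Xi(0)$ apart one factor at a time via the single commutation relation $\Xi(u+1)\eta_j=\eta_j\Xi(u)$. That relation is the precise form of your heuristic that the shift absorbs the $\delta$-terms, and the exterior-algebra wrapper is what converts the twisted-commutation bookkeeping you rightly flag as the main obstacle into the one-line relations \eqref{eq:commutation_relations_for_eta} and \eqref{eq:commutation_relations_for_eta2}. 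Without something playing this organizing role, the asserted ``routine, if lengthy, induction'' has not been demonstrated and cannot be taken for granted.

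Two specific claims in your sketch are also incorrect as stated. The assertion that ``reordering factors inside $\operatorname{column-det}Z_{I^\circ J^\circ}$ produces exactly the right inserted permutations'' suggests this column-determinant behaves alternately under column permutations; it does not, because permuting columns forces one to commute $L(w)$'s, which inserts factors of $L(s_1)$ on the right rather than signs. What is actually $J$-symmetric is the combination $\eta_{j_r}\cdots\eta_{j_1}\eta^*_{j_1}\cdots\eta^*_{j_r}$, by \eqref{eq:commutation_relations_for_eta2}, not either column-determinant factor alone. Correspondingly, your argument for the second displayed equality misattributes the stabilizer factors to $\operatorname{column-det}Z^*_{IJ}$: the passage from the sum over $\mybinom{[n]}{r}\times\bibinom{[n']}{r}$ with weight $1/J!$ to the sum over $[n]^r\times[n']^r$ with weight $1/r!^2$ uses the row-alternation of $\operatorname{column-det}Z_{I^\circ J^\circ}$ in $I$ together with the symmetry in $J$ of the \emph{product}, not of $\operatorname{column-det}Z^*_{IJ}$ by itself. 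These are exactly the subtleties the exterior calculus handles automatically, and they must be resolved, not deferred, before the proposal becomes a proof.
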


The proof will be given soon in Section~\ref{subsec:proof_of_Capelli_type_id}.

Recall the relation between the centers of $U(\mathfrak{gl}_n)$ and $\mathcal{Q}_2$ 
in Corollary~\ref{cor:relation_between_ZU_and_Q2}.
We can regard Theorem~\ref{thm:Capelli_identity_in_T} as
an explicit description of this relation in terms of generators.
Indeed, the right hand side of Theorem~\ref{thm:Capelli_identity_in_T}
belongs to $\mathcal{Q}_2$
(recall the relation $\mathcal{L}^{(q)}_r(\mathbb{C}^n \otimes \mathbb{C}^{n'}) 
\subset \mathcal{L}^{(q)}_p(\mathbb{C}^n \otimes \mathbb{C}^{n'})$
in Theorem~\ref{thm:generalization_of_the_Schur_Weyl_duality}).
This is parallel to the fact that
the original Capelli identity describes
the correspondence of central elements of the universal enveloping algebras
and invariant differential operators 
in the dual pair $(GL_n(\mathbb{C}),GL_{n'}(\mathbb{C}))$
(cf. Section~\ref{subsec:higher_Capelli_identity}; see also \cite{Ho}, \cite{HU}, \cite{U1}).

In Section~\ref{subsec:higher_Capelli_identity_on_T},
we will give a more detailed description of this correspondence.
Namely we will describe the image of a basis of the center of $U(\mathfrak{gl}_n)$
as a vector space.

\subsection{}
%
We recall the exterior calculus to deal with noncommutative determinants.
This is a key of the proof of Theorem~\ref{thm:Capelli_identity_in_T}
and serves as a prototype of
the approach to (noncommutative) immanants in 
Sections~\ref{sec:immanants_and_algebras}--\ref{sec:higher_Capelli}.
See \cite{U2}--\cite{U5}, \cite{IU}, \cite{I1}--\cite{I6}, \cite{Ha}, and \cite{Wa}
for the details and further developments of this approach to noncommutative determinants.

It is convenient to
introduce the ``symmetrized determinant'' 
(or the ``double determinant'')
as another noncommutative determinant.
For an $n \times n$ matrix $X$, we put
$$
   \operatorname{symm-det} X
   = \frac{1}{n!} \sum_{\sigma,\sigma' \in S_n} 
   \operatorname{sgn}(\sigma\sigma^{\prime -1})
   X_{\sigma(1)\sigma'(1)} \cdots X_{\sigma(n)\sigma'(n)}.
$$ 
We also consider a variant with $n$ complex parameters $a_1,\ldots,a_n$:
$$
   \operatorname{symm-det}(X \,;\, a_1,\ldots,a_n) 
   = \frac{1}{n!} \sum_{\sigma,\sigma' \in S_n} 
   \operatorname{sgn}(\sigma\sigma^{\prime -1})
   X_{\sigma(1)\sigma'(1)}(a_1) \cdots X_{\sigma(n)\sigma'(n)}(a_n).
$$
Here $X_{ij}(u)$ means $X_{ij}(u) = X_{ij} + \delta_{ij} u$.
Moreover, we put
\begin{align*}
   \operatorname{det}_r(X \,;\, a_1,\ldots,a_r) 
   &= \frac{1}{r!}\sum_{I \in [n]^r} 
   \operatorname{symm-det}(X_{II} \,;\, a_1,\ldots,a_r) \\
   &= \sum_{I \in \mybinom{[n]}{r}} 
   \operatorname{symm-det}(X_{II} \,;\, a_1,\ldots,a_r).
\end{align*}
The second equality holds because $\operatorname{symm-det}(X_{II} \,;\, a_1,\ldots,a_n) = 0$ 
for $I! \ne 1$ and 
$$
   \operatorname{symm-det}(X_{II} \,;\, a_1,\ldots,a_r) 
   = \operatorname{symm-det}(X_{\sigma(I)\sigma(I)} \,;\, a_1,\ldots,a_r)
$$ 
for any $\sigma \in S_r$.
We can also express this in terms of the column-determinant:
\begin{equation}\label{eq:expression_of_det_r_in_terms_of_column-det}
   \operatorname{det}_r(X \,;\, a_1,\ldots,a_r) 
   = \frac{1}{r!}\sum_{I \in [n]^r} 
   \operatorname{column-det}(X_{II} + 1_{II}\operatorname{diag}(a_1,\ldots,a_r)).
\end{equation}
This follows by a simple calculation.

These determinants have their own advantages.
For example, the column-determinant is easily calculated,
and the symmetrized determinant has some good invariances.

\begin{remark}
   In \cite{I2}--\cite{I6},
   the matrix functions
   ``$\operatorname{column-det}$,''
   ``$\operatorname{symm-det}$,'' and 
   ``$\operatorname{det}_r$''
   are denoted by
   ``$\operatorname{det}$,''
   ``$\operatorname{Det}$,'' and
   ``$\operatorname{Det}_r$,''
   respectively.
\end{remark}

These noncommutative determinants can be expressed in terms of the exterior calculus as follows.
Let $V$ be an $n$-dimensional $\mathbb{C}$-vector space and
consider the exterior algebra $\Lambda(V)$ on $V$.
Let $X = (X_{ij})$ be an $n \times n$ matrix whose entries are elements of 
an associative $\mathbb{C}$-algebra $\mathcal{A}$. 
From now on, we calculate in the extended  algebra $\Lambda(V) \otimes \mathcal{A}$
in which the two algebras $\Lambda(V)$ and $\mathcal{A}$ commute with each other.

We fix a basis $e_1,\ldots,e_n$ of $V$ and
put $\xi_j = \sum_{i=1}^n e_i X_{ij} \in \Lambda(V) \otimes \mathcal{A}$.
Then we have
$$
   \xi_{j_1} \cdots \xi_{j_r} 
   = \sum_{I \in [n]^r}
   e_{i_1} \cdots e_{i_r} 
   \operatorname{column-det} X_{IJ}
   = r! \sum_{I \in \mybinom{[n]}{r}} 
   e_{i_1} \cdots e_{i_r} 
   \operatorname{column-det} X_{IJ}.
$$
Indeed, the first equality is seen by a straightforward calculation.
The second equality is also immediate,
because the column-determinant is alternating 
under the permutations of rows.
This is generalized as follows:
\begin{align*}
   \xi_{j_1}(a_1) \cdots \xi_{j_r}(a_r)
   &= \sum_{I \in [n]^r}
   e_{i_1} \cdots e_{i_r} 
   \operatorname{column-det} (X_{IJ} + 1_{IJ}\operatorname{diag}(a_1,\ldots,a_r))\\
   &= r!
   \sum_{I \in \mybinom{[n]}{r}}
   e_{i_1} \cdots e_{i_r} 
   \operatorname{column-det} (X_{IJ} + 1_{IJ}\operatorname{diag}(a_1,\ldots,a_r)).
\end{align*}
Here 
$1$ means the unit matrix of size $n$,
and we put $\xi_j(u) = \sum_{i=1}^n e_i X_{ij}(u)$.

To deal with the symmetrized determinant, we double the anti-commuting generators.
Namely, in addition to the basis $e_1,\ldots,e_n$ of V, 
we consider the dual basis $e^*_1,\ldots,e^*_n$ of $V^*$
and work in $\Lambda(V \oplus V^*) \otimes \mathcal{A}$. 
We consider the following elements in  $\Lambda(V \oplus V^*) \otimes \mathcal{A}$:
$$
   \tau = \sum_{i=1}^n e_i e^*_i, \quad
   \Xi = \sum_{i,j=1}^n e_i X_{ij} e^*_j, \quad
   \Xi(u) = \sum_{i,j=1}^n e_i X_{ij}(u) e^*_j 
   = \sum_{j=1}^n \xi_j(u) e^*_j
   = \Xi + u \tau.
$$
Then, a straightforward calculation shows us
\begin{align*}
   & \Xi(a_1) \cdots \Xi(a_r) \\
   & \quad
   = \frac{1}{r!^2}
   \sum_{I,J \in [n]^r} e_{i_1} e^*_{j_1} \cdots e_{i_r} e^*_{j_r}
   \sum_{\sigma,\sigma' \in S_r} 
   \operatorname{sgn}(\sigma) \operatorname{sgn}(\sigma')
   X_{i_{\sigma(1)}j_{\sigma'(1)}}(a_1)
   \cdots
   X_{i_{\sigma(r)}j_{\sigma'(r)}}(a_r) \\
   & \quad
   = 
   \sum_{I,J \in \mybinom{[n]}{r}} 
   e_{i_1} e^*_{j_1} \cdots e_{i_r} e^*_{j_r}
   \sum_{\sigma,\sigma' \in S_r} 
   \operatorname{sgn}(\sigma) \operatorname{sgn}(\sigma')
   X_{i_{\sigma(1)}j_{\sigma'(1)}}(a_1)
   \cdots
   X_{i_{\sigma(r)}j_{\sigma'(r)}}(a_r).
\end{align*}

It is useful to consider the symmetric bilinear form 
$\langle \cdot,\cdot \rangle \colon 
\Lambda(V \oplus V^*) \times \Lambda(V \oplus V^*) \to \mathbb{C}$
such that the following forms an orthonormal basis of $\Lambda(V \oplus V^*)$:
$$
   \left\{ e_{i_1} \cdots e_{i_p} e^*_{j_1} \cdots e^*_{j_q} \, \left| \, 
   I \in \mybinom{[n]}{p}, \, J \in \mybinom{[n]}{q}, \, 0 \leq p,q \leq n \right. \right\}.
$$
Moreover, we extend this to the bilinear map 
$$
   \langle \cdot,\cdot \rangle \colon 
   \Lambda(V \oplus V^*) \times \Lambda(V \oplus V^*) \otimes \mathcal{A} \to \mathcal{A}, \qquad
   \langle \varphi, \psi \otimes a \rangle 
   = \langle \varphi, \psi \rangle a.
$$
Here, $\varphi$ and $\psi$ are elements of $\Lambda(V \oplus V^*)$,
and $a$ is an element of $\mathcal{A}$.
Using this bilinear map, 
we can express some noncommutative determinants as follows.
These can be shown by simple calculations.
\begin{align}
\label{eq:expression_of_column-det1}
   \langle e_{i_1} \cdots e_{i_r} , \xi_{j_1} \cdots \xi_{j_r} \rangle
   & = \operatorname{column-det}X_{IJ}, \\
\label{eq:expression_of_column-det2}
   \langle e_{i_1} \cdots e_{i_r} , \xi_{j_1}(a_1) \cdots \xi_{j_r}(a_r) \rangle
   & = \operatorname{column-det}(X_{IJ} + 1_{IJ}\operatorname{diag}(a_1,\ldots,a_r)),\\
\label{eq:expression_of_symm_det}
   \frac{1}{r!}\langle \tau^{(r)}, \Xi(a_1) \cdots \Xi(a_r) \rangle
   & = \operatorname{det}_r(X \,;\, a_1,\ldots,a_r).
\end{align}
Here  $x^{(r)}$ means the divided power $x^{(r)} = \frac{1}{r!}x^r$.
Note that 
$$
   \tau^{(r)} 
   = \frac{1}{r!}\sum_{I \in [n]^r} 
   e_{i_1} \cdots e_{i_r} e^*_{i_r} \cdots e^*_{i_1}
   = \sum_{I \in \mybinom{[n]}{r}} 
   e_{i_1} \cdots e_{i_r} e^*_{i_r} \cdots e^*_{i_1}.
$$

\subsection{}\label{subsec:proof_of_Capelli_type_id}
%
As a preparation for the proof of Theorem~\ref{thm:Capelli_identity_in_T}, 
we note the following relation:

\begin{proposition}\label{prop:relation_between_column_dets}\sl
   For $I$, $J \in[n]^r$ and $\sigma \in S_r$,
   we have
   \begin{align*}
      & \operatorname{column-det}(E_{\sigma(I)\sigma'(J)} 
      + 1_{\sigma(I)\sigma'(J)} \operatorname{diag}(r-1,r-2,\ldots,0)) \\
	  & \qquad
	  = \operatorname{sgn}(\sigma)\operatorname{sgn}(\sigma')
	  \operatorname{column-det}(E_{IJ} 
      + 1_{IJ} \operatorname{diag}(r-1,r-2,\ldots,0)).
   \end{align*}
   In particular, this quantity vanishes when $I! \ne 1$ or $J! \ne 1$.
\end{proposition}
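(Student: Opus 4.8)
The plan is to extract both alternation properties from the exterior-calculus dictionary set up above, applied to the matrix $X = E = (E_{ij})$ over the algebra $\mathcal{A} = U(\mathfrak{gl}_n)$. Working in $\Lambda(V) \otimes U(\mathfrak{gl}_n)$ with $\xi_j(u) = \sum_{i} e_i(E_{ij} + \delta_{ij}u)$ and taking the parameters $a_b = r-b$, formula (\ref{eq:expression_of_column-det2}) becomes
\[
   \operatorname{column-det}\bigl(E_{IJ} + 1_{IJ}\operatorname{diag}(r-1,r-2,\ldots,0)\bigr)
   = \langle e_{i_1} \cdots e_{i_r}, \, \xi_{j_1}(r-1)\,\xi_{j_2}(r-2) \cdots \xi_{j_r}(0) \rangle .
\]
Both sides are alternating in $I$ (the left side by the elementary fact that $\operatorname{column-det}$ is alternating under permutations of rows, the right side because $e_1,\ldots,e_n$ anticommute in $\Lambda(V)$), so the identity is valid for every $I \in [n]^r$ and the substitution $I \mapsto \sigma(I)$ immediately produces the factor $\operatorname{sgn}(\sigma)$. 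Hence the row part of the proposition is automatic.

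The real content is the column part, and for this I would first establish the exchange relation
\[
   \xi_j(u)\,\xi_k(u-1) = -\,\xi_k(u)\,\xi_j(u-1) \qquad \text{in } \Lambda(V) \otimes U(\mathfrak{gl}_n)
\]
for all $j$, $k$ and all scalars $u$. Granting this, let $\sigma' = (b\ b{+}1)$ be an adjacent transposition; the factors occupying positions $b$ and $b+1$ of $\xi_{j_1}(r-1) \cdots \xi_{j_r}(0)$ carry the parameters $r-b$ and $r-b-1 = (r-b)-1$, so the exchange relation applied to $\xi_{j_b}(r-b)\,\xi_{j_{b+1}}(r-b-1)$ gives
\[
   \xi_{j_1}(r-1) \cdots \xi_{j_{b+1}}(r-b)\,\xi_{j_b}(r-b-1) \cdots \xi_{j_r}(0)
   = -\,\xi_{j_1}(r-1) \cdots \xi_{j_r}(0) ;
\]
pairing with $e_{i_1}\cdots e_{i_r}$ and reading off (\ref{eq:expression_of_column-det2}) once more yields the case in which the columns are permuted by an adjacent transposition. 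Since the adjacent transpositions generate $S_r$, a general column permutation $J \mapsto \sigma'(J)$ contributes $\operatorname{sgn}(\sigma')$, and combining with the row part gives the displayed formula. The ``in particular'' clause then follows at once: if $I! \ne 1$ some transposition $\tau$ fixes $I$, so the quantity equals its own negative and hence vanishes, and likewise when $J! \ne 1$.

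The main obstacle --- and essentially the only step that is not formal bookkeeping --- is the exchange relation. I expect to verify it by expanding $\xi_j(u)\xi_k(u-1) + \xi_k(u)\xi_j(u-1)$ and collecting the coefficient of each quadratic monomial $e_i e_{i'}$ with $i < i'$: using only the bracket rule $[E_{ab},E_{cd}] = \delta_{bc}E_{ad} - \delta_{da}E_{cb}$, all the surviving terms should cancel, the cancellation hinging on the two shift parameters differing by exactly $1$. (This is precisely why the Capelli shift $\operatorname{diag}(r-1,r-2,\ldots,0)$ must consist of consecutive integers.) This is a short but slightly delicate computation; once it is in place, the remainder of the argument is just the anticommutativity of the exterior generators together with $S_r$ being generated by adjacent transpositions.
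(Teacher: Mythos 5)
Your proposal is correct and follows essentially the same route as the paper: establish the exchange relation $\xi_{j}(u+1)\,\xi_{k}(u) = -\,\xi_{k}(u+1)\,\xi_{j}(u)$ in $\Lambda(V)\otimes U(\mathfrak{gl}_n)$ (the paper's relation~(\ref{eq:comm_rel_for_xi})), propagate it through the product $\xi_{j_1}(r-1)\cdots\xi_{j_r}(0)$ via adjacent transpositions to get column alternation, read off the determinants via (\ref{eq:expression_of_column-det2}), and note that row alternation is elementary. The paper also dispatches the exchange relation with ``a direct calculation,'' so even that step matches.
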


\begin{proof}
For $\xi_j(u) = \sum_{i=1}^n e_i E_{ij}(u) \in \Lambda(V) \otimes U(\mathfrak{gl}_n)$,
we have the commutation relation
\begin{equation}\label{eq:comm_rel_for_xi}
   \xi_{j_1}(u+1) \xi_{j_2}(u) = - \xi_{j_2}(u+1) \xi_{j_1}(u)
\end{equation}
by a direct calculation.
Thus we have
$$
   \xi_{j_{\sigma(1)}}(r-1) \xi_{j_{\sigma(2)}}(r-2) \cdots \xi_{j_{\sigma(r)}}(0)
   = \operatorname{sgn}(\sigma)
   \xi_{j_1}(r-1) \xi_{j_2}(r-2) \cdots \xi_{j_r}(0).
$$
Combining this with (\ref{eq:expression_of_column-det2}), 
we see the alternating property under permutations of columns.
In addition, the alternating property under permutations of rows holds in general.
\end{proof}

The following is immediate from this and (\ref{eq:expression_of_det_r_in_terms_of_column-det}):

\begin{corollary}\label{cor:relation_between_column_det_and_symm_det}\sl
   We have
   $$
      C_r = \operatorname{det}_r(E \,;\, r-1,r-2,\ldots,0)
      = \frac{1}{r!}\langle \tau^{(r)}, \Xi(r-1) \Xi(r-2) \cdots \Xi(0) \rangle.
   $$
   Here we define $\Xi(u) \in \Lambda(V \oplus V^*) \otimes U(\mathfrak{gl}_n)$ by
   $\Xi(u) = \sum_{i,j=1}^n e_i e^*_j E_{ij}(u) = \sum_{j=1}^n \xi_j(u) e^*_j$.
\end{corollary}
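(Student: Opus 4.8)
The plan is to read off both equalities from facts already in hand: Proposition~\ref{prop:relation_between_column_dets} yields the first equality, and identity (\ref{eq:expression_of_symm_det}) yields the second, so nothing new needs to be set up.

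\emph{First equality $C_r = \operatorname{det}_r(E\,;\,r-1,r-2,\ldots,0)$.} I would begin from (\ref{eq:expression_of_det_r_in_terms_of_column-det}) applied to the matrix $X = E$ over $\mathcal{A} = U(\mathfrak{gl}_n)$ with parameters $(a_1,\ldots,a_r) = (r-1,r-2,\ldots,0)$, which expresses $\operatorname{det}_r(E\,;\,r-1,\ldots,0)$ as $\frac{1}{r!}\sum_{I \in [n]^r}\operatorname{column-det}(E_{II} + 1_{II}\operatorname{diag}(r-1,\ldots,0))$. Now Proposition~\ref{prop:relation_between_column_dets} does all the work: each summand indexed by an $I$ with a repeated entry (i.e. $I! \ne 1$) vanishes, whereas for $I$ with distinct entries the matrix $1_{II}$ is the $r \times r$ identity and, writing $I = \sigma(I^{\uparrow})$ with $I^{\uparrow} \in \mybinom{[n]}{r}$ the increasing rearrangement, the proposition (applied with both index sequences equal to $I^{\uparrow}$ and with $\sigma' = \sigma$) gives $\operatorname{column-det}(E_{II} + 1_{II}\operatorname{diag}(r-1,\ldots,0)) = \operatorname{sgn}(\sigma)^2\operatorname{column-det}(E_{I^{\uparrow}I^{\uparrow}} + \operatorname{diag}(r-1,\ldots,0))$. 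Since $\operatorname{sgn}(\sigma)^2 = 1$ and each $S_r$-orbit of distinct-index sequences in $[n]^r$ has exactly $r!$ members, the factor $\frac{1}{r!}$ collapses the sum to $\sum_{I \in \mybinom{[n]}{r}}\operatorname{column-det}(E_{II} + \operatorname{diag}(r-1,\ldots,0))$, which is precisely the definition (\ref{eq:definition_of_Capelli_el}) of $C_r$.

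\emph{Second equality.} Here I would simply specialize (\ref{eq:expression_of_symm_det}) to $\mathcal{A} = U(\mathfrak{gl}_n)$, $X = E$, and $(a_1,\ldots,a_r) = (r-1,r-2,\ldots,0)$. With $E_{ij}(u) = E_{ij} + \delta_{ij}u$ the element $\Xi(u) = \sum_{i,j=1}^n e_i E_{ij}(u) e^*_j = \Xi + u\tau$ coincides with the $\Xi(u)$ named in the statement (and with $\sum_{j=1}^n \xi_j(u) e^*_j$), so (\ref{eq:expression_of_symm_det}) reads $\frac{1}{r!}\langle\tau^{(r)},\Xi(r-1)\Xi(r-2)\cdots\Xi(0)\rangle = \operatorname{det}_r(E\,;\,r-1,\ldots,0)$, which the first step has already identified with $C_r$.

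There is essentially no real obstacle: both halves are bookkeeping on top of earlier results. The only points that need a moment of care are that the reduction of $\sum_{I \in [n]^r}$ to $\sum_{I \in \mybinom{[n]}{r}}$ uses exactly the $\operatorname{sgn}(\sigma)\operatorname{sgn}(\sigma')$-alternation of Proposition~\ref{prop:relation_between_column_dets} with $\sigma' = \sigma$ together with its vanishing clause for $I! \ne 1$, and that $1_{II}$ is genuinely the $r \times r$ identity matrix when $I$ has distinct entries, so that the shift $1_{II}\operatorname{diag}(r-1,\ldots,0)$ is the $\operatorname{diag}(r-1,\ldots,0)$ of (\ref{eq:definition_of_Capelli_el}).
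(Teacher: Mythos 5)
Your proposal is correct and matches the paper's own route: the paper declares the corollary ``immediate from'' Proposition~\ref{prop:relation_between_column_dets} together with (\ref{eq:expression_of_det_r_in_terms_of_column-det}), and the bracket expression is exactly the specialization of (\ref{eq:expression_of_symm_det}) to $X=E$, $(a_1,\ldots,a_r)=(r-1,\ldots,0)$, which is what you do. Your spelled-out bookkeeping (vanishing for $I!\ne 1$, $\operatorname{sgn}(\sigma)^2=1$ on each $S_r$-orbit, and $1_{II}$ being the identity for distinct indices) is just the detail the paper leaves implicit.
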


Noting this, let us prove Theorem~\ref{thm:Capelli_identity_in_T}. 
This proof is parallel to that of the original Capelli identity given in \cite{I3}
(see also \cite{U5}).

\begin{proof}[Proof of Theorem~{\sl \ref{thm:Capelli_identity_in_T}}]
We work in $\Lambda(V \oplus V^*) \otimes \mathcal{L}(\mathbb{C}^n \otimes \mathbb{C}^{n'})$.
We put
$$
   \Xi = \Xi_{\nu(E)} = \sum_{i,j=1}^n e_i e^*_j \nu(E_{ij}),\qquad
   \Xi(u) = \Xi_{\nu(E)}(u) = \sum_{i,j=1}^n e_i e^*_j \nu(E_{ij}(u)),
$$
so that
\begin{equation}\label{eq:relation_between_Xi_and_eta}
   \Xi(0) = \Xi = \sum_{j=1}^{n'} \eta_j \eta^*_j.
\end{equation}
Here we define $\eta_j$ and $\eta^*_j$ by
$\eta_j = \sum_{a=1}^n e_a L(w_{aj})$ and 
$\eta^*_j = \sum_{a=1}^n e^*_a L(w^*_{aj})$.
The following commutation relations are straightforward from Theorem~\ref{thm:analogue_of_CCR}:
\begin{equation}\label{eq:commutation_relations_for_eta}
   \eta_i \eta_j = - \eta_j \eta_i s_1, \qquad
   \eta^*_i \eta^*_j = - s_1 \eta^*_j \eta^*_i, \qquad
   \eta^*_i \eta_j = - \eta_j s_1 \eta^*_i - \delta_{ij}\tau.
\end{equation}
In particular, we have
\begin{equation}\label{eq:commutation_relations_for_eta2}
   \eta_{j_r} \cdots \eta_{j_1} \sigma
   = \eta_{j_{\sigma(r)}} \cdots \eta_{j_{\sigma(1)}}, \qquad
   \sigma^{-1} \eta^*_{j_1} \cdots \eta^*_{j_r}
   = \eta^*_{j_{\sigma(1)}} \cdots \eta^*_{j_{\sigma(r)}}.
\end{equation}
Moreover, combining (\ref{eq:relation_between_Xi_and_eta}) 
and (\ref{eq:commutation_relations_for_eta}), 
we have 
\begin{equation}\label{eq:comm_relation_between_Xi_and_eta}
   \Xi(u+1) \eta_j = \eta_j \Xi(u).
\end{equation}
Using the relations (\ref{eq:relation_between_Xi_and_eta}) 
and (\ref{eq:comm_relation_between_Xi_and_eta}) repeatedly, we have
\begin{align*}
   & \Xi(r-1) \Xi(r-2) \cdots \Xi(2) \Xi(1) \Xi(0) \\
   & \qquad
   = \sum_{1 \leq j_r \leq n'} 
   \Xi(r-1) \Xi(r-2) \cdots \Xi(2) \Xi(1) \eta_{j_r} \eta^*_{j_r} \\
   & \qquad
   = \sum_{1 \leq j_r \leq n'} 
   \eta_{j_r} \Xi(r-2) \Xi(r-3) \cdots \Xi(1)  \Xi(0) \eta^*_{j_r} \\
   & \qquad
   = \sum_{1 \leq j_{r-1},j_r \leq n'} 
   \eta_{j_r} 
   \Xi(r-2) \Xi(r-3) \cdots \Xi(1) 
   \eta_{j_{r-1}}\eta^*_{j_{r-1}} \eta^*_{j_r} \\
   & \qquad
   = \sum_{1 \leq j_{r-1},j_r \leq n'} 
   \eta_{j_r} \eta_{j_{r-1}} \Xi(r-3) \Xi(r-4) \cdots \Xi(0) 
   \eta^*_{j_{r-1}} \eta^*_{j_r} \\
   & \qquad\qquad \vdots \\
   & \qquad 
   = \sum_{1 \leq j_1,\ldots,j_r \leq n'} 
   \eta_{j_r} \eta_{j_{r-1}} \cdots \eta_{j_1}
   \eta^*_{j_1} \cdots \eta^*_{j_{r-1}} \eta^*_{j_r} \\
   & \qquad
   = \sum_{J \in \bibinom{[n']}{r}} \frac{r!}{J!}
   \eta_{j_r}  \eta_{j_{r-1}} \cdots \eta_{j_1}
   \eta^*_{j_1} \cdots \eta^*_{j_{r-1}} \eta^*_{j_r}.
\end{align*}
The last equality is seen from (\ref{eq:commutation_relations_for_eta2}).
Taking the coupling with $\tau^{(r)}$ and 
and using (\ref{eq:expression_of_column-det1}) and 
Corollary~\ref{cor:relation_between_column_det_and_symm_det},
we have the assertion.
\end{proof}

\subsection{}
%
Using Theorem~\ref{thm:Capelli_identity_in_T} for the case $r = n$, 
we can determine the $SL_n(\mathbb{C})$-invariants in 
$T(\mathbb{C}^n \otimes \mathbb{C}^{n'})$
and $\bar{T}(\mathbb{C}^n \otimes \mathbb{C}^{n'})$
(namely, we have the first fundamental theorem 
of invariant theory for $SL_n(\mathbb{C})$ in the tensor algebra).

\begin{theorem}\label{thm:FFT_for_SL_in_T}\sl
   We put
   $$
      \mathcal{I} 
      = \left\{ \operatorname{column-det} Y_{I_n^{\circ} J^{\circ}} 
      \,\left|\, J \in \bibinom{[n']}{n} \right.\right\}.
   $$
   Here $Y$ denotes the matrix $Y = (w_{ij})_{1 \leq i \leq n, \, 1 \leq j \leq n'}$
   in $\operatorname{Mat}_{n,n'}(\bar{T}(\mathbb{C}^n \otimes \mathbb{C}^{n'}))$,
   and $I_n$ means the sequence $I_n = (1,2,\ldots,n)$. 
   Then we have the following assertions:
   \begin{enumerate}
      \item[(i)]
      The algebra $\bar{T}(\mathbb{C}^n \otimes \mathbb{C}^{n'})^{SL_n(\mathbb{C})}$
      is generated by $\mathcal{I}$ and $S_{\infty}$.
      \item[(ii)]
      The algebra $T(\mathbb{C}^n \otimes \mathbb{C}^{n'})^{SL_n(\mathbb{C})}$ 
      is spanned by the elements in the form $A_1 \cdots A_k \sigma$.
      Here $A_1,\ldots,A_k$ are elements of $\mathcal{I}$, 
      and $\sigma$ is an element of $S_{rn}$.
   \end{enumerate}
\end{theorem}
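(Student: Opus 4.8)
The plan is to reduce the computation of $SL_n(\mathbb{C})$-invariants to the Capelli-type identity of Theorem~\ref{thm:Capelli_identity_in_T} for $r=n$, exactly as the classical description of $SL_n(\mathbb{C})$-invariants in $\mathcal{P}(\mathbb{C}^n\otimes\mathbb{C}^{n'})$ follows from the original Capelli identity (Theorem~2.6.A in \cite{We}). The starting observation is that $SL_n(\mathbb{C})$-invariance is governed by the action of $GL_n(\mathbb{C})$ together with the top Capelli element: an element $\varphi$ of $\bar{T}(\mathbb{C}^n\otimes\mathbb{C}^{n'})$ is $SL_n(\mathbb{C})$-invariant iff it is killed by $\mathfrak{sl}_n$, i.e. by all $\pi(E_{ij})$ with $i\ne j$ and all $\pi(E_{ii}-E_{jj})$. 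First I would package this: let $\mathcal{J}$ be the left ideal of $\mathcal{L}(\mathbb{C}^n\otimes\mathbb{C}^{n'})$ generated by $\pi(E_{ij})$ ($i\ne j$) and by $\pi(E_{ii})-\pi(E_{jj})$; then $\bar{T}(\mathbb{C}^n\otimes\mathbb{C}^{n'})^{SL_n}$ is the set of $\varphi$ annihilated by $\mathcal{J}$. Because $GL_n(\mathbb{C})$ acts semisimply on each $\bar{T}_p$, every $SL_n$-invariant of degree $p$ lies in a $GL_n$-isotypic component of one-dimensional $SL_n$-fixed space, hence the invariants of degree $p$ are concentrated in degrees $p=rn$ and the whole invariant algebra is generated, as an algebra over $\mathbb{C}S_\infty$, by the lowest-degree invariants living in degree $n$.

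Next I would identify the degree-$n$ invariants. Applying Theorem~\ref{thm:Capelli_identity_in_T} with $r=n$ gives
$$
   \pi(C_n)=\frac{1}{n!^2}\sum_{I\in[n]^n}\sum_{J\in[n']^n}
   \operatorname{column-det}Z_{I^\circ J^\circ}\,\operatorname{column-det}Z^*_{IJ},
$$
and since $\operatorname{column-det}Z_{I^\circ J^\circ}$ vanishes unless $I$ is a permutation of $(1,\dots,n)$ (the only $I\in[n]^n$ with $I!=1$), the $Z$-part of each surviving summand is, up to sign, $\operatorname{column-det}Z_{I_n^\circ J^\circ}=L(\operatorname{column-det}Y_{I_n^\circ J^\circ})$ --- precisely a left-multiplication by an element of $\mathcal{I}$. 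Dually, $\operatorname{column-det}Z^*_{IJ}$ is a composite of derivation operators, and by the pairing relation \eqref{eq:pairing} (together with Proposition~\ref{prop:initial_sp_and_final_sp}) it annihilates $\bar{T}_p(\mathbb{C}^n\otimes\mathbb{C}^{n'})$ for $p<n$ and sends $\bar{T}_n$ into $\mathbb{C}S_\infty$. Feeding a degree-$n$ vector $\psi\in\bar{T}_n$ into $\pi(C_n)\psi$ then expresses $\pi(C_n)\psi$ as a sum of terms $A\cdot t$ with $A\in\mathcal{I}$ and $t\in\mathbb{C}S_\infty$; conversely, the first fundamental theorem for $GL_n$ acting on $\mathbb{C}^n$ (the classical FFT: $GL_n$-covariants of $n$ vectors are generated by determinants) shows every $SL_n$-fixed vector of degree $n$ is a $\mathbb{C}S_\infty$-combination of the $\operatorname{column-det}Y_{I_n^\circ J^\circ}$. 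This proves part~(i) in degree $n$, and the general case of~(i) follows by multiplying: a product of elements of $\mathcal{I}$ is $SL_n$-invariant (each factor transforms by $\det$, which is trivial on $SL_n$), these products together with $S_\infty$ span an $SL_n$-invariant subalgebra, and the degree count above shows there is nothing else.

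For part~(ii), I would intersect the description in~(i) with the tensor algebra $T(\mathbb{C}^n\otimes\mathbb{C}^{n'})=T^{(0)}(\mathbb{C}^n\otimes\mathbb{C}^{n'})$: an element $A_1\cdots A_k\sigma$ with $A_i\in\mathcal{I}$ lies in $T(\mathbb{C}^n\otimes\mathbb{C}^{n'})$ iff $\sigma$ can be chosen so that $A_1\cdots A_k\sigma\in T^{(0)}_{kn}$, and since each $A_i=\operatorname{column-det}Y_{I_n^\circ J^\circ}$ is antisymmetric in its $n$ arguments it already lies in $T^{(0)}_n\cdot s_{I_n}$ with the relevant idempotent trivial, so the product sits in degree $kn=rn$ after right-multiplying by a suitable $\sigma\in S_{rn}$; spanning then follows from~(i) by restricting scalars from $\mathbb{C}S_\infty$ to $\mathbb{C}S_{rn}$. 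The main obstacle I expect is the bookkeeping in the degree-$n$ step: one must check carefully that the derivation factor $\operatorname{column-det}Z^*_{IJ}$ acting on $\bar T_n$ really does land in $\mathbb{C}S_\infty$ with the coefficients matching $\mathcal{I}$, and that no invariants are lost in degrees not divisible by $n$ --- this last point rests on the semisimplicity of the $GL_n$-action and on the fact that the only $SL_n$-fixed vector in an irreducible $GL_n$-module is nonzero exactly when the highest weight is a multiple of $(1,\dots,1)$, which forces the degree to be a multiple of $n$.
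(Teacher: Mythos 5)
Your strategy of deducing the result from Theorem~\ref{thm:Capelli_identity_in_T} with $r=n$, and of identifying the surviving $\operatorname{column-det}Z_{I^{\circ}J^{\circ}}$ factors with left multiplications by elements of $\mathcal{I}$, is in the same spirit as the paper. But the argument has a genuine gap at exactly the place you flagged as "the main obstacle." After treating degree $n$ you assert that "the general case of~(i) follows by multiplying" and that "the degree count above shows there is nothing else." The degree count (invariants concentrated in degrees $rn$) only constrains \emph{which} degrees can carry invariants; it gives no reason why an $SL_n(\mathbb{C})$-invariant of degree $2n$, say, should lie in the span of products $A_1 A_2 \sigma$ with $A_i\in\mathcal{I}$. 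The descent from degree $rn$ to degree $(r-1)n$ is precisely what has to be proved, and your proposal provides no mechanism for it; semisimplicity of the $GL_n(\mathbb{C})$-action and one-dimensionality of $SL_n(\mathbb{C})$-fixed lines inside irreducible modules locate the invariants but do not produce the required factorizations.

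The paper supplies that mechanism directly and also makes the appeal to the classical FFT in degree $n$ unnecessary. The key observation, absent from your argument, is that $\pi(C_n)$ acts by an explicit \emph{nonzero} scalar on every positive-degree $SL_n(\mathbb{C})$-invariant. If $\varphi\in\bar{T}_k(\mathbb{C}^n\otimes\mathbb{C}^{n'})^{SL_n(\mathbb{C})}$ with $k\geq 1$, then $\pi(E_{ij})\varphi=\delta_{ij}\tfrac{k}{n}\varphi$ (off-diagonal $\pi(E_{ij})$ annihilate $\varphi$, and the diagonal $\pi(E_{ii})$ are all equal and sum to the degree operator $A$), so unwinding the column-determinant gives
\begin{equation*}
   \pi(C_n)\varphi=\Bigl(\tfrac{k}{n}+n-1\Bigr)\Bigl(\tfrac{k}{n}+n-2\Bigr)\cdots\Bigl(\tfrac{k}{n}+0\Bigr)\varphi,
\end{equation*}
and this scalar is nonzero. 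Comparing with the Capelli expansion $\pi(C_n)\varphi=\sum_{J}\tfrac{1}{J!}\operatorname{column-det}Z_{I_n^{\circ}J^{\circ}}\cdot\bigl(\operatorname{column-det}Z^*_{I_nJ}\,\varphi\bigr)$, and noting that each $\operatorname{column-det}Z^*_{I_nJ}\varphi$ is again an $SL_n(\mathbb{C})$-invariant of degree $k-n$ (zero when $k<n$), one may divide by the nonzero eigenvalue to express $\varphi$ itself as $\sum_i A_i\varphi_i$ with $A_i\in\mathcal{I}$ and $\varphi_i\in\bar{T}_{k-n}(\mathbb{C}^n\otimes\mathbb{C}^{n'})^{SL_n(\mathbb{C})}$. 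Induction on $k$ then both rules out the degrees not divisible by $n$ and performs the reduction that your proposal takes for granted. If you incorporate this eigenvalue-plus-division step, the rest of your outline (including the deduction of part~(ii) by intersecting with $T^{(0)}$) goes through.
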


\begin{proof}
For $\varphi \in \bar{T}_k(\mathbb{C}^n \otimes \mathbb{C}^{n'})^{SL_n(\mathbb{C})}$,
we have $\nu(E_{ij}) \varphi = \delta_{ij} \frac{k}{n} \varphi$.
This is immediate from the invariance of $\varphi$ under the action of $SL_n(\mathbb{C})$.
Noting this and the definition of the column-determinant, we have
$$
   \nu(C_n) \varphi
   = 
   \left( \frac{k}{n} + n-1 \right) 
   \left( \frac{k}{n} + n-2 \right) \cdots 
   \left( \frac{k}{n} + 0 \right) \varphi.
$$
On the other hand, from Theorem~\ref{thm:Capelli_identity_in_T}, we see
$$
   \nu(C_n) \varphi = \sum_{J \in \bibinom{[n']}{n}} \frac{1}{J!} 
   \operatorname{column-det} X_{I_n^{\circ} J^{\circ}} 
   \operatorname{column-det} X^*_{I_nJ} \varphi.
$$
The quantity $\operatorname{column-det} X^*_{I_nJ} \varphi$ 
is $SL_n(\mathbb{C})$-invariant,
because the operator $\operatorname{column-det} X^*_{I_nJ}$ 
commutes with the action of $SL_n(\mathbb{C})$.
Thus, we can express $\varphi$ as $\varphi = \sum_{i=1}^l A_i \varphi_i$.
Here $A_i$ is an element of $\mathcal{I}$,
and $\varphi_i$ is an element of $\bar{T}_{k-n}(\mathbb{C}^n \otimes \mathbb{C}^{n'})^{SL_n(\mathbb{C})}$.
Repeating this argument, we see the assertion by induction on $k$.
\end{proof}

%
\section{Other natural extensions of the tensor algebra}\label{sec:def_of_variants}
%
%
Suggested by the operator algebra $\mathcal{L}(V)$ introduced in 
Section~\ref{sec:multiplications_and_derivations},
we can consider similar algebras associated to two vector spaces $V$ and $W$. 
We also consider the opposite algebra of $\bar{T}(V)$.
These algebras are useful to treat the immanants in the next section.

\subsection{}
%
We consider the associative algebra $\mathcal{C}'_{m,n}$
defined by the following generators and relations:
\begin{align*}
   \text{generators: } &
   e_1,\ldots,e_m,f_1,\ldots,f_n,s_1,s_2,\ldots, \\
   \text{relations: } 
   & e_b e_a = e_a e_b s_1, \quad
   f_b f_a = s_1 f_a f_b, \quad
   f_b e_a = e_a s_1 f_b + c_{ab}, \\
   & s_i e_a = e_a s_{i+1}, \quad
   f_a s_i = s_{i+1} f_a, \\
   & s_i^2 = 1, \quad
   s_i s_{i+1} s_i = s_{i+1} s_i s_{i+1}, \quad
   s_i s_j = s_j s_i \text{ when $|i-j|>1$}.
\end{align*}
Here we fix arbitrary complex numbers $c_{ab}$ 
for $1 \leq a \leq m$ and $1 \leq b \leq n$.
Note that this $\mathcal{C}'_{m,n}$ is equal to 
$\mathcal{B}_n \simeq \mathcal{L}(V)$
defined in (\ref{eq:gen_and_rel_of_L(V)}), 
when $m=n$ and $c_{ab} = \delta_{ab}$.
It is also fundamental to consider the case  $c_{ab} = 0$ for all $a$ and $b$,
and we denote $\mathcal{C}'_{m,n}$ by $\mathcal{C}_{m,n}$ in this case.
It appears natural to regard $\mathcal{C}'_{m,n}$
as a ``quantization'' of $\mathcal{C}_{m,n}$.

Let us regard $e_1,\ldots,e_m$ and $f_1,\ldots,f_n$ 
as bases of two vector spaces $V$ and $W$.
Moreover we consider a bilinear map $\langle \cdot,\cdot \rangle \colon 
W \times V \to \mathbb{C}$
determined by $\langle f_b,e_a \rangle = c_{ab}$.
Then, as seen soon, 
the algebra $\mathcal{C}'_{m,n}$ can be identified with the vector space
$$
   \bar{T}'(V,W) = 
   \bigoplus_{p,q \geq 0} \bar{T}'_{p,q}(V,W).
$$
Here we put 
$$
   \bar{T}'_{p,q}(V,W) = 
   V^{\otimes p} \otimes_{\mathbb{C}S_p} \mathbb{C}S_{\infty} 
   \otimes_{\mathbb{C}S_q} W^{\otimes q}.
$$

\begin{proposition} \sl
   The linear extension of the mapping
   $$
      \bar{T}'(V,W) \to \mathcal{C}'_{m,n}, \quad
      v_p \cdots v_1 t w_1 \cdots w_q \mapsto v_p \cdots v_1 t w_1 \cdots w_q
   $$
   defines a linear isomorphism from $\bar{T}'(V,W)$ onto $\mathcal{C}'_{m,n}$.
   Here $v_i$, $w_j$, and $t$ are elements of $V$, $W$, and $\mathbb{C}S_{\infty}$, respectively.
\end{proposition}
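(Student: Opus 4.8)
The plan is to mimic exactly the argument already used to identify $\mathcal{B}_n$ with $\mathcal{L}(V)$ (Theorem~\ref{thm:def_of_L(V)_in_terms_of_gen_and_rel} and Corollary~\ref{cor:def_of_L(V)_in_terms_of_tensors}), since $\mathcal{C}'_{m,n}$ is obtained from $\mathcal{B}_n$ merely by allowing two distinct vector spaces and an arbitrary pairing $c_{ab}$ in place of $\delta_{ab}$. First I would check that the map is well defined, i.e.\ that it does not depend on the representative chosen in $V^{\otimes p} \otimes_{\mathbb{C}S_p} \mathbb{C}S_\infty \otimes_{\mathbb{C}S_q} W^{\otimes q}$: this is immediate from the relations $s_i e_a = e_a s_{i+1}$ and $f_a s_i = s_{i+1} f_a$, which are precisely the relations making the tensor product over the group algebras well behaved. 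Then I would observe that the map is surjective: using the first three relations ($e_b e_a = e_a e_b s_1$, $f_b f_a = s_1 f_a f_b$, $f_b e_a = e_a s_1 f_b + c_{ab}$) one can move every $e$-generator to the left of every $f$-generator and every $f$ to the right of every $e$, at the cost of inserting elements of $\mathbb{C}S_\infty$ in the middle and lower-degree correction terms; hence every element of $\mathcal{C}'_{m,n}$ is a linear combination of monomials $e_{i_p}\cdots e_{i_1}\, t\, f_{j_1}\cdots f_{j_q}$, which are exactly the images of the spanning set of $\bar{T}'(V,W)$.

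The real content is injectivity, i.e.\ a lower bound on $\dim \mathcal{C}'_{m,n}$ matching $\dim \bar{T}'(V,W)$. Here I would follow the module-theoretic trick from the proof of Theorem~\ref{thm:def_of_L(V)_in_terms_of_gen_and_rel}: construct an explicit representation of $\mathcal{C}'_{m,n}$ on $\bar{T}(V) = \bigoplus_p V^{\otimes p}\otimes_{\mathbb{C}S_p}\mathbb{C}S_\infty$ in which $e_a$ acts by left multiplication $L(e_a)$, $s_i$ acts by $L(s_i)$, and $f_b$ acts by the derivation operator $L(w^*_b)$, where $w^*_b \in V^*$ is chosen so that $\langle w^*_b, e_a\rangle = c_{ab}$ (this requires $m$-dimensional target for the covectors, so one works inside $\mathcal{L}(V)$ with $\dim V = m$ and uses $n$ of the derivation operators, or more cleanly one takes $W^* \hookrightarrow V^*$ via the form). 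Theorem~\ref{thm:analogue_of_CCR} guarantees that these operators satisfy exactly the defining relations of $\mathcal{C}'_{m,n}$, so this gives a well-defined algebra homomorphism $g\colon \mathcal{C}'_{m,n}\to \operatorname{End}(\bar{T}(V))$. Now apply the pairing relation~\eqref{eq:pairing}: if a nonzero element $\Phi\in\mathcal{C}'_{m,n}$ mapped to $0$ in $\bar{T}'(V,W)$, write it in canonical form $\Phi = \sum_r \sum_{I}\psi_I e^*_{i_1}\cdots e^*_{i_r}$ (with $\psi_I$ in the subalgebra generated by the $e$'s and $s$'s, which by Theorem~\ref{thm:def_by_gen_and_relation} is $\bar{T}(V)$), pick a shortest $I=J$ with $\psi_J\neq 0$, and evaluate $g(\Phi)$ on $e_{j_p}\cdots e_{j_1}$ to extract $J!\,\psi_J$, forcing $\psi_J=0$, a contradiction. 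This shows the canonical monomials are linearly independent in $\mathcal{C}'_{m,n}$, hence the map is injective.

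The only genuine obstacle is a bookkeeping subtlety rather than a conceptual one: when $m \neq n$ or when the $c_{ab}$ are degenerate, one cannot simply reuse the basis/dual-basis pairing of a single space, so the representation-theoretic witness $g$ must be set up carefully — I would take $\dim V = m$, let $e_1,\dots,e_m$ be its basis, and define $g(f_b) = \sum_{a=1}^m c_{ab}\, L(e^*_a)$, which is the derivation by the covector $\langle\,\cdot\,, e_a\rangle \mapsto c_{ab}$; one then checks directly from Theorem~\ref{thm:analogue_of_CCR} that $g(f_b)g(e_a) = g(e_a)L(s_1)g(f_b) + c_{ab}$, and similarly for the $ff$-relation, so all relations of $\mathcal{C}'_{m,n}$ are verified. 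With $g$ in hand the injectivity argument above goes through verbatim, using that the operators $L(e^*_{a_1})\cdots L(e^*_{a_r})$ applied to $e_{a_r}\cdots e_{a_1}$ recover the idempotent $s_I$ up to the factor $I!$. This simultaneously yields the analogue of Corollary~\ref{cor:def_of_L(V)_in_terms_of_tensors}, namely that $\bar{T}'(V,W)$ carries the algebra structure transported from $\mathcal{C}'_{m,n}$, and the proposition follows.
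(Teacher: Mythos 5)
Your overall strategy — reduce injectivity to a faithful operator-theoretic representation of $\mathcal{C}'_{m,n}$ and then use the canonical form together with the pairing $(\ref{eq:pairing})$ — is the right instinct, and for $m=n$ with $(c_{ab})$ invertible your construction does prove the claim. But the representation you propose, $g(f_b) = \sum_{a=1}^m c_{ab}\,L(e^*_a)$ acting on $\bar{T}(V)$ with $\dim V = m$, is not faithful when $(c_{ab})$ is degenerate or when $n > m$, and this kills the injectivity argument precisely in the cases the proposition must cover. Concretely, take $c_{ab} = 0$ for all $a,b$ (this is $\mathcal{C}_{m,n} \simeq \bar{T}(V,W)$, the graded case the paper relies on heavily later). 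Then every $g(f_b) = 0$, so $g(\iota(\Phi))$ sees only the $q=0$ component of $\Phi$, and the evaluation of $g(\Phi)$ on $e_{j_p}\cdots e_{j_1}$ returns $0$ for every $\Phi$ of positive $W$-degree — no $\psi_J$ is extracted, and the contradiction you want never materializes. More generally, whenever the $n$ covectors $\sum_a c_{ab} e^*_a$ are linearly dependent, distinct canonical monomials in the $f$'s collapse under $g$, and linear independence of the $g$-images cannot be used to certify linear independence in $\mathcal{C}'_{m,n}$. The issue is not just bookkeeping: with $V$ kept $m$-dimensional there is simply no room for the $f_b$ to act independently.

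The paper's proof sidesteps this by not trying to make $\bar{T}(V)$ itself a faithful module. Instead it embeds $V \hookrightarrow U$ and $W \hookrightarrow U^*$ into a sufficiently large space $U$ and its full dual, arranged so that the natural $U^* \times U$ pairing restricts to $\langle\cdot,\cdot\rangle$ on $W \times V$; such an embedding always exists (e.g.\ $U = V \oplus W^*$, sending $w \mapsto (\langle w,\cdot\rangle, w) \in V^*\oplus W \subset U^*$), and in particular it works for degenerate $c$. One then gets an algebra homomorphism $\mathcal{C}'_{m,n} \to \mathcal{B}_{\dim U}$ and, crucially, an injective linear map $\bar{T}'(V,W) \hookrightarrow \bar{T}'(U,U^*) \simeq \mathcal{L}(U)$ (Corollary~\ref{cor:def_of_L(V)_in_terms_of_tensors}); the composite $\bar{T}'(V,W) \to \mathcal{C}'_{m,n} \to \mathcal{L}(U)$ agrees with this injection, so the first arrow is injective. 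If you replace your representation on $\bar{T}(V)$ with this enlarged one — or equivalently first pass to $U$, where the pairing is the standard one, and only then invoke the canonical-form/pairing argument as in Theorem~\ref{thm:def_of_L(V)_in_terms_of_gen_and_rel} — your proof goes through. Your treatment of well-definedness and surjectivity is fine and matches the paper.
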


\begin{proof}
By the definition of $\mathcal{C}'_{m,n}$,
this correspondence induces a linear map from $\bar{T}'(V,W)$ onto $\mathcal{C}'_{m,n}$.
Thus, it suffices to show that this is injective.

We consider a sufficiently large vector space $U$ and its linear dual $U^*$.
Then we can identify $W$ and $V$ with 
subspaces $W \subset U^*$ and $V \subset U$
such that the restriction of the natural coupling of $U^*$ and $U$
to $W \times V$
is equal to the bilinear map $\langle \cdot,\cdot \rangle$.
As seen in Corollary~\ref{cor:def_of_L(V)_in_terms_of_tensors},
$\bar{T}'(U,U^*)$ is isomorphic to
the operator algebra $\mathcal{L}(U)$.
Moreover, we have a natural algebra homomorphism from $\mathcal{C}'_{m,n}$ to $\mathcal{B}_{\dim U}$
defined in (\ref{eq:gen_and_rel_of_L(V)}),
Similarly we have an injective linear map from $\bar{T}'(V,W)$ to $\mathcal{L}(U)$.
Thus we see that this correspondence
between $\bar{T}'(V,W)$ and $\mathcal{C}'_{m,n}$ is injective.
\end{proof}

We introduce an algebra structure on the vector space $\bar{T}'(V,W)$
through this identification with $\mathcal{C}'_{m,n}$.

Let us denote $\bar{T}'(V,W)$ and $\bar{T}'_{p,q}(V,W)$ 
by $\bar{T}(V,W)$ and $\bar{T}_{p,q}(V,W)$, respectively,
when $c_{ab}=0$ (namely when the bilinear map $\langle \cdot,\cdot \rangle
\colon W \times V \to \mathbb{C}$ is trivial).
Then $\bar{T}(V,W) \simeq \mathcal{C}_{m,n}$ becomes a graded algebra,
where $\bar{T}_{p,q}(V,W)$ is the homogeneous part of degree $(p,q)$.
Note that $\bar{T}'(V,W)$ has only a structure of filtered algebra in general.

The following is easily seen:

\begin{proposition}\label{prop:ef_is_central}\sl
   Let $v$ and $w$ be elements of $V$ and $W$, respectively.
   Then, in the algebra $\bar{T}(V,W)$, the element $vw$ is central,
   though $wv$ is not central.
\end{proposition}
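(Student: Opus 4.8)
The plan is to verify both claims by a direct computation in $\mathcal{C}_{m,n} \simeq \bar{T}(V,W)$ using only the defining relations. Fix $v \in V$ and $w \in W$; it suffices to check the assertions on the algebra generators $e_a$, $f_b$, and $s_i$, since these generate $\bar{T}(V,W)$.

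First I would show $vw$ is central. For the generators $s_i$ this is immediate: from $s_i e_a = e_a s_{i+1}$ and $f_b s_i = s_{i+1} f_b$ we get $s_i (e_a f_b) = e_a s_{i+1} f_b = e_a f_b s_{i+2}$ on one side, and one must compare this with $(e_a f_b) s_i$. Here I would be careful: the cleanest route is to check $s_i(vw) = (vw)s_i$ directly using $s_i v = v s_{i+1}$ (valid for $v \in V$, from the relation $s_i e_a = e_a s_{i+1}$ extended linearly) together with the analogous $w s_i = s_{i+1} w$; chaining these gives $s_1(vw) = v s_2 w = vw s_3$, which does not obviously equal $(vw)s_1$, so instead I should work with the \emph{homogeneous} structure and observe that left/right multiplication by $v$ shifts the $S_\infty$-index by $\alpha$, matching up correctly. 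Concretely, the element $vw \in \bar{T}_{1,1}(V,W)$ acts, and using the commutation relations $e_b e_a = e_a e_b s_1$ and $f_b f_a = s_1 f_a f_b$ one computes $(vw)(v'w') = v v' s_1 \cdot s_1 w w' \cdot (\text{index shift})$; the two $s_1$'s cancel because $s_1^2 = 1$, and a short bookkeeping check of the $\alpha$-shifts shows this equals $v' v \cdot w' w$ reorganized — i.e. $vw$ commutes with $v'w'$. Combined with centrality with respect to $e_a$ and $f_b$ individually (using $f_b e_a = e_a s_1 f_b$ since $c_{ab}=0$ here, so $e_a f_b$ and $f_b e_a$ differ only by an $s_1$ which gets absorbed), this establishes that $vw$ is central.

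Second, to show $wv$ is \emph{not} central, it suffices to exhibit one element with which it fails to commute. The natural candidate is $wv$ versus $e_a$ or versus $s_1$. Using $f_b e_a = e_a s_1 f_b$ (the $c_{ab}=0$ relation), one has $(wv)e_a$ and $e_a(wv)$ differing by a nontrivial power of $s_1$ that, unlike in the $vw$ case, is \emph{not} cancelled — because $wv$ carries the generators in the order $f$ then $e$, so pushing $e_a$ through picks up an uncompensated $s_1$. I would make this precise by fixing the standard generators $w = f_1$, $v = e_1$ and computing $f_1 e_1 \cdot e_1$ versus $e_1 \cdot f_1 e_1$ in $\mathcal{C}_{m,n}$, using Proposition~\ref{prop:canonical_form_of_elements_of_T}-style canonical forms (via the idempotents $s_I$) to certify that the resulting two elements of $\bar{T}_{2,1}(V,W)$ are genuinely distinct rather than merely expressed differently.

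The main obstacle is purely bookkeeping: tracking the $\alpha$-shifts of the $S_\infty$-components correctly when sliding $V$-generators past $W$-generators and vice versa, and confirming that in the $vw$ case the two occurrences of $s_1$ that appear (one from $e_b e_a = e_a e_b s_1$, one from $f_b f_a = s_1 f_a f_b$) truly cancel against each other after the index shifts are accounted for, whereas in the $wv$ case there is a residual $s_1$. No deep idea is needed — the relations of $\mathcal{C}_{m,n}$ do all the work — but the sign/index conventions from Section~\ref{sec:def_of_our_alg} must be applied with care.
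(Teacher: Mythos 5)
Your overall strategy — check centrality on the generators $e_a$, $f_b$, $s_i$ of $\bar T(V,W)\simeq\mathcal C_{m,n}$ and exhibit a counterexample for $wv$ — is correct; the paper itself gives no proof (it just asserts the proposition), so any clean direct verification fills the gap. However, your execution has a computational error that then sends you on an unnecessary detour. In the step
$s_1(vw)=v\,s_2\,w$
you try to push $s_2$ through $w$ using $f_a s_i=s_{i+1}f_a$, i.e.\ $w s_i=s_{i+1}w$, but you apply it in the wrong direction and write $s_2 w=w s_3$. The relation with $i=1$ gives $w s_1=s_2 w$, hence $s_2 w=w s_1$, so in fact
$s_1(vw)=v s_2 w=v w s_1=(vw)s_1$
and the direct computation succeeds with no trouble. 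The same kind of two-line computation handles $e_a$ and $f_b$: for instance
$(e_c f_d)e_a=e_c(e_a s_1 f_d)=e_a e_c s_1 s_1 f_d=e_a(e_c f_d)$
using $e_c e_a=e_a e_c s_1$ and $s_1^2=1$, and symmetrically for $f_b$. Having misdiagnosed the $s_i$ case, you replace the computation with a vague appeal to ``the homogeneous structure'' and ``a short bookkeeping check of the $\alpha$-shifts''; as written this is not a proof, and it is also unnecessary once the index error is corrected.

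The second half (non-centrality of $wv$) is fine in outline. To make it concrete: with $v=e_1$, $w=f_1$, one computes $f_1e_1\cdot e_1=e_1e_1\,s_2 s_1\,f_1$ but $e_1\cdot f_1 e_1=e_1e_1\,s_1\,f_1$, and since $s_2 s_1\notin S_2\cdot s_1=\{s_1,1\}$ these differ in $V^{\otimes 2}\otimes_{\mathbb C S_2}\mathbb C S_\infty\otimes W$, which is exactly the canonical-form argument you sketch via Proposition~\ref{prop:canonical_form_of_elements_of_T}. So the proposal is salvageable with small repairs, but as stated the centrality argument contains a genuine gap caused by a misapplied relation.
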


\subsection{}
%
Let us denote the opposite algebra of $\bar{T}(V)$ by $\bar{T}^{\circ}(V)$.
Namely, we put
$$
   \bar{T}^{\circ}(V) 
   = \bigoplus_{p \geq 0} \bar{T}^{\circ}_p(V) \qquad
   \text{with} \quad
   \bar{T}^{\circ}_p(V) 
   = \mathbb{C}S_{\infty} \otimes_{\mathbb{C}S_p} V^{\otimes p},
$$
and define the multiplication on $\bar{T}^{\circ}(V)$ by the formula
$$
   \varphi \varphi'
   = (\sigma v_1 \cdots v_p) (\sigma' v'_1 \cdots v'_{p'}) 
   = \sigma \alpha^p(\sigma') v_1 \cdots v_p v'_1 \cdots v'_{p'}
$$
for  $\varphi = \sigma v_1 \cdots v_p \in \bar{T}^{\circ}_p(V)$ and 
$\varphi' = \sigma' v'_1 \cdots v'_{p'} \in \bar{T}^{\circ}_{p'}(V)$.
Then, the following is a subalgebra of $\bar{T}^{\circ}(V)$,
and can be regarded as the opposite algebra of $T^{(q)}(V)$:
$$
   T^{(q)\circ}(V) 
   = \bigoplus_{p \geq 0} T^{(q)\circ}_p(V) \qquad
   \text{with} \quad
   T^{(q)\circ}_p(V) 
   = \mathbb{C}S_{p+q} \otimes_{\mathbb{C}S_p} V^{\otimes p}.
$$

The algebras $\bar{T}(V)$ and $\bar{T}^{\circ}(V)$
are isomorphic to special cases of the algebra $\bar{T}(V,W)$:
$$
   \bar{T}(V) \simeq \bar{T}(V,\{0\}), \qquad
   \bar{T}^{\circ}(W) \simeq \bar{T}(\{0\}, W).
$$

\subsection{}\label{subsec:def_of_couplings}
%
Let us consider some natural bilinear maps.
First, we define the bilinear map
$$
   \langle \cdot,\cdot \rangle \colon
   \bar{T}^{\circ}_p(V^*) \times \bar{T}_p(V) 
   \to \mathbb{C}S_{\infty}
$$
by
$$
   \langle \sigma' v^*_1 \cdots v^*_p, v_p \cdots v_1 \sigma \rangle
   = \sigma' L(v^*_1) \cdots L(v^*_p) v_p \cdots v_1 \sigma.
$$
It is easily seen that this is well defined.
Namely, for $\varphi^* \in \bar{T}^{\circ}_p(V^*)$, $\varphi \in \bar{T}_p(V)$
and $\sigma$, $\sigma' \in S_{\infty}$,
we have
$\langle \sigma' \varphi^*, \varphi \sigma \rangle
= \sigma' \langle \varphi^*, \varphi \rangle \sigma$.

Restricting this to 
$T^{(0) \circ}_p(V^*) \times T^{(0)}_p(V) = T^{\circ}_p(V^*) \times T_p(V)$,
we obtain a map
$$
   \langle \cdot,\cdot \rangle \colon
   T^{\circ}_p(V^*) \times T_p(V) 
   \to \mathbb{C}S_p. 
$$
Here $T^{(0)\circ}_p(V^*)$ is isomorphic to $T_p(V^*)$,
but we denote this by $T^{\circ}_p(V^*)$ 
to emphasize that we regard this as a subspace of $\bar{T}^{\circ}(V^*)$. 
Composing this with the irreducible character $\chi_{\lambda}$ of $S_p$ 
determined by a partition $\lambda$ of $p$, 
we have a bilinear form.
Namely, we define
$$
   \langle \cdot,\cdot \rangle_{\lambda} \colon
   T^{\circ}_p(V^*) \times T_p(V) \to \mathbb{C}
$$
by $\langle x^*,x \rangle_{\lambda} = \chi_{\lambda} (\langle x^*,x \rangle)$.

The map $\langle \cdot,\cdot \rangle$ can be generalized to a bilinear map 
$$
   \diamond \colon
   \bar{T}_{p,q}(U,V^*) \times \bar{T}_{q,r}(V,W) \to \bar{T}_{p,r}(U,W) 
$$
defined by
$$
   (u_p \cdots u_1 \sigma v^*_1 \cdots v^*_q)
   \diamond 
   (v_q \cdots v_1 \sigma' w_1 \cdots w_r)
   = u_p \cdots u_1 \sigma 
   \langle v^*_1 \cdots v^*_q, v_q \cdots v_1 \rangle 
   \sigma' w_1 \cdots w_r.
$$
It is easily seen that this multiplication $\diamond$ is well defined 
and associative.

Suggested by the relation (\ref{eq:expression_of_symm_det}),
we also define $\langle \cdot \rangle \colon \bar{T}_{p,p}(V,V^*) \to \mathbb{C}S_{\infty}$ by
$$
   \langle \Phi \rangle
   = \frac{1}{p!} \sum_{I \in [n]^p} 
   e^*_{i_1} \cdots e^*_{i_p} \diamond \Phi \diamond e_{i_p} \cdots e_{i_1}.
$$
When $\langle \Phi \rangle \in \mathbb{C}S_p$, 
we denote $\chi_{\lambda}(\langle \Phi \rangle)$ simply by $\langle \Phi \rangle_{\lambda}$.

\subsection{}
%
We can use these bilinear maps to provide some central elements in $\mathbb{C}S_p$.
Let us give some preliminaries to the discussion in 
Sections~\ref{sec:immanants_and_algebras} and \ref{sec:quantum_immanants}.

First, the following is easily seen from the definition of $\langle \cdot \rangle$.

\begin{proposition}\label{prop:centrality_of_<Phi>}\sl
   When $\Phi \in \bar{T}_{p,p}(V,V^*)$,
   the element $\langle \Phi \rangle \in \mathbb{C}S_{\infty}$ commutes with any element in $S_p$.
\end{proposition}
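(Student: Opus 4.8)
The plan is to show directly that $\langle\Phi\rangle$ commutes with an arbitrary adjacent transposition $s_k$ with $1\le k\le p-1$, since these generate $S_p$. Write the defining formula
$$
\langle\Phi\rangle = \frac{1}{p!}\sum_{I\in[n]^p}
e^*_{i_1}\cdots e^*_{i_p}\diamond\Phi\diamond e_{i_p}\cdots e_{i_1},
$$
and recall that, inside $\bar T(V)$ and $\bar T^\circ(V^*)$, multiplication by $s_k$ acts on the vectors/covectors by swapping the $k$th and $(k+1)$st slots (relations~(\ref{eq:relation_1_in_T}), used on both sides). The key observation is that the index set $[n]^p$ is stable under the transposition $i_k\leftrightarrow i_{k+1}$, so reindexing the sum by this swap leaves it unchanged.

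First I would move $s_k$ past $\Phi$ from the left: using the relation $\langle\sigma'\varphi^*,\varphi\sigma\rangle=\sigma'\langle\varphi^*,\varphi\rangle\sigma$ for the coupling (stated in Section~\ref{subsec:def_of_couplings}) together with the associativity of $\diamond$, one gets
$$
s_k\langle\Phi\rangle = \frac{1}{p!}\sum_{I\in[n]^p}
(s_k\,e^*_{i_1}\cdots e^*_{i_p})\diamond\Phi\diamond e_{i_p}\cdots e_{i_1}
= \frac{1}{p!}\sum_{I\in[n]^p}
(e^*_{i_1}\cdots e^*_{i_p}\,\text{with }i_k,i_{k+1}\text{ swapped})\diamond\Phi\diamond e_{i_p}\cdots e_{i_1},
$$
where the second step pushes $s_k$ through the $p$ covectors using $e^*_a s_i=s_{i+1}e^*_a$-type relations until it acts as the slot-swap on the first $k,k+1$ covectors (counting from the left, i.e.\ the outermost ones). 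I would then reindex the sum by swapping $i_k\leftrightarrow i_{k+1}$; this turns the covector factor back into $e^*_{i_1}\cdots e^*_{i_p}$ but swaps the corresponding two $e$'s on the right end. The surviving effect of $s_k$ on the right factor $e_{i_p}\cdots e_{i_1}$ is then exactly multiplication by $s_k$ on the right of $\langle\Phi\rangle$. Hence $s_k\langle\Phi\rangle=\langle\Phi\rangle s_k$.

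The main bookkeeping obstacle is getting the index conventions right: the covectors are listed left-to-right as $e^*_{i_1},\dots,e^*_{i_p}$ while the vectors are listed right-to-left as $e_{i_p},\dots,e_{i_1}$, and one must track which adjacent transposition of $S_\infty$ corresponds to swapping slots $k$ and $k+1$ on each side (this uses the endomorphism $\alpha$ and relations~(\ref{eq:relation_1_in_T})). Once the correspondence is pinned down, the argument is just the stability of $\sum_{I\in[n]^p}$ under the coordinate transposition, exactly as in the classical fact that $\sum_I e^*_{i_1}\cdots e^*_{i_p}\otimes e_{i_p}\cdots e_{i_1}$ is $S_p$-invariant. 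Alternatively, and perhaps more cleanly, I would phrase the whole computation via $\langle\Phi\rangle = \tfrac1{p!}\langle \tau^{(p)}\text{-type element}\rangle$ and invoke that $\tau=\sum_i e_ie^*_i$ commutes (up to the bookkeeping permutations) with all the $s_k$'s, but the brute-force reindexing above is the most transparent route.
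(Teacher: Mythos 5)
Your argument is correct, and it is essentially the argument the paper has in mind behind the terse ``easily seen from the definition'': conjugating (or, as you phrase it, multiplying on the left by $s_k$) and reindexing the sum over $I\in[n]^p$ by the coordinate permutation transfers the permutation from the covector side to the vector side, giving $\sigma^{-1}\langle\Phi\rangle\sigma=\langle\Phi\rangle$. The relation you need (``pushing $s_k$ through the covectors until it acts as the slot-swap'') is exactly the identity the paper records as~(\ref{eq:key_relation_to_make_central_elements_in_CS}), namely $\sigma^{-1}(e^*_{i_1}\cdots e^*_{i_p})=e^*_{i_{\sigma(1)}}\cdots e^*_{i_{\sigma(p)}}$ and $(e_{i_p}\cdots e_{i_1})\sigma=e_{i_{\sigma(p)}}\cdots e_{i_{\sigma(1)}}$, so you can cite that directly and avoid the generator-by-generator bookkeeping with $\alpha$ and the $e^*_a s_i = s_{i+1}e^*_a$ relations. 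One small streamlining worth noting: the paper already records, in the proof of Proposition~\ref{prop:relations_for_varphi_and_varphi*}(ii), the averaging formula $\langle\varphi\varphi^*\rangle=\frac{1}{p!}\sum_{\sigma\in S_p}\sigma^{-1}\langle\varphi^*,\varphi\rangle\sigma$; writing a general $\Phi\in\bar T_{p,p}(V,V^*)$ as $\varphi t\varphi^*$ and applying the same manipulation exhibits $\langle\Phi\rangle$ as an $S_p$-average, making the centrality a one-line consequence. That is an equivalent route to your reindexing but packages the bookkeeping once and for all.
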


Let us consider $\varphi^{}_J \in T_p(V)$ and $\varphi^*_J \in T^{\circ}_p(V^*)$ 
indexed by $J \in [n]^p$ satisfying 
\begin{equation}\label{eq:key_relation_to_make_central_elements_in_CS}
   \varphi^{}_J \sigma = \varphi^{}_{\sigma(J)}, \qquad
   \sigma^{-1} \varphi^*_J = \varphi^*_{\sigma(J)}.
\end{equation}
For example, $\varphi^{}_J = v_{j_p} \cdots v_{j_1}$ and $\varphi^*_J = v^*_{j_1} \cdots v^*_{j_p}$
satisfy these relations
for any $v_1,\ldots,v_n \in V$ and $v^*_1,\ldots,v^*_n \in V^*$.
For these, we have the following proposition:

\begin{proposition}\label{prop:relations_for_varphi_and_varphi*}\sl
   Let
   $\varphi^{}_J \in T_p(V)$ and $\varphi^*_J \in T^{\circ}_p(V^*)$ 
   satisfy (\ref{eq:key_relation_to_make_central_elements_in_CS}).
   \begin{enumerate}
   \item[(i)]
   We have
   $\sigma^{-1} \langle \varphi^*_I, \varphi^{}_J \rangle \sigma' 
   = \langle \varphi^*_{\sigma(I)}, \varphi^{}_{\sigma'(J)} \rangle$.
   \item[(ii)]
   We have
   $
      \sum_{J \in [n]^p} \langle \varphi^*_J, \varphi^{}_J \rangle
      = \sum_{J \in [n]^p}\langle \varphi^{}_J \varphi^*_J \rangle.
   $
   In particular, this quantity is central in $\mathbb{C}S_p$.
   \end{enumerate}
\end{proposition}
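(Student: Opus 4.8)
The plan is to obtain (i) directly from the way the coupling interacts with multiplication by $\mathbb{C}S_p$, and then to use (i) to reduce (ii) to a commutation identity in $\mathbb{C}S_p$ which is settled by a character argument. For (i): the defining property of $\langle\cdot,\cdot\rangle$ recorded in Section~\ref{subsec:def_of_couplings} gives $\langle\sigma'\varphi^*,\varphi\sigma\rangle=\sigma'\langle\varphi^*,\varphi\rangle\sigma$, so taking $\varphi^*=\varphi^*_I$, $\varphi=\varphi^{}_J$ and putting $\sigma^{-1}$, $\sigma'$ in the two slots gives $\sigma^{-1}\langle\varphi^*_I,\varphi^{}_J\rangle\sigma'=\langle\sigma^{-1}\varphi^*_I,\varphi^{}_J\sigma'\rangle$, which by (\ref{eq:key_relation_to_make_central_elements_in_CS}) equals $\langle\varphi^*_{\sigma(I)},\varphi^{}_{\sigma'(J)}\rangle$. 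The same computation applies verbatim when the ``bra'' and ``ket'' come from two different families each satisfying (\ref{eq:key_relation_to_make_central_elements_in_CS}); I will use this mixed form below with one family equal to $\epsilon^*_I=e^*_{i_1}\cdots e^*_{i_p}$, $\epsilon_I=e_{i_p}\cdots e_{i_1}$.

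For (ii) I would first expand both sides. The product $\varphi^{}_J\varphi^*_J$ is a combination of normally ordered terms whose symmetric-group factor is trivial and whose $V$- and $V^*$-parts are the corresponding pieces of $\varphi^{}_J$ and $\varphi^*_J$; unwinding the definition of $\langle\cdot\rangle$ and of the $\diamond$-products therefore separates the two ends and yields
\[ \langle\varphi^{}_J\varphi^*_J\rangle=\frac{1}{p!}\sum_{I\in[n]^p}\langle\epsilon^*_I,\varphi^{}_J\rangle\,\langle\varphi^*_J,\epsilon_I\rangle . \]
On the other hand the higher Euler operator $A_p$ of (\ref{eq:higher_Euler_op}) is the identity on $T_p(V)$, i.e.\ $\frac{1}{p!}\sum_I\epsilon_I\langle\epsilon^*_I,\varphi^{}_J\rangle=\varphi^{}_J$; pairing this against $\varphi^*_J$ and using $\langle\varphi^*,\varphi\sigma\rangle=\langle\varphi^*,\varphi\rangle\sigma$ gives $\langle\varphi^*_J,\varphi^{}_J\rangle=\frac{1}{p!}\sum_I\langle\varphi^*_J,\epsilon_I\rangle\langle\epsilon^*_I,\varphi^{}_J\rangle$. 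Summing over $J$, assertion (ii) comes down to the claim that the two elements $\sum_{I,J}\langle\epsilon^*_I,\varphi^{}_J\rangle\langle\varphi^*_J,\epsilon_I\rangle$ and $\sum_{I,J}\langle\varphi^*_J,\epsilon_I\rangle\langle\epsilon^*_I,\varphi^{}_J\rangle$ of $\mathbb{C}S_p$ are equal.

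This interchange of the two factors inside a double sum is the crux, and I would handle it as follows. Each of the two sums is central in $\mathbb{C}S_p$: applying the mixed form of (i) to both factors and reindexing by $I\mapsto\sigma(I)$, $J\mapsto\sigma(J)$ shows it is unchanged under conjugation by every $\sigma\in S_p$. Moreover, for each partition $\lambda$ of $p$ the irreducible character $\chi_\lambda$ is a trace, hence it takes the same value on the two sums, the summands differing only by a cyclic interchange of their two factors; since $\mathbb{C}S_p$ is semisimple, two central elements with equal irreducible characters coincide, so the two sums are equal. This proves $\sum_J\langle\varphi^*_J,\varphi^{}_J\rangle=\sum_J\langle\varphi^{}_J\varphi^*_J\rangle$, and the closing assertion (centrality in $\mathbb{C}S_p$) is then immediate, since each $\langle\varphi^{}_J\varphi^*_J\rangle$ commutes with $S_p$ by Proposition~\ref{prop:centrality_of_<Phi>}. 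The one delicate point is the bookkeeping in the expansion step — tracking how $\diamond$ and $A_p$ act on $\varphi^{}_J\varphi^*_J$ — which becomes routine once the normal form of that product is written out explicitly.
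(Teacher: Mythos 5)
For part (i) your argument coincides with the paper's (which simply says the proof is easy). For part (ii) your route is genuinely different. The paper states and uses a single structural identity,
\[
\langle\varphi\varphi^*\rangle
=\frac{1}{p!}\sum_{\sigma\in S_p}\sigma^{-1}\langle\varphi^*,\varphi\rangle\,\sigma ,
\]
so that by (i) one gets $\langle\varphi^{}_J\varphi^*_J\rangle=\frac{1}{p!}\sum_{\sigma}\langle\varphi^*_{\sigma(J)},\varphi^{}_{\sigma(J)}\rangle$, and summing over $J$ makes the two sides match by a trivial reindexing; centrality then comes for free from Proposition~\ref{prop:centrality_of_<Phi>}. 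You instead expand both $\langle\varphi^{}_J\varphi^*_J\rangle$ and $\langle\varphi^*_J,\varphi^{}_J\rangle$ against the coordinate basis via the Euler operator $A_p$ of (\ref{eq:higher_Euler_op}), reducing the claim to the equality of $\sum_{I,J}a_{IJ}b_{IJ}$ and $\sum_{I,J}b_{IJ}a_{IJ}$ in $\mathbb{C}S_p$ (with $a_{IJ}=\langle\epsilon^*_I,\varphi^{}_J\rangle$, $b_{IJ}=\langle\varphi^*_J,\epsilon_I\rangle$), which you settle by (i) (giving centrality of both double sums) together with the trace property of irreducible characters and the fact that a central element of the semisimple algebra $\mathbb{C}S_p$ is determined by its characters. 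Both routes are correct. The paper's is shorter because the displayed identity for $\langle\varphi\varphi^*\rangle$ does all the work, though it is stated there without justification; your version never invokes that formula, at the cost of an extra semisimplicity/character step, and makes $A_p$ carry the combinatorics. Your final worry about the normal form of $\varphi^{}_J\varphi^*_J$ is indeed unproblematic: since $\varphi^{}_J\in T_p(V)$ and $\varphi^*_J\in T^{\circ}_p(V^*)$ carry no permutation factor, the product in $\bar{T}(V,V^*)$ is the plain concatenation, so $\epsilon^*_I\diamond\varphi^{}_J\varphi^*_J\diamond\epsilon_I=\langle\epsilon^*_I,\varphi^{}_J\rangle\langle\varphi^*_J,\epsilon_I\rangle$ as you assert.
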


\begin{proof}
The proof of (i) is easy.
We can deduce (ii) from
$$
   \langle \varphi^{}_J \varphi^*_J \rangle
   = \frac{1}{p!} \sum_{\sigma \in S_p} 
   \sigma^{-1} \langle \varphi^*_J, \varphi^{}_J \rangle \sigma
   = \frac{1}{p!} \sum_{\sigma \in S_p} 
   \langle \varphi^*_{\sigma(J)}, \varphi^{}_{\sigma(J)} \rangle.
$$
Here we used (i) and the following relation 
for $\varphi \in T_p(V)$ and $\varphi^* \in T^{\circ}_p(V^*)$:
\begin{equation*}
   \langle \varphi \varphi^* \rangle
   = \frac{1}{p!} \sum_{\sigma \in S_p} \sigma^{-1} \langle \varphi^*, \varphi \rangle \sigma.
\qedhere
\end{equation*}
\end{proof}

%
\section{immanants}\label{sec:immanants}
%
%
The algebras $\bar{T}(V)$, $\bar{T}^{\circ}(V)$, and $\bar{T}(V,V^*)$
are useful to treat a matrix function called the ``immanant.''
In this section, we recall this notion and see its basic properties.
We also introduce the notion of ``preimmanants.''

\subsection{}
%
The immanant is a natural generalization of the determinant and the permanent.
Let $\mathcal{A}$ be an associative $\mathbb{C}$-algebra,
and consider an $n \times n$ matrix $X = (X_{ij})_{1 \leq i,j \leq n}$ 
in $\operatorname{Mat}_n(\mathcal{A})$.
When $\mathcal{A}$ is commutative,
we define the immanant by
\begin{equation}\label{eq:definition_of_immanant}
   \operatorname{imm}_{\lambda} X 
   = \sum_{\sigma \in S_n} \chi_{\lambda} (\sigma) 
   X_{\sigma(1)1} \cdots X_{\sigma(n)n}
   = \sum_{\sigma \in S_n} \chi_{\lambda} (\sigma^{-1}) 
   X_{1\sigma(1)} \cdots X_{n\sigma(n)}.
\end{equation}
Here $\lambda$ is a partition of $n$,
and $\chi_{\lambda}$ is the irreducible character of $S_n$ determined by $\lambda$.
This is equal to the determinant
and the permanent when $\lambda = (1^n)$ and $\lambda = (n)$, respectively:
$$
   \operatorname{imm}_{(1^n)} = \det, \qquad
   \operatorname{imm}_{(n)} = \operatorname{per}.
$$

When $\mathcal{A}$ is not commutative,
the two expressions in (\ref{eq:definition_of_immanant}) do not coincide in general,
so that we should consider the following two functions independently: 
\begin{align*}
   \operatorname{column-imm}_{\lambda} X 
   & = \sum_{\sigma \in S_n} \chi_{\lambda} (\sigma) 
   X_{\sigma(1)1} \cdots X_{\sigma(n)n},\\
   \operatorname{row-imm}_{\lambda} X 
   & = \sum_{\sigma \in S_n} \chi_{\lambda} (\sigma^{-1}) 
   X_{1\sigma(1)} \cdots X_{n\sigma(n)}.
\end{align*}
Additionally we consider 
\begin{align*}
   \operatorname{double-imm}_{\lambda} X 
   & = \frac{\chi_{\lambda}(1)}{n!}
   \sum_{\sigma, \sigma' \in S_n} 
   \chi_{\lambda} (\sigma) \chi_{\lambda} (\sigma^{\prime -1}) 
   X_{\sigma(1)\sigma'(1)} \cdots X_{\sigma(n)\sigma'(n)},\\
   \operatorname{symm-imm}_{\lambda} X 
   & = \frac{1}{n!}
   \sum_{\sigma, \sigma' \in S_n} 
   \chi_{\lambda} (\sigma \sigma^{\prime -1}) 
   X_{\sigma(1)\sigma'(1)} \cdots X_{\sigma(n)\sigma'(n)}.
\end{align*}
These names are parallel to those of the noncommutative determinants 
in Section~\ref{sec:Capelli_and_FFT} and \cite{I1}--\cite{I6}, \cite{IU}.
These four functions are all equal to $\operatorname{imm}_{\lambda}X$,
when $\mathcal{A}$ is commutative:

\begin{proposition}\label{prop:relation_among_imm}\sl
   If $\mathcal{A}$ is commutative, we have
   $$
      \operatorname{column-imm}_{\lambda} X
      = \operatorname{row-imm}_{\lambda} X 
      = \operatorname{double-imm}_{\lambda} X 
      = \operatorname{symm-imm}_{\lambda} X.
   $$
\end{proposition}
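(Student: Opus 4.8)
The plan is to reduce all four quantities to $\operatorname{column-imm}_\lambda X$, using commutativity of $\mathcal{A}$ to free the products from their ordering and then the orthogonality relations for the irreducible characters of $S_n$ to rearrange the coefficients. The basic observation is that, since the entries of $X$ commute, for any $\sigma,\sigma'\in S_n$ one may reindex the product by $k\mapsto\sigma'^{-1}(k)$ to obtain
\[
   X_{\sigma(1)\sigma'(1)}\cdots X_{\sigma(n)\sigma'(n)}
   =\prod_{k=1}^n X_{(\sigma\sigma'^{-1})(k)\,k},
\]
so this product depends only on $\rho=\sigma\sigma'^{-1}$. Taking $\sigma'=1$ gives $X_{\sigma(1)1}\cdots X_{\sigma(n)n}=\prod_{k=1}^n X_{k\,\sigma^{-1}(k)}$, and substituting $\sigma\mapsto\sigma^{-1}$ in the definition of $\operatorname{column-imm}_\lambda X$ then immediately yields $\operatorname{column-imm}_\lambda X=\operatorname{row-imm}_\lambda X$.

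For $\operatorname{double-imm}_\lambda X$ I would substitute the reindexed product into the definition and regroup the double sum over $(\sigma,\sigma')$ according to the value of $\rho=\sigma\sigma'^{-1}$. Writing $\sigma=\rho\sigma'$, the coefficient of $\prod_{k=1}^n X_{\rho(k)k}$ becomes
\[
   \frac{\chi_\lambda(1)}{n!}\sum_{\sigma'\in S_n}\chi_\lambda(\rho\sigma')\,\chi_\lambda(\sigma'^{-1}).
\]
The character identity $\sum_{g\in S_n}\chi_\lambda(xg)\,\chi_\lambda(g^{-1})=\frac{n!}{\chi_\lambda(1)}\chi_\lambda(x)$, a standard consequence of Schur orthogonality, collapses this coefficient to $\chi_\lambda(\rho)$, so $\operatorname{double-imm}_\lambda X=\sum_{\rho\in S_n}\chi_\lambda(\rho)\prod_{k=1}^n X_{\rho(k)k}=\operatorname{column-imm}_\lambda X$.

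For $\operatorname{symm-imm}_\lambda X$ the same regrouping applies, but the coefficient of $\prod_{k=1}^n X_{\rho(k)k}$ is now simply $\frac{1}{n!}\sum_{\sigma'\in S_n}\chi_\lambda(\rho)=\chi_\lambda(\rho)$, since there are exactly $n!$ pairs $(\sigma,\sigma')$ with $\sigma\sigma'^{-1}=\rho$ and each contributes $\chi_\lambda(\sigma\sigma'^{-1})=\chi_\lambda(\rho)$. Hence $\operatorname{symm-imm}_\lambda X=\operatorname{column-imm}_\lambda X$ as well, and chaining the three identities gives the proposition. The computation is entirely routine; the only points that demand a little care are the index substitution showing that the product depends only on $\sigma\sigma'^{-1}$, and pinning down the normalizing constant in the character convolution identity — that is where I would slow down in the final write-up.
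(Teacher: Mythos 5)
Your proof is correct and follows essentially the same path as the paper's: use commutativity to reindex the product so that it depends only on $\sigma\sigma'^{-1}$, then collapse the double sum via the character convolution identity $\sum_{g\in S_n}\chi_\lambda(xg)\chi_\lambda(g^{-1})=\tfrac{n!}{\chi_\lambda(1)}\chi_\lambda(x)$, which is precisely the first relation in (\ref{eq:relations_for_characters}). One small bonus of organizing things by reducing all four quantities to $\operatorname{column-imm}_\lambda X$ is that you never actually need the real-valuedness relation $\chi_\lambda(\sigma)=\chi_\lambda(\sigma^{-1})$, which the paper lists among its ingredients.
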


This coincidence is easy from the following two relations 
of the irreducible characters:
\begin{equation}\label{eq:relations_for_characters}
   |S_n| \frac{\chi_{\lambda}(\sigma)}{\chi_{\lambda}(1)}
   = \sum_{\sigma' \in S_n} \chi_{\lambda}(\sigma\sigma^{\prime -1}) 
   \chi_{\lambda}(\sigma'), \qquad
   \chi_{\lambda}(\sigma) = \chi_{\lambda}(\sigma^{-1}).
\end{equation}
The first relation holds for general finite groups,
and the second one is deduced from the fact that
the irreducible representations of $S_n$ are all over $\mathbb{R}$.

\begin{proof}[Proof of Proposition~{\ref{prop:relation_among_imm}}]
The equality between ``$\operatorname{double-imm}$'' and 
``$\operatorname{column-imm}$'' is shown as follows:
\begin{align*}
   &
   \sum_{\sigma,\sigma' \in S_n} 
   \chi_{\lambda}(\sigma) \chi_{\lambda}(\sigma^{\prime -1}) 
   X_{\sigma(1) \sigma'(1)} \cdots X_{\sigma(n) \sigma'(n)} \\ 
   & \qquad
   = \sum_{\sigma,\sigma' \in S_n} 
   \chi_{\lambda}(\sigma\sigma') \chi_{\lambda}(\sigma^{\prime -1}) 
   X_{\sigma\sigma'(1) \sigma'(1)} \cdots 
   X_{\sigma\sigma'(n) \sigma'(n)} \\
   & \qquad
   = \sum_{\sigma,\sigma' \in S_n} 
   \chi_{\lambda}(\sigma\sigma') \chi_{\lambda}(\sigma^{\prime -1}) 
   X_{\sigma(1)1} \cdots X_{\sigma(n)n} \\
   & \qquad
   = \sum_{\sigma \in S_n} 
   \frac{n!}{\chi_{\lambda}(1)} \chi_{\lambda}(\sigma) 
   X_{\sigma(1)1} \cdots X_{\sigma(n)n}.
\end{align*}
Here we replaced $\sigma$ by $\sigma\sigma'$ in the first equality.
Moreover, we changed the order of $X_{ij}$'s in the second equality.
The third equality is from the first relation of 
(\ref{eq:relations_for_characters}).

The proofs of the other equalities are similar, so that we omit them.
\end{proof}

The following relation follows
from the fact that $\chi_{\lambda}$ is a class function:

\begin{proposition}\label{prop:invariance_of_symm_imm_under_permutation}\sl
   Even if $\mathcal{A}$ is noncommutative,
   we have 
   $$
      \operatorname{symm-imm}_{\lambda} \sigma^{-1} X \sigma
      = \operatorname{symm-imm}_{\lambda} X
   $$
   for $X \in \operatorname{Mat}_n(\mathcal{A})$
   and $\sigma \in S_n$.
   Here we define the left and right actions of $\sigma \in S_n$
   on $\operatorname{Mat}_n(\mathcal{A})$ by
   $$
      \sigma (X_{ij})_{1 \leq i,j \leq n} = (X_{\sigma^{-1}(i)j})_{1 \leq i,j \leq n},\qquad
      (X_{ij})_{1 \leq i,j \leq n} \sigma = (X_{i\sigma(j)})_{1 \leq i,j \leq n}.
   $$
   In particular, we have 
   $\operatorname{imm}_{\lambda} \sigma^{-1} X \sigma
   = \operatorname{imm}_{\lambda} X$,
   when $\mathcal{A}$ is commutative.
\end{proposition}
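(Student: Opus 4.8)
The plan is to reduce the claim to the fact that $\chi_\lambda$ is a class function, by means of a relabelling of the two summation indices. First I would record the effect of the two-sided action on matrix entries: with the conventions given, the left action by $\sigma^{-1}$ sends the row index $i$ to $\sigma(i)$ and the right action by $\sigma$ sends the column index $j$ to $\sigma(j)$, so $(\sigma^{-1} X \sigma)_{ij} = X_{\sigma(i)\sigma(j)}$ for all $i,j$. Writing $Y = \sigma^{-1} X \sigma$ and expanding the definition of $\operatorname{symm-imm}_\lambda$ gives
\begin{equation*}
   \operatorname{symm-imm}_\lambda Y
   = \frac{1}{n!} \sum_{\tau,\tau' \in S_n}
   \chi_\lambda(\tau \tau'^{-1})\,
   X_{\sigma\tau(1)\,\sigma\tau'(1)} \cdots X_{\sigma\tau(n)\,\sigma\tau'(n)} .
\end{equation*}

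Next I would substitute $\rho = \sigma\tau$, $\rho' = \sigma\tau'$, which is a bijection of $S_n \times S_n$ onto itself. Under it, $\tau\tau'^{-1} = \sigma^{-1}(\rho\rho'^{-1})\sigma$, so $\chi_\lambda(\tau\tau'^{-1}) = \chi_\lambda(\rho\rho'^{-1})$ since $\chi_\lambda$ is constant on conjugacy classes; and the $k$-th factor of the ordered product is $X_{\sigma\tau(k)\,\sigma\tau'(k)} = X_{\rho(k)\,\rho'(k)}$, so the product is of exactly the same shape with no reordering of the noncommuting factors. Hence the sum over $(\tau,\tau')$ becomes verbatim the sum over $(\rho,\rho')$ defining $\operatorname{symm-imm}_\lambda X$, which proves the identity. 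The concluding assertion, that $\operatorname{imm}_\lambda \sigma^{-1} X \sigma = \operatorname{imm}_\lambda X$ when $\mathcal{A}$ is commutative, is then immediate from Proposition~\ref{prop:relation_among_imm}, which identifies $\operatorname{imm}_\lambda$ with $\operatorname{symm-imm}_\lambda$ in that case.

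I do not expect a real obstacle here: once the entry formula $(\sigma^{-1} X \sigma)_{ij} = X_{\sigma(i)\sigma(j)}$ is in hand, the argument is a one-line change of summation variables. The only point requiring attention is that \emph{both} indices of $X$ are acted on by the \emph{same} permutation $\sigma$ on the left, which is precisely what allows the character factor $\chi_\lambda(\tau\tau'^{-1})$ to pass through the relabelling unchanged; for $\operatorname{column-imm}_\lambda$ or $\operatorname{row-imm}_\lambda$ the corresponding relabelling would instead force a reordering of the noncommuting factors of the product, so the symmetrized version is genuinely the natural object for this invariance.
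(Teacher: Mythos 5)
Your proof is correct and takes exactly the route the paper has in mind: the paper simply remarks that the proposition ``follows from the fact that $\chi_{\lambda}$ is a class function,'' and your change of variables $(\tau,\tau') \mapsto (\sigma\tau,\sigma\tau')$, together with the observation that the noncommuting factors appear in the same order, is precisely the calculation that remark is pointing to.
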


\subsection{}
%
The Cauchy--Binet identity is known for determinants.
Namely, when the entries of $n \times n'$ matrix $X$ and 
$n' \times n^{\prime\prime}$ matrix $Y$ are commutative,
we have
$$
   \det(XY)_{IK} = \sum_{J \in \mybinom{[n']}{p}} \det X_{IJ} \det Y_{JK}.
$$
We have a similar relation for immanants:

\begin{proposition}\label{prop:Cauchy_Binet_for_immanants}
   \sl
   Consider two matrices $X \in \operatorname{Mat}_{n,n'}(\mathcal{A})$ 
   and $Y \in \operatorname{Mat}_{n',n^{\prime\prime}}(\mathcal{A})$.
   When $\mathcal{A}$ is commutative,
   we have the following relation for $I \in [n]^p$ and $K \in [n^{\prime\prime}]^p$:
   $$
      \operatorname{imm}_{\lambda}(XY)_{IK} 
	  = \frac{\chi_{\lambda}(1)}{p!} \sum_{J \in [n']^p}
      \operatorname{imm}_{\lambda}X_{IJ} \operatorname{imm}_{\lambda}Y_{JK}.
   $$
\end{proposition}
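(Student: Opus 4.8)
The plan is to expand the left-hand side directly and recognize the right-hand side as what comes out, using Proposition~\ref{prop:relation_among_imm} at both ends. The convenient form to start from is not $\operatorname{imm}_{\lambda} = \operatorname{column-imm}_{\lambda}$ but the ``double'' form: since $\mathcal{A}$ is commutative, $\operatorname{imm}_{\lambda}(XY)_{IK} = \operatorname{double-imm}_{\lambda}(XY)_{IK}$, and for the $p \times p$ matrix $(XY)_{IK}$ (whose $(a,b)$-entry is $(XY)_{i_a k_b}$) this reads
\[
   \operatorname{double-imm}_{\lambda}(XY)_{IK}
   = \frac{\chi_{\lambda}(1)}{p!} \sum_{\sigma,\sigma' \in S_p}
   \chi_{\lambda}(\sigma)\chi_{\lambda}(\sigma^{\prime -1})
   \prod_{a=1}^{p} (XY)_{i_{\sigma(a)}\, k_{\sigma'(a)}} .
\]
The point of this form is that the permutation $\sigma$ (acting on the rows, i.e. on $I$) and the permutation $\sigma'$ (acting on the columns, i.e. on $K$) are already decoupled, which is exactly what lets the product split between $X$ and $Y$.

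First I would expand each factor $(XY)_{i_{\sigma(a)}\, k_{\sigma'(a)}} = \sum_{c=1}^{n'} X_{i_{\sigma(a)}\, c}\, Y_{c\, k_{\sigma'(a)}}$, multiply over $a = 1,\dots,p$, and collect the intermediate indices into a sequence $J = (j_1,\dots,j_p) \in [n']^p$; this turns the product into $\sum_{J \in [n']^p} \prod_{a} X_{i_{\sigma(a)}\, j_a}\, Y_{j_a\, k_{\sigma'(a)}}$. Then commutativity of $\mathcal{A}$ is used to regroup $\prod_{a} X_{i_{\sigma(a)}\, j_a}\, Y_{j_a\, k_{\sigma'(a)}} = \bigl(\prod_{a} X_{i_{\sigma(a)}\, j_a}\bigr)\bigl(\prod_{a} Y_{j_a\, k_{\sigma'(a)}}\bigr)$, after which the $\sigma$-sum and the $\sigma'$-sum factor apart. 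By definition the first factor, weighted by $\chi_{\lambda}(\sigma)$, is $\operatorname{column-imm}_{\lambda}X_{IJ}$, and the second, weighted by $\chi_{\lambda}(\sigma^{\prime -1})$, is $\operatorname{row-imm}_{\lambda}Y_{JK}$, so one obtains
\[
   \operatorname{imm}_{\lambda}(XY)_{IK}
   = \frac{\chi_{\lambda}(1)}{p!} \sum_{J \in [n']^p}
   \operatorname{column-imm}_{\lambda}X_{IJ}\;\operatorname{row-imm}_{\lambda}Y_{JK} .
\]
Invoking Proposition~\ref{prop:relation_among_imm} once more to rewrite $\operatorname{column-imm}_{\lambda}$ and $\operatorname{row-imm}_{\lambda}$ as $\operatorname{imm}_{\lambda}$ yields the claim; observe that no hypothesis $p \le n'$ is needed, the sum simply runs over all $J \in [n']^p$ (with repeats allowed).

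There is no genuine obstacle: the only places needing care are (a) choosing the double-imm presentation at the outset, so that the row- and column-permutations are decoupled \emph{before} the matrix product is expanded --- if one works with $\operatorname{column-imm}_{\lambda}$ throughout, the two permutation sums become entangled and one must spend the first character identity in~(\ref{eq:relations_for_characters}), $\sum_{\sigma'}\chi_{\lambda}(\sigma\sigma^{\prime -1})\chi_{\lambda}(\sigma') = |S_p|\,\chi_{\lambda}(\sigma)/\chi_{\lambda}(1)$, to disentangle them again --- and (b) keeping the bookkeeping straight when passing from $\prod_{a}(XY)_{i_{\sigma(a)}k_{\sigma'(a)}}$ to the sum over $J \in [n']^p$, where the summation variable attached to the $a$-th factor depends on $a$.
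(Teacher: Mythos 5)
Your proof is correct and follows essentially the same route as the paper: start from the double-immanant form, expand each $(XY)_{i_{\sigma(a)}k_{\sigma'(a)}}$, collect the intermediate indices into $J\in[n']^p$, use commutativity to factor the product, and recognize $\operatorname{column-imm}_{\lambda}X_{IJ}$ and $\operatorname{row-imm}_{\lambda}Y_{JK}$ before invoking Proposition~\ref{prop:relation_among_imm}. The paper's proof is this exact calculation written in displayed form.
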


The higher Capelli identity given in \cite{O1} can be regarded as
a noncommutative analogue of this relation
(we will discuss this in Section~\ref{sec:higher_Capelli}).

\begin{proof}[Proof of Proposition~{\sl \ref{prop:Cauchy_Binet_for_immanants}}]
This is seen from the following calculation:
\begin{align*}
   & \operatorname{double-imm}_{\lambda}(XY)_{IK} \\
   & \qquad
   = \frac{\chi_{\lambda}(1)}{p!} \sum_{\sigma,\sigma' \in S_p} 
   \chi_{\lambda}(\sigma) \chi_{\lambda}(\sigma^{\prime -1})
   (XY)_{i_{\sigma(1)} k_{\sigma'(1)}} \cdots 
   (XY)_{i_{\sigma(p)} k_{\sigma'(p)}} \\
   & \qquad
   = \frac{\chi_{\lambda}(1)}{p!} \sum_{\sigma,\sigma' \in S_p} 
   \sum_{J \in [n']^p}
   \chi_{\lambda}(\sigma) \chi_{\lambda}(\sigma^{\prime -1})
   X_{i_{\sigma(1)} j_1} Y_{j_1 k_{\sigma'(1)}} 
   \cdots 
   X_{i_{\sigma(p)} j_p} Y_{j_p k_{\sigma'(p)}} \\
   & \qquad
   = \frac{\chi_{\lambda}(1)}{p!}
   \sum_{J \in [n']^p} 
   \sum_{\sigma \in S_p} 
   \chi_{\lambda}(\sigma) 
   X_{i_{\sigma(1)} j_1} \cdots X_{i_{\sigma(p)} j_p}
   \sum_{\sigma' \in S_p} 
   \chi_{\lambda}(\sigma^{\prime -1})
   Y_{j_1 k_{\sigma'(1)}} \cdots Y_{j_p k_{\sigma'(p)}} \\
   & \qquad
   = \frac{\chi_{\lambda}(1)}{p!} 
   \sum_{J \in [n']^p}
   \operatorname{column-imm}_{\lambda}X_{IJ} 
   \operatorname{row-imm}_{\lambda}Y_{JK}. 
   \qedhere
\end{align*}
\end{proof}

\subsection{}
%
Next we introduce a fundamental matrix function expressed as a sum of immanants.
This is invariant under the conjugation by $GL_n(\mathbb{C})$,
and this invariance follows from Proposition~\ref{prop:Cauchy_Binet_for_immanants}
when the matrix entries are commutative.

We assume that $\mathcal{A}$ is not necessarily commutative.
A simple calculation shows 
\begin{align*}
   \sum_{I \in [n]^p} \operatorname{column-imm}_{\lambda} X_{II}
   = \sum_{I \in [n]^p} \operatorname{row-imm}_{\lambda} X_{II}
   = \sum_{I \in [n]^p} \operatorname{symm-imm}_{\lambda} X_{II}.
\end{align*}
Let us denote by ``$\operatorname{imm}_{\lambda,p} X$'' 
this quantity divided by $p!$.
We can also express this as
$$
   \operatorname{imm}_{\lambda,p} X
   =
   \sum_{I \in \bibinom{[n]}{p}} \frac{1}{I!} 
   \operatorname{symm-imm}_{\lambda} X_{II}
$$
as seen from Proposition~\ref{prop:invariance_of_symm_imm_under_permutation}.
We can regard this function ``$\operatorname{imm}_{\lambda,p}$'' 
as the counterpart of ``$\operatorname{det}_p$'' given in Section~\ref{sec:Capelli_and_FFT}.
This function is invariant under the conjugation by $GL_n(\mathbb{C})$:

\begin{proposition}\label{prop:invariance_of_imm_p}\sl
   For any $g \in GL_n(\mathbb{C})$, we have
   $\operatorname{imm}_{\lambda,p} X
   = \operatorname{imm}_{\lambda,p} gXg^{-1}$.   
\end{proposition}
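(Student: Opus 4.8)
The plan is to derive a single closed formula for $\operatorname{imm}_{\lambda,p}$ that makes the conjugation invariance transparent. For $\tau\in S_p$ set $\beta_\tau(X)=\sum_{C\in[n]^p}X_{c_1c_{\tau(1)}}X_{c_2c_{\tau(2)}}\cdots X_{c_pc_{\tau(p)}}\in\mathcal{A}$. Starting from the column--immanant form and using $(X_{II})_{ab}=X_{i_ai_b}$ one gets $\operatorname{column-imm}_{\lambda}X_{II}=\sum_{\sigma\in S_p}\chi_{\lambda}(\sigma)\,X_{i_{\sigma(1)}i_1}\cdots X_{i_{\sigma(p)}i_p}$; for each fixed $\sigma$ the change of summation index $c_k=i_{\sigma(k)}$ is a bijection of $[n]^p$ carrying $\sum_{I\in[n]^p}X_{i_{\sigma(1)}i_1}\cdots X_{i_{\sigma(p)}i_p}$ to $\beta_{\sigma^{-1}}(X)$ (the ordered product in $\mathcal{A}$ is untouched, only the dummy index is relabelled). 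Hence, invoking $\chi_{\lambda}(\sigma)=\chi_{\lambda}(\sigma^{-1})$ from the second relation in (\ref{eq:relations_for_characters}),
$$\operatorname{imm}_{\lambda,p}X=\frac{1}{p!}\sum_{I\in[n]^p}\operatorname{column-imm}_{\lambda}X_{II}=\frac{1}{p!}\sum_{\tau\in S_p}\chi_{\lambda}(\tau)\,\beta_\tau(X),$$
so it suffices to prove $\beta_\tau(gXg^{-1})=\beta_\tau(X)$ for every $\tau\in S_p$ and every $g\in GL_n(\mathbb{C})$.

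For this I would substitute $(gXg^{-1})_{ab}=\sum_{c,d}g_{ac}X_{cd}(g^{-1})_{db}$ into the definition of $\beta_\tau$. Since the entries of $g$ and $g^{-1}$ are scalars they commute with everything and may be pulled to the front of the ordered product over $k$, which gives
$$\beta_\tau(gXg^{-1})=\sum_{A,B\in[n]^p}\Bigl(\sum_{C\in[n]^p}\prod_{k=1}^{p}g_{c_ka_k}(g^{-1})_{b_kc_{\tau(k)}}\Bigr)\,X_{a_1b_1}X_{a_2b_2}\cdots X_{a_pb_p}.$$
In the bracketed sum each index $c_m$ appears in exactly two factors, namely $g_{c_ma_m}$ and $(g^{-1})_{b_{\tau^{-1}(m)}c_m}$, so summation over $c_m$ yields $(g^{-1}g)_{b_{\tau^{-1}(m)}a_m}=\delta_{b_{\tau^{-1}(m)},a_m}$; the product over $m$ is therefore $1$ exactly when $b_k=a_{\tau(k)}$ for all $k$, and $0$ otherwise. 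Substituting back collapses the double sum to $\sum_{A\in[n]^p}X_{a_1a_{\tau(1)}}X_{a_2a_{\tau(2)}}\cdots X_{a_pa_{\tau(p)}}=\beta_\tau(X)$. Combining this with the displayed formula gives $\operatorname{imm}_{\lambda,p}(gXg^{-1})=\operatorname{imm}_{\lambda,p}X$.

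The argument is purely computational; the only thing to watch is the bookkeeping in the scalar contraction $\sum_{c}g_{ca}(g^{-1})_{bc}=\delta_{ab}$ and the requirement that the noncommutative product $X_{a_1b_1}\cdots X_{a_pb_p}$ stay in the order $k=1,\ldots,p$ while only the commuting $g$-entries are moved, so there is no conceptual obstacle. Conceptually, $\beta_\tau(X)$ is the partial trace over $(\mathbb{C}^n)^{\otimes p}$ of the permutation operator of $\tau$ composed with the image of $\bigl(\sum_{i,j}E_{ij}\otimes X_{ij}\bigr)^{\otimes p}$ under iterated multiplication in $\mathcal{A}$, and this is manifestly unaffected when the $\operatorname{End}(\mathbb{C}^n)$-tensorand is conjugated by $g^{\otimes p}$, since $g^{\otimes p}$ commutes with the permutation operator; I would nonetheless present the self-contained index computation above.
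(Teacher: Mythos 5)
Your proof is correct, but it takes a genuinely different route from the paper. The paper deliberately postpones this proposition and derives it as a corollary of Proposition~\ref{prop:preimm_p}~(iii), which is itself proved in Section~\ref{sec:immanants_and_algebras} via the algebra $\bar{T}(V,V^*)$: one writes $\operatorname{preimm}_p X = \langle \Xi^{(p)}\rangle$ with $\Xi=\sum_{i,j}e_iX_{ij}e^*_j$, observes that $\Xi_{gXg^{-1}}=\tilde\Xi_X$ when $\tilde e_j = \sum_i e_ig_{ij}$ is the transformed basis, and invokes the basis-independence of the pairing $\langle\cdot\rangle$. Your argument instead isolates the ``twisted traces'' $\beta_\tau(X)=\sum_{C\in[n]^p}X_{c_1c_{\tau(1)}}\cdots X_{c_pc_{\tau(p)}}$, shows $\operatorname{imm}_{\lambda,p}X=\frac{1}{p!}\sum_\tau\chi_\lambda(\tau)\beta_\tau(X)$ by a change of dummy index in the column-immanant form (your bookkeeping $c_k=i_{\sigma(k)}$, $i_k=c_{\sigma^{-1}(k)}$ is right, and the ordered product in $\mathcal A$ is indeed untouched since one only relabels indices), and then proves $\beta_\tau(gXg^{-1})=\beta_\tau(X)$ by contracting the scalar $g$-entries: each $c_m$ occurs in exactly the two factors $g_{c_m a_m}$ and $(g^{-1})_{b_{\tau^{-1}(m)}c_m}$, so summing over $c_m$ gives $\delta_{b_{\tau^{-1}(m)},a_m}$, and the double sum collapses to $\beta_\tau(X)$. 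This is a valid, entirely elementary, self-contained index computation; the partial-trace interpretation you append at the end is the correct conceptual explanation and is in fact the precise content of your identity. What the paper's proof buys is coherence with its overall program (it illustrates that the operator $\langle\cdot\rangle$ on $\bar{T}(V,V^*)$ is a useful invariant-theoretic device, a theme that recurs in Sections~\ref{sec:quantum_immanants}--\ref{sec:higher_Capelli}), and it yields the stronger statement about $\operatorname{preimm}_p$ in $\mathbb{C}S_p\otimes\mathcal A$ from which the scalar statement follows by applying $\chi_\lambda$; what yours buys is independence from that machinery and a direct route to the result stated. One small remark: you use $\chi_\lambda(\sigma)=\chi_\lambda(\sigma^{-1})$ to tidy up the formula, but it is not actually needed for the invariance, since you prove $\beta_\tau(gXg^{-1})=\beta_\tau(X)$ for every single $\tau$ and any linear combination over $\tau$ would then be invariant.
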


This proposition is immediate from Proposition~\ref{prop:Cauchy_Binet_for_immanants},
when $\mathcal{A}$ is commutative.
We will prove the noncommutative case later
as a corollary of Proposition~\ref{prop:preimm_p} (iii).

For $X \in \operatorname{Mat}_n(\mathbb{C})$,
this quantity $\operatorname{imm}_{\lambda,p} X$ is equal to the Schur polynomial in eigenvalues of $X$.
We will discuss this issue in Section~\ref{subsec:various_expressions_of_preimm_p}.

\subsection{}\label{subsec:preimmanants}
%
Let us consider the following quantities 
determined by $X \in\operatorname{Mat}_{n}(\mathcal{A})$:
\begin{align*}
   \operatorname{column-preimm}X 
   & = \sum_{\sigma \in S_n} \sigma X_{\sigma(1)1} \cdots X_{\sigma(n)n}, \\
   \operatorname{row-preimm}X
   & = \sum_{\sigma \in S_n} X_{1\sigma(1)} \cdots X_{n\sigma(n)} \sigma^{-1}, \\
   \operatorname{symm-preimm}X
   & = \frac{1}{n!} \sum_{\sigma,\sigma' \in S_n} 
   \sigma X_{\sigma(1)\sigma'(1)} \cdots X_{\sigma(n)\sigma'(n)} 
   \sigma^{\prime -1}.
\end{align*}
These are elements of the algebra $\mathbb{C}S_n \otimes \mathcal{A}$,
in which the two algebras $\mathcal{A}$ and $\mathbb{C}S_n$ 
commute with each other.
In this article,
we call these $\mathbb{C}S_n \otimes \mathcal{A}$-valued matrix functions 
the ``preimmanants'' or the ``predeterminants,''
because the immanants and the determinants can be obtained from these 
by applying the character $\chi_{\lambda}$ and the signature function:
\begin{align*}
   \operatorname{column-imm}_{\lambda} X
   &= \chi_{\lambda}(\operatorname{column-preimm}X), \\ 
   \operatorname{row-imm}_{\lambda} X
   &= \chi_{\lambda}(\operatorname{row-preimm}X), \\
   \operatorname{symm-imm}_{\lambda} X
   &= \chi_{\lambda}(\operatorname{symm-preimm}X). 
\end{align*}
Here we define the linear map $\chi_{\lambda} \colon \mathbb{C}S_n \otimes \mathcal{A} \to \mathcal{A}$
by $\chi_{\lambda}(t \otimes a) = \chi_{\lambda}(t) a$.
We also put
\begin{align*}
   \operatorname{column-imm}^{\circ} X
   &= (\operatorname{column-imm}X)^{\circ}, \\
   \operatorname{row-imm}^{\circ} X
   &= (\operatorname{row-imm}X)^{\circ}, \\
   \operatorname{symm-imm}^{\circ} X
   &= (\operatorname{symm-imm}X)^{\circ}.
\end{align*}
Here we define the antiautomorphism $t \mapsto t^{\circ}$ on $\mathbb{C}S_n$
by $\sigma^{\circ} = \sigma^{-1}$ for $\sigma \in S_n$.
Moreover we extend this to a linear transformation on $\mathbb{C}S_n \otimes \mathcal{A}$ 
by $(t \otimes a)^{\circ} = t^{\circ} \otimes a$.

When $\mathcal{A}$ is commutative, we have 
\begin{align*}
   \operatorname{column-preimm}X
   & = \operatorname{row-preimm}X 
   = \operatorname{symm-preimm}X, \\ 
   \operatorname{column-preimm}^{\circ}X 
   & = \operatorname{row-preimm}^{\circ}X 
   = \operatorname{symm-preimm}^{\circ}X
\end{align*}
by a simple calculation.
Let us denote these two quantities by
$$
   \operatorname{preimm}X, \qquad
   \operatorname{preimm}^{\circ}X,
$$
respectively.
These two are connected by the transposition:
$\operatorname{preimm}{}^t\!X = \operatorname{preimm}^{\circ} X$.

For the actions of $S_n$ on $\operatorname{Mat}_n(\mathcal{A})$
given in Proposition~\ref{prop:invariance_of_symm_imm_under_permutation},
the following relations hold:

\begin{proposition}\label{prop:preimm_and_multiplications_by_permutations} \sl
   We have
   \begin{align*}
      \operatorname{column-preimm}(\sigma X) 
	  &= \sigma (\operatorname{column-preimm}X), \\
      \operatorname{row-preimm}(X\sigma) 
	  &= (\operatorname{row-preimm}X) \sigma, \\
      \operatorname{symm-preimm}(\sigma X \sigma') 
	  &= \sigma (\operatorname{symm-preimm}X) \sigma', 
	  \displaybreak[0]\\
      \operatorname{column-preimm}^{\circ}(\sigma X) 
	  &= (\operatorname{column-preimm}^{\circ}X) \sigma, \\
      \operatorname{row-preimm}^{\circ}(X\sigma) 
	  &= \sigma(\operatorname{row-preimm}^{\circ}X), \\
      \operatorname{symm-preimm}^{\circ}(\sigma X \sigma') 
	  &= \sigma'(\operatorname{symm-preimm}^{\circ}X)\sigma.
   \end{align*}
\end{proposition}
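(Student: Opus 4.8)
The plan is to obtain all six identities from one mechanical device. Each of $\operatorname{column-preimm}X$, $\operatorname{row-preimm}X$, $\operatorname{symm-preimm}X$ is a sum, indexed by $S_n$ (resp. by $S_n\times S_n$), of a group-algebra element times a monomial in the entries $X_{ij}$; replacing $X$ by $\sigma X$, $X\sigma$, or $\sigma X\sigma'$ merely relabels the row indices (resp. the column indices) occurring in every monomial, and this relabelling is absorbed by a change of summation variable, after which the fixed permutation factors out of the sum. Factoring out is legitimate because in $\mathbb{C}S_n\otimes\mathcal{A}$ the elements of $\mathbb{C}S_n$ commute with the $X_{ij}$, so the noncommutative monomials $X_{\cdot 1}\cdots X_{\cdot n}$ are never rearranged: only the summation label moves.

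Concretely, I would first do $\operatorname{column-preimm}$: from $(\sigma X)_{ij}=X_{\sigma^{-1}(i)j}$ one gets $\operatorname{column-preimm}(\sigma X)=\sum_{\tau\in S_n}\tau\,X_{\sigma^{-1}\tau(1),1}\cdots X_{\sigma^{-1}\tau(n),n}$, and the substitution $\rho=\sigma^{-1}\tau$ turns this into $\sum_{\rho}(\sigma\rho)\,X_{\rho(1)1}\cdots X_{\rho(n)n}=\sigma\,\operatorname{column-preimm}X$. The $\operatorname{row-preimm}$ identity is the mirror image: with $(X\sigma)_{ij}=X_{i\sigma(j)}$ and the substitution $\rho=\sigma\tau$ (so that $\tau^{-1}=\rho^{-1}\sigma$) the fixed $\sigma$ emerges on the right. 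For $\operatorname{symm-preimm}(\sigma X\sigma')$ one makes both substitutions simultaneously in the double sum over $S_n\times S_n$, one for the left index and one for the right. The three $\circ$-versions then follow by applying the linear map $t\mapsto t^{\circ}$ to the first three identities and using that, on $\mathbb{C}S_n\otimes\mathcal{A}$, this map fixes the $\mathcal{A}$-factor and acts as the antiautomorphism of $\mathbb{C}S_n$, so it interchanges left and right multiplication by elements of $\mathbb{C}S_n$.

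I do not expect any genuine obstacle; the argument is entirely bookkeeping. The one thing to be careful about is tracking inversions -- both the $\sigma^{-1}$ that enters through the convention $(\sigma X)_{ij}=X_{\sigma^{-1}(i)j}$ and the inversion produced by $t\mapsto t^{\circ}$ -- and never permuting the noncommutative monomial, only the summation index. As a by-product, applying the character $\chi_\lambda$ to the $\operatorname{symm-preimm}$ identity recovers the invariance $\operatorname{symm-imm}_\lambda(\sigma^{-1}X\sigma)=\operatorname{symm-imm}_\lambda X$ of Proposition~\ref{prop:invariance_of_symm_imm_under_permutation}, so the present Proposition may be viewed as the preimmanant-level refinement of that earlier statement.
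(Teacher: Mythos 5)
Your direct substitution argument is correct for the first three identities and is, in effect, a more elementary version of what the paper does: the paper defers the proof to Section~\ref{sec:immanants_and_algebras}, where it declares the proposition ``immediate'' from Lemmas~\ref{lem:expression_of_column-imm}--\ref{lem:expression_of_preimm} (the expressions of the preimmanants as pairings in $\bar{T}(V)\otimes\mathcal{A}$, $\bar T^{\circ}(V^*)\otimes\mathcal{A}$, $\bar T(V,V^*)\otimes\mathcal{A}$) together with the equivariance of $\langle\cdot,\cdot\rangle$ and $\langle\cdot\rangle$ under the $S_n$-actions. Your relabelling-of-summation-index computation accomplishes the same thing directly from the definitions, with no detour through the algebras, and the first three lines check out exactly as you wrote them.

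The place where you must actually \emph{do} the bookkeeping you warn yourself about is the three $\circ$-versions, and there your method, carried out honestly, does \emph{not} reproduce the proposition as printed. Since $\circ$ is an anti-homomorphism on $\mathbb{C}S_n$ (extended to $\mathbb{C}S_n\otimes\mathcal{A}$ by fixing the $\mathcal{A}$-factor), applying it to
$\operatorname{column-preimm}(\sigma X)=\sigma\,\operatorname{column-preimm}X$
gives
$\operatorname{column-preimm}^{\circ}(\sigma X)
=(\operatorname{column-preimm}^{\circ}X)\,\sigma^{\circ}
=(\operatorname{column-preimm}^{\circ}X)\,\sigma^{-1},$
and the direct substitution $\rho=\sigma^{-1}\tau$ in
$\operatorname{column-preimm}^{\circ}(\sigma X)=\sum_{\tau}\tau^{-1}\otimes X_{\sigma^{-1}\tau(1),1}\cdots X_{\sigma^{-1}\tau(n),n}$
confirms the same: one gets $\sum_{\rho}\rho^{-1}\sigma^{-1}\otimes X_{\rho(1)1}\cdots X_{\rho(n)n}$. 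The same computation yields
$\operatorname{row-preimm}^{\circ}(X\sigma)=\sigma^{-1}(\operatorname{row-preimm}^{\circ}X)$ and
$\operatorname{symm-preimm}^{\circ}(\sigma X\sigma')=\sigma'^{-1}(\operatorname{symm-preimm}^{\circ}X)\sigma^{-1}$.
In other words, the printed right-hand sides of the last three identities should carry $\sigma^{-1}$ and $\sigma'^{-1}$ rather than $\sigma$ and $\sigma'$ (equivalently, their left-hand sides should be $\operatorname{column-preimm}^{\circ}(\sigma^{-1}X)$, etc.). This looks like a slip in the statement rather than in your method: be sure to record the corrected form that your calculation actually produces, rather than forcing the outcome to match the printed one. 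Your closing remark — that applying $\chi_{\lambda}$ to the $\operatorname{symm-preimm}$ identity recovers Proposition~\ref{prop:invariance_of_symm_imm_under_permutation} via the class-function property of $\chi_{\lambda}$ — is correct and is exactly the observation the paper makes immediately after the proposition.
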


We can regard Proposition~\ref{prop:invariance_of_symm_imm_under_permutation} 
as a corollary of this proposition.

\subsection{}
%
Under the assumption of commutativity,
Cauchy--Binet type identities still hold for the preimmanants:

\begin{proposition}\label{prop:Cauchy_Binet_for_preimmanants}\sl
   When $\mathcal{A}$ is commutative,
   we have the following relations
   for $X \in \operatorname{Mat}_{n,n'}(\mathcal{A})$,
   $Y \in \operatorname{Mat}_{n',n^{\prime\prime}}(\mathcal{A})$
   and $I \in [n]^p$, $K \in [n^{\prime\prime}]^p$:
   \begin{align*}
      \operatorname{preimm}(XY)_{IK}
      & = \frac{1}{p!} \sum_{J \in [n']^p} 
      \operatorname{preimm}X_{IJ} \operatorname{preimm}Y_{JK} \\
      & = \sum_{J \in \bibinom{[n']}{p}} 
      \frac{1}{J!} 
	  \operatorname{preimm}X_{IJ} 
	  \operatorname{preimm}Y_{JK}, \\
      \operatorname{preimm}^{\circ}(XY)_{IK}
      & = \frac{1}{p!} \sum_{J \in [n']^p} 
      \operatorname{preimm}^{\circ}Y_{JK} \operatorname{preimm}^{\circ}X_{IJ} \\
      & = \sum_{J \in \bibinom{[n']}{p}} 
      \frac{1}{J!} 
	  \operatorname{preimm}^{\circ}Y_{JK} 
	  \operatorname{preimm}^{\circ}X_{IJ} .
   \end{align*}
\end{proposition}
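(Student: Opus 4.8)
The plan is to imitate the computation in the proof of Proposition~\ref{prop:Cauchy_Binet_for_immanants}, but to carry the symmetric group elements along in $\mathbb{C}S_p \otimes \mathcal{A}$ instead of collapsing them with a character. First I would expand $\operatorname{preimm}(XY)_{IK}$ using the ``$\operatorname{symm-preimm}$'' expression (legitimate since $\mathcal{A}$ is commutative), insert $(XY)_{ij} = \sum_{l} X_{il}Y_{lj}$ into each of the $p$ factors, and name the $p$ summation indices $j_1,\dots,j_p$, i.e.\ a sequence $J \in [n']^p$. Commutativity of $\mathcal{A}$ then lets me pull the $X$-entries past the $Y$-entries and regroup each product as $\bigl(\prod_a X_{i_{\sigma(a)} j_a}\bigr)\bigl(\prod_a Y_{j_a k_{\sigma'(a)}}\bigr)$. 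Since the group elements $\sigma,\sigma'$ commute with all of $\mathcal{A}$, for each fixed $J$ the double sum over $\sigma,\sigma'$ factors as $\bigl(\sum_{\sigma}\sigma\prod_a X_{i_{\sigma(a)} j_a}\bigr)\bigl(\sum_{\sigma'}(\prod_a Y_{j_a k_{\sigma'(a)}})(\sigma')^{-1}\bigr)$, which is exactly $\operatorname{column-preimm}X_{IJ}\cdot\operatorname{row-preimm}Y_{JK}$. Using commutativity once more to identify $\operatorname{column-preimm}$, $\operatorname{row-preimm}$ and $\operatorname{preimm}$, this gives the first equality $\operatorname{preimm}(XY)_{IK} = \frac{1}{p!}\sum_{J\in[n']^p}\operatorname{preimm}X_{IJ}\operatorname{preimm}Y_{JK}$.

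For the passage to $J \in \bibinom{[n']}{p}$, the key point is that the summand depends only on the $S_p$-orbit of $J$. For $\rho \in S_p$ the sequence $\rho(J)$ amounts to permuting the columns of $X_{IJ}$ (resp.\ the rows of $Y_{JK}$) by $\rho$, so Proposition~\ref{prop:preimm_and_multiplications_by_permutations} together with the identification $\operatorname{symm-preimm} = \operatorname{preimm}$ yields $\operatorname{preimm}X_{I\rho(J)} = (\operatorname{preimm}X_{IJ})\rho$ and $\operatorname{preimm}Y_{\rho(J)K} = \rho^{-1}(\operatorname{preimm}Y_{JK})$; multiplying, the $\rho$'s cancel in $\mathbb{C}S_p\otimes\mathcal{A}$ and $\operatorname{preimm}X_{I\rho(J)}\operatorname{preimm}Y_{\rho(J)K} = \operatorname{preimm}X_{IJ}\operatorname{preimm}Y_{JK}$. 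Grouping $[n']^p$ into $S_p$-orbits, each orbit has a unique weakly increasing representative $J_0$ and exactly $p!/J_0!$ members, so the first equality turns into the second.

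Finally, to obtain the two identities for $\operatorname{preimm}^{\circ}$ I would apply the antiautomorphism $\circ$ (the extension of $\sigma\mapsto\sigma^{-1}$ by $(t\otimes a)^{\circ}=t^{\circ}\otimes a$) to both equalities: since $\mathcal{A}$ is commutative this is an antiautomorphism of $\mathbb{C}S_p\otimes\mathcal{A}$, so it reverses the order of the two factors in each summand and sends $\operatorname{preimm}$ to $\operatorname{preimm}^{\circ}$, producing exactly the stated relations with $X$ and $Y$ interchanged in order. The only delicate points are the bookkeeping of which of the three ``preimm'' variants is in use at each stage (commutativity is precisely what makes them interchangeable, so one must track where it is invoked), and the orbit argument in the second step when $J$ has repeated entries — but there the covariance relations above show the summand is genuinely constant on the orbit, so no difficulty arises; I expect this orbit-counting to be the main thing needing care.
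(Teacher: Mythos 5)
Your proof is correct and is exactly the adaptation of the paper's proof of Proposition~\ref{prop:Cauchy_Binet_for_immanants} that the author has in mind: the paper omits this proof with the remark that it is ``almost the same.'' Starting from the $\operatorname{symm-preimm}$ expression, expanding $(XY)_{ij}=\sum_l X_{il}Y_{lj}$, and using commutativity of $\mathcal{A}$ to factor the inner double sum into $\operatorname{column-preimm}X_{IJ}\cdot\operatorname{row-preimm}Y_{JK}$ is precisely the preimmanant version of that computation (the factorization needs no commutativity of the group algebra factors since the $\sigma$-dependent part sits entirely to the left and the $\sigma'$-dependent part entirely to the right). You also supply the two steps the paper leaves to the reader: the orbit-counting that converts the sum over $[n']^p$ into the sum over $\bibinom{[n']}{p}$ with weight $1/J!$ (your covariance computation $X_{I\rho(J)}=X_{IJ}\rho$, $Y_{\rho(J)K}=\rho^{-1}Y_{JK}$ combined with Proposition~\ref{prop:preimm_and_multiplications_by_permutations} is exactly what makes the summand orbit-constant), and the observation that $\circ$ is an antiautomorphism of $\mathbb{C}S_p\otimes\mathcal{A}$ when $\mathcal{A}$ is commutative, which transports the first pair of identities to the $\operatorname{preimm}^{\circ}$ pair with the factor order reversed. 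Both supplements are sound.
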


The proof is almost the same as that of 
Proposition~\ref{prop:Cauchy_Binet_for_immanants}, so that we omit it.

In Section~\ref{sec:higher_Capelli}, we will obtain
noncommutative analogues of this proposition,
which are regarded as translations of the higher Capelli identities.

Proposition~\ref{prop:Cauchy_Binet_for_preimmanants} 
does not hold in general if $\mathcal{A}$ is noncommutative,
but we can consider an analogue of $\operatorname{imm}_{\lambda,p}$
without the assumption of commutativity.

\begin{proposition}\label{prop:preimm_p}\sl
   Even if $\mathcal{A}$ is noncommutative, the following assertions hold: 
   \begin{enumerate}
   \item[(i)]
   We have
   \begin{align*}
      & \sum_{I \in [n]^p} 
	  \operatorname{column-preimm}X_{II}
	  = \sum_{I \in [n]^p} 
	  \operatorname{row-preimm}X_{II}
	  = \sum_{I \in [n]^p} 
	  \operatorname{symm-preimm}X_{II} \\
      & \quad
      = \sum_{I \in [n]^p} 
	  \operatorname{column-preimm}^{\circ}X_{II}
      = \sum_{I \in [n]^p} 
	  \operatorname{row-preimm}^{\circ}X_{II}
      = \sum_{I \in [n]^p} 
	  \operatorname{symm-preimm}^{\circ}X_{II}.
   \end{align*}
   We denote by ``$\operatorname{preimm}_p X$'' this quantity divided by $p!$.
   \item[(ii)]
   The quantity $\operatorname{preimm}_p X$ is commutative 
   with any element in $\mathbb{C}S_p$. 
   \item[(iii)] 
   The quantity $\operatorname{preimm}_p X$ is invariant 
   under the conjugation by $GL_n(\mathbb{C})$.
   \end{enumerate}
\end{proposition}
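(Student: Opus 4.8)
The plan is to prove the three parts of Proposition~\ref{prop:preimm_p} by reducing them, via the algebras $\bar{T}(V)$, $\bar{T}^{\circ}(V)$, and $\bar{T}(V,V^*)$, to facts already established — most importantly Proposition~\ref{prop:centrality_of_<Phi>} and Proposition~\ref{prop:relations_for_varphi_and_varphi*}, together with the $GL(V)$-equivariance built into the bilinear maps of Section~\ref{subsec:def_of_couplings}. Throughout I would work with a matrix $X=(X_{ij})\in\operatorname{Mat}_n(\mathcal{A})$ and encode its entries by setting, for a basis $e_1,\dots,e_n$ of an $n$-dimensional $V$ with dual basis $e_1^*,\dots,e_n^*$, the elements $\xi_j=\sum_i e_i X_{ij}\in T_1(V)\otimes\mathcal{A}$ and $\xi_j^*=\sum_i e_i^* X_{ij}\in T_1^{\circ}(V^*)\otimes\mathcal{A}$, so that the various preimmanants become couplings of products $\xi_{j_1}\cdots\xi_{j_n}$ and $\xi_{j_1}^*\cdots\xi_{j_n}^*$ (cf.\ the exterior-calculus expressions (\ref{eq:expression_of_column-det1})–(\ref{eq:expression_of_symm_det}) in the determinant case). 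Concretely, $\operatorname{column-preimm}^{\circ}X_{II}$ and the others arise by taking $\diamond$-products and the bracket $\langle\cdot\rangle$ of Section~\ref{subsec:def_of_couplings} of these generating elements.

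For part (i) — the six-fold equality after summing over $I\in[n]^p$ — I would first dispose of the three "preimm vs.\ preimm${}^{\circ}$" coincidences: applying the antiautomorphism $t\mapsto t^{\circ}$ and relabeling the summation $\sigma\mapsto\sigma^{-1}$ turns $\operatorname{column-preimm}$ into $\operatorname{row-preimm}^{\circ}$ and so on, so after summing over the diagonal index set $[n]^p$ (which is stable under the relabeling of $I$ forced by Proposition~\ref{prop:preimm_and_multiplications_by_permutations}) the starred and unstarred sums agree pairwise. The remaining equality $\sum_I\operatorname{column-preimm}X_{II}=\sum_I\operatorname{row-preimm}X_{II}=\sum_I\operatorname{symm-preimm}X_{II}$ is the preimmanant analogue of the already-noted identity $\sum_I\operatorname{column-imm}_\lambda X_{II}=\sum_I\operatorname{row-imm}_\lambda X_{II}=\sum_I\operatorname{symm-imm}_\lambda X_{II}$ from Section~6.3; the same "changing the order of the $X_{ij}$'s under a simultaneous relabeling $I\mapsto\sigma(I)$" calculation used there goes through verbatim at the level of $\mathbb{C}S_p\otimes\mathcal{A}$, because reindexing permutations commutes with the formal symbol $\sigma\in\mathbb{C}S_p$ that is being carried along. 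I would present this as essentially Proposition~\ref{prop:relations_for_varphi_and_varphi*}(ii) applied to $\varphi_J=\xi_{j_p}\cdots\xi_{j_1}$, $\varphi^*_J=\xi^*_{j_1}\cdots\xi^*_{j_p}$, whose relations (\ref{eq:key_relation_to_make_central_elements_in_CS}) hold by construction.

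Part (ii) is then immediate: once $\operatorname{preimm}_p X$ is identified with $\frac{1}{p!}\sum_{J\in[n]^p}\langle\varphi_J\varphi^*_J\rangle$ (equivalently $\frac{1}{p!}\sum_J\langle\varphi^*_J,\varphi_J\rangle$) for the above $\varphi_J,\varphi^*_J$, the centrality in $\mathbb{C}S_p$ is exactly Proposition~\ref{prop:relations_for_varphi_and_varphi*}(ii), and also follows from Proposition~\ref{prop:centrality_of_<Phi>} since $\sum_J\varphi_J\diamond\varphi^*_J$ lies in $\bar T_{p,p}(V,V^*)$. For part (iii), I would use the $GL(V)$-equivariance of $\diamond$ and of $\langle\cdot\rangle$: replacing $X$ by $gXg^{-1}$ replaces $\xi_j$ by $\sum_k g_{\cdot}\xi_k (g^{-1})_{kj}$-type combinations, i.e.\ acts by $g$ on the $V$-factors and by ${}^tg^{-1}$ on the $V^*$-factors, and the normalized trace-like bracket $\langle\Phi\rangle=\frac{1}{p!}\sum_I e^*_{i_1}\cdots e^*_{i_p}\diamond\Phi\diamond e_{i_p}\cdots e_{i_1}$ is visibly $GL(V)$-invariant because $\sum_I e_{i_p}\cdots e_{i_1}\otimes e^*_{i_1}\cdots e^*_{i_p}$ is the invariant "Casimir" tensor. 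This yields $\operatorname{preimm}_p(gXg^{-1})=\operatorname{preimm}_p X$, and specializing via $\chi_\lambda$ gives Proposition~\ref{prop:invariance_of_imm_p} as the promised corollary.

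The step I expect to be the main obstacle is part (i): one must carry out the reindexing arguments carefully in $\mathbb{C}S_p\otimes\mathcal{A}$ rather than in $\mathcal{A}$, keeping track of how the bookkeeping permutation transforms, and in particular check that summing over the full index set $[n]^p$ (as opposed to $\bibinom{[n]}{p}$ with weights $1/I!$, which is the alternative presentation) is what makes the noncommutative $\operatorname{column}$–$\operatorname{row}$–$\operatorname{symm}$ discrepancies collapse. Once the generating-element dictionary $X\leftrightarrow(\xi_j,\xi_j^*)$ is set up and (i) is secured, parts (ii) and (iii) are short consequences of the already-proved propositions.
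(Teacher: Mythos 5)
Your treatments of parts (ii) and (iii) essentially agree with the paper: part (iii) is proved in Section~7 from Lemma~\ref{lem:expression_of_preimm} ($\operatorname{preimm}_p X=\langle\Xi^{(p)}\rangle$) together with the basis-independence of $\langle\cdot\rangle$, which is exactly your ``invariant Casimir tensor'' observation; and the paper proves (ii) directly from Proposition~\ref{prop:preimm_and_multiplications_by_permutations} but remarks at the end of Section~7 that your route via Proposition~\ref{prop:centrality_of_<Phi>} also works.

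Part (i) is where the plan has concrete gaps. The opening step, ``applying $t\mapsto t^\circ$ and relabeling $\sigma\mapsto\sigma^{-1}$ turns $\operatorname{column-preimm}$ into $\operatorname{row-preimm}^\circ$,'' is not a valid manipulation: by definition $\circ$ sends $\operatorname{column-preimm}$ to $\operatorname{column-preimm}^\circ$, and relabeling the $\sigma$- and $I$-sums only produces identities \emph{within} the unstarred family ($\sum_I\operatorname{column-preimm}X_{II}=\sum_I\operatorname{row-preimm}X_{II}$) or \emph{within} the starred family, never across the two. The starred/unstarred coincidence is precisely the statement $t^\circ=t$, and this holds only once one knows $t$ is central in $\mathbb{C}S_p$ (every $\sigma\in S_p$ being conjugate to $\sigma^{-1}$); the paper's order — unstarred equalities, then centrality via Proposition~\ref{prop:preimm_and_multiplications_by_permutations}, then $t^\circ=t$ — cannot be inverted. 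Your proposed invocation of Proposition~\ref{prop:relations_for_varphi_and_varphi*}(ii) with $\varphi_J=\xi_{j_p}\cdots\xi_{j_1}$, $\varphi^*_J=\xi^*_{j_1}\cdots\xi^*_{j_p}$ also does not go through as stated: these are elements of $T_p(V)\otimes\mathcal{A}$, and over a noncommutative $\mathcal{A}$ they fail hypothesis (\ref{eq:key_relation_to_make_central_elements_in_CS}), since $\xi_{j_p}\cdots\xi_{j_1}\sigma=\sum_I e_{i_{\sigma(p)}}\cdots e_{i_{\sigma(1)}}\,X_{i_pj_p}\cdots X_{i_1j_1}$ and the $\mathcal{A}$-factors are not reordered along with the $e$'s, so this is not $\xi_{j_{\sigma(p)}}\cdots\xi_{j_{\sigma(1)}}$ (this failure is exactly what the Jucys--Murphy shifts $y_k$ in $\xi_j(y_l)$ compensate for in Section~9, where the entries are $E_{ij}\in U(\mathfrak{gl}_n)$). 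One further caution: even for the unstarred part, the step $\sum_I\operatorname{column-preimm}X_{II}=\sum_I\operatorname{symm-preimm}X_{II}$ requires more care than the corresponding $\operatorname{imm}_\lambda$ equality of Section~6.3, which leans on $\chi_\lambda$ being a class function; before trusting the relabeling at the preimmanant level, check it explicitly for, say, $p=3$, $n=2$ with free noncommutative entries.
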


\begin{proof}[Proof of Proposition~{\sl \ref{prop:preimm_p}} (i) and (ii)]
A simple calculation shows us
$$
   \sum_{I \in [n]^p} \operatorname{column-preimm}X_{II}
   = \sum_{I \in [n]^p} \operatorname{row-preimm}X_{II}
   = \sum_{I \in [n]^p} \operatorname{symm-preimm}X_{II}.
$$
This quantity commutes with any element in $\mathbb{C}S_p$.
Indeed, we have
$$
   \sigma^{-1} (\operatorname{symm-preimm}X_{II}) \sigma
   = \operatorname{symm-preimm} (\sigma^{-1} X_{II} \sigma) 
   = \operatorname{symm-preimm}X_{\sigma(I)\sigma(I)}
$$
using Proposition~\ref{prop:preimm_and_multiplications_by_permutations}.
Since we have $t = t^{\circ}$ for any central element $t$ in $\mathbb{C}S_p$,
we conclude (i) and (ii).
\end{proof}

We will show Proposition~\ref{prop:preimm_p} (iii) using the algebra $\bar{T}(V,V^*)$ 
in Section~\ref{sec:immanants_and_algebras}.

\subsection{}\label{subsec:various_expressions_of_preimm_p}
%
The immanant and the preimmanant have interesting relations with symmetric functions.
Let $X$ be a matrix in $\operatorname{Mat}_n(\mathbb{C})$.
Then, as mentioned in Section~1.4 of~\cite{O1} or Section~2 of \cite{OO},
the following relation holds:

\begin{proposition}\sl
   We have
   $$
      \operatorname{imm}_{\lambda,p}X = s_{\lambda}(a_1,\ldots,a_n).
   $$
   Here $s_{\lambda}$ is the Schur polynomial, and
   $a_1,\ldots,a_n$ are the eigenvalues of $X$.
\end{proposition}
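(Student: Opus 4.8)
The plan is to reduce $\operatorname{imm}_{\lambda,p}X$ to a linear combination of power sums in the eigenvalues of $X$ and then recognize the Frobenius character formula for the Schur polynomial. Concretely, I would start from the expression
$$
   \operatorname{imm}_{\lambda,p}X
   = \frac{1}{p!}\sum_{I \in [n]^p}\operatorname{row-imm}_{\lambda}X_{II}
   = \frac{1}{p!}\sum_{\sigma \in S_p}\chi_{\lambda}(\sigma^{-1})
     \sum_{I \in [n]^p} X_{i_1 i_{\sigma(1)}} X_{i_2 i_{\sigma(2)}} \cdots X_{i_p i_{\sigma(p)}},
$$
and evaluate the inner sum over $I = (i_1,\dots,i_p) \in [n]^p$ by decomposing $\sigma$ into disjoint cycles. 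For a cycle $(a_1\,a_2\,\cdots\,a_{\ell})$ of $\sigma$ the corresponding factors contract to $\sum_{i_{a_1},\dots,i_{a_{\ell}}} X_{i_{a_1}i_{a_2}} X_{i_{a_2}i_{a_3}} \cdots X_{i_{a_{\ell}}i_{a_1}} = \operatorname{tr}(X^{\ell})$, so, since the cycles partition $\{1,\dots,p\}$, the whole inner sum equals $\prod_j \operatorname{tr}(X^{\ell_j})$, where $\ell_1,\ell_2,\dots$ are the cycle lengths of $\sigma$.

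Next I would use that $\operatorname{tr}(X^{k}) = a_1^{k} + \cdots + a_n^{k} = p_k(a_1,\dots,a_n)$ for the eigenvalues $a_1,\dots,a_n$ of $X$ (counted with algebraic multiplicity), which holds for every $X \in \operatorname{Mat}_n(\mathbb{C})$ by passing to the Jordan (or Schur triangular) form, so the inner sum equals $p_{\mu}(a_1,\dots,a_n)$ with $\mu = \mu(\sigma)$ the cycle type of $\sigma$. Since $\chi_{\lambda}(\sigma^{-1}) = \chi_{\lambda}(\sigma)$ by (\ref{eq:relations_for_characters}) and $\chi_{\lambda}$ is a class function, grouping the sum over $\sigma$ by cycle type and using that exactly $p!/z_{\mu}$ permutations in $S_p$ have cycle type $\mu$ yields
$$
   \operatorname{imm}_{\lambda,p}X
   = \sum_{\mu \vdash p} \frac{\chi_{\lambda}(\mu)}{z_{\mu}}\, p_{\mu}(a_1,\dots,a_n).
$$
This is precisely the Frobenius expansion of the Schur function $s_{\lambda}$ in the power-sum basis, so the right-hand side equals $s_{\lambda}(a_1,\dots,a_n)$, as claimed.

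I do not expect a serious obstacle: the computation is the classical ``trace of a permuted tensor power'' identity together with Frobenius's theorem. The only points requiring care are the cycle-by-cycle bookkeeping of the contraction (keeping track of which index variable is shared between consecutive matrix entries) and the fact that $\operatorname{tr}(X^k) = p_k(\text{eigenvalues})$ must be invoked for possibly non-diagonalizable $X$, which is immediate from triangularizability over $\mathbb{C}$. As an alternative route one could first apply Proposition~\ref{prop:invariance_of_imm_p} to reduce to diagonalizable $X$, hence (both sides being polynomial, thus continuous, in the entries of $X$) to diagonal $X = \operatorname{diag}(a_1,\dots,a_n)$, where the same cycle decomposition gives $\operatorname{imm}_{\lambda,p}X = \sum_{m} K_{\lambda m}\, a_1^{m_1}\cdots a_n^{m_n}$, the monomial expansion of $s_{\lambda}$; but the power-sum computation above avoids any density argument and handles all $X$ uniformly.
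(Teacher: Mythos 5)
Your proof is correct, and it takes a genuinely different route from the paper's. You contract the inner sum $\sum_{I\in[n]^p}X_{i_1 i_{\sigma(1)}}\cdots X_{i_p i_{\sigma(p)}}$ cycle by cycle to get $\operatorname{tr}(X^{\ell_1})\operatorname{tr}(X^{\ell_2})\cdots = p_{\mu(\sigma)}(a_1,\dots,a_n)$ for \emph{arbitrary} $X$, then average over $S_p$ and invoke the Frobenius character formula $s_\lambda = \sum_{\mu\vdash p}z_\mu^{-1}\chi_\lambda(\mu)p_\mu$. The paper instead uses the conjugation invariance (Proposition~\ref{prop:invariance_of_imm_p}) to reduce to triangular $X$, where the only surviving $\sigma$'s lie in the stabilizer $(S_p)_I\cong S_\mu$, and then appeals to the Kostka-number identities $\sum_{\sigma\in S_\mu}\chi_\lambda(\sigma)=|S_\mu|K_{\lambda\mu}$ and $s_\lambda=\sum_\mu K_{\lambda\mu}m_\mu$. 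Your power-sum route is arguably cleaner in that it handles all $X$ at once with no triangularization or density argument, at the cost of invoking Frobenius's formula; the paper's route stays entirely inside the combinatorics of induced representations and Kostka numbers, which fits its surrounding discussion of $\tilde h_\lambda$ and $\operatorname{Ind}_{S_\mu}^{S_p}1$. One small imprecision in your closing alternative: for diagonal $X$ the cycle-contraction still naturally produces power sums rather than the monomial expansion; to land directly on $\sum_\mu K_{\lambda\mu}m_\mu$ one must argue as the paper does, restricting to $\sigma\in(S_p)_I$ and counting, not by cycles. This does not affect your main argument, which is sound.
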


\begin{proof}
It suffices to consider the case when $X$ is triangular,
because both sides are invariant under conjugations.
In this case, the assertion follows from the relations
$$
   s_{\lambda} = \sum_{\mu \vdash p} K_{\lambda\mu} m_{\lambda}, \qquad
   \sum_{\sigma \in S_{\mu}} \chi^{}_{\lambda}(\sigma) = |S_{\mu}| K_{\lambda\mu},
$$
where the first relation is well known formula (for example (7.35) in \cite{S}) 
and the second relation is equivalent to 
$\langle \operatorname{Ind}_{S_{\mu}}^{S_p} 1,\chi_{\lambda} \rangle
= K_{\lambda\mu}$
(Proposition~7.18.7 in \cite{S}).
Here $m_{\lambda}$ is the monomial symmetric polynomial, 
$K_{\lambda\mu}$ is the Kostka number,
and $S_{\mu}$ is the Young subgroup $S_{\mu_1} \times \cdots \times S_{\mu_l}$ of $S_p$
determined by $\mu = (\mu_1,\ldots,\mu_l) \vdash p$.
\end{proof}

Next, let us interpret the preimmanant in the context of symmetric functions.
We denote by $\operatorname{Class}(S_p)$ the set of all class functions on $S_p$.
This can be identified with the center $Z\mathbb{C}S_p$ of $\mathbb{C}S_p$
through the canonical inner product on $\operatorname{Class}(S_p)$.
Moreover, through the Frobenius characteristic map,
we can identify $\operatorname{Class}(S_p)$ with $\operatorname{Symm}_p$, 
the set of all homogeneous symmetric functions in $x_1,x_2,\ldots$ of degree $p$
(Section~7.18 of \cite{S}).
For example,
the following three elements correspond to each other through this identification:
$$
   \tilde{s}_{\lambda} \in Z\mathbb{C}S_p, \qquad
   \chi_{\lambda} \in \operatorname{Class}(S_p), \qquad
   s_{\lambda} \in \operatorname{Symm}_p.
$$
Here we put $\tilde{s}_{\lambda} = \frac{1}{p!} \sum_{\sigma \in S_p} \chi_{\lambda}(\sigma)\sigma$
for $\lambda \vdash p$.
Then the following is easy:

\begin{lemma}\label{lem:tilde_s_lambda}\sl
   We have $t = \sum_{\lambda \vdash p} \chi_{\lambda}(t)\tilde{s}_{\lambda}$
   for any central element $t$ in $\mathbb{C}S_p$.
\end{lemma}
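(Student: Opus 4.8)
The plan is to realize $t$ as a class function on $S_p$ and expand it in the orthonormal basis of irreducible characters. First I would write $t = \sum_{\sigma \in S_p} a(\sigma)\,\sigma$; since $t$ is central in $\mathbb{C}S_p$, conjugation-invariance forces the coefficient function $a \colon S_p \to \mathbb{C}$ to be constant on conjugacy classes, so $a \in \operatorname{Class}(S_p)$. The irreducible characters $\{\chi_\lambda\}_{\lambda \vdash p}$ form an orthonormal basis of $\operatorname{Class}(S_p)$ for the standard inner product $\langle f,g \rangle = \frac{1}{p!}\sum_{\sigma \in S_p} f(\sigma)\overline{g(\sigma)}$, hence $a = \sum_{\lambda \vdash p} \langle a,\chi_\lambda \rangle \chi_\lambda$.

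Next I would evaluate the coefficients. Because the irreducible representations of $S_p$ are defined over $\mathbb{R}$, the characters are real-valued (this is the second relation in (\ref{eq:relations_for_characters})), so $\overline{\chi_\lambda(\sigma)} = \chi_\lambda(\sigma)$ and therefore
$$
   \langle a,\chi_\lambda \rangle
   = \frac{1}{p!}\sum_{\sigma \in S_p} a(\sigma)\,\chi_\lambda(\sigma)
   = \frac{1}{p!}\,\chi_\lambda(t),
$$
the last step being just the definition of the linear extension of $\chi_\lambda$ applied to $t = \sum_\sigma a(\sigma)\sigma$.

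Finally I would substitute back and reorganize the double sum:
$$
   t = \sum_{\sigma \in S_p} a(\sigma)\,\sigma
   = \sum_{\sigma \in S_p} \Bigl( \sum_{\lambda \vdash p} \tfrac{1}{p!}\,\chi_\lambda(t)\,\chi_\lambda(\sigma) \Bigr)\sigma
   = \sum_{\lambda \vdash p} \chi_\lambda(t)\cdot \frac{1}{p!}\sum_{\sigma \in S_p}\chi_\lambda(\sigma)\,\sigma
   = \sum_{\lambda \vdash p} \chi_\lambda(t)\,\tilde{s}_\lambda ,
$$
which is the assertion. I expect no genuine obstacle: the statement is precisely the expansion of a central element in the basis $\{\tilde{s}_\lambda\}_{\lambda \vdash p}$ of $Z\mathbb{C}S_p$, and everything reduces to the two orthogonality relations for irreducible characters already recorded in the paper. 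An equivalent, slightly more structural route would be to note directly that $\chi_\mu(\tilde{s}_\lambda) = \delta_{\lambda\mu}$ by character orthogonality and that the $\tilde{s}_\lambda$ span $Z\mathbb{C}S_p$, then apply $\chi_\mu$ to a putative expansion $t = \sum_\lambda c_\lambda \tilde{s}_\lambda$ to read off $c_\mu = \chi_\mu(t)$; the only point requiring a moment's care in either approach is the reality of $\chi_\lambda$, so that no complex conjugate survives.
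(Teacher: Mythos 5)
Your proof is correct and is precisely the standard character-theoretic argument the paper leaves implicit (the paper merely remarks "the following is easy" with no written proof): realize the coefficient function of a central element as a class function, expand it in the orthonormal basis $\{\chi_\lambda\}$, use reality of symmetric-group characters to drop conjugates, and regroup to obtain the $\tilde{s}_\lambda$-expansion. The alternative route you sketch, checking $\chi_\mu(\tilde{s}_\lambda)=\delta_{\lambda\mu}$ directly, is an equally clean way to fill the same gap.
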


Applying this, we have
\begin{equation}\label{eq:counterpart_of_Cauchy}
   \operatorname{preimm}_p X
   = \sum_{\lambda \vdash p} \operatorname{imm}_{\lambda,p} X \cdot \tilde{s}_{\lambda}
   = \sum_{\lambda \vdash p} s_{\lambda}(a_1,\ldots,a_n) \tilde{s}_{\lambda}.
\end{equation}
We can regard this relation 
as the counterpart of the Cauchy identity (Theorem~7.12.1 in \cite{S})
or the irreducible decomposition of the regular representation of $S_p \times S_p$ on $\mathbb{C}S_p$
through the identification $Z \mathbb{C}S_p \simeq \operatorname{Class}(S_p) \simeq \operatorname{Symm}_p$.

We can generalize (\ref{eq:counterpart_of_Cauchy}) as follows:

\begin{theorem}\sl
   We have 
   $$
      \operatorname{preimm}_p X
      = \sum_{\lambda \vdash p} s_{\lambda}(a_1,\ldots,a_n) \tilde{s}_{\lambda}
      = \sum_{\lambda \vdash p} m_{\lambda}(a_1,\ldots,a_n) \tilde{h}_{\lambda}
      = \sum_{\lambda \vdash p} \frac{p!}{z_{\lambda}}
      p_{\lambda}(a_1,\ldots,a_n) \tilde{p}_{\lambda}. 
   $$
\end{theorem}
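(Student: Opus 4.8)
\emph{Sketch of the argument.} Since $\operatorname{preimm}_p X$ is central in $\mathbb{C}S_p$ (Proposition~\ref{prop:preimm_p}~(ii)), it corresponds, under the chain of identifications $Z\mathbb{C}S_p \simeq \operatorname{Class}(S_p) \simeq \operatorname{Symm}_p$ of Section~\ref{subsec:various_expressions_of_preimm_p}, to a homogeneous symmetric function of degree $p$; write $\operatorname{ch}$ for the Frobenius characteristic map, so that $\operatorname{ch}$ carries $\tilde s_\lambda$, $\tilde h_\lambda$, $\tilde p_\lambda$ onto $s_\lambda$, $h_\lambda$, $p_\lambda$ up to the normalizing scalars fixed there. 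The first of the three claimed equalities is precisely (\ref{eq:counterpart_of_Cauchy}), which is already in hand; the plan is to read the other two off from it.

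The content is the classical Cauchy identity. Equation~(\ref{eq:counterpart_of_Cauchy}) says that the symmetric function attached to $\operatorname{preimm}_p X$ is $\sum_{\lambda\vdash p}s_\lambda(a_1,\dots,a_n)\,s_\lambda$, i.e.\ the degree-$p$ component (in the $x$-variables, viewed as an element of $\operatorname{Symm}_p$) of $\prod_{i,j}(1-a_i x_j)^{-1}$. Its two other standard expansions,
\[
   \sum_{\lambda \vdash p} s_\lambda(a)\,s_\lambda
   = \sum_{\lambda \vdash p} m_\lambda(a)\,h_\lambda
   = \sum_{\lambda \vdash p} z_\lambda^{-1}\,p_\lambda(a)\,p_\lambda
\]
in $\operatorname{Symm}_p$ (Theorem~7.12.1 in~\cite{S}), give the remaining two identities after applying the linear map $\operatorname{ch}^{-1}$ term by term. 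In carrying this out I would (i) recall the precise definitions of $\tilde h_\lambda$ and $\tilde p_\lambda$ from Section~\ref{subsec:various_expressions_of_preimm_p} and record the scalars relating $\operatorname{ch}^{-1}(h_\lambda)$, $\operatorname{ch}^{-1}(p_\lambda)$ to them --- this is where the coefficient $p!/z_\lambda$ in front of $p_\lambda(a)\,\tilde p_\lambda$ (and the absence of any such coefficient in front of $m_\lambda(a)\,\tilde h_\lambda$) comes from --- and (ii) invoke (\ref{eq:counterpart_of_Cauchy}) once more to identify the transported left-hand side with $\operatorname{preimm}_p X$.

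A more concrete variant bypasses the symmetric-function dictionary until the end. By conjugation-invariance (Proposition~\ref{prop:preimm_p}~(iii); in the present commutative case this already follows from Proposition~\ref{prop:Cauchy_Binet_for_preimmanants}) one may assume $X=\operatorname{diag}(a_1,\dots,a_n)$. Then $\operatorname{column-preimm}X_{II}=(a_{i_1}\cdots a_{i_p})\sum_{\sigma\in(S_p)_I}\sigma$, and summing over $I\in[n]^p$ --- for fixed $\sigma$ the contributing sequences $I$ are exactly those constant on the cycles of $\sigma$ --- yields
\[
   \operatorname{preimm}_p X
   = \frac{1}{p!}\sum_{\sigma\in S_p} p_{\rho(\sigma)}(a)\,\sigma
   = \frac{1}{p!}\sum_{\mu\vdash p} p_\mu(a)\sum_{\sigma\in C_\mu}\sigma ,
\]
with $\rho(\sigma)$ the cycle type of $\sigma$ and $C_\mu$ the conjugacy classes. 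From this closed form the $s$-expansion is immediate from $p_\mu=\sum_\lambda\chi_\lambda(\mu)\,s_\lambda$ and $\tilde s_\lambda=\frac{1}{p!}\sum_\sigma\chi_\lambda(\sigma)\sigma$; the $p$-expansion is immediate from the definition of $\tilde p_\mu$; and the $h$-expansion follows by applying $\operatorname{ch}$, recognising $\sum_\mu z_\mu^{-1}p_\mu(a)\,p_\mu=\sum_\lambda m_\lambda(a)\,h_\lambda$, and applying $\operatorname{ch}^{-1}$.

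All the computations are routine. The single point requiring care is the bookkeeping of constants: one must pin down the conventions for $\tilde h_\lambda$ and $\tilde p_\lambda$ so that exactly the coefficients $1$ and $p!/z_\lambda$ appear, rather than, say, $z_\lambda^{-1}$. I do not anticipate any genuine difficulty beyond this.
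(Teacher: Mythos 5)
Your approach is the right one, and the paper itself gives no proof of this theorem — it is stated at the end of Section~\ref{subsec:various_expressions_of_preimm_p} as a direct consequence of (\ref{eq:counterpart_of_Cauchy}) and the symmetric-function dictionary, which is exactly what you propose. Both your routes (transporting the Cauchy identity through $\operatorname{ch}^{-1}$, or diagonalizing $X$ and resumming over sequences constant on cycles) are sound, and your closed form $\operatorname{preimm}_p X = \frac{1}{p!}\sum_{\sigma\in S_p} p_{\rho(\sigma)}(a)\,\sigma$ is correct.

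However, the one thing you explicitly defer — ``pin down the conventions so that exactly the coefficients $1$ and $p!/z_\lambda$ appear'' — is precisely where the argument fails to reproduce the stated theorem, and you should not have left it open. Carrying it out: since $|\operatorname{Conj}_\mu| = p!/z_\mu$ and $\tilde p_\mu = \frac{z_\mu}{p!}\sum_{\sigma\in\operatorname{Conj}_\mu}\sigma$ is the \emph{average} over the conjugacy class, one has $\sum_{\sigma\in\operatorname{Conj}_\mu}\sigma = \frac{p!}{z_\mu}\tilde p_\mu$, so
\[
   \operatorname{preimm}_p X
   = \frac{1}{p!}\sum_\mu p_\mu(a)\!\!\sum_{\sigma\in\operatorname{Conj}_\mu}\!\!\sigma
   = \sum_\mu \frac{1}{z_\mu}\,p_\mu(a)\,\tilde p_\mu,
\]
with coefficient $z_\mu^{-1}$, not $p!/z_\mu$. (A quick check at $p=2$ with $X=\operatorname{diag}(a_1,\dots,a_n)$ gives $\operatorname{preimm}_2 X = \tfrac12 p_{(1,1)}(a)\cdot 1 + \tfrac12 p_{(2)}(a)\cdot(1\,2)$, matching $\sum_\mu z_\mu^{-1}p_\mu(a)\tilde p_\mu$ but not $\sum_\mu (2!/z_\mu)p_\mu(a)\tilde p_\mu$.) So the statement as printed appears to contain a typo in the third sum — the $p!$ should not be there — and your caution about bookkeeping, while well-placed, should have been converted into an actual computation that would have surfaced this. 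The first two equalities of the theorem are correct with the definitions as given; note in particular that $\operatorname{ch}^{-1}(h_\lambda)=\tilde h_\lambda$ does hold on the nose with the paper's normalizations, as you can verify by the change of variable $\tau = g^{-1}\sigma g$ in the definition of $\tilde h_\lambda$.
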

Here, $m_{\lambda}$ and $p_{\lambda}$ are
the monomial symmetric polynomial and the power sum symmetric polynomial, respectively.
Moreover, $\tilde{h}_{\lambda}$ and $\tilde{p}_{\lambda}$ in $Z \mathbb{C}S_p$
denote the counterparts of the complete homogeneous symmetric function $h_{\lambda}$
and the power sum symmetric function $p_{\lambda}$ in $\operatorname{Symm}_p$.
Namely we put
$$
   \tilde{h}_{\lambda}
   = \frac{1}{|S_p||S_{\lambda}|} 
   \sum_{\sigma \in S_p} 
   \sum_{\tau \in S_{\lambda}} 
   \sigma^{-1} \tau \sigma, \qquad
   \tilde{p}_{\lambda}
   = \frac{z_{\lambda}}{p!}\sum_{\sigma \in \operatorname{Conj}_{\lambda}} \sigma.
$$
Here $\operatorname{Conj}_{\lambda} \subset S_p$ is the conjugacy class corresponding to $\lambda$,
and $z_{\lambda}$ means $z_{\lambda} = p!/|\operatorname{Conj}_{\lambda}|$.
These $\tilde{h}_{\lambda}$ and $\tilde{p}_{\lambda}$ correspond to 
$\operatorname{Ind}_{S_{\lambda}}^{S_p} 1$
and $1_{\operatorname{Conj}_{\lambda}}$ in $\operatorname{Class}(S_p)$,
respectively.
Here $1_{\operatorname{Conj}_{\lambda}}$ is 
the characteristic function of $\operatorname{Conj}_{\lambda}$:
$$
   1_{\operatorname{Conj}_{\lambda}}(\sigma) = 
   \begin{cases}
      1, & \sigma \in \operatorname{Conj}_{\lambda}, \\
	  0,  & \sigma \not\in \operatorname{Conj}_{\lambda}.
   \end{cases}
$$

%
\section{immanants and extensions of the tensor algebra}
\label{sec:immanants_and_algebras}
%
%
In Section~\ref{sec:Capelli_and_FFT}, we used the exterior algebra
in order to treat noncommutative determinants conveniently.
Replacing the exterior algebra by the algebras 
$\bar{T}(V)$, $\bar{T}^{\circ}(V)$, and $\bar{T}(V,V^*)$,
we can treat the noncommutative immanants similarly. 
We can regard this approach as a kind of generating function method. 
This method plays an important role to treat the quantum immanant in Section~\ref{sec:quantum_immanants}.

\subsection{}
%
Before going to the main subject,
we note a natural construction of extensions of bilinear maps.
Let $U_1$, $U_2$, and $U_3$ be three $\mathbb{C}$-vector spaces,
and consider  a bilinear map $\langle \cdot, \cdot \rangle \colon U_1 \times U_2 \to U_3$.
We consider another $\mathbb{C}$-vector space $\mathcal{A}$, 
and define bilinear maps
$$
   \langle \cdot, \cdot \rangle_R \colon 
   U_1 \times U_2 \otimes \mathcal{A} \to U_3 \otimes \mathcal{A}, \qquad
   \langle \cdot, \cdot \rangle_L \colon 
   U_1 \otimes \mathcal{A} \times U_2 \to U_3 \otimes \mathcal{A}
$$
by
$\langle u_1, u_2 \otimes a \rangle_R = \langle u_1, u_2 \rangle \otimes a$
and 
$\langle u_1 \otimes a, u_2 \rangle_L = \langle u_1, u_2 \rangle \otimes a$.
Let us denote these $\langle \cdot,\cdot \rangle_R$ and $\langle \cdot,\cdot \rangle_L$
by the same symbol $\langle \cdot,\cdot \rangle$,
when no confusion arises.
All bilinear maps which we use in this section
are defined from the bilinear maps given in Section~\ref{subsec:def_of_couplings}
in this way.

\subsection{}
%
First, let us see how to treat the column-immanant and the row-immanant.

Let $\mathcal{A}$ be an associative $\mathbb{C}$-algebra,
and fix $X = (X_{ij})_{1 \leq i,j \leq n} \in \operatorname{Mat}_n(\mathcal{A})$.
We consider an $n$-dimensional $\mathbb{C}$-vector space $V$
and work in the extended algebra $\bar{T}(V) \otimes \mathcal{A}$
in which the two algebras $\bar{T}(V)$ and $\mathcal{A}$ commute with each other.

Fix a basis $e_1,\ldots,e_n$ of $V$ and its dual basis $e^*_1,\ldots,e^*_n$.
We consider 
$\xi_j = \sum_{i=1}^n e_i X_{ij} \in \bar{T}(V) \otimes \mathcal{A}$.
Then we have
\begin{align*}
   \langle e^*_{i_1} \cdots e^*_{i_p},
      \xi_{j_p} \cdots \xi_{j_1} \rangle
   & = \langle
   e^*_{i_1} \cdots e^*_{i_p}, \sum_{K \in [n]^p} e_{k_p} \cdots e_{k_1} 
   X_{k_p j_p} \cdots X_{k_1 j_1} 
   \rangle \\
   & =  \sum_{\sigma \in S_p} \frac{1}{I!}
   \langle
   e^*_{i_1} \cdots e^*_{i_p},
   e_{i_{\sigma(p)}} \cdots e_{i_{\sigma(1)}}
   \rangle   
   X_{i_{\sigma(p)} j_p} \cdots X_{i_{\sigma(1)} j_1} \\
   & = 
   \sum_{\sigma \in S_p} s_I \sigma 
   X_{i_{\sigma(p)} j_p} \cdots X_{i_{\sigma(1)} j_1} \\
   & = 
   \sum_{\sigma \in S_p} \sigma 
   X_{i_{\sigma(p)} j_p} \cdots X_{i_{\sigma(1)} j_1} \\
   & = \operatorname{column-preimm} X_{I^{\circ}J^{\circ}}.
\end{align*}
Here, we see the third equality, because (\ref{eq:pairing}) implies
$$
   \langle
   e^*_{i_1} \cdots e^*_{i_p}, 
   e_{i_{\sigma(p)}} \cdots e_{i_{\sigma(1)}}
   \rangle   
   = L(e^*_{i_1}) \cdots L(e^*_{i_p}) e_{i_{\sigma(p)}} \cdots e_{i_{\sigma(1)}} 
   = I! s_I \sigma.
$$
Moreover, we see the fourth equality,
because $s_I = \frac{1}{I!} \sum_{\sigma' \in (S_p)_I} \sigma'$ and 
$$
   \sum_{\sigma \in S_p} \sigma' \sigma X_{i_{\sigma(p)}j_p} \cdots X_{i_{\sigma(1)}j_1}
   = \sum_{\sigma \in S_p} \sigma X_{i_{\sigma(p)}j_p} \cdots X_{i_{\sigma(1)}j_1}
$$
for $\sigma' \in (S_p)_I$.

We have a similar calculation for 
$\xi^*_i = \sum_{j=1}^n X_{ij} e^*_j 
\in \bar{T}^{\circ}(V^*) \otimes \mathcal{A}$.
Thus, the following lemma holds:

\begin{lemma}\label{lem:expression_of_column-imm}
   \sl
   For $I,J \in [n]^p$, we have
   \begin{align*}
      \operatorname{column-preimm} X_{I^{\circ}J^{\circ}}
      &= \langle e^*_{i_1} \cdots e^*_{i_p},
      \xi_{j_p} \cdots \xi_{j_1} \rangle, \\
      \operatorname{column-imm}_{\lambda} X_{I^{\circ}J^{\circ}}
	  &= \langle e^*_{i_1} \cdots e^*_{i_p},
      \xi_{j_p} \cdots \xi_{j_1} \rangle_{\lambda}, \\
      \operatorname{row-preimm} X_{IJ}
      &= 
	  \langle \xi^*_{i_1} \cdots \xi^*_{i_p}, 
	  e_{j_p} \cdots e_{j_1} \rangle, \\
      \operatorname{row-imm}_{\lambda} X_{IJ}
	  &= 
	  \langle \xi^*_{i_1} \cdots \xi^*_{i_p}, 
	  e_{j_p} \cdots e_{j_1} \rangle_{\lambda}.
   \end{align*}
\end{lemma}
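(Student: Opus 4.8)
The first of the four identities is already contained in the computation displayed immediately before the statement, so for it there is nothing further to do: one expands $\xi_{j_p}\cdots\xi_{j_1}=\sum_{K\in[n]^p}e_{k_p}\cdots e_{k_1}\,X_{k_p j_p}\cdots X_{k_1 j_1}$ in $\bar T(V)\otimes\mathcal A$, pairs against $e^*_{i_1}\cdots e^*_{i_p}$ by means of the coupling $\langle\cdot,\cdot\rangle\colon\bar T^\circ_p(V^*)\times\bar T_p(V)\to\mathbb CS_\infty$ of Section~\ref{subsec:def_of_couplings} (extended to $\mathcal A$-coefficients on the right as in the previous subsection), and invokes (\ref{eq:pairing}) in the form $\langle e^*_{i_1}\cdots e^*_{i_p},\,e_{i_{\sigma(p)}}\cdots e_{i_{\sigma(1)}}\rangle=I!\,s_I\sigma$ together with (\ref{eq:symmetric_expression}); the summation over the stabilizer $(S_p)_I$ absorbs the idempotent $s_I$, and what is left is exactly $\sum_{\sigma\in S_p}\sigma\,X_{i_{\sigma(p)}j_p}\cdots X_{i_{\sigma(1)}j_1}=\operatorname{column-preimm}X_{I^\circ J^\circ}$ by the definition in Section~\ref{subsec:preimmanants}.

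The plan for the third identity is to run the mirror image of this argument inside $\bar T^\circ(V^*)\otimes\mathcal A$. I would write $\xi^*_{i_1}\cdots\xi^*_{i_p}=\sum_{K\in[n]^p}X_{i_1 k_1}\cdots X_{i_p k_p}\,e^*_{k_1}\cdots e^*_{k_p}$, pair it against $e_{j_p}\cdots e_{j_1}$ using the same coupling (now with the $\mathcal A$-coefficients sitting on the left, i.e. the map $\langle\cdot,\cdot\rangle_L$), and again use (\ref{eq:pairing}): the pairing $L(e^*_{k_1})\cdots L(e^*_{k_p})\,e_{j_p}\cdots e_{j_1}$ vanishes unless $K$ is a rearrangement of $J$, and collecting the surviving terms (with the stabilizer $(S_p)_J$ again absorbing $s_J$) produces $\sum_{\sigma\in S_p}X_{i_1 j_{\sigma(1)}}\cdots X_{i_p j_{\sigma(p)}}\,\sigma^{-1}$, which is $\operatorname{row-preimm}X_{IJ}$ by the definition in Section~\ref{subsec:preimmanants}.

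Finally, the two ``$\lambda$'' identities follow from the two ``preimm'' identities by applying the character map. Since $\langle x^*,x\rangle_\lambda=\chi_\lambda(\langle x^*,x\rangle)$ by definition, and $\chi_\lambda\colon\mathbb CS_p\otimes\mathcal A\to\mathcal A$ acts only on the group-algebra factor (hence commutes with the $\mathcal A$-coefficients), applying $\chi_\lambda$ to the first and third identities and using $\operatorname{column-imm}_\lambda X=\chi_\lambda(\operatorname{column-preimm}X)$ and $\operatorname{row-imm}_\lambda X=\chi_\lambda(\operatorname{row-preimm}X)$ from Section~\ref{subsec:preimmanants} yields the second and fourth identities at once. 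The only genuinely delicate point in the whole proof is the bookkeeping: one must keep the index reversals $I\mapsto I^\circ$, $J\mapsto J^\circ$ and the placement of the permutation factor ($\sigma$ on the left in the column case, $\sigma^{-1}$ on the right in the row case) consistent with the definitions of Section~\ref{subsec:preimmanants} and with the right-to-left numbering of vectors used throughout; once the patterns are pinned down on the cases $p=1,2$, the general statement is a routine rearrangement.
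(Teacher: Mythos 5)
Your proposal is correct and follows essentially the same approach as the paper: the first identity is exactly the displayed computation preceding the lemma (expand $\xi_{j_p}\cdots\xi_{j_1}$, pair using~(\ref{eq:pairing}) and~(\ref{eq:symmetric_expression}), absorb $s_I$ into the sum), the third is the mirror computation in $\bar T^\circ(V^*)\otimes\mathcal A$ which the paper indicates only with ``we have a similar calculation for $\xi^*_i$,'' and the two $\lambda$-identities follow by applying $\chi_\lambda$ as the paper's definitions in Section~\ref{subsec:preimmanants} make immediate.
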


Similarly,
for $\gamma^*_j = \sum_{i=1}^n X_{ij} e^*_i 
\in \bar{T}^{\circ}(V^*) \otimes \mathcal{A}$
and 
$\gamma_i = \sum_{j=1}^n e_j X_{ij} 
\in \bar{T}(V) \otimes \mathcal{A}$,
we have the following lemma:

\begin{lemma} \sl
   For $I,J \in [n]^p$, we have
   \begin{align*}
      \operatorname{column-preimm}^{\circ} X_{IJ}
      &= 
	  \langle 
      \gamma^*_{j_p} \cdots \gamma^*_{j_1}, 
	  e_{i_1} \cdots e_{i_p}
	  \rangle, \\
      \operatorname{column-imm}_{\lambda} X_{IJ}
	  &= 
	  \langle 
      \gamma^*_{j_p} \cdots \gamma^*_{j_1}, 
	  e_{i_1} \cdots e_{i_p}
	  \rangle_{\lambda}, \\
      \operatorname{row-preimm}^{\circ} X_{I^{\circ}J^{\circ}}
      &=
	  \langle  
	  e^*_{j_1} \cdots e^*_{j_p}, 
	  \gamma_{i_p} \cdots \gamma_{i_1}
	  \rangle, \\
      \operatorname{row-imm}_{\lambda} X_{I^{\circ}J^{\circ}}
	  &= \langle  
	  e^*_{j_1} \cdots e^*_{j_p}, 
	  \gamma_{i_p} \cdots \gamma_{i_1}
	  \rangle_{\lambda}.
   \end{align*}
\end{lemma}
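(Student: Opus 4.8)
The plan is to prove the four identities by exactly the computation that established Lemma~\ref{lem:expression_of_column-imm}, now run with $\gamma^*_j=\sum_i X_{ij}e^*_i$ and $\gamma_i=\sum_j e_j X_{ij}$ in place of $\xi_j$ and $\xi^*_i$. Take the first identity as a model. Working in $\bar T^\circ(V^*)\otimes\mathcal A$ one expands
\[
   \gamma^*_{j_p}\cdots\gamma^*_{j_1}
   =\sum_{K\in[n]^p} e^*_{k_p}\cdots e^*_{k_1}\,X_{k_p j_p}\cdots X_{k_1 j_1},
\]
and then, by the definition of the coupling $\langle\cdot,\cdot\rangle$ from Section~\ref{subsec:def_of_couplings},
\[
   \langle\gamma^*_{j_p}\cdots\gamma^*_{j_1},e_{i_1}\cdots e_{i_p}\rangle
   =\sum_{K\in[n]^p}\bigl(L(e^*_{k_p})\cdots L(e^*_{k_1})\,e_{i_1}\cdots e_{i_p}\bigr)\,X_{k_p j_p}\cdots X_{k_1 j_1}.
\]
I would evaluate the bracket with (\ref{eq:pairing}) just as in the proof of Lemma~\ref{lem:expression_of_column-imm}: only the sequences $K$ that are permutations of $I$ survive, the bracket equals $I!\,s_I$ times a permutation, the multiplicity factor $1/I!$ and the idempotent $s_I$ (resp. $s_J$) are absorbed exactly as in the earlier computation, and what remains is precisely $\operatorname{column-preimm}^\circ X_{IJ}$.

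The identities involving $\gamma_i$ are obtained the same way, pairing a product of $e^*$'s against $\gamma_{i_p}\cdots\gamma_{i_1}\in\bar T(V)\otimes\mathcal A$, and each immanant version follows from the corresponding preimmanant version by applying the irreducible character, i.e. by passing from $\langle\cdot,\cdot\rangle$ to $\langle\cdot,\cdot\rangle_\lambda=\chi_\lambda(\langle\cdot,\cdot\rangle)$. An alternative route is to reduce directly to Lemma~\ref{lem:expression_of_column-imm}: since $\gamma^*_j$ and $\gamma_i$ are exactly the vectors $\xi^*_i$ and $\xi_j$ built from the transposed matrix ${}^tX$, the four couplings here are the values furnished by that lemma for ${}^tX$, and one finishes using the transposition relations between $\operatorname{column-preimm}^\circ$, $\operatorname{row-preimm}$, and so on.

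The one place that demands care is the bookkeeping of orderings: which of the two indices of $\gamma^*$ (respectively of $\gamma$) is summed and which is free, so that the reversed sequences $I^\circ$, $J^\circ$ land in exactly the two identities where they belong; and, most delicately, on which side the resulting $S_\infty$-factor ends up --- on the left, giving $\sigma X\cdots$, for the plain variants, versus on the right, giving $X\cdots\sigma^{-1}$, for the $\circ$-variants. This is precisely what distinguishes $\operatorname{column-preimm}$ from $\operatorname{column-preimm}^\circ$ and $\operatorname{row-preimm}$ from $\operatorname{row-preimm}^\circ$ among the four cases, and it is the only thing beyond a routine rerun of the earlier argument.
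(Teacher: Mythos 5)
Your overall strategy is exactly the one the paper intends: the lemma is stated after Lemma~\ref{lem:expression_of_column-imm} with only the word ``Similarly,'' and a direct rerun of that computation with $\gamma^*_j=\sum_i X_{ij}e^*_i$ and $\gamma_i=\sum_j e_j X_{ij}$, or equivalently a reduction to Lemma~\ref{lem:expression_of_column-imm} applied to ${}^t\!X$, is certainly what is meant. You also correctly flag the one genuinely nonroutine point: the index bookkeeping that decides where $I^\circ,J^\circ$ versus $I,J$ appear and on which side the $\mathbb{C}S_\infty$-factor lands. The gap is that you stop at flagging it rather than resolving it, and this is precisely where the statement needs to be checked rather than assumed.

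Concretely: carrying out your own plan for the first line gives, for $K=\tau(I)$, the pairing $\langle e^*_{k_p}\cdots e^*_{k_1}, e_{i_1}\cdots e_{i_p}\rangle = I!\,s_{K^\circ}\,\varepsilon\tau^{-1}\varepsilon$ (where $\varepsilon$ denotes the order-reversing permutation $m\mapsto p-m+1$), so the whole sum comes out as $\operatorname{column\text{-}preimm}^{\circ}X_{I^{\circ}J^{\circ}}$, not $\operatorname{column\text{-}preimm}^{\circ}X_{IJ}$ as printed in the lemma. Already for $p=2$ the two differ in a noncommutative $\mathcal{A}$ (the $e$-coefficient is $X_{i_2j_2}X_{i_1j_1}$ on the computed side versus $X_{i_1j_1}X_{i_2j_2}$ on the printed side), so this is not a cosmetic discrepancy. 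Your second route gives the same conclusion: applying the $\operatorname{row\text{-}preimm}$ identity of Lemma~\ref{lem:expression_of_column-imm} to ${}^t\!X$ and using $\operatorname{row\text{-}preimm}({}^t\!Y)=\operatorname{column\text{-}preimm}^{\circ}Y$ yields $\operatorname{column\text{-}preimm}^{\circ}X_{I^{\circ}J^{\circ}}$, again with the reversed sequences. So a complete proof cannot simply defer to ``the bookkeeping works out as before''; it must either establish $X_{I^{\circ}J^{\circ}}$ in the first two lines (which is what both of your proposed routes actually produce, and which matches the pattern of Lemma~\ref{lem:expression_of_column-imm}), or explain how the printed $X_{IJ}$ arises. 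Until that is pinned down, the proposal is an outline rather than a proof, and the one place you yourself identified as ``the only thing beyond routine'' is exactly where it breaks down.
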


\subsection{}
%
To treat the symmetrized immanant, the algebra $\bar{T}(V,V^*)$ is useful.
We consider the following element of $\bar{T}(V,V^*) \otimes \mathcal{A}$:
$$
   \Xi 
   = \sum_{i,j=1}^n e_i X_{ij} e^*_j 
   = \sum_{j=1}^n \xi_j e^*_j
   = \sum_{i=1}^n e_i \xi^*_i.
$$
Then, we have
\begin{align*}
   &  e^*_{i_1} \cdots e^*_{i_p}
   \diamond
   \Xi^p 
   \diamond
   e_{j_p} \cdots e_{j_1} \\
   & \quad
   = \sum_{K,L \in [n]^p} 
   e^*_{i_1} \cdots e^*_{i_p}
   \diamond
   e_{k_p} \cdots e_{k_1} 
   X_{k_1 l_1} \cdots X_{k_p l_p} 
   e^*_{l_1} \cdots e^*_{l_p} 
   \diamond
   e_{j_p} \cdots e_{j_1} \\
   & \quad
   = \sum_{\sigma, \sigma' \in S_p} \frac{1}{I!J!}
   e^*_{i_1} \cdots e^*_{i_p}
   \diamond
   e_{i_{\sigma(p)}} \cdots e_{i_{\sigma(1)}} 
   X_{i_{\sigma(1)} j_{\sigma'(1)}} \cdots X_{i_{\sigma(p)} j_{\sigma'(p)}} 
   e^*_{j_{\sigma'(1)}} \cdots e^*_{j_{\sigma'(p)}}
   \diamond
   e_{j_p} \cdots e_{j_1} \\
   & \quad
   = \sum_{\sigma, \sigma' \in S_p} 
   s_I \sigma
   X_{i_{\sigma(1)} j_{\sigma'(1)}} \cdots X_{i_{\sigma(p)} j_{\sigma'(p)}} 
   \sigma^{\prime -1} s_J \\
   & \quad
   = \sum_{\sigma, \sigma' \in S_p} 
   \sigma
   X_{i_{\sigma(1)} j_{\sigma'(1)}} \cdots X_{i_{\sigma(p)} j_{\sigma'(p)}} 
   \sigma^{\prime -1} \\
   & \quad
   = p! \operatorname{symm-preimm}X_{IJ}.
\end{align*}
Here, we used Proposition~\ref{prop:ef_is_central} to show the first equality.
Thus the following lemma holds:

\begin{lemma}\sl
   For $I, J \in [n]^p$, we have
   \begin{align*} 
      \operatorname{symm-preimm} X_{IJ}
      &=
	  e^*_{i_1} \cdots e^*_{i_p}
      \diamond
      \Xi^{(p)} 
      \diamond
      e_{j_p} \cdots e_{j_1}, \\
      \operatorname{symm-imm}_{\lambda} X_{IJ}
	  &= \chi_{\lambda}
      (e^*_{i_1} \cdots e^*_{i_p}
      \diamond
      \Xi^{(p)} 
      \diamond
      e_{j_p} \cdots e_{j_1}). 
   \end{align*}
   Here, $x^{(k)}$ denotes the divided power: $x^{(k)} = \frac{1}{k!}x^k$.
\end{lemma}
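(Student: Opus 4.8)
The plan is to read both identities directly off the generating-function computation carried out immediately before the statement, which already establishes
$$
   e^*_{i_1} \cdots e^*_{i_p}
   \diamond \Xi^p \diamond
   e_{j_p} \cdots e_{j_1}
   = p!\, \operatorname{symm-preimm}X_{IJ} .
$$
Since $\Xi^{(p)} = \frac{1}{p!}\Xi^p$ and $\diamond$ is bilinear, dividing this relation by $p!$ gives the first asserted identity. For the second I would apply the linear map $\chi_\lambda \colon \mathbb{C}S_p \otimes \mathcal{A} \to \mathcal{A}$ to the first — this is legitimate because $e^*_{i_1}\cdots e^*_{i_p} \diamond \Xi^{(p)} \diamond e_{j_p}\cdots e_{j_1}$ is homogeneous of the appropriate degrees and hence lands in $\mathbb{C}S_p \otimes \mathcal{A}$ — and then invoke the relation $\operatorname{symm-imm}_\lambda X = \chi_\lambda(\operatorname{symm-preimm}X)$ recorded in Section~\ref{subsec:preimmanants}.

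The real content is thus the displayed computation, which I would spell out in the same style as the column-immanant case in Lemma~\ref{lem:expression_of_column-imm}. First, expand $\Xi^p = \sum_{K,L \in [n]^p} e_{k_1} X_{k_1 l_1} e^*_{l_1} \cdots e_{k_p} X_{k_p l_p} e^*_{l_p}$ and use Proposition~\ref{prop:ef_is_central} — each $e_i e^*_j$ is central in $\bar{T}(V,V^*)$ — to separate the $V$-factors from the $V^*$-factors, bringing each summand to the form $e_{k_p}\cdots e_{k_1}\, X_{k_1 l_1}\cdots X_{k_p l_p}\, e^*_{l_1}\cdots e^*_{l_p}$. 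Second, insert this into $\diamond$; the two outer couplings, evaluated with $(\ref{eq:pairing})$, annihilate every term except those in which $K$ lies in the $S_p$-orbit of $I$ and $L$ in the $S_p$-orbit of $J$, and convert the summand into $s_I \sigma\, X_{i_{\sigma(1)} j_{\sigma'(1)}} \cdots X_{i_{\sigma(p)} j_{\sigma'(p)}}\, \sigma^{\prime -1} s_J$, now summed over $\sigma,\sigma' \in S_p$, the multiplicities $I!$, $J!$ cancelling the reindexing factors. Third, delete $s_I$ and $s_J$: summation over $\sigma \in S_p$ is invariant under left multiplication by $(S_p)_I$, and the product $X_{i_{\sigma(1)} j_{\sigma'(1)}} \cdots X_{i_{\sigma(p)} j_{\sigma'(p)}}$ is unaffected by replacing $\sigma$ by an element of $(S_p)_I\sigma$, and symmetrically for $(S_p)_J$ on the right. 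Fourth, recognize $\frac{1}{p!}\sum_{\sigma,\sigma' \in S_p} \sigma\, X_{i_{\sigma(1)} j_{\sigma'(1)}} \cdots X_{i_{\sigma(p)} j_{\sigma'(p)}}\, \sigma^{\prime -1}$ as, by definition, $\operatorname{symm-preimm}X_{IJ}$, which accounts for the factor $p!$ on the left.

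The main obstacle, modest as it is, is the third step: one must be careful about which side each idempotent sits on and check that its removal is licensed by the facts that $(S_p)_I$ permutes the rows of $X_{IJ}$ trivially and $(S_p)_J$ its columns, so that the repetitions occurring when $I! \ne 1$ or $J! \ne 1$ are exactly compensated. This is the same bookkeeping already done for $\operatorname{column-preimm}$ just above, so it presents no genuine difficulty. Apart from that, nothing is needed beyond bilinearity of $\diamond$, the definitions of $\langle\cdot\rangle$ and $\chi_\lambda$ from Sections~\ref{subsec:def_of_couplings} and~\ref{subsec:preimmanants}, formula $(\ref{eq:pairing})$, and the centrality of $e_ie^*_j$ from Proposition~\ref{prop:ef_is_central}.
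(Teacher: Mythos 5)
Your proposal is correct and follows the paper's argument essentially verbatim: the paper proves the lemma precisely by the displayed $\diamond$-computation you cite (expand $\Xi^p$, reorder via Proposition~\ref{prop:ef_is_central}, evaluate the outer couplings with~(\ref{eq:pairing}) to pick up $s_I\sigma$ and $\sigma'^{-1}s_J$ with the $\frac{1}{I!J!}$ reindexing, delete the idempotents by the stabilizer-invariance argument, and read off $p!\operatorname{symm-preimm}X_{IJ}$), then divides by $p!$ and applies $\chi_\lambda$. Your bookkeeping of each step, including why $s_I$ and $s_J$ may be removed, matches the paper's reasoning.
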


Moreover, we can express $\operatorname{preimm}_p$ and $\operatorname{imm}_{\lambda,p}$ as follows:

\begin{lemma}\label{lem:expression_of_preimm}
   \sl
   We have
   $$
   	  \operatorname{preimm}_p X
	  = \langle \Xi^{(p)} \rangle, \quad
      \operatorname{imm}_{\lambda,p} X
	  = \frac{1}{p!}\sum_{I \in [n]^p} 
      \operatorname{symm-imm}_{\lambda} X_{II}
	  = \langle \Xi^{(p)} \rangle_{\lambda}.
   $$
\end{lemma}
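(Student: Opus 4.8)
The plan is to read both identities directly off the preceding lemma together with the definition of $\langle \cdot \rangle$ given in Section~\ref{subsec:def_of_couplings}. First I would recall that, by that definition,
$$
   \langle \Phi \rangle
   = \frac{1}{p!} \sum_{I \in [n]^p}
   e^*_{i_1} \cdots e^*_{i_p} \diamond \Phi \diamond e_{i_p} \cdots e_{i_1}
$$
for $\Phi \in \bar{T}_{p,p}(V,V^*)$, with the $\mathcal{A}$-coefficients carried along via the extension of bilinear maps explained at the start of this section. Specializing the identity $\operatorname{symm-preimm} X_{IJ} = e^*_{i_1} \cdots e^*_{i_p} \diamond \Xi^{(p)} \diamond e_{j_p} \cdots e_{j_1}$ of the preceding lemma to the diagonal case $J = I$ and substituting $\Phi = \Xi^{(p)}$, I obtain
$$
   \langle \Xi^{(p)} \rangle
   = \frac{1}{p!} \sum_{I \in [n]^p} \operatorname{symm-preimm} X_{II}.
$$
By Proposition~\ref{prop:preimm_p}(i), the right-hand side is by definition $\operatorname{preimm}_p X$, which proves the first identity. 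This also shows $\langle \Xi^{(p)} \rangle \in \mathbb{C}S_p \otimes \mathcal{A}$, since each summand $\operatorname{symm-preimm} X_{II}$ lies there, so that applying $\chi_{\lambda}$ to it is legitimate.

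For the immanant version I would apply the linear map $\chi_{\lambda} \colon \mathbb{C}S_p \otimes \mathcal{A} \to \mathcal{A}$ to the displayed equation. Since $\chi_{\lambda}$ commutes with the finite sum over $I$ and $\chi_{\lambda}(\operatorname{symm-preimm} X_{II}) = \operatorname{symm-imm}_{\lambda} X_{II}$ (recall $\operatorname{symm-imm}_{\lambda} = \chi_{\lambda} \circ \operatorname{symm-preimm}$ from Section~\ref{subsec:preimmanants}, restated in the preceding lemma), I get
$$
   \langle \Xi^{(p)} \rangle_{\lambda}
   = \chi_{\lambda}\bigl( \langle \Xi^{(p)} \rangle \bigr)
   = \frac{1}{p!} \sum_{I \in [n]^p} \operatorname{symm-imm}_{\lambda} X_{II},
$$
and the last expression is $\operatorname{imm}_{\lambda,p} X$ by the definition of $\operatorname{imm}_{\lambda,p}$ in Section~\ref{sec:immanants}. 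This establishes both remaining equalities at once.

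There is essentially no obstacle beyond unwinding definitions; the only points meriting a word of care are (i) matching the orientation of the tail $e_{i_p}\cdots e_{i_1}$ in the definition of $\langle\cdot\rangle$ with the tail $e_{j_p}\cdots e_{j_1}$ of the preceding lemma under the substitution $J = I$, and (ii) checking that, although $\langle\cdot\rangle$ a priori takes values in $\mathbb{C}S_{\infty} \otimes \mathcal{A}$, the particular element $\langle \Xi^{(p)} \rangle$ actually lands in $\mathbb{C}S_p \otimes \mathcal{A}$, so that forming $\langle\cdot\rangle_{\lambda}$ via $\chi_{\lambda}$ makes sense.
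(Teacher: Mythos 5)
Your proposal is correct and follows exactly the route the paper intends: the lemma is stated there without proof precisely because it is the specialization $J=I$ of the preceding lemma combined with the definitions of $\langle \cdot \rangle$, $\operatorname{preimm}_p$, and $\operatorname{imm}_{\lambda,p}$, followed by an application of $\chi_{\lambda}$. Your two points of care (the orientation of the tail $e_{i_p}\cdots e_{i_1}$ and the fact that $\langle \Xi^{(p)} \rangle$ lies in $\mathbb{C}S_p \otimes \mathcal{A}$ so that $\langle \cdot \rangle_{\lambda}$ is defined) are exactly the details worth checking, and both are handled correctly.
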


We can also express ``$\operatorname{symm-preimm}^{\circ}$''
using the following element of $\bar{T}(V,V^*) \otimes \mathcal{A}$:
$$
   \Gamma 
   = \sum_{i,j=1}^n e_j X_{ij} e^*_i
   = \sum_{i=1}^n \gamma_i e^*_i
   = \sum_{j=1}^n e_j \gamma^*_j.
$$

\begin{lemma} \sl
   For $I, J \in [n]^p$, we have
   \begin{align*}
      \operatorname{symm-preimm}^{\circ} X_{IJ}
      & =
      e^*_{j_1} \cdots e^*_{j_p} 
      \diamond
      \Gamma^{(p)} 
      \diamond
      e_{i_p} \cdots e_{i_1}, \\ 
      \operatorname{symm-imm}^{\circ}_{\lambda} X_{IJ}
      & =
      \chi_{\lambda}(
      e^*_{j_1} \cdots e^*_{j_p} 
      \diamond
      \Gamma^{(p)} 
      \diamond
      e_{i_p} \cdots e_{i_1}
      ).
   \end{align*}
   Moreover, we have
   $$
      \operatorname{preimm}_p X
      = \langle \Gamma^{(p)} \rangle, \quad
      \operatorname{imm}_{\lambda,p} X
      = \frac{1}{p!} \sum_{I \in [n]^p} 
      \operatorname{symm-imm}^{\circ}_{\lambda} X_{II}
      = \langle \Gamma^{(p)} \rangle_{\lambda}.
   $$
\end{lemma}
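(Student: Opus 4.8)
The plan is to deduce this lemma from the preceding one --- the lemma that expresses $\operatorname{symm-preimm}X_{IJ}$ through the element $\Xi=\sum_{i,j}e_iX_{ij}e^*_j$ --- by a transpose trick, so that no fresh $\diamond$-computation is required.

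First I would record two routine identities. Writing $\operatorname{symm-preimm}X=\frac1{n!}\sum_{\sigma,\sigma'}(\sigma\sigma^{\prime -1})\otimes X_{\sigma(1)\sigma'(1)}\cdots X_{\sigma(n)\sigma'(n)}$ as an element of $\mathbb{C}S_n\otimes\mathcal{A}$, applying the antiautomorphism $t\mapsto t^{\circ}$, and relabelling $\sigma\leftrightarrow\sigma'$, one obtains $\operatorname{symm-preimm}^{\circ}X=\operatorname{symm-preimm}\,{}^t\!X$ for every $X\in\operatorname{Mat}_n(\mathcal{A})$; commutativity of $\mathcal{A}$ is not needed here, since the $\mathbb{C}S_n$- and $\mathcal{A}$-factors commute in $\mathbb{C}S_n\otimes\mathcal{A}$. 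Restricting to submatrices and using ${}^t(X_{IJ})=({}^t\!X)_{JI}$, this becomes $\operatorname{symm-preimm}^{\circ}X_{IJ}=\operatorname{symm-preimm}({}^t\!X)_{JI}$. Separately, if $\Xi_Y=\sum_{i,j}e_iY_{ij}e^*_j$ denotes ``$\Xi$ for the matrix $Y$'', then taking $Y={}^t\!X$ and relabelling the summation indices gives $\Xi_{{}^t\!X}=\sum_{i,j}e_iX_{ji}e^*_j=\sum_{i,j}e_jX_{ij}e^*_i=\Gamma$.

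Combining these two identities with the preceding lemma applied to $Y={}^t\!X$, with the roles of $I$ and $J$ interchanged, yields at once
$$\operatorname{symm-preimm}^{\circ}X_{IJ}=\operatorname{symm-preimm}({}^t\!X)_{JI}=e^*_{j_1}\cdots e^*_{j_p}\diamond\Xi_{{}^t\!X}^{(p)}\diamond e_{i_p}\cdots e_{i_1}=e^*_{j_1}\cdots e^*_{j_p}\diamond\Gamma^{(p)}\diamond e_{i_p}\cdots e_{i_1},$$
which is the first displayed equality; applying the linear map $\chi_{\lambda}$ to both sides and using $\operatorname{symm-imm}^{\circ}_{\lambda}X_{IJ}=\chi_{\lambda}(\operatorname{symm-preimm}^{\circ}X_{IJ})$ gives the second. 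For the ``moreover'' part I would set $J=I$, sum over $I\in[n]^p$, and divide by $p!$: the right-hand side is then $\langle\Gamma^{(p)}\rangle$ by the definition of $\langle\cdot\rangle$, while the left-hand side equals $\operatorname{preimm}_pX$ by Proposition~\ref{prop:preimm_p}(i). Applying $\chi_{\lambda}$ once more (and again invoking Proposition~\ref{prop:preimm_p}(i), now after $\chi_{\lambda}$) identifies $\langle\Gamma^{(p)}\rangle_{\lambda}$ with both $\operatorname{imm}_{\lambda,p}X$ and $\frac1{p!}\sum_{I\in[n]^p}\operatorname{symm-imm}^{\circ}_{\lambda}X_{II}$.

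Because the transpose trick bypasses essentially all of the computation, there is no serious obstacle here; the one point that needs attention is the index bookkeeping --- the rows and columns of $\Gamma$ are ``crossed'' relative to those of $\Xi$, which is precisely why the covectors $e^*_{j_\bullet}$ and the vectors $e_{i_\bullet}$ swap sides and appear with $J$ and $I$ transposed compared with the $\Xi$-formula. If one prefers to sidestep the transpose argument, the alternative is to repeat the direct computation done above for $\Xi$ and $\operatorname{symm-preimm}$: expand $e^*_{j_1}\cdots e^*_{j_p}\diamond\Gamma^p\diamond e_{i_p}\cdots e_{i_1}$ as a double sum over $K,L\in[n]^p$, use Proposition~\ref{prop:ef_is_central} (the element $e_je^*_i$ is central in $\bar{T}(V,V^*)$) to move all vectors past all covectors, collapse the $e^*$--$e$ couplings into the idempotents $s_I$ and $s_J$ via the pairing formula (\ref{eq:pairing}), and absorb those idempotents using $\sum_{\sigma\in S_p}\sigma'\sigma(\cdots)=\sum_{\sigma\in S_p}\sigma(\cdots)$ for $\sigma'\in(S_p)_I$ --- the very same steps as in the $\Xi$ case.
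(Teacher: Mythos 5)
Your argument is correct. The paper states this lemma without proof, the implicit intent evidently being that the reader reproduce the $\diamond$-expansion carried out for $\operatorname{symm-preimm}$ and $\Xi$, now with $\Gamma$ in its place --- that is the ``alternative'' route you describe, and it would go through exactly as you outline, using Proposition~\ref{prop:ef_is_central} and the pairing formula~(\ref{eq:pairing}) in the same way. Your primary route is a genuinely different and more economical argument: you observe that $\Gamma = \Xi_{{}^t\!X}$ and that $\operatorname{symm-preimm}^{\circ}X = \operatorname{symm-preimm}\,{}^t\!X$ in $\mathbb{C}S_n \otimes \mathcal{A}$ (correctly noting that no commutativity of $\mathcal{A}$ is needed, since $t \mapsto t^{\circ}$ acts only on the $\mathbb{C}S_n$ tensor factor and the relabelling $\sigma \leftrightarrow \sigma'$ absorbs the transpose), so that the $\Gamma$-statement is just the $\Xi$-statement applied to ${}^t\!X$ with $I$ and $J$ interchanged, and no fresh computation is required. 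The ``moreover'' part, obtained by setting $J=I$, summing over $I$, dividing by $p!$, and appealing to the definition of $\langle\cdot\rangle$ together with Proposition~\ref{prop:preimm_p}~(i) (and then applying $\chi_{\lambda}$), is handled correctly as well. What the transpose trick buys is brevity and a structural explanation of the crossed index pattern ($e^*_{j_\bullet}$ moving to the left, $e_{i_\bullet}$ to the right, indices transposed relative to the $\Xi$-formula); the direct computation buys self-containedness at the cost of repeating the bookkeeping already done for $\Xi$.
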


Using Lemma~\ref{lem:expression_of_preimm}, 
we can show Proposition~\ref{prop:preimm_p} (iii),
namely the invariance of $\operatorname{preimm}_p$ under the conjugation by $GL_n(\mathbb{C})$.
This also concludes Proposition~\ref{prop:invariance_of_imm_p}.

\begin{proof}[Proof of Proposition~{\sl \ref{prop:preimm_p}} (iii)]
Fix $g = (g_{ij}) \in GL_n(\mathbb{C})$,
and put $\tilde{e}_j = \sum_{i=1}^n e_i g_{ij}$,
so that $\tilde{e}_1,\ldots,\tilde{e}_n$ form a basis of $V$.
We can express the dual basis $\tilde{e}^*_1,\ldots,\tilde{e}^*_n$ 
as $\tilde{e}^*_i = \sum_{j=1}^n e^*_j g^{ij}$,
where $g^{ij}$ is the $(i,j)$th entries of $g^{-1}$.
We put
$$
   \Xi_{gXg^{-1}} = \sum_{i,j=1}^n e_i(gXg^{-1})_{ij}e^*_j, \qquad
   \tilde{\Xi}_X = \sum_{i,j=1}^n \tilde{e}_i X_{ij} \tilde{e}^*_j.
$$
Then, we can easily see that these are equal:
$\Xi_{gXg^{-1}} = \tilde{\Xi}_X$.
Thus we have
$$
   \operatorname{preimm}_p gXg^{-1}
   = \langle \Xi_{gXg^{-1}}^{(p)} \rangle 
   = \langle \tilde{\Xi}^{(p)} \rangle
   = \langle \Xi^{(p)} \rangle
   = \operatorname{preimm}_p X.
$$
Indeed, $\langle \,\cdot\, \rangle$ does not depend on basis chosen.
This means the assertion.
\end{proof}

Besides Proposition~\ref{prop:preimm_p} (iii),
we can easily see
various relations in Section~\ref{sec:immanants}
from the lemmas in this section.
For example, Proposition~\ref{prop:preimm_and_multiplications_by_permutations} 
is immediate from these lemmas and the properties of $\langle \cdot , \cdot \rangle$
and  $\langle \cdot \rangle$.
Proposition~\ref{prop:preimm_p} (ii) is also easy
from Proposition~\ref{prop:centrality_of_<Phi>}.

This method is more effective against the quantum immanants
as seen in the next section.

%
\section{Quantum immanants}
\label{sec:quantum_immanants}
%
%
The quantum immanants are a generalization of the Capelli elements,
and form a basis of the center of the universal enveloping algebra
$U(\mathfrak{gl}_n)$ as a vector space.
These were first introduced in~\cite{O1},
and have been studied by using the $R$-matrix method, fusion procedure, and representation theory
of the Yangian $Y(\mathfrak{gl}_n)$
(\cite{O1}, \cite{O2}, \cite{OO}, \cite{M1}, \cite{M2}, \cite{N2}).
In this section, 
we employ the expressions of immanants given in the previous section
instead of these standard methods,
and show various relations for the quantum immanants.
Most of these relations were already given in~\cite{O1}
(and the others can also be shown by the same method). 
Thus it is not that our approach is stronger than established ones,
but the author thinks that our method has its own advantage 
as a natural analogue of the usual generating function method.
In other words, our approach is regarded as an advanced version 
of the exterior calculus in the study of the Capelli type identities in 
\cite{IU}, \cite{I1}--\cite{I6}, \cite{U2}--\cite{U5}, \cite{Ha}, \cite{Wa}
(and also in Section~\ref{sec:Capelli_and_FFT} of this article).

It is also interesting that
the notion of ``quantum preimmanant'' naturally appears.

\subsection{}
%
First, we recall some notions and facts of from the representation theory of the symmetric group.
See \cite{JK} and \cite{O1} for the details.

We identify a partition with the corresponding Young diagram.
For a cell $(i,j)$ of a Young diagram,
we call the number $j-i$ the content of the cell.
For example, for the Young diagram corresponding to $(4,3,1)$,
each cell has the following value as its content:
$$
   \begin{matrix}
   0  & 1 & 2 & 3 \\
   -1 & 0 & 1 &   \\
   -2 &   &   &
   \end{matrix}
$$
Let $\operatorname{STab}(\lambda)$ be the set of all 
standard tableaux of shape $\lambda$.
For $T \in \operatorname{STab}(\lambda)$,
we put $c_T(k) = j-i$, when the $k$th cell is $(i,j)$.
For example, we have $c_T(1) = 0$, $c_T(2) = -1$, and $c_T(3) = 1$ for the tableau
$$
   T = \,\,
   \begin{matrix}
   1  & 3 \\
   2 & 
   \end{matrix}
   \,\,
   \in \operatorname{STab}(2,1).
$$

We consider the Young orthogonal form $\rho_\lambda$
of the irreducible representation of $S_p$
determined by a partition $\lambda \vdash p$ (\cite{JK}, \cite{O1}).
We denote by $v_T$ the vector in the Young orthogonal basis
corresponding to $T \in \operatorname{STab}(\lambda)$.
Moreover, let $\rho_{\lambda}(\sigma)_{TT'}$ be
the $(T,T')$th matrix entry of $\rho_{\lambda}(\sigma)$.
Namely, we define this by
$$
   \rho_{\lambda}(\sigma) v_T 
   = \sum_{T' \in \operatorname{STab}(\lambda)} \rho_{\lambda}(\sigma)_{TT'} v_{T'}. 
$$
We can describe the matrix entries of $\rho_{\lambda}(s_i)$ as follows.
We put $r_T(i) = c_T(i+1) -c_T(i)$.
Then, for $T \in \operatorname{STab}(\lambda)$, we have 
$$
   \rho_{\lambda}(s_i) v_T = r_T(i) v_T,
$$
when $s_i T$ is non-standard
(in this case, $r_T(i)$ is equal to $\pm 1$).
Here we consider the natural action of permutation in $S_p$ on a tableau.
When $T' = s_i T$ is standard, we have
$$
   \begin{pmatrix}
   \rho_{\lambda}(s_i) v_T & 
   \rho_{\lambda}(s_i) v_{T'} 
   \end{pmatrix}   
   = 
   \begin{pmatrix}
   v_T & 
   v_{T'}
   \end{pmatrix}  
   \begin{pmatrix}
   r^{-1} & \sqrt{1 - r^{-2}} \\
   \sqrt{1 - r^{-2}} & -r^{-1} 
   \end{pmatrix},
$$
where $r = r_T(i)$.
Using these and the recurrence formula (\ref{prop:entries_of_JM_elements}), 
we can prove Proposition~\ref{prop:entries_of_JM_elements} by induction on $i$.

Finally
we consider the Jucys--Murphy elements $x_k \in \mathbb{C}S_p$ 
(\cite{J}, \cite{O1}):
$$
   x_k = \sum_{i=1}^{k-1} (i \,\, k)
   = (1 \,\, k) + (2 \,\, k) + \cdots + (k-1 \,\, k).
$$
For $k=1$, we put $x_1 = 0$.
Then $\rho_{\lambda}(x_i)$ is diagonal,
and its entries are expressed in terms of contents:

\begin{proposition}\label{prop:entries_of_JM_elements}\sl
   We have $\rho_{\lambda}(x_i) v_T = c_T(i) v_T$,
   namely $\rho_{\lambda}(x_i)_{TT'} = \delta_{TT'} c_T(i)$.
\end{proposition}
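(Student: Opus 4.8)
The plan is to induct on $i$, using the standard recurrence $x_{i+1} = s_i x_i s_i + s_i$ among the Jucys--Murphy elements. This recurrence follows in one line: conjugation by the transposition $s_i = (i \,\, i+1)$ fixes each $j < i$ and swaps $i$ and $i+1$, so $s_i (j \,\, i) s_i = (j \,\, i+1)$, and therefore $s_i x_i s_i + s_i = \sum_{j=1}^{i-1}(j \,\, i+1) + (i \,\, i+1) = x_{i+1}$. The base case $i = 1$ is immediate: $x_1 = 0$, and in any standard tableau the entry $1$ occupies the cell $(1,1)$, whose content is $0$, so $c_T(1) = 0$ for every $T$.

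For the inductive step I would assume $\rho_\lambda(x_i) v_T = c_T(i) v_T$ for all $T \in \operatorname{STab}(\lambda)$ and apply $\rho_\lambda(x_{i+1}) = \rho_\lambda(s_i) \rho_\lambda(x_i) \rho_\lambda(s_i) + \rho_\lambda(s_i)$, distinguishing two cases according to whether $s_i T$ is standard. If $s_i T$ is non-standard, then $\rho_\lambda(s_i) v_T = r v_T$ with $r := r_T(i) = \pm 1$, and a one-line computation gives $\rho_\lambda(x_{i+1}) v_T = (r^2 c_T(i) + r) v_T = (c_T(i) + r_T(i)) v_T = c_T(i+1) v_T$. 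If $T' := s_i T$ is standard, then by the inductive hypothesis the plane spanned by $v_T$ and $v_{T'}$ is invariant under $\rho_\lambda(x_i)$, and the given block formula shows it is invariant under $\rho_\lambda(s_i)$; in the basis $(v_T, v_{T'})$ one has $\rho_\lambda(x_i) = \operatorname{diag}(a, a+r)$ with $a := c_T(i)$ and $r := r_T(i) = c_T(i+1) - c_T(i)$ (here one uses $c_{T'}(i) = c_T(i+1)$), while $\rho_\lambda(s_i) = \left(\begin{smallmatrix} r^{-1} & t \\ t & -r^{-1} \end{smallmatrix}\right)$ with $t := \sqrt{1 - r^{-2}}$.

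A routine $2 \times 2$ multiplication then yields $\rho_\lambda(s_i) \operatorname{diag}(a, a+r) \rho_\lambda(s_i) = \left(\begin{smallmatrix} a + r - r^{-1} & -t \\ -t & a + r^{-1} \end{smallmatrix}\right)$ (using $r(1 - r^{-2}) = r - r^{-1}$, $r^{-2} \cdot r = r^{-1}$, and $r^{-1} t r = t$), so that adding back $\rho_\lambda(s_i)$ cancels the off-diagonal entries and restores the diagonal: $\rho_\lambda(x_{i+1}) = \operatorname{diag}(a+r, a) = \operatorname{diag}(c_T(i+1), c_{T'}(i+1))$ on the plane, as required. Since every $v_T$ is covered by exactly one of the two cases, the induction is complete, and in particular $\rho_\lambda(x_i)$ is diagonal in the Young orthogonal basis with the stated entries.

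I do not expect a serious obstacle here: the only points needing care are checking that the two cases exhaust all basis vectors and that the relevant line or plane is simultaneously $\rho_\lambda(x_i)$- and $\rho_\lambda(s_i)$-invariant. The agreeable feature of the computation is that the off-diagonal contributions vanish precisely upon adding $\rho_\lambda(s_i)$, which is what makes the diagonal form propagate through the induction.
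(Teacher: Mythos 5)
Your proof is correct and follows exactly the approach the paper sketches (induction on $i$ via the recurrence $x_{i+1} = s_i x_i s_i + s_i$ together with the explicit Young orthogonal form of $\rho_\lambda(s_i)$), supplying the $2\times 2$ calculation the paper leaves to the reader.
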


This can be proved by using the recurrence formula
\begin{equation}\label{eq:recurrence_formula}
   x_{i+1} = s_i x_i s_i + s_i
\end{equation}
and the description of the matrix entries of $\rho_{\lambda}(s_i)$ above.

\subsection{}
%
The main objects of this section are 
the following elements in the universal enveloping algebra $U(\mathfrak{gl}_n)$:
\begin{align*}
   G_{\lambda}
   & = 
   \frac{\chi_{\lambda}(1)}{p!}
   \sum_{J \in [n]^p}   
   \sum_{\sigma \in S_p}
   \rho_{\lambda}(\sigma)_{TT}
   E_{j_{\sigma(1)} j_1} (c_T(1)) 
   E_{j_{\sigma(2)} j_2} (c_T(2)) 
   \cdots 
   E_{j_{\sigma(p)} j_p} (c_T(p)) \\
   & = 
   \frac{\chi_{\lambda}(1)}{p!}
   \sum_{J \in [n]^p}   
   \sum_{\sigma \in S_p}
   \rho_{\lambda}(\sigma^{-1})_{TT}
   E_{j_p j_{\sigma(p)}} (c_T(p)) 
   \cdots
   E_{j_2 j_{\sigma(2)}} (c_T(2)) 
   E_{j_1 j_{\sigma(1)}} (c_T(1)), \\
   G^{\circ}_{\lambda}
   & = 
   \frac{\chi_{\lambda}(1)}{p!}
   \sum_{J \in [n]^p}   
   \sum_{\sigma \in S_p}
   \rho_{\lambda}(\sigma^{-1})_{TT}
   E_{j_{\sigma(p)} j_p} (-c_T(p)) 
   \cdots
   E_{j_{\sigma(2)} j_2} (-c_T(2)) 
   E_{j_{\sigma(1)} j_1} (-c_T(1)) \\
   & = 
   \frac{\chi_{\lambda}(1)}{p!}
   \sum_{J \in [n]^p}   
   \sum_{\sigma \in S_p}
   \rho_{\lambda}(\sigma)_{TT}
   E_{j_1 j_{\sigma(1)}} (-c_T(1)) 
   E_{j_2 j_{\sigma(2)}} (-c_T(2)) 
   \cdots 
   E_{j_p j_{\sigma(p)}} (-c_T(p)).
\end{align*}
Here $\lambda$ is a partition of $p$,
and $T$ is a standard tableau of shape $\lambda$.
We will see that these $G_{\lambda}$ and $G^{\circ}_{\lambda}$ are central in $U(\mathfrak{gl}_n)$,
and these expressions do not depend on $T \in \operatorname{STab}(\lambda)$.
These can also be expressed as 
$$
   G_{\lambda} = \chi_{\lambda}(G_p), \qquad
   G^{\circ}_{\lambda} = \chi_{\lambda}(G^{\circ}_p),
$$
where we define $G_p$ and $G^{\circ}_p \in \mathbb{C}S_p \otimes U(\mathfrak{gl}_n)$ 
as follows (these are also central):
\begin{align*}
   G_p
   & = 
   \frac{1}{p!}
   \sum_{J \in [n]^p}   
   \sum_{\sigma \in S_p}
   \sigma
   E_{j_{\sigma(1)} j_1} (x_1) 
   E_{j_{\sigma(2)} j_2} (x_2) 
   \cdots 
   E_{j_{\sigma(p)} j_p} (x_p) \\
   & = 
   \frac{1}{p!}
   \sum_{J \in [n]^p}   
   \sum_{\sigma \in S_p}
   E_{j_p j_{\sigma(p)}} (x_p) 
   \cdots
   E_{j_2 j_{\sigma(2)}} (x_2) 
   E_{j_1 j_{\sigma(1)}} (x_1) 
   \sigma^{-1}, \\
   G^{\circ}_p
   & = 
   \frac{1}{p!}
   \sum_{J \in [n]^p}   
   \sum_{\sigma \in S_p}
   E_{j_{\sigma(p)} j_p} (-x_p) 
   \cdots
   E_{j_{\sigma(2)} j_2} (-x_2) 
   E_{j_{\sigma(1)} j_1} (-x_1) 
   \sigma^{-1} \\
   & = 
   \frac{1}{p!}
   \sum_{J \in [n]^p}   
   \sum_{\sigma \in S_p}
   \sigma
   E_{j_1 j_{\sigma(1)}} (-x_1) 
   E_{j_2 j_{\sigma(2)}} (-x_2) 
   \cdots 
   E_{j_p j_{\sigma(p)}} (-x_p). 
\end{align*}
We will show the equivalence of these various expressions later.

We can regard $G_{\lambda}$ and $G^{\circ}_{\lambda}$ (resp.\ $G_p$ and $G^{\circ}_p$) 
as modifications of $\operatorname{imm}_{\lambda,p} X$ (resp.\ $\operatorname{preimm}_p X$).
Indeed, the highest terms of these elements
with respect to the canonical filtration of $U(\mathfrak{gl}_n)$
are equal to  $\operatorname{imm}_{\lambda,p} E$ and $\operatorname{preimm}_p E$,
respectively.
To be more specific, 
$G_p$ and $G^{\circ}_p$ are obtained by modifying
$\frac{1}{p!}\sum_I \operatorname{preimm}X_{II}$ and 
$\frac{1}{p!}\sum_I \operatorname{preimm}^{\circ}X_{II}$, respectively.
It is interesting that two different modifications naturally appear.

The second central element $G^{\circ}_{\lambda}$ of $U(\mathfrak{gl}_n)$ 
is known as the ``quantum immanant'' \cite{O1}
(this is denoted by $\mathbb{S}_{\lambda}$ in \cite{O1}).
Thus, it is natural to call $G^{\circ}_p$ the ``quantum preimmanant.''
As seen in Section~\ref{sec:higher_Capelli}, 
we have beautiful Capelli type identities for the quantum immanant $G^{\circ}_{\lambda}$ 
(the higher Capelli identities).
This fact seems to indicate that $G^{\circ}_p$ is more fundamental than $G_p$,
though ``$\operatorname{preimm}$'' is simpler than ``$\operatorname{preimm}^{\circ}$''
in Proposition~\ref{prop:Cauchy_Binet_for_preimmanants}.

It is easily seen that 
these two are related by the automorphism of $U(\mathfrak{gl}_n)$ defined by $E_{ij} \mapsto -E_{ji}$
as follows: $G_p \mapsto (-)^p G^{\circ}_p$.

\begin{remark}
   When $\lambda = (1^p)$, 
   we can express $G_{\lambda}$ and $G^{\circ}_{\lambda}$ as follows:
   \begin{align*}
      G_{(1^p)} 
      & = 
      \frac{1}{p!}
      \sum_{J \in [n]^p}
      \operatorname{column-det}(E_{JJ} - \operatorname{diag}(0,1,\ldots,n-1)), \\
      G^{\circ}_{(1^p)}
      & = 
      \frac{1}{p!}
      \sum_{J \in [n]^p}
      \operatorname{column-det}(E_{JJ} + \operatorname{diag}(n-1,n-2,\ldots,0)).
   \end{align*}
   In particular, $G^{\circ}_{(1^p)}$ is equal to the Capelli element $C_p$ 
   seen in Section~\ref{sec:Capelli_and_FFT}.
   Similarly, we can express $G_{(p)}$ and $G^{\circ}_{(p)}$ in terms of column-permanent
   ($G^{\circ}_{(p)}$ was first given and studied 
   in~\cite{N1}).
\end{remark}

Let us express these elements
using the algebras $\bar{T}(V)$, $\bar{T}^{\circ}(V)$, and $\bar{T}(V,V^*)$.
Under these expressions, we can easily show various relations for these elements.

\subsection{}\label{subsec:first_expression_of_G}
%
Let $V$ be an $n$-dimensional $\mathbb{C}$-vector space
with basis $e_1,\ldots,e_n$,
and consider
$\xi_j = \sum_{i=1}^n e_i E_{ij} \in \bar{T}(V) \otimes U(\mathfrak{gl}_n)$.
Then we have the commutation relation
\begin{align}\label{eq:comm_relation_of_xi_1}
   \xi_j \xi_l 
   & = \sum_{i,k=1}^n e_i e_k E_{ij} E_{kl} \\
   & = \sum_{i,k=1}^n e_i e_k
   (E_{kl} E_{ij} + E_{il} \delta_{kj} - E_{kj} \delta_{il}) 
   \notag \\
   & =  \xi_l \xi_j \cdot (1 \,\, 2) + \xi_l e_j - e_l \xi_j, 
   \notag
\end{align}
namely
$$
   \xi_j (\xi_l +  e_l \cdot (1 \,\, 2)) 
   = \xi_l  (\xi_j + e_j \cdot (1 \,\, 2)) \cdot (1 \,\, 2).
$$
This can be generalized as follows:

\begin{lemma}\label{lem:comm_relation_of_xi_3}\sl 
   We put $\xi_j(u) = \sum_{i=1}^n e_i E_{ij}(u) = \xi_j + e_j u$.
   Then we have
   $$
      \xi_i(y_l) \xi_j(y_{l+1}) 
	  = \xi_j(y_l) \xi_i(y_{l+1}) s_1.
   $$
   Here we define $y_k \in \mathbb{C}S_p$ as follows (for $k=1$, we put $y_1 = 0$):
   $$
      y_k 
      = \sum_{i=1}^{k-1} (1 \,\,\, i+1) 
      = (1 \,\, 2) + (1 \,\, 3) + \cdots + (1 \,\, k).
   $$
\end{lemma}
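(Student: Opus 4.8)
The plan is to prove the identity by a direct computation in $\bar{T}(V)\otimes U(\mathfrak{gl}_n)$, building on the basic commutation relation (\ref{eq:comm_relation_of_xi_1}), in the same spirit in which Proposition~\ref{prop:relation_between_column_dets} was deduced from (\ref{eq:comm_rel_for_xi}) in the exterior setting. The only structural facts about $\bar{T}(V)$ that the argument uses are the two transport rules $\sigma e_a = e_a\alpha(\sigma)$ for $\sigma\in S_\infty$ and $e_b e_a = e_a e_b s_1$; since the matrix entries $E_{ij}$ commute with all of $\bar{T}(V)$, these combine to give $\xi_a e_b = e_b\,\xi_a s_1$ and, for any $y\in\mathbb{C}S_\infty$, $y\,\xi_a = \xi_a\,\alpha(y)$.

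First I would expand
$$
   \xi_i(y_l)\xi_j(y_{l+1}) = \xi_i\xi_j + \xi_i e_j\,y_{l+1} + e_i\,y_l\,\xi_j + e_i\,y_l\,e_j\,y_{l+1}
$$
and the analogous four-term expansion of $\xi_j(y_l)\xi_i(y_{l+1})s_1$. On the left-hand side I apply (\ref{eq:comm_relation_of_xi_1}) to the quadratic term $\xi_i\xi_j$, and on both sides I use the transport rules to pull every $\xi$ to the left and collect the trailing elements of $\mathbb{C}S_\infty$. After this normalization the quadratic parts $\xi_j\xi_i s_1$ cancel automatically, and the surviving terms regroup according to their leading factor, $\xi_j e_i$, $\xi_i e_j$, or $e_i e_j$ (the $e_j e_i$ occurring on the right being rewritten via $e_b e_a = e_a e_b s_1$). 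Matching the $\xi_j e_i$- and $\xi_i e_j$-families reduces everything to the single identity in $\mathbb{C}S_p$
$$
   y_{l+1} = s_1\,\alpha(y_l)\,s_1 + s_1
$$
(equivalently $y_{l+1}s_1 = 1 + s_1\alpha(y_l)$), while matching the $e_i e_j$-family reduces, after one further use of $e_b e_a = e_a e_b s_1$ and of the displayed identity, to the commutation relation $\alpha(y_l)\,y_{l+1} = y_{l+1}\,\alpha(y_l)$. It is worth remarking that the displayed identity is precisely the Jucys--Murphy recurrence (\ref{eq:recurrence_formula}) conjugated by the endomorphism $\alpha$, which explains why the elements $y_k$ are the right ones to use here.

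It then remains to check these two identities in $\mathbb{C}S_p$ by elementary manipulation of transpositions. From $\alpha((1\;m)) = (2\;m{+}1)$ one gets $\alpha(y_l) + s_1 = \sum_{m\in\{1,3,\ldots,l+1\}}(2\;m)$; conjugating by $s_1 = (1\;2)$ sends each $(2\;m)$ to $(1\;m)$ and produces $\sum_{m=2}^{l+1}(1\;m) = y_{l+1}$, which is the recurrence. For the commutation relation one expands $[\alpha(y_l),y_{l+1}] = \sum_{k=3}^{l+1}\sum_{m=2}^{l+1}[(2\;k),(1\;m)]$, observes that a summand can be nonzero only when $m=2$ or $m=k$, and checks that for each fixed $k$ the two surviving commutators $[(2\;k),(1\;2)]$ and $[(2\;k),(1\;k)]$ are mutually inverse $3$-cycles with opposite signs, so they sum to zero.

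I expect the genuine bottleneck to be purely organizational: normalizing the eight expanded terms so that every $\xi$ stands on the left and all trailing permutations are brought to a common form, since a single misplaced $s_1$ or $\alpha$ destroys the cancellation. The combinatorial identities themselves are short, and no input beyond (\ref{eq:comm_relation_of_xi_1}) and the defining relations of $\bar{T}(V)$ is required.
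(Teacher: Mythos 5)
Your proposal is correct and follows essentially the same route as the paper: expand $(\xi_i + e_i y_l)(\xi_j + e_j y_{l+1})$ into four terms using the transport rules $\sigma e_a = e_a\alpha(\sigma)$ and $e_b e_a = e_a e_b s_1$, replace $\xi_i\xi_j$ by $\xi_j\xi_i s_1 + \xi_j e_i - e_j\xi_i$ via (\ref{eq:comm_relation_of_xi_1}), regroup by leading monomial, and observe that everything reduces to the two $\mathbb{C}S_p$-identities $s_1\alpha(y_l)s_1 = y_{l+1}-s_1$ and $[\alpha(y_l),y_{l+1}]=0$ --- precisely the identities the paper records inside the proof. The modest value you add is (a) actually verifying those two permutation identities, which the paper states without proof, and (b) the heuristic remark linking the first one to the Jucys--Murphy recurrence (\ref{eq:recurrence_formula}); note that phrasing it as ``conjugated by $\alpha$'' is imprecise since $\alpha$ is an endomorphism rather than an inner automorphism, and the actual link runs through the identities $\alpha^{p-k}(y_k)=x_k^\circ=\varepsilon^{-1}x_k\varepsilon$ of (\ref{eq:relation_between_x_and_y}), but as a motivating aside it does no harm.
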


This relation can be regarded as an analogue of (\ref{eq:comm_rel_for_xi}) 
in the exterior algebra.

Before proving Lemma~{\ref{lem:comm_relation_of_xi_3}}, 
we consider $x^{\circ}_k \in \mathbb{C}S_p$ defined by
$$
   x^{\circ}_k = \sum_{i=1}^{k-1} (p-i+1 \,\,\, p-k+1).
$$
For $k=1$, we put $x^{\circ}_1 = 0$.
The elements $x_k$, $x^{\circ}_k$, and  $y_k$ are connected by the relations
\begin{equation}\label{eq:relation_between_x_and_y}
   \varepsilon^{-1} x_i \varepsilon = x^{\circ}_i, \qquad\qquad
   \alpha^{p-i}(y_i) = x^{\circ}_i,
\end{equation}
where $\varepsilon$ is the following element in $S_p$:
$$
   \varepsilon = 
   \begin{pmatrix}
   1 & 2 & \hdots & p-1 & p \\
   p & p-1 & \hdots & 2 & 1 
   \end{pmatrix}.
$$

\begin{proof}[Proof of Lemma~{\sl\ref{lem:comm_relation_of_xi_3}}]
By a simple calculation, we have
\begin{align}\label{eq:expansion_of_LHS}
   & (\xi_i + e_i y_l) (\xi_j + e_j y_{l+1}) \\
   & \qquad
   = \xi_i \xi_j 
      + e_i e_j \alpha(y_l) y_{l+1}
	  + e_i \xi_j \alpha(y_l) 
	  + \xi_i e_j y_{l+1} \notag \\
   & \qquad
   = \{ \xi_j \xi_i s_1 + \xi_j e_i - e_j \xi_i \} 
      + e_j e_i s_1 \alpha(y_l) y_{l+1}
	  + \xi_j e_i s_1 \alpha(y_l) 
	  +  e_j \xi_i s_1 y_{l+1} \notag \\
   & \qquad
   = \xi_j \xi_i s_1
      + e_j e_i s_1 \alpha(y_l) y_{l+1}
	  + \xi_j e_i s_1 (\alpha(y_l) + s_1) 
	  +  e_j \xi_i s_1 (y_{l+1} - s_1). \notag
\end{align}
Here, in the second equality, we used (\ref{eq:comm_relation_of_xi_1}).
Since we have
\begin{gather*}
   s_1 \alpha(y_l) s_1 = y_{l+1} - s_1, \qquad
   s_1 y_{l+1} s_1 = \alpha(y_l) + s_1, \qquad
   \alpha(y_l) y_{l+1} = y_{l+1 }\alpha(y_l), \\
   s_1 \alpha(y_l) y_{l+1} 
   = (y_{l+1} - s_1) s_1 y_{l+1}
   = y_{l+1} s_1 (y_{l+1} - s_1)
   = y_{l+1} \alpha(y_l) s_1,
\end{gather*}
the last line of (\ref{eq:expansion_of_LHS}) is equal to
$$
   \xi_j \xi_i s_1
   + e_j e_i \alpha(y_l)y_{l+1} s_1
   + \xi_j e_i y_{l+1}  s_1 
   + e_j \xi_i \alpha(y_l) s_1 
   = (\xi_j + e_j y_l) (\xi_i + e_i y_{l+1}) s_1.
$$
This means the assertion.
\end{proof}

We see the following from Lemma~\ref{lem:comm_relation_of_xi_3},
because $\sigma \xi_{i}(y_{k+1}) = \xi_{i}(y_{k+1}) \alpha(\sigma)$ for $\sigma \in S_k$:

\begin{corollary}\sl 
   For $\sigma \in S_p$, we have
   $$ 
      \xi_{i_p} (y_1) 
	  \xi_{i_{p-1}} (y_2) 
	  \cdots 
      \xi_{i_1} (y_p) \sigma \\
      = 
      \xi_{i_{\sigma(p)}} (y_1) 
      \xi_{i_{\sigma(p-1)}} (y_2) 
      \cdots 
      \xi_{i_{\sigma(1)}} (y_p).
   $$
   Namely, the element 
   $\varphi^{}_J = \xi_{j_p} (y_1) \xi_{j_{p-1}} (y_2) \cdots \xi_{j_1} (y_p)$
   satisfies the relation (\ref{eq:key_relation_to_make_central_elements_in_CS}).
\end{corollary}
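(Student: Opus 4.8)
The plan is to reduce the identity to the case of an adjacent transposition $\sigma=s_k$ and then to obtain that case by feeding Lemma~\ref{lem:comm_relation_of_xi_3} together with the commutation rule quoted just before the statement, $\sigma\,\xi_i(y_{m+1})=\xi_i(y_{m+1})\,\alpha(\sigma)$ for $\sigma\in S_m$. The latter holds because $\alpha(\sigma)$ is a permutation of $\{2,\dots,m+1\}$ and therefore commutes with $y_{m+1}$, while the matrix coefficients $E_{ij}(u)$ commute with $\mathbb{C}S_\infty$; in the form I shall use it, it says that $\xi_i(y_{m+1})\,s_u=s_{u-1}\,\xi_i(y_{m+1})$ whenever $u\le m$.

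The first step is the reduction. The set of $\sigma\in S_p$ for which $\varphi^{}_J\,\sigma=\varphi^{}_{\sigma(J)}$ holds for every sequence $J$ is closed under multiplication, since $\varphi^{}_J\,\sigma\tau=(\varphi^{}_J\,\sigma)\,\tau=\varphi^{}_{\sigma(J)}\,\tau=\varphi^{}_{\tau(\sigma(J))}$ and $\tau(\sigma(J))=(\sigma\tau)(J)$; it obviously contains the identity, and the $s_k$ generate $S_p$, so it suffices to treat $\sigma=s_k$. (Here I would pause to fix the composition convention so that $\sigma(J)$, $\tau(\sigma(J))$ and $(\sigma\tau)(J)$ are consistent.)

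For $\sigma=s_k$ I would write $\varphi^{}_J = A\cdot\xi_{j_{k+1}}(y_{p-k})\,\xi_{j_k}(y_{p-k+1})\cdot B$, where $B=\xi_{j_{k-1}}(y_{p-k+2})\cdots\xi_{j_1}(y_p)$ collects the $k-1$ rightmost factors and $A$ everything to the left; the two displayed middle factors are exactly those whose lower indices $s_k$ interchanges. Next I would push $s_k$ leftward through $B$ one factor at a time using $\xi_i(y_{m+1})\,s_u=s_{u-1}\,\xi_i(y_{m+1})$: at the $a$th crossing the side condition becomes $k-a+1\le p-a$, i.e.\ $k\le p-1$, which is automatic for an adjacent transposition, and after $s_k$ has crossed all of $B$ its index has dropped to $1$, so $\varphi^{}_J\,s_k = A\cdot\xi_{j_{k+1}}(y_{p-k})\,\xi_{j_k}(y_{p-k+1})\cdot s_1\cdot B$. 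Finally, Lemma~\ref{lem:comm_relation_of_xi_3}, multiplied on the right by $s_1$ and combined with $s_1^2=1$, turns $\xi_{j_{k+1}}(y_{p-k})\,\xi_{j_k}(y_{p-k+1})\,s_1$ into $\xi_{j_k}(y_{p-k})\,\xi_{j_{k+1}}(y_{p-k+1})$, and the resulting product $A\cdot\xi_{j_k}(y_{p-k})\,\xi_{j_{k+1}}(y_{p-k+1})\cdot B$ is precisely $\varphi^{}_{s_k(J)}$ (for $k=1$ the block $B$ is empty and Lemma~\ref{lem:comm_relation_of_xi_3} applies at once). The second assertion of the statement is then immediate, since it is just the identity $\varphi^{}_J\,\sigma=\varphi^{}_{\sigma(J)}$, which is the first half of (\ref{eq:key_relation_to_make_central_elements_in_CS}).

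The one thing that needs care, rather than cleverness, is the index bookkeeping: keeping straight which factor of $\varphi^{}_J$ carries which coefficient $y_m$, following the index of the travelling transposition as it moves across $B$, and checking the side condition $u\le m$ at every crossing. Once the splitting of $\varphi^{}_J$ and the two commutation relations are in place, there is no further difficulty.
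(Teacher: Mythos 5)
Your proof is correct and is essentially a fully detailed expansion of the paper's one-line justification: the paper derives the corollary directly from Lemma~\ref{lem:comm_relation_of_xi_3} and the commutation rule $\sigma\,\xi_i(y_{k+1}) = \xi_i(y_{k+1})\,\alpha(\sigma)$ for $\sigma\in S_k$, which is exactly the pair of tools you use, with the reduction to adjacent transpositions and the index bookkeeping spelled out.
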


Recalling Proposition~\ref{prop:relations_for_varphi_and_varphi*} (ii),
we put
\begin{align*}
   G^{IJ} 
   &= 
   \langle
   e^*_{i_1} \cdots e^*_{i_p},
   \xi_{j_p} (y_1) 
   \xi_{j_{p-1}} (y_2) 
   \cdots 
   \xi_{j_1} (y_p)
   \rangle, \\
   G_p 
   &= \frac{1}{p!} \sum_{J \in [n]^p} G^{JJ} \\
   &= \frac{1}{p!} \sum_{J \in [n]^p} 
   \langle
   e^*_{j_1} \cdots e^*_{j_p},
   \xi_{j_p} (y_1) 
   \xi_{j_{p-1}} (y_2) 
   \cdots 
   \xi_{j_1} (y_p)
   \rangle \\
   &= \frac{1}{p!} \sum_{J \in [n]^p} 
   \langle
   \xi_{j_p} (y_1) 
   \xi_{j_{p-1}} (y_2) 
   \cdots 
   \xi_{j_1} (y_p)
   e^*_{j_1} \cdots e^*_{j_p}
   \rangle \\
   & = 
   \frac{1}{p!}
   \langle 
   \sum_{I,J \in [n]^p} 
   e_{i_p} \cdots e_{i_1} 
   E_{i_p j_p}(x^{\circ}_1) \cdots E_{i_1 j_1}(x^{\circ}_p)
   e^*_{j_1} \cdots e^*_{j_p}
   \rangle 
\end{align*}
The last equality is seen from the second relation of (\ref{eq:relation_between_x_and_y}).
Moreover we put
$$
   G_{\lambda} = \chi_{\lambda}(G_p)
   = \frac{1}{p!}
   \sum_{J \in [n]^p}   
   \chi_{\lambda}(G^{JJ}).
$$
We aim to prove the following theorem:

\begin{theorem}\label{thm:centrality_of_G_p}\sl
   The element $G_p$ is central in $\mathbb{C}S_p \otimes U(\mathfrak{gl_n})$.
\end{theorem}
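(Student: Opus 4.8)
The plan is to check separately that $G_p$ commutes with the subalgebra $\mathbb{C}S_p\otimes 1$ and with the subalgebra $1\otimes U(\mathfrak{gl}_n)$; since these two generate $\mathbb{C}S_p\otimes U(\mathfrak{gl}_n)$ (and commute with each other), this gives the centrality asserted.

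Commutation with $\mathbb{C}S_p$ is the easy half and is essentially already prepared. Writing $\varphi_J = \xi_{j_p}(y_1)\xi_{j_{p-1}}(y_2)\cdots\xi_{j_1}(y_p)$ and $\varphi^*_J = e^*_{j_1}\cdots e^*_{j_p}$, we have $p!\,G_p = \sum_{J\in[n]^p}\langle\varphi^*_J,\varphi_J\rangle$, and $\varphi_J$ in fact lies in $T_p(V)\otimes U(\mathfrak{gl}_n)$ (the $\mathbb{C}S_p$-valued shift terms collapse the tensor slots back into $V^{\otimes p}$). The corollary immediately preceding the theorem shows that $\varphi_J$ satisfies~(\ref{eq:key_relation_to_make_central_elements_in_CS}), and $\varphi^*_J$ plainly does as well, so Proposition~\ref{prop:relations_for_varphi_and_varphi*}(ii), applied in its $U(\mathfrak{gl}_n)$-valued version, says precisely that $p!\,G_p$ is central in $\mathbb{C}S_p$.

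For commutation with $U(\mathfrak{gl}_n)$ I would pass to the algebra $\bar{T}(V,V^*)\otimes U(\mathfrak{gl}_n)$ and write $G_p = \langle\Phi\rangle$, where, by the computation of Section~\ref{subsec:first_expression_of_G} carried out in the style of Section~\ref{subsec:proof_of_Capelli_type_id}, $\Phi$ is a product of $\mathbb{C}S_p$-shifted copies $\Xi(u) = \Xi + u\tau$ of the element $\Xi = \sum_{i,j=1}^n e_i E_{ij} e^*_j$ (with $\tau = \sum_i e_i e^*_i$ and the shift parameters among the $x^\circ_k\in\mathbb{C}S_p$, which commute with $U(\mathfrak{gl}_n)$). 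Two ingredients then finish the job. First, a short direct computation from Theorem~\ref{thm:analogue_of_CCR} and the relations of $\mathfrak{gl}_n$ gives $[\Xi, E_{ab}] = -\pi(E_{ba})\Xi$, where $\pi$ denotes the canonical action of $\mathfrak{gl}(V)$ on $\bar{T}(V,V^*)$; since also $[\tau, E_{ab}] = 0 = \pi(E_{ba})\tau$, the same identity holds for each factor $\Xi(u)$, and, both $[\,\cdot\,, E_{ab}]$ and $\pi(E_{ba})$ being derivations, $[\Phi, E_{ab}] = -\pi(E_{ba})\Phi$. Second, the coupling $\langle\,\cdot\,\rangle\colon\bar{T}_{p,p}(V,V^*)\to\mathbb{C}S_\infty$ does not depend on the chosen basis (the fact used in the proof of Proposition~\ref{prop:preimm_p}(iii)), hence is $GL(V)$-invariant, hence annihilates every element of the form $\pi(X)\Psi$ with $X\in\mathfrak{gl}(V)$. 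Since $\langle\,\cdot\,\rangle$ is two-sided $U(\mathfrak{gl}_n)$-linear, we conclude $[G_p, E_{ab}] = \langle[\Phi, E_{ab}]\rangle = -\langle\pi(E_{ba})\Phi\rangle = 0$, and hence $G_p$ commutes with all of $U(\mathfrak{gl}_n)$.

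The main obstacle is the first step of the second half: putting $G_p$ into the form $\langle\Phi\rangle$ with $\Phi$ an honest product of $\Xi$-factors. The expression coming straight out of Section~\ref{subsec:first_expression_of_G} has all the $e_i$'s on the left and all the $e^*_j$'s on the right, and normal-ordering it into $\Xi(u_1)\Xi(u_2)\cdots\Xi(u_p)$ (up to the factors $1/p!$ and the summations) requires shift-intertwining relations — the analogue of the identity $\Xi(u+1)\eta_j = \eta_j\Xi(u)$ exploited in Section~\ref{subsec:proof_of_Capelli_type_id} — together with Lemma~\ref{lem:comm_relation_of_xi_3}. Once this normal form is available, the two ingredients above are routine. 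Alternatively, one can bypass the normal form and verify $\sum_J\langle\varphi^*_J,[\varphi_J, E_{ab}]\rangle = 0$ by a direct telescoping computation using $[\xi_j(y_k), E_{ab}] = \delta_{ja}\xi_b - e_b E_{aj}$ and Lemma~\ref{lem:comm_relation_of_xi_3}; this is the exact analogue of the classical verification that the Capelli determinant is central.
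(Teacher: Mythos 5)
Your decomposition into two halves is exactly the paper's (its Proposition~\ref{prop:invariance_of_G_p_under_CS} and Theorem~\ref{thm:invariance_of_G_p_under_GL}), and your first half — commutation with $\mathbb{C}S_p$ via the corollary to Lemma~\ref{lem:comm_relation_of_xi_3} and Proposition~\ref{prop:relations_for_varphi_and_varphi*}(ii) — is the paper's argument verbatim. Your second ingredient for the second half, namely that $\langle\,\cdot\,\rangle$ is basis-independent and hence kills $\pi(X)\Psi$, is also exactly the invariance the paper exploits.

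The place where you stall, however, is not actually an obstacle in the paper's route. You try to obtain the $\mathfrak{gl}(V)$-equivariance $[\Phi,E_{ab}]=-\pi(E_{ba})\Phi$ by first writing $\Phi$ as an ordered product $\Xi(u_1)\cdots\Xi(u_p)$ of $\Xi$-factors with shift parameters in $\mathbb{C}S_p$ and then invoking the derivation property factor by factor. That factorization is never established for $G_p$; the paper only reaches the form $\Xi(c_T(1))\cdots\Xi(c_T(p))$ after specializing to $G_\lambda$, where the shifts have become scalar contents rather than Jucys--Murphy elements, and the permutation-valued shifts $x^\circ_k$ do not commute with $\tau$ or the $e_i,e^*_j$ in $\bar{T}(V,V^*)$, so the normal form you are after is genuinely delicate. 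The paper sidesteps this entirely: it works at the group level with Lemma~\ref{lem:adjoint_action_on_E} ($\operatorname{Ad}(g)E={}^t\!g\cdot E\cdot{}^t\!g^{-1}$), applies $\operatorname{Ad}(g)$ directly to the normal-ordered expression of $G_p$ from Proposition~\ref{prop:expressions_of_G^IJ_and_G_p} (all $e$'s on the left, all $e^*$'s on the right, the $E$'s sandwiched in between), absorbs the matrix $g$ into a new dual basis pair $\tilde{e}_i,\tilde{e}^*_j$, and concludes $\operatorname{Ad}(g)G_p=G_p$ by basis-independence of $\langle\,\cdot\,\rangle$. Differentiating this group-level invariance gives exactly the Lie-algebra identity you were trying to prove, but with no factorization needed. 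In short: your Leibniz-rule strategy is the infinitesimal shadow of a computation that is shorter at the group level, and your ``main obstacle'' dissolves if you make the same observation in integrated form. Your second fallback (a direct telescoping commutator computation) is plausible but left as a sketch, so as written the proposal has a real gap at its central step.
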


\begin{corollary}\sl
   The element $G_{\lambda}$ is central in $U(\mathfrak{gl}_n)$.
\end{corollary}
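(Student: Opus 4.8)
The plan is to deduce this corollary directly from Theorem~\ref{thm:centrality_of_G_p} together with the identity $G_{\lambda} = \chi_{\lambda}(G_p)$ already recorded above, where $\chi_{\lambda} \colon \mathbb{C}S_p \otimes U(\mathfrak{gl}_n) \to U(\mathfrak{gl}_n)$ is the linear map sending $t \otimes a$ to $\chi_{\lambda}(t)\,a$. The point I would exploit is that centrality of $G_p$ in the tensor product algebra $\mathbb{C}S_p \otimes U(\mathfrak{gl}_n)$ is in fact stronger than what is needed: it already includes the assertion that $G_p$ commutes with the subalgebra $1 \otimes U(\mathfrak{gl}_n)$, and this is exactly the information that transfers to $G_{\lambda}$.

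Concretely, I would expand $G_p = \sum_{\sigma \in S_p} \sigma \otimes a_{\sigma}$ with uniquely determined coefficients $a_{\sigma} \in U(\mathfrak{gl}_n)$, using that the $\sigma$'s form a basis of $\mathbb{C}S_p$ and that the two tensor factors commute. For any $i,j$, the relation $[G_p,\, 1 \otimes E_{ij}] = 0$ furnished by Theorem~\ref{thm:centrality_of_G_p} reads $\sum_{\sigma \in S_p} \sigma \otimes [a_{\sigma}, E_{ij}] = 0$, and comparing coefficients of the basis elements $\sigma$ gives $[a_{\sigma}, E_{ij}] = 0$ for every $\sigma$. Since the $E_{ij}$ generate $U(\mathfrak{gl}_n)$, each $a_{\sigma}$ lies in the center of $U(\mathfrak{gl}_n)$. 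Applying $\chi_{\lambda}$ then gives $G_{\lambda} = \chi_{\lambda}(G_p) = \sum_{\sigma \in S_p} \chi_{\lambda}(\sigma)\, a_{\sigma}$, a $\mathbb{C}$-linear combination of central elements, hence itself central. (The same component argument also shows that the displayed expression for $G_{\lambda}$ does not depend on the choice of standard tableau $T$, since $\chi_{\lambda}(G_p)$ manifestly does not.)

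As for obstacles, there is essentially none remaining at this stage: all the substance lives in Theorem~\ref{thm:centrality_of_G_p}, and the passage from $G_p$ to $G_{\lambda}$ is a one-line projection. The only thing I would make sure is on record before invoking this argument is the identity $G_{\lambda} = \chi_{\lambda}(G_p)$, which the text promises to establish together with the equivalence of the various expressions for $G_p$; once that is in place, the corollary is immediate.
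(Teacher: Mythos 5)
Your proof is correct and essentially the paper's own route: the corollary is deduced immediately from Theorem~\ref{thm:centrality_of_G_p} by applying $\chi_{\lambda}$ to the central element $G_p$ (using $G_{\lambda}=\chi_{\lambda}(G_p)$), and your expansion $G_p=\sum_{\sigma\in S_p}\sigma\otimes a_{\sigma}$ with each $a_{\sigma}$ central merely spells out why this projection preserves commutation with $U(\mathfrak{gl}_n)$. Only your closing parenthetical should be qualified: the independence from $T$ of the content-based expression for $G_{\lambda}$ is not a consequence of this projection argument alone but is the separate Theorem~\ref{thm:expression_of_G_lambda_corresponding_to_the_symmetrized_immanant}, though it is not needed for the corollary itself.
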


First, the following is a consequence of 
Proposition~\ref{prop:relations_for_varphi_and_varphi*} (ii):

\begin{proposition}\label{prop:invariance_of_G_p_under_CS}\sl
   The element $G_p$ commutes with any element in $\mathbb{C}S_p$.
\end{proposition}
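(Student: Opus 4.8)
The plan is to obtain Proposition~\ref{prop:invariance_of_G_p_under_CS} as an immediate consequence of Proposition~\ref{prop:relations_for_varphi_and_varphi*}~(ii), applied with
$$
   \varphi_J = \xi_{j_p}(y_1)\,\xi_{j_{p-1}}(y_2)\cdots\xi_{j_1}(y_p), \qquad
   \varphi^*_J = e^*_{j_1}\cdots e^*_{j_p}.
$$
First I would record that, directly from the definitions of $G^{IJ}$ and of $G_p$ given just above, one has
$$
   p!\, G_p = \sum_{J\in[n]^p} G^{JJ} = \sum_{J\in[n]^p}\langle\varphi^*_J,\varphi_J\rangle,
$$
so it suffices to show that the right--hand side commutes with every element of $\mathbb{C}S_p$.

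Next I would check the two hypotheses~(\ref{eq:key_relation_to_make_central_elements_in_CS}). The relation $\sigma^{-1}\varphi^*_J=\varphi^*_{\sigma(J)}$ holds because $\varphi^*_J=e^*_{j_1}\cdots e^*_{j_p}$ is precisely the basic example recorded immediately after (\ref{eq:key_relation_to_make_central_elements_in_CS}). The relation $\varphi_J\sigma=\varphi_{\sigma(J)}$ is exactly the content of the corollary to Lemma~\ref{lem:comm_relation_of_xi_3}, which in turn rests on the $s_1$--commutation rule $\xi_i(y_l)\xi_j(y_{l+1})=\xi_j(y_l)\xi_i(y_{l+1})s_1$ of that lemma. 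Granting these, Proposition~\ref{prop:relations_for_varphi_and_varphi*}~(ii) gives $\sum_J\langle\varphi^*_J,\varphi_J\rangle=\sum_J\langle\varphi_J\varphi^*_J\rangle$ and asserts that this quantity is central in $\mathbb{C}S_p$; dividing by $p!$ yields $G_p$.

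The one point that needs a word of justification is that here $\varphi_J$ lies in $\bar{T}_p(V)\otimes U(\mathfrak{gl}_n)$ rather than in $T_p(V)$ as literally stated in Proposition~\ref{prop:relations_for_varphi_and_varphi*}: it carries the permutation factors $y_k$ and $U(\mathfrak{gl}_n)$--coefficients. However, the proof of part~(ii) of that proposition uses nothing beyond part~(i) and the formal identity $\langle\varphi\varphi^*\rangle=\frac{1}{p!}\sum_{\sigma\in S_p}\sigma^{-1}\langle\varphi^*,\varphi\rangle\sigma$, both of which carry over verbatim once the pairings of Section~\ref{subsec:def_of_couplings} are tensored with $\mathcal{A}=U(\mathfrak{gl}_n)$ as explained in Section~\ref{sec:immanants_and_algebras}; moreover, since each $y_k\in\mathbb{C}S_p$, everything stays inside $\mathbb{C}S_p\otimes U(\mathfrak{gl}_n)$. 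I therefore do not expect any genuine obstacle: all of the real work — the $s_1$--commutation relation for the $\xi_i(y_l)$ — has already been done in Lemma~\ref{lem:comm_relation_of_xi_3}. As an alternative route, one may simply observe that $G_p=\langle\,\tfrac{1}{p!}\sum_{J\in[n]^p}\varphi_J\varphi^*_J\,\rangle$ and invoke Proposition~\ref{prop:centrality_of_<Phi>} directly.
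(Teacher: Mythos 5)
Your proof is correct and is exactly what the paper intends: the paper states Proposition~\ref{prop:invariance_of_G_p_under_CS} as a direct consequence of Proposition~\ref{prop:relations_for_varphi_and_varphi*}~(ii), applied to $\varphi_J = \xi_{j_p}(y_1)\cdots\xi_{j_1}(y_p)$ and $\varphi^*_J = e^*_{j_1}\cdots e^*_{j_p}$, and you have correctly identified this pair and verified its hypotheses via the corollary to Lemma~\ref{lem:comm_relation_of_xi_3}. The caveat you raise at the end is needless: since each $y_k \in \mathbb{C}S_p$ and the $\bar{T}(V)$-multiplication only produces permutation factors inside $\mathbb{C}S_p$ along the way, the product $\xi_{j_p}(y_1)\cdots\xi_{j_1}(y_p)$ lies in $V^{\otimes p}\otimes_{\mathbb{C}S_p}\mathbb{C}S_p\otimes U(\mathfrak{gl}_n) \simeq T_p(V)\otimes U(\mathfrak{gl}_n)$, so Proposition~\ref{prop:relations_for_varphi_and_varphi*}~(ii) applies as stated after the routine extension of the pairings by $\mathcal{A}$-coefficients set up at the start of Section~\ref{sec:immanants_and_algebras}.
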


Thus it suffices to show the commutativity with elements of $U(\mathfrak{gl}_n)$.
For this, we note the following relation.
This is seen by a straightforward calculation
(see \cite{O1} or \cite{IU} for the details of the proof).

\begin{lemma}\label{lem:adjoint_action_on_E}\sl 
   The matrix $E$ satisfies the following relation for any $g \in GL_n(\mathbb{C})$:
   $$
      \operatorname{Ad}(g) E = {}^t\!g \cdot E \cdot {}^t\!g^{-1}.
   $$
   Here $\operatorname{Ad}(g) E$ means the matrix
   $(\operatorname{Ad}(g) E_{ij})_{1 \leq i,j \leq n}$.
\end{lemma}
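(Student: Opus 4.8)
The plan is to reduce the identity to an elementary computation of the adjoint action on matrix units. First I would recall that, under the canonical embedding $\mathfrak{gl}_n \hookrightarrow U(\mathfrak{gl}_n)$ that sends the standard basis element to $E_{ij}$, the adjoint action of $GL_n(\mathbb{C})$ on the image of $\mathfrak{gl}_n$ coincides with the conjugation action $Y \mapsto g Y g^{-1}$ on $\mathfrak{gl}_n$ regarded as the space of $n \times n$ matrices. This is the defining property of the adjoint representation, and I would take it as known.

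Granting this, I would compute $\operatorname{Ad}(g)(E_{ij})$ by conjugating the matrix unit $E_{ij}$: its $(k,l)$-entry is $\sum_{a,b} g_{ka} (E_{ij})_{ab} (g^{-1})_{bl} = g_{ki} (g^{-1})_{jl}$, so that $\operatorname{Ad}(g)(E_{ij}) = \sum_{k,l} g_{ki} (g^{-1})_{jl}\, E_{kl}$ as an element of $U(\mathfrak{gl}_n)$. On the other hand, expanding the right-hand side of the claimed identity entrywise gives $({}^t\! g \cdot E \cdot {}^t\! g^{-1})_{ij} = \sum_{k,l} ({}^t\! g)_{ik} E_{kl} ({}^t\! g^{-1})_{lj} = \sum_{k,l} g_{ki} E_{kl} (g^{-1})_{jl}$. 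Since the scalars $g_{ki}$ and $(g^{-1})_{jl}$ are central in $U(\mathfrak{gl}_n)$, this coincides with the previous expression, and the lemma follows.

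An alternative, equally short route — worth noting since $GL_n(\mathbb{C})$ is connected — is to verify only the infinitesimal form: both $g \mapsto \operatorname{Ad}(g) E$ and $g \mapsto {}^t\! g \cdot E \cdot {}^t\! g^{-1}$ are regular maps $GL_n(\mathbb{C}) \to \operatorname{Mat}_n(U(\mathfrak{gl}_n))$ agreeing at $g = e$, so it suffices to check that their differentials coincide, i.e. that $[X, E] = {}^t\! X \cdot E - E \cdot {}^t\! X$ entrywise for all $X \in \mathfrak{gl}_n$; taking $X = E_{ab}$ this is just the commutation relation $[E_{ab}, E_{ij}] = \delta_{bi} E_{aj} - \delta_{ja} E_{ib}$ reassembled into matrix form.

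The only genuine obstacle is bookkeeping: one must keep straight the distinction between the matrix unit $E_{ij}$ used inside the conjugation and the abstract generator $E_{ij} \in U(\mathfrak{gl}_n)$ appearing as an entry of the matrix $E$, and one must be careful about which of ${}^t\! g$ and ${}^t\! g^{-1}$ sits on which side — it is a transpose rather than an inverse-transpose that makes the two sides match. Beyond that the argument is routine, consistent with the references to \cite{O1} and \cite{IU}.
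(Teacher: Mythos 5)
Your proposal is correct, and it fills in exactly the straightforward entrywise computation that the paper alludes to without giving (it simply cites \cite{O1} and \cite{IU} for the details). Both the main conjugation argument and the alternative infinitesimal check via $[E_{ab},E_{ij}]=\delta_{bi}E_{aj}-\delta_{ja}E_{ib}$ are sound, and you have correctly identified the one point where care is needed, namely that $({}^t\!g)_{ik}=g_{ki}$ and $({}^t\!g^{-1})_{lj}=(g^{-1})_{jl}$ are scalars and hence commute with the noncommutative entries $E_{kl}$.
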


Combining this lemma and Proposition~\ref{prop:expressions_of_G^IJ_and_G_p},
we can show the invariance of $G_p$ 
under the conjugation by $GL_n(\mathbb{C})$
in a way similar to that of Proposition~\ref{prop:preimm_p} (iii). 

\begin{theorem}\label{thm:invariance_of_G_p_under_GL}\sl 
   We have $\operatorname{Ad}(g) G_p = G_p$ for any $g \in GL_n(\mathbb{C})$.
   In particular, $G_p$ commutes with any element of $\mathfrak{gl}_n$.
\end{theorem}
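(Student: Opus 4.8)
The plan is to mimic the proof of Proposition~\ref{prop:preimm_p} (iii), replacing the matrix $X$ there by $E$ and ordinary conjugation by $\operatorname{Ad}$. Recall (Proposition~\ref{prop:expressions_of_G^IJ_and_G_p}, and the displayed formula preceding Theorem~\ref{thm:centrality_of_G_p}) that $G_p$ is obtained by a purely basis-independent recipe applied inside $\bar{T}(V,V^*) \otimes U(\mathfrak{gl}_n)$ to the element
$$
   \Xi_E = \sum_{i,j=1}^n e_i E_{ij} e^*_j ,
$$
together with its content shifts $\Xi_E(u) = \Xi_E + u\tau$, $\tau = \sum_i e_i e^*_i$: the only further ingredients are the couplings $\langle\cdot,\cdot\rangle$ and $\langle\cdot\rangle$ and the elements $x^{\circ}_k \in \mathbb{C}S_p$, and, as already noted in the proof of Proposition~\ref{prop:preimm_p} (iii), $\langle\cdot\rangle$ (and likewise $\langle\cdot,\cdot\rangle$, being defined through the coordinate-free operators $L(v^*)$) does not depend on the chosen basis of $V$. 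So if we write $G_p = \mathcal{F}(\Xi_E)$ for this recipe $\mathcal{F}$, then $G_p = \mathcal{F}(\Xi'_E)$ whenever $\Xi'_E$ is the same element built from any other basis and its dual.

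First I would identify $\operatorname{Ad}(g) G_p$. Since $\operatorname{Ad}(g)$ acts as the identity on the $\mathbb{C}S_p$- and $\bar{T}(V,V^*)$-factors and the couplings touch only the $\bar{T}$-variable, applying $\operatorname{Ad}(g)$ to $G_p$ just replaces $E$ by $\operatorname{Ad}(g)E$, i.e.\ $\operatorname{Ad}(g)G_p = \mathcal{F}(\Xi_{\operatorname{Ad}(g)E})$ with $\Xi_{\operatorname{Ad}(g)E} = \sum_{i,j} e_i (\operatorname{Ad}(g)E)_{ij} e^*_j$. By Lemma~\ref{lem:adjoint_action_on_E} we have $\operatorname{Ad}(g)E = {}^t\!g \cdot E \cdot {}^t\!g^{-1}$, so expanding the matrix product and regrouping,
$$
   \Xi_{\operatorname{Ad}(g)E}
   = \sum_{k,l=1}^n
   \Bigl(\sum_{i=1}^n e_i ({}^t\!g)_{ik}\Bigr) E_{kl}
   \Bigl(\sum_{j=1}^n ({}^t\!g^{-1})_{lj} e^*_j\Bigr)
   = \sum_{k,l=1}^n \tilde{e}_k E_{kl} \tilde{e}^*_l ,
$$
where $\tilde{e}_k = \sum_i e_i ({}^t\!g)_{ik}$ is a new basis of $V$ (the change of basis matrix is the invertible ${}^t\!g$) and $\tilde{e}^*_l = \sum_j ({}^t\!g^{-1})_{lj} e^*_j$ is checked to be its dual basis. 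Thus $\Xi_{\operatorname{Ad}(g)E}$ is exactly $\Xi_E$ rebuilt from $\tilde{e}_1,\ldots,\tilde{e}_n$; and since $\sum_k \tilde{e}_k \tilde{e}^*_k = \sum_k e_k e^*_k = \tau$, the identity-matrix parts absorbed into the content-shift arguments inside $\mathcal{F}$ also match.

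By the basis-independence of $\mathcal{F}$ we conclude $\operatorname{Ad}(g)G_p = \mathcal{F}(\Xi_{\operatorname{Ad}(g)E}) = \mathcal{F}(\Xi_E) = G_p$, which is the first assertion. For the "in particular," I would use that $GL_n(\mathbb{C})$ is connected: applying the identity just proved to $g = \exp(tX)$ with $X \in \mathfrak{gl}_n$ and differentiating at $t = 0$ yields $[\,1 \otimes X,\, G_p\,] = 0$ in $\mathbb{C}S_p \otimes U(\mathfrak{gl}_n)$, i.e.\ $G_p$ commutes with every element of $\mathfrak{gl}_n$. (Combined with Proposition~\ref{prop:invariance_of_G_p_under_CS}, this also reproves Theorem~\ref{thm:centrality_of_G_p}, since $\mathbb{C}S_p$ and $U(\mathfrak{gl}_n)$ generate the tensor-product algebra.)

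The only delicate point is the index bookkeeping in the displayed identity — keeping the transposes straight so that the ${}^t\!g$ and ${}^t\!g^{-1}$ coming from Lemma~\ref{lem:adjoint_action_on_E} get absorbed exactly into a legitimate basis/dual-basis change. This is precisely the computation in the proof of Proposition~\ref{prop:preimm_p} (iii), now with $g$ replaced by ${}^t\!g$, so I expect no genuine obstacle beyond this routine verification.
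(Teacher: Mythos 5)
Your proof is correct and follows essentially the same route as the paper's: both invoke Lemma~\ref{lem:adjoint_action_on_E} to replace $\operatorname{Ad}(g)E$ by ${}^t\!g \cdot E \cdot {}^t\!g^{-1}$, absorb the ${}^t\!g$ and ${}^t\!g^{-1}$ into a change of basis of $V$ together with its dual basis, and then appeal to the basis-independence of $\langle\cdot\rangle$ to conclude, exactly as in Proposition~\ref{prop:preimm_p}~(iii). The only cosmetic difference is that you package the recipe as ``$\mathcal{F}(\Xi_E)$'' and make explicit the differentiation step for the ``in particular,'' whereas the paper works directly with the expression of $G_p$ preceding Theorem~\ref{thm:centrality_of_G_p}; the mathematical content is the same.
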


Theorem~\ref{thm:centrality_of_G_p} follows
from this and Proposition~\ref{prop:invariance_of_G_p_under_CS}.

\begin{proof}[Proof of Theorem~{\sl \ref{thm:invariance_of_G_p_under_GL}}]
Using Lemma~\ref{lem:adjoint_action_on_E}, we have
\begin{align*}
   & \operatorname{Ad}(g)
   \sum_{I,J \in [n]^p} 
   e_{i_p} \cdots e_{i_1} 
   E_{i_p j_p}(x^{\circ}_1) \cdots E_{i_1 j_1}(x^{\circ}_p)
   e^*_{j_1} \cdots e^*_{j_p} \\
   & \qquad
   = \sum_{I,J \in [n]^p} 
   e_{i_p} \cdots e_{i_1} 
   ({}^t\!g \cdot E \cdot {}^t\!g^{-1})_{i_p j_p}(x^{\circ}_1) 
   \cdots ({}^t\!g \cdot E \cdot {}^t\!g^{-1})_{i_1 j_1}(x^{\circ}_p)
   e^*_{j_1} \cdots e^*_{j_p} \\
   & \qquad
   = \sum_{I,J \in [n]^p} 
   \tilde{e}_{i_p} \cdots \tilde{e}_{i_1} 
   E_{i_p j_p}(x^{\circ}_1) \cdots E_{i_1 j_1}(x^{\circ}_p)
   \tilde{e}^*_{j_1} \cdots \tilde{e}^*_{j_p}.
\end{align*}
Here we put $\tilde{e}_i = \sum_{j=1}^n e_j g_{ij}$ 
and $\tilde{e}^*_i = \sum_{i=1}^n e^*_i g^{ij}$,
so that these are dual bases of each other. 
Applying $\langle \cdot \rangle$ to this 
and using Proposition~\ref{prop:expressions_of_G^IJ_and_G_p}, 
we obtain the assertion,
because $\langle \cdot \rangle$ does not depend on basis chosen.
\end{proof}

Let us find other expressions of $G_p$.
From Proposition~\ref{prop:relations_for_varphi_and_varphi*} (i),
we see
\begin{equation}\label{eq:actions_to_G^IJ}
   G^{\sigma(I) \sigma'(J)} = \sigma^{-1} G^{IJ} \sigma'.
\end{equation}
Noting this, we can express $G^{IJ}$ and $G_p$ as follows:

\begin{proposition}\label{prop:expressions_of_G^IJ_and_G_p}\sl
   We have
   \begin{align*}
      G^{IJ}
      &= \sum_{\sigma \in S_p}
      \sigma
      E_{i_{\sigma(p)} j_p} (x^{\circ}_1) 
      E_{i_{\sigma(p-1)} j_{p-1}} (x^{\circ}_2) 
      \cdots 
      E_{i_{\sigma(1)} j_1} (x^{\circ}_p) \\
      &= 
      \sum_{\sigma \in S_p}
      \sigma
      E_{i_{\sigma(1)} j_1} (x_1) 
      E_{i_{\sigma(2)} j_2} (x_2) 
      \cdots 
      E_{i_{\sigma(p)} j_p} (x_p), 
      \displaybreak[0]\\
      G_p
	  & = \frac{1}{p!^2}
      \sum_{I \in [n]^p} \sum_{\sigma, \sigma' \in S_p} 
      \sigma
      E_{i_{\sigma(p)} i_{\sigma'(p)}}(x^{\circ}_1) 
      \cdots 
      E_{i_{\sigma(1)} i_{\sigma'(1)}}(x^{\circ}_p) 
      \sigma^{\prime-1} \\
      & = \frac{1}{p!^2}
      \sum_{I \in [n]^p} \sum_{\sigma, \sigma' \in S_p} 
      \sigma
      E_{i_{\sigma(1)} i_{\sigma'(1)}}(x_1) 
      \cdots 
      E_{i_{\sigma(p)} i_{\sigma'(p)}}(x_p) 
      \sigma^{\prime-1}.
   \end{align*}
\end{proposition}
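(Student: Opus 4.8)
The plan is to establish the two closed forms for $G^{IJ}$ first and then read off the two for $G_p$. By the definition in Section~\ref{subsec:first_expression_of_G},
$G^{IJ} = \langle e^*_{i_1}\cdots e^*_{i_p},\, \xi_{j_p}(y_1)\xi_{j_{p-1}}(y_2)\cdots\xi_{j_1}(y_p)\rangle = L(e^*_{i_1})\cdots L(e^*_{i_p})\bigl(\xi_{j_p}(y_1)\cdots\xi_{j_1}(y_p)\bigr)$.
First I would use the normal form of the generating product already obtained in Section~\ref{subsec:first_expression_of_G}: writing $\xi_j(u)=\sum_k e_kE_{kj}(u)$ and pushing each symmetric-group component to the right of all vectors via $\sigma e_a=e_a\alpha(\sigma)$, the component $y_m$ attached to the slot followed by $p-m$ vectors becomes $\alpha^{p-m}(y_m)=x^{\circ}_m$ by the second relation of~(\ref{eq:relation_between_x_and_y}), and since each surviving component stays to the left of those from later slots one gets
$$
   \xi_{j_p}(y_1)\cdots\xi_{j_1}(y_p)
   =\sum_{K\in[n]^p}e_{k_p}\cdots e_{k_1}\,
   E_{k_pj_p}(x^{\circ}_1)E_{k_{p-1}j_{p-1}}(x^{\circ}_2)\cdots E_{k_1j_1}(x^{\circ}_p).
$$
Applying the derivations termwise, by~(\ref{eq:pairing}) only the $K$ that are rearrangements of $I$ survive, with $\langle e^*_{i_1}\cdots e^*_{i_p},e_{i_{\sigma(p)}}\cdots e_{i_{\sigma(1)}}\rangle=I!\,s_I\sigma$; summing over $\sigma$ while absorbing the idempotent $s_I$ by averaging over the stabilizer $(S_p)_I$ — exactly the manoeuvre used for $\operatorname{column-preimm}$ in Section~\ref{sec:immanants_and_algebras} — collapses the sum to $G^{IJ}=\sum_{\sigma\in S_p}\sigma\,E_{i_{\sigma(p)}j_p}(x^{\circ}_1)\cdots E_{i_{\sigma(1)}j_1}(x^{\circ}_p)$.

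The second expression for $G^{IJ}$ I would then deduce purely formally. Substituting $E_{ab}(x^{\circ}_k)=\varepsilon^{-1}E_{ab}(x_k)\varepsilon$ (valid by the first relation of~(\ref{eq:relation_between_x_and_y}) since $U(\mathfrak{gl}_n)$ commutes with $\mathbb{C}S_p$), cancelling the interior $\varepsilon\varepsilon^{-1}$ pairs, and re-indexing the $S_p$-sum by $\sigma\mapsto\rho$ with $\sigma=\rho\varepsilon$ (so $i_{\sigma(m)}=i_{\rho(p+1-m)}$) gives
$$
   G^{IJ}=\Bigl(\sum_{\rho\in S_p}\rho\,E_{i_{\rho(1)}j_p}(x_1)E_{i_{\rho(2)}j_{p-1}}(x_2)\cdots E_{i_{\rho(p)}j_1}(x_p)\Bigr)\varepsilon.
$$
Applying this same identity to the pair $(I,J^{\circ})$ with $J^{\circ}=(j_p,\dots,j_1)$ and combining it with the instance $G^{IJ}=G^{IJ^{\circ}}\varepsilon$ of~(\ref{eq:actions_to_G^IJ}) (take $\sigma=e$, $\sigma'=\varepsilon$; note $\varepsilon(J)=J^{\circ}$ and $\varepsilon^2=1$) removes the trailing $\varepsilon$ and yields $G^{IJ}=\sum_{\sigma\in S_p}\sigma\,E_{i_{\sigma(1)}j_1}(x_1)E_{i_{\sigma(2)}j_2}(x_2)\cdots E_{i_{\sigma(p)}j_p}(x_p)$.

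For $G_p$ I would start from $G_p=\frac1{p!}\sum_{J\in[n]^p}G^{JJ}$. The instance $\sigma'^{-1}G^{JJ}\sigma'=G^{\sigma'(J)\sigma'(J)}$ of~(\ref{eq:actions_to_G^IJ}) (take $I=J$ and the two permutations equal), averaged over $\sigma'\in S_p$ and combined with the reindexing $\sum_J\sum_{\sigma'}G^{\sigma'(J)\sigma'(J)}=p!\sum_JG^{JJ}$, gives $G_p=\frac1{p!^2}\sum_J\sum_{\sigma'\in S_p}\sigma'G^{JJ}\sigma'^{-1}$. Substituting either closed form for $G^{JJ}$ and relabeling the pair $(J,\text{inner }\sigma)$ by $(I,\sigma)$ through $J=\sigma'(I)$ — so that $j_{\sigma(m)}$ turns into $i_{\sigma(m)}$, $j_m$ into $i_{\sigma'(m)}$, and the inner permutation is absorbed into the new index — produces exactly the two displayed formulas for $G_p$.

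The step I expect to be most delicate is the normal-form expansion of $\xi_{j_p}(y_1)\cdots\xi_{j_1}(y_p)$: one must check that each $y_m$ passes the correct number of vectors to become precisely $x^{\circ}_m$ while the several non-commuting symmetric-group components are transported simultaneously, that no spurious cross terms arise, and that the surviving components end up ordered as $x^{\circ}_1,\dots,x^{\circ}_p$ dictated by the normal form. Everything downstream — the coupling computation via~(\ref{eq:pairing}), the $\varepsilon$-conjugation, and the symmetrizations for $G_p$ — is bookkeeping of the same kind already carried out for noncommutative determinants and preimmanants earlier in the paper.
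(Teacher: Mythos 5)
Your argument is correct and follows essentially the same route as the paper: the normal-form expansion of $\varphi_J$ together with~(\ref{eq:pairing}) gives the first expression of $G^{IJ}$, and then $\varepsilon$-conjugation via the two relations in~(\ref{eq:relation_between_x_and_y}) and~(\ref{eq:actions_to_G^IJ}) gives the second. The only cosmetic differences are that the paper conjugates $I$ and $J$ simultaneously through $G^{IJ}=\varepsilon G^{\varepsilon(I)\varepsilon(J)}\varepsilon^{-1}$ whereas you do it in two stages (first turning the $x^{\circ}_k$ into $x_k$ and then absorbing the trailing $\varepsilon$ by passing to $J^{\circ}$), and that you spell out the symmetrization argument for $G_p$ which the paper dismisses as ``shown similarly.''
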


\begin{proof}
Using the second relation of (\ref{eq:relation_between_x_and_y}),
we have
$$
   \varphi^{}_J 
   = \sum_{I \in [n]^p} e_{i_p} e_{i_{p-1}} \cdots e_{i_1}
   E_{i_p j_p} (x^{\circ}_1) 
   E_{i_{p-1} j_{p-1}} (x^{\circ}_2) 
   \cdots 
   E_{i_1 j_1} (x^{\circ}_p). 
$$
The first expression of $G^{IJ}$ follows from this
in a way similar to the proof of Lemma~\ref{lem:expression_of_column-imm}.
Moreover, using (\ref{eq:actions_to_G^IJ}) and the first relation of (\ref{eq:relation_between_x_and_y}),
we have
\begin{align*}
   G^{IJ}
   &= \varepsilon G^{\varepsilon(I)\varepsilon(J)} \varepsilon^{-1} \\
   &= 
   \varepsilon \sum_{\sigma \in S_p}
   \sigma
   E_{i_{\varepsilon \sigma(p)} j_{\varepsilon(p)}} (x^{\circ}_1) 
   E_{i_{\varepsilon \sigma(p-1)} j_{\varepsilon(p-1)}} (x^{\circ}_2) 
   \cdots 
   E_{i_{\varepsilon \sigma(1)} j_{\varepsilon(1)}} (x^{\circ}_p) \varepsilon^{-1} \\
   &= 
   \sum_{\sigma \in S_p}
   \varepsilon \sigma \varepsilon^{-1}
   E_{i_{\varepsilon \sigma(p)} j_{\varepsilon(p)}} (x_1) 
   E_{i_{\varepsilon \sigma(p-1)} j_{\varepsilon(p-1)}} (x_2) 
   \cdots 
   E_{i_{\varepsilon \sigma(1)} j_{\varepsilon(1)}} (x_p).
\end{align*}
Replacing $\sigma$ with $\varepsilon^{-1} \sigma \varepsilon$, we have
\begin{align*}
      G^{IJ}
      & = 
      \sum_{\sigma \in S_p}
      \sigma
      E_{i_{\sigma \varepsilon(p)} j_{\varepsilon(p)}} (x_1) 
      E_{i_{\sigma \varepsilon(p-1)} j_{\varepsilon(p-1)}} (x_2) 
      \cdots 
      E_{i_{\sigma \varepsilon(1)} j_{\varepsilon(1)}} (x_p) \\
      & = 
      \sum_{\sigma \in S_p}
      \sigma
      E_{i_{\sigma(1)} j_1} (x_1) 
      E_{i_{\sigma(2)} j_2} (x_2) 
      \cdots 
      E_{i_{\sigma(p)} j_p} (x_p). 
\end{align*}
This means the second expression of $G^{IJ}$.
The expressions of $G_p$ are shown similarly.
\end{proof}

We can express $G_{\lambda}$ simply in terms of contents 
of the Young diagram $\lambda$ as follows:

\begin{theorem}\label{thm:expression_of_G_lambda_corresponding_to_the_symmetrized_immanant}\sl
   For any $T \in \operatorname{STab}(\lambda)$,
   we have
   \begin{align*}
      G_{\lambda} 
      & = \frac{\chi_{\lambda}(1)}{p!}
      \sum_{I \in [n]^p} \sum_{\sigma \in S_p}
      \rho_{\lambda}(\sigma)_{TT}
      E_{i_{\sigma(1)} i_1}(c_T(1))
      \cdots
      E_{i_{\sigma(p)} i_p}(c_T(p)) \\
      & = \frac{\chi_{\lambda}(1)}{p!}
      \left<
      \Xi(c_T(1))
      \cdots
      \Xi(c_T(p))
      \right>_T \\
      & = \frac{1}{p!}
      \left<
      \Xi(c_T(1))
      \cdots
      \Xi(c_T(p))
      \right>_{\lambda}.
   \end{align*}
   In particular, these expressions do not depend on $T$.
   Here we define $\Xi(u)$ by $\Xi(u) = \sum_{i,j=1}^n e_i E_{ij}(u) e^*_j = \Xi + u \tau$,
   and we put
   $\langle \Phi \rangle_T = \rho_{\lambda}(\langle \Phi \rangle)_{TT}$
   for $\Phi \in T_{p,p}(V,V^*)$.
\end{theorem}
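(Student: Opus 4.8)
The plan is to derive all three identities from two ``centrality'' facts — Theorem~\ref{thm:centrality_of_G_p} (that $G_p$ is central in $\mathbb{C}S_p\otimes U(\mathfrak{gl}_n)$) and Proposition~\ref{prop:centrality_of_<Phi>} (that $\langle\Phi\rangle$ commutes with $\mathbb{C}S_p$) — together with the diagonalization of the Jucys--Murphy elements in the Young orthogonal basis (Proposition~\ref{prop:entries_of_JM_elements}). The first displayed formula is, verbatim, the expression for $G_\lambda$ introduced at the start of Section~8.2, so its real content is the asserted $T$-independence. To get this, I would apply $\rho_\lambda$ (acting on the $\mathbb{C}S_p$-factor of $\mathbb{C}S_p\otimes U(\mathfrak{gl}_n)$) to the presentation $G_p=\frac1{p!}\sum_{J}\sum_{\sigma}\sigma\,E_{j_{\sigma(1)}j_1}(x_1)\cdots E_{j_{\sigma(p)}j_p}(x_p)$ of Proposition~\ref{prop:expressions_of_G^IJ_and_G_p}. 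By Proposition~\ref{prop:entries_of_JM_elements}, $\rho_\lambda(x_k)$ is diagonal with $(S,S)$-entry $c_S(k)$, so $\rho_\lambda(E_{ab}(x_k))$ is the diagonal $\operatorname{Mat}(U(\mathfrak{gl}_n))$-matrix with $(S,S)$-entry $E_{ab}(c_S(k))$; hence the product of the $p$ factors is diagonal, and the $(T,T)$-entry of $\rho_\lambda(G_p)$ is exactly $\chi_\lambda(1)^{-1}$ times the right-hand side of the first formula. Since $G_p$ is central and $\rho_\lambda$ is irreducible, $\rho_\lambda(G_p)$ is a scalar matrix over $U(\mathfrak{gl}_n)$: its diagonal entries all coincide (this is the $T$-independence), and its trace is $\chi_\lambda(G_p)=G_\lambda$, so $G_\lambda=\chi_\lambda(1)\cdot(\text{common diagonal entry})$, which is the first formula.

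Next I would show that the second and third formulas are equal to each other for every $T$. Since $\Xi(c_T(1))\cdots\Xi(c_T(p))\in\bar T_{p,p}(V,V^*)\otimes U(\mathfrak{gl}_n)$, Proposition~\ref{prop:centrality_of_<Phi>} gives that $\langle\Xi(c_T(1))\cdots\Xi(c_T(p))\rangle$ commutes with $\mathbb{C}S_p$, so again $\rho_\lambda$ of it is a scalar matrix over $U(\mathfrak{gl}_n)$; therefore $\langle\,\cdot\,\rangle_\lambda=\operatorname{tr}\rho_\lambda(\langle\,\cdot\,\rangle)=\chi_\lambda(1)\,\langle\,\cdot\,\rangle_T$, which is precisely the asserted equality of the second and third expressions. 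Thus it remains only to identify one of them, say the third, with $G_\lambda$.

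For that last identification I would first expand $\langle\Xi(a_1)\cdots\Xi(a_p)\rangle$ through the $\diamond$-product and the pairing (\ref{eq:pairing}), exactly as in the proof of the symmetrized-preimmanant lemma in Section~\ref{sec:immanants_and_algebras} but keeping the $p$ arguments distinct, obtaining
\[
  \langle\Xi(a_1)\cdots\Xi(a_p)\rangle=\frac1{p!}\sum_{I\in[n]^p}\sum_{\sigma,\sigma'\in S_p}\sigma\,E_{i_{\sigma(1)}i_{\sigma'(1)}}(a_1)\cdots E_{i_{\sigma(p)}i_{\sigma'(p)}}(a_p)\,\sigma'^{-1}
\]
(for $a_1=\dots=a_p=0$ this recovers $p!\operatorname{preimm}_pE$, consistently with Lemma~\ref{lem:expression_of_preimm}). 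Comparing with the double-sum presentation of $G_p$ in Proposition~\ref{prop:expressions_of_G^IJ_and_G_p}, one sees that $\frac1{p!}\langle\Xi(c_T(1))\cdots\Xi(c_T(p))\rangle$ is ``$G_p$ with the Jucys--Murphy elements $x_k$ replaced by the scalars $c_T(k)$''. The remaining work is to verify that applying $\rho_\lambda$ to $G_p$ really does produce this substitution: using the presentation $G_p=\frac1{p!}\langle\Theta\rangle$ of Section~\ref{subsec:first_expression_of_G} and the relations (\ref{eq:relation_between_x_and_y}) relating $x_k$, $x^\circ_k$, $y_k$, one rewrites the generating function so that $\rho_\lambda$ evaluates its permutation-valued arguments at the contents $c_T(k)$ along the Young vector $v_T$; the conclusion is that $\rho_\lambda(G_p)$ and $\rho_\lambda\!\bigl(\tfrac1{p!}\langle\Xi(c_T(1))\cdots\Xi(c_T(p))\rangle\bigr)$ are the same scalar matrix, whence $G_\lambda=\chi_\lambda(G_p)$ equals the third (and hence the second) expression, and in particular these expressions are $T$-independent.

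The hard part is this last reconciliation. The subtlety is that inside $G_p$ the elements $x_k$ (equivalently $x^\circ_k$, $y_k$) are interleaved both with permutations and with the generators $e_i,e^*_i$, so ``setting $x_k=c_T(k)$'' is not a literal substitution: one must track the $\alpha$-shifts of (\ref{eq:relation_between_x_and_y}) introduced when the $e_i$'s pass the permutation-valued arguments, and exploit that $\rho_\lambda(G_p)$ is already known to be scalar — so it is determined by its action on the single Young vector $v_T$, on which each $x_k$ does act by the scalar $c_T(k)$. This is the immanant analogue of the ``$\Xi(u+1)\eta_j=\eta_j\Xi(u)$'' peeling-off used in the proof of Theorem~\ref{thm:Capelli_identity_in_T}, with the arithmetic progression $r-1,\dots,0$ replaced by the content sequence $c_T(1),\dots,c_T(p)$.
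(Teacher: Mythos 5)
Your first two reductions reproduce the paper's argument almost verbatim and are sound: (i) you derive the first displayed formula (and its $T$-independence) from Theorem~\ref{thm:centrality_of_G_p}, Schur's lemma, and the diagonality of $\rho_\lambda(x_k)$ via the one-sided presentation of $G_p$ in Proposition~\ref{prop:expressions_of_G^IJ_and_G_p}; (ii) you derive the equality of the second and third formulas from Proposition~\ref{prop:centrality_of_<Phi>} and Schur's lemma. Your double-sum expansion of $\langle\Xi(a_1)\cdots\Xi(a_p)\rangle$ is also correct.

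The real difficulty is the one you flag and do not close, and your diagnosis of it is off the mark. There are no $\alpha$-shifts to track: both presentations of $G_p$ in Proposition~\ref{prop:expressions_of_G^IJ_and_G_p} and your formula for $\langle\Xi(a_1)\cdots\Xi(a_p)\rangle$ already live in $\mathbb{C}S_p\otimes U(\mathfrak{gl}_n)$, with no $\bar T(V,V^*)$-factor in sight. The genuine obstruction to ``set $x_k=c_T(k)$ on $v_T$'' is that your target $\frac{1}{p!}\langle\Xi(c_T(1))\cdots\Xi(c_T(p))\rangle$ is the \emph{two-sided} (symmetric) expression $\frac{1}{p!^2}\sum_{I,\sigma,\sigma'}\sigma\,E(\cdots)\,\sigma'^{-1}$, while the substitution-on-$v_T$ trick only works in the \emph{one-sided} presentation $\frac{1}{p!}\sum_{I,\sigma}\sigma\,E(x_1)\cdots E(x_p)$: applied to $\sigma\,E(x_1)\cdots E(x_p)\,\sigma'^{-1}$, the Jucys--Murphy factors act not on $v_T$ but on $\rho_\lambda(\sigma'^{-1})v_T$, where they no longer read off the scalars $c_T(k)$. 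At the level of coefficients this is visible: the $(T,T)$-entry you would need, $\rho_\lambda(\sigma)_{TT}$, does not coincide with the class-averaged quantity $\chi_\lambda(\sigma)/\chi_\lambda(1)$ that arises from the symmetric expression (e.g.\ for $\lambda=(2,1)$ and $\sigma=s_1$, one has $\rho_\lambda(s_1)_{TT}=\pm1$ while $\chi_\lambda(s_1)=0$), so the equality of the two $\rho_\lambda$-scalar matrices is not a pointwise identity of coefficients but requires a genuine cancellation. Your sketch neither produces that cancellation nor reduces to the first formula; as written it is circular, since ``the two scalar matrices coincide'' is precisely what remains to be proved.

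The paper closes this gap by a different route: it re-expresses the already-established first formula as a coupling sum $\frac{\chi_\lambda(1)}{p!}\sum_J\langle e^*_{j_p}\cdots e^*_{j_1},\xi_{j_1}(c_T(1))\cdots\xi_{j_p}(c_T(p))\rangle_T$ by a direct computation of the pairing, then converts $\sum_J\langle\varphi^*_J,\varphi_J\rangle_T$ to $\sum_J\langle\varphi_J\varphi^*_J\rangle_T$, and finally assembles $\sum_J\varphi_J\varphi^*_J=\Xi(c_T(1))\cdots\Xi(c_T(p))$ in $\bar T(V,V^*)\otimes U(\mathfrak{gl}_n)$. That forward chain, starting from the first formula and building up to $\langle\Xi(\cdots)\rangle_T$, is what your proposal is missing.
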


\begin{proof}
Note that $\rho_{\lambda}(A)$ is a ``scalar matrix'' 
(namely $\rho_{\lambda}(A)_{TT'} = \delta_{TT'} \frac{1}{\chi_{\lambda}(1)}\chi_{\lambda}(A)$)
by Schur's lemma,
when $A \in \mathbb{C}S_p \otimes \mathcal{A}$ commutes with any element in $\mathbb{C} S_p$.
Thus we see the last equality of the assertion,
because $\left< \Xi(c_T(1)) \cdots \Xi(c_T(p)) \right>$ commutes with any element in $\mathbb{C}S_p$.
Similarly we have $G_{\lambda} = \chi_{\lambda}(1) \rho_{\lambda}(G_p)_{TT}$.
Noting this and the relation $\rho_{\lambda}(x_i)_{TT} = c_T(i)$ 
in Proposition~\ref{prop:entries_of_JM_elements},
we can express $G_{\lambda}$ as
\begin{align*}
   G_{\lambda} 
   & = \frac{\chi_{\lambda}(1)}{p!}
   \sum_{I \in [n]^p} \sum_{\sigma \in S_p}
   \rho_{\lambda}(
   \sigma
   E_{i_{\sigma(1)} i_1}(x_1)
   \cdots
   E_{i_{\sigma(1)} i_p}(x_p) 
   )_{TT}\\
   & = \frac{\chi_{\lambda}(1)}{p!}
   \sum_{I \in [n]^p} \sum_{\sigma \in S_p}
   \rho_{\lambda}(\sigma)_{TT}
   E_{i_{\sigma(1)} i_1}(c_T(1))
   \cdots
   E_{i_{\sigma(p)} i_p}(c_T(p)).
\end{align*}
This can be rewritten as
\begin{align*}
   G_{\lambda} 
   & = \frac{\chi_{\lambda}(1)}{p!}
   \sum_{J \in [n]^p}   
   \left<
   e^*_{j_p} \cdots e^*_{j_1},
   \xi_{j_1}(c_T(1))
   \cdots
   \xi_{j_p}(c_T(p))
   \right>_T \\
   & = \frac{\chi_{\lambda}(1)}{p!}
   \sum_{J \in [n]^p} 
   \left<
   \xi_{j_1}(c_T(1))
   \cdots
   \xi_{j_p}(c_T(p))
   e^*_{j_p} \cdots e^*_{j_1}
   \right>_T \\
   & = \frac{\chi_{\lambda}(1)}{p!}
   \left<
   \Xi(c_T(1))
   \cdots
   \Xi(c_T(p))
   \right>_T.
\end{align*}
Here we put 
$\langle \varphi^*,\varphi \rangle_T = \rho_{\lambda}(\langle \varphi^*,\varphi \rangle)_{TT}$
for $\varphi^* \in T^{\circ}_p(V^*)$ and $\varphi \in T_p(V)$.
\end{proof}

\subsection{}\label{subsec:second_expression_of_G}
%
By a similar calculation, we can obtain expressions of $G_{\lambda}$
corresponding to the row-immanant as follows.
We consider $\xi^*_i(u) = \sum_{j=1}^n E_{ij}(u) e^*_j$ 
in $\bar{T}^{\circ}(V^*) \otimes U(\mathfrak{gl}_n)$.
Then the following relation holds:

\begin{lemma} \sl 
   We have
   $$
      \xi^*_i (y_{l+1}) 
	  \xi^*_j (y_l) 
      = s_1
      \xi^*_j (y_{l+1}) 
	  \xi^*_i (y_l)
   $$
   and in particular
   $$
      \sigma^{-1}
      \xi^*_{i_1} (y_p) 
	  \xi^*_{i_2} (y_{p-1})
	  \cdots 
      \xi^*_{i_p} (y_1) 
      = 
      \xi^*_{i_{\sigma(1)}} (y_p)
      \xi^*_{i_{\sigma(2)}} (y_{p-1})
      \cdots 
      \xi^*_{i_{\sigma(p)}} (y_1).
   $$
\end{lemma}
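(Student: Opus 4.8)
The plan is to obtain both assertions from Lemma~\ref{lem:comm_relation_of_xi_3} and the corollary following it by transporting them through a natural anti-isomorphism, rather than repeating the computation in $\bar{T}^{\circ}(V^*) \otimes U(\mathfrak{gl}_n)$. Recall that $\bar{T}^{\circ}(V^*)$ is by definition the opposite algebra of $\bar{T}(V^*)$, and that $\bar{T}(V^*) \simeq \bar{T}(V)$ under the identification of $e^*_1,\ldots,e^*_n$ with $e_1,\ldots,e_n$; so by Theorem~\ref{thm:def_by_gen_and_relation} there is an anti-isomorphism $\Psi_1 \colon \bar{T}(V) \to \bar{T}^{\circ}(V^*)$ with $\Psi_1(e_i) = e^*_i$ and $\Psi_1(s_i) = s_i$. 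First I would record that its restriction to $\mathbb{C}S_{\infty} = \bar{T}_0(V)$ is the anti-automorphism $\sigma \mapsto \sigma^{-1}$; in particular $\Psi_1$ fixes every transposition and hence fixes each $y_k$. Tensoring with the transposition anti-automorphism $\Psi_2 \colon E_{ij} \mapsto E_{ji}$ of $U(\mathfrak{gl}_n)$ produces an anti-isomorphism $\Psi = \Psi_1 \otimes \Psi_2$ from $\bar{T}(V) \otimes U(\mathfrak{gl}_n)$ onto $\bar{T}^{\circ}(V^*) \otimes U(\mathfrak{gl}_n)$.

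The decisive point is the identity $\Psi(\xi_j(u)) = \xi^*_j(u)$. To check it I would write $\xi_j(u) = \sum_i e_i \otimes E_{ij} + e_j \otimes u$, apply $\Psi$ (which fixes $u \in \mathbb{C}S_p$ and sends $E_{ij}$ to $E_{ji}$), and get $\sum_i e^*_i \otimes E_{ji} + u\,e^*_j$; since $U(\mathfrak{gl}_n)$ and $\bar{T}^{\circ}(V^*)$ commute inside the tensor product this is exactly $\xi^*_j(u) = \sum_i E_{ji}(u)\,e^*_i$. Granting this, applying $\Psi$ to the relation $\xi_i(y_l)\xi_j(y_{l+1}) = \xi_j(y_l)\xi_i(y_{l+1})s_1$ of Lemma~\ref{lem:comm_relation_of_xi_3} (noting $\Psi$ reverses products and fixes $s_1$) gives $\xi^*_j(y_{l+1})\xi^*_i(y_l) = s_1\,\xi^*_i(y_{l+1})\xi^*_j(y_l)$, and interchanging the names $i$ and $j$ yields the first asserted relation. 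Applying $\Psi$ to the corollary following Lemma~\ref{lem:comm_relation_of_xi_3} — whose left-hand side $\xi_{i_p}(y_1)\cdots\xi_{i_1}(y_p)\sigma$ carries the permutation at the right end, which $\Psi$ moves to the left as $\sigma^{-1}$ while reversing the order of the $\xi$-factors — produces the ``in particular'' statement verbatim. (Alternatively, the ``in particular'' part can be obtained from the first relation of the present lemma exactly as in the earlier case, using $\xi^*_i(y_{k+1})\sigma = \alpha(\sigma)\xi^*_i(y_{k+1})$ for $\sigma \in S_k$.)

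I do not expect a genuine obstacle; the only delicate point is the order-reversal bookkeeping — verifying that $\Psi$ restricts to $\sigma \mapsto \sigma^{-1}$ on $\mathbb{C}S_{\infty}$ and that the transported factor $s_1$ lands on the \emph{left}, as the opposite-algebra structure of $\bar{T}^{\circ}(V^*)$ requires. If one prefers to avoid the anti-isomorphism, the statement can instead be proved by copying the proof of Lemma~\ref{lem:comm_relation_of_xi_3} almost verbatim: first establish the base identity $\xi^*_i\xi^*_k = s_1\xi^*_k\xi^*_i + e^*_k\xi^*_i - \xi^*_k e^*_i$ from the $\mathfrak{gl}_n$ relations together with $e^*_j e^*_l = s_1\,e^*_l e^*_j$ in $\bar{T}^{\circ}(V^*)$, then insert the parameters $y_l, y_{l+1}$ and simplify using $s_1\alpha(y_l)s_1 = y_{l+1}-s_1$, $s_1 y_{l+1}s_1 = \alpha(y_l)+s_1$ and the commuting of $\alpha(y_l)$ with $y_{l+1}$, with every permutation now collected on the opposite side.
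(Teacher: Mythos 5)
Your proof is correct, and it takes a genuinely different route from the paper, which does not actually spell out an argument for this lemma: the sentence immediately preceding it reads ``By a similar calculation, we can obtain\ldots'', so the intended proof is simply to repeat the computation that gave Lemma~\ref{lem:comm_relation_of_xi_3}, now inside $\bar{T}^{\circ}(V^*) \otimes U(\mathfrak{gl}_n)$ --- which is exactly your fallback at the end. Your main argument instead constructs the anti-isomorphism $\Psi = \Psi_1 \otimes \Psi_2$ and transports Lemma~\ref{lem:comm_relation_of_xi_3} and its corollary across it. The two decisive checks --- that $\Psi_1$ is $\sigma \mapsto \sigma^{-1}$ on $\mathbb{C}S_\infty$, hence fixes $s_1$ and each $y_k$, and that $\Psi(\xi_j(y_l)) = \xi^*_j(y_l)$ --- are correct, and the order-reversal bookkeeping does land precisely on the displayed relations. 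This route is more economical and also more structural: it makes explicit that the $\xi$- and $\xi^*$-calculi of Sections~\ref{subsec:first_expression_of_G} and~\ref{subsec:second_expression_of_G} are conjugate under the pair of anti-automorphisms $e_a \leftrightarrow e^*_a$, $\sigma \mapsto \sigma^{-1}$ and $E_{ij} \mapsto E_{ji}$, a symmetry the paper uses only tacitly. One small caveat worth writing out rather than asserting: invoking Theorem~\ref{thm:def_by_gen_and_relation} for the existence of $\Psi_1$ really means checking that the defining relations of $\bar{T}(V)$ are preserved under the reversed assignment $e_a \mapsto e^*_a$, $s_i \mapsto s_i$ into $\bar{T}^{\circ}(V^*)$; they are (for instance $e_b e_a = e_a e_b s_1$ reverses to $e^*_a e^*_b = s_1 e^*_b e^*_a$, which after renaming is the relation $e^*_b e^*_a = s_1 e^*_a e^*_b$ of $\bar{T}^{\circ}(V^*)$), and also that ``$\Psi$ fixes $u \in \mathbb{C}S_p$'' should be stated only for $u$ of the form $y_k$ (or any sum of transpositions), since $\Psi_1$ inverts general permutations.
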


Noting this, we put
\begin{align*}
   G^{\prime IJ}
   & = 
   \langle
   \xi^*_{i_1} (y_p)
   \xi^*_{i_2} (y_{p-1})
   \cdots 
   \xi^*_{i_p} (y_1), 
   e_{j_p} \cdots e_{j_1}
   \rangle \\
   & =
   \sum_{\sigma \in S_p} 
   E_{i_1 j_{\sigma(1)}} (x^{\circ}_p) 
   E_{i_2 j_{\sigma(2)}} (x^{\circ}_{p-1}) 
   \cdots 
   E_{i_p j_{\sigma(p)}} (x^{\circ}_1) 
   \sigma^{-1} \\
   & =
   \sum_{\sigma \in S_p} 
   E_{i_p j_{\sigma(p)}} (x_p) 
   E_{i_{p-1} j_{\sigma(p-1)}} (x_{p-1}) 
   \cdots 
   E_{i_1 j_{\sigma(1)}} (x_1) 
   \sigma^{-1}
\end{align*}
and 
$$
   G'_p
   = \frac{1}{p!}
   \sum_{I \in [n]^p}
   G^{\prime II}, \qquad
   G'_{\lambda} = \chi_{\lambda}(G'_p).
$$
These $G'_p$ and $G'_{\lambda}$ are central 
in $\mathbb{C}S_p \otimes U(\mathfrak{gl}_n)$ and $U(\mathfrak{gl}_n)$, respectively.
We can also express these as
\begin{align}
\label{eq:expression_of_G^prime_p_corresponding_to_symm_imm}
   G'_p 
   &= \frac{1}{p!^2}
   \sum_{I \in [n]^p} \sum_{\sigma, \sigma' \in S_p} 
   \sigma
   E_{i_{\sigma(p)} i_{\sigma'(p)}}(x_p) 
   \cdots 
   E_{i_{\sigma(1)} i_{\sigma'(1)}}(x_1) 
   \sigma^{\prime-1}, 
   \displaybreak[0]\\
\label{eq:expression_of_G^prime_lambda_corresponding_to_symm_imm}
   G'_{\lambda} 
   & = \frac{\chi_{\lambda}(1)}{p!}
   \sum_{I \in [n]^p} \sum_{\sigma \in S_p} 
   E_{i_p i_{\sigma(p)}}(c_T(p))
   \cdots
   E_{i_1 i_{\sigma(1)}}(c_T(1))
   \rho_{\lambda}(\sigma^{-1})_{TT} \\
\notag
   & = \frac{\chi_{\lambda}(1)}{p!}
   \left<
   \Xi(c_T(p)) \cdots \Xi(c_T(1))
   \right>_T \\
\notag
   & = \frac{1}{p!}
   \left<
   \Xi(c_T(p)) \cdots \Xi(c_T(1))
   \right>_{\lambda}.
\end{align}
Hence we have $G_{\lambda} = G'_{\lambda}$,
because $\Xi(c_T(p)) \cdots \Xi(c_T(1)) = \Xi(c_T(1)) \cdots \Xi(c_T(p))$.
Noting Lemma~\ref{lem:tilde_s_lambda}, we also have $G_p = G'_p$.

\begin{remark}
   It seems not easy to see this equality $G_p = G'_p$
   without using the expressions of $G_{\lambda}$ and $G'_{\lambda}$ in terms of contents.
\end{remark}

\subsection{}\label{subsec:expression_of_G^circ}
%
We can deal with $G^{\circ}_{\lambda}$ and $G^{\circ\prime}_{\lambda}$ similarly.
We define
$\gamma^*_j(u) \in \bar{T}^{\circ}(V^*) \otimes U(\mathfrak{gl}_n)$ and
$\gamma_j(u) \in \bar{T}(V) \otimes U(\mathfrak{gl}_n)$ by
$$
   \gamma^*_j(u) = \sum_{i=1}^n E_{ij}(u) e^*_i, \qquad
   \gamma_i(u) = \sum_{j=1}^n e_j E_{ij}(u).
$$
Then we have the following commutation relations:

\begin{lemma}\sl
   We have 
   $$
      \gamma^*_i(-y_{l+1}) \gamma^*_j(-y_l) 
      = s_1 \gamma^*_j(-y_{l+1}) \gamma^*_i(-y_l), \quad
      \gamma_i(-y_l) \gamma_j(-y_{l+1}) 
      = \gamma_j(-y_l) \gamma_i(-y_{l+1}) s_1
   $$
   and in particular
   \begin{align*}
      \sigma^{-1}
      \gamma^*_{j_1}(-y_p) \gamma^*_{j_2}(-y_{p-1}) \cdots \gamma^*_{j_p}(-y_1) 
      & = \gamma^*_{j_{\sigma(1)}}(-y_p) \gamma^*_{j_{\sigma(2)}}(-y_{p-1}) \cdots 
      \gamma^*_{j_{\sigma(p)}}(-y_1), \\
      \gamma_{i_p}(-y_1) \gamma_{i_{p-1}}(-y_2) \cdots \gamma_{i_1}(-y_p) \sigma
      & = \gamma_{i_{\sigma(p)}}(-y_1) \gamma_{i_{\sigma(p-1)}}(-y_2) \cdots 
      \gamma_{i_{\sigma(1)}}(-y_p).
   \end{align*}
\end{lemma}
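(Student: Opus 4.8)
The plan is to deduce the entire lemma from the $\xi$- and $\xi^{*}$-relations already proved, by transporting them through the automorphism $\theta$ of $U(\mathfrak{gl}_n)$ determined by $E_{ij}\mapsto -E_{ji}$. This $\theta$ is a Lie algebra automorphism of $\mathfrak{gl}_n$, hence extends to an algebra automorphism of $U(\mathfrak{gl}_n)$, and further to automorphisms of $\bar{T}(V)\otimes U(\mathfrak{gl}_n)$ and $\bar{T}^{\circ}(V^{*})\otimes U(\mathfrak{gl}_n)$ acting as the identity on the first tensor factor. First I would record the effect of $\theta$ on the generating elements. Since $E_{ij}(u)=E_{ij}+\delta_{ij}u$ is sent to $-E_{ji}+\delta_{ij}u$, a direct check of the summation indices gives $\theta(\xi_j(u))=-\gamma_j(-u)$ and $\theta(\xi^{*}_i(u))=-\gamma^{*}_i(-u)$, and the same identities persist when the scalar $u$ is replaced by any $y_l\in\mathbb{C}S_p$, since $y_l$ lies in the tensor factor on which $\theta$ is the identity. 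The one point needing care is that $\gamma_j(u)=\sum_i e_i E_{ji}(u)$ fixes the first index of $E$ and sums the second, which is precisely the pattern produced by applying $\theta$ to $\xi_j(u)=\sum_i e_i E_{ij}(u)$ (and similarly for the starred versions).

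Next I would apply $\theta$ to Lemma~\ref{lem:comm_relation_of_xi_3}, namely $\xi_i(y_l)\xi_j(y_{l+1})=\xi_j(y_l)\xi_i(y_{l+1})s_1$: since $\theta(s_1)=s_1$ and the minus signs produced on each side cancel in pairs, this yields $\gamma_i(-y_l)\gamma_j(-y_{l+1})=\gamma_j(-y_l)\gamma_i(-y_{l+1})s_1$, the second displayed identity. Applying $\theta$ in the same way to the $\xi^{*}$-relation $\xi^{*}_i(y_{l+1})\xi^{*}_j(y_l)=s_1\xi^{*}_j(y_{l+1})\xi^{*}_i(y_l)$ from Section~\ref{subsec:second_expression_of_G} gives $\gamma^{*}_i(-y_{l+1})\gamma^{*}_j(-y_l)=s_1\gamma^{*}_j(-y_{l+1})\gamma^{*}_i(-y_l)$, the first displayed identity. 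The ``in particular'' assertions then follow by the same device: applying $\theta$ to the corollary of Lemma~\ref{lem:comm_relation_of_xi_3} and to the corresponding $\xi^{*}$-statement introduces an overall factor $(-1)^{p}$ on each side, one for each of the $p$ factors converted from $\xi$ (or $\xi^{*}$) to $\gamma$ (or $\gamma^{*}$), and these cancel, leaving exactly the stated equalities for $\gamma_{i_p}(-y_1)\cdots\gamma_{i_1}(-y_p)\sigma$ and $\sigma^{-1}\gamma^{*}_{j_1}(-y_p)\cdots\gamma^{*}_{j_p}(-y_1)$.

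As an alternative I would note that one can avoid the automorphism and argue directly, mirroring the proof of Lemma~\ref{lem:comm_relation_of_xi_3}: from $[E_{ik},E_{jl}]=\delta_{kj}E_{il}-\delta_{il}E_{jk}$ together with $e_k e_l=e_l e_k s_1$ one first obtains the base relation $\gamma_i\gamma_j=\gamma_j\gamma_i s_1+e_j\gamma_i-e_i\gamma_j s_1$, then expands $\gamma_i(-y_l)\gamma_j(-y_{l+1})$, pushes the permutations past vectors via $\sigma e_a=e_a\alpha(\sigma)$, and finishes using the identities $s_1\alpha(y_l)s_1=y_{l+1}-s_1$, $s_1 y_{l+1}s_1=\alpha(y_l)+s_1$, $\alpha(y_l)y_{l+1}=y_{l+1}\alpha(y_l)$ exactly as before, with the analogous computation for $\gamma^{*}$. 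I expect the automorphism route to be much shorter and essentially bookkeeping-free; the only real hazard in either approach is keeping straight which index of $E$ is summed and which is fixed, since $\gamma$ and $\xi$ (and $\gamma^{*}$ and $\xi^{*}$) differ precisely in that choice, and that is where a sign or transposition slip would most naturally occur.
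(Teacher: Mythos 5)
Your proposal is correct, and it tracks the paper's own hint almost exactly: the paper states the lemma without explicit proof (indicating it is proved ``similarly'' to the $\xi$- and $\xi^*$-cases in Sections~\ref{subsec:first_expression_of_G} and~\ref{subsec:second_expression_of_G}), and then explicitly remarks at the end of the section that these $\gamma$-relations can also be deduced by applying the automorphism of $U(\mathfrak{gl}_n)$ defined by $E_{ij}\mapsto -E_{ji}$ to the earlier results. Your primary route is precisely this automorphism argument, correctly worked out: $\theta(\xi_j(u))=-\gamma_j(-u)$ and $\theta(\xi^*_i(u))=-\gamma^*_i(-u)$ (with $\theta$ acting as the identity on the $\bar{T}(V)$ or $\bar{T}^{\circ}(V^*)$ tensor factor, so the argument $u$ may be any $y_l\in\mathbb{C}S_p$), the sign $(-1)$ cancels in pairs in the two-factor relations, and $(-1)^p$ cancels on both sides of the $p$-factor relations. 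Your alternative direct computation (the base relation $\gamma_i\gamma_j=\gamma_j\gamma_i s_1+e_j\gamma_i-e_i\gamma_j s_1$, consistent with $\theta$ applied to relation (\ref{eq:comm_relation_of_xi_1}), followed by the same $y_l$-manipulations) is the paper's implicit primary route. Both are sound; nothing is missing.
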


Noting this, we put
\begin{align*}
   G^{\circ IJ}
   & = \langle 
   \gamma^*_{j_1}(-y_p) \gamma^*_{j_2}(-y_{p-1}) \cdots \gamma^*_{j_p}(-y_1), 
   e_{i_p} \cdots e_{i_1} 
   \rangle \\
   & = \sum_{\sigma \in S_p}
   E_{i_{\sigma(1)}j_1}(-x^{\circ}_p)
   E_{i_{\sigma(2)}j_2}(-x^{\circ}_{p-1})
   \cdots
   E_{i_{\sigma(p)}j_p}(-x^{\circ}_1) \sigma^{-1} \\
   & = \sum_{\sigma \in S_p}
   E_{i_{\sigma(p)}j_p}(-x_p)
   E_{i_{\sigma(p-1)}j_{p-1}}(-x_{p-1})
   \cdots
   E_{i_{\sigma(1)}j_1}(-x_1) \sigma^{-1}, \\
   G^{\circ \prime IJ}
   & = \langle 
   e^*_{j_1} \cdots e^*_{j_p},
   \gamma_{i_p}(-y_1) \gamma_{i_{p-1}}(-y_2) \cdots \gamma_{i_1}(-y_p)
   \rangle \\
   & = \sum_{\sigma \in S_p}
   \sigma
   E_{i_p j_{\sigma(p-1)}}(-x^{\circ}_1)
   E_{i_{p-1} j_{\sigma(p-2)}}(-x^{\circ}_2)
   \cdots
   E_{i_1 j_{\sigma(1)}}(-x^{\circ}_p)\\
   & = \sum_{\sigma \in S_p}
   \sigma
   E_{i_1 j_{\sigma(1)}}(-x_1)
   E_{i_2 j_{\sigma(2)}}(-x_2)
   \cdots
   E_{i_p j_{\sigma(p)}}(-x_p)
\end{align*}
and moreover
$$
   G^{\circ}_p 
   = \frac{1}{p!} \sum_{I \in [n]^p} G^{\circ II}, \qquad
   G^{\circ \prime}_p 
   = \frac{1}{p!} \sum_{I \in [n]^p} G^{\circ \prime II}, \qquad
   G^{\circ}_{\lambda}
   = \chi_{\lambda}(G^{\circ}_p), \qquad
   G^{\circ \prime}_{\lambda}
   = \chi_{\lambda}(G^{\circ \prime}_p).
$$
Then we have
\begin{align}
\label{eq:expression_of_G^circ_p_corresponding_to_symm_imm}
   G^{\circ}_p 
   &= \frac{1}{p!^2}
   \sum_{J \in [n]^p} \sum_{\sigma, \sigma' \in S_p} 
   \sigma'
   E_{j_{\sigma(p)} j_{\sigma'(p)}}(-x_p) 
   \cdots 
   E_{j_{\sigma(1)} j_{\sigma'(1)}}(-x_1) 
   \sigma^{-1}, 
   \displaybreak[0] \\
\label{eq:expression_of_G^circprime_p_corresponding_to_symm_imm}
   G^{\circ \prime}_p 
   &= \frac{1}{p!^2}
   \sum_{J \in [n]^p} \sum_{\sigma, \sigma' \in S_p} 
   \sigma'
   E_{j_{\sigma(1)} j_{\sigma'(1)}}(-x_1) 
   \cdots 
   E_{j_{\sigma(p)} j_{\sigma'(p)}}(-x_p) 
   \sigma^{-1}, 
   \displaybreak[0] \\
\label{eq:expression_of_G^circ_lambda_corresponding_to_symm_imm}
   G^{\circ}_{\lambda}
   & = \frac{\chi_{\lambda}(1)}{p!} 
   \sum_{J \in [n]^p} \sum_{\sigma \in S_p}
   E_{j_{\sigma(p)} j_p}(-c_T(p))
   \cdots
   E_{j_{\sigma(1)} j_1}(-c_T(1))
   \rho_{\lambda}(\sigma^{-1})_{TT} \\
\notag
   & = \frac{\chi_{\lambda}(1)}{p!} 
   \langle 
   \Xi(-c_T(p)) \Xi(-c_T(p-1)) \cdots \Xi(-c_T(1))
   \rangle_T \\
\notag
   & = \frac{1}{p!} 
   \langle 
   \Xi(-c_T(p)) \Xi(-c_T(p-1)) \cdots \Xi(-c_T(1))
   \rangle_{\lambda}, 
   \displaybreak[0] \\
\label{eq:expression_of_G^circprime_lambda_corresponding_to_symm_imm}
   G^{\circ \prime}_{\lambda}
   & = \frac{\chi_{\lambda}(1)}{p!} 
   \sum_{J \in [n]^p} \sum_{\sigma \in S_p}
   \rho_{\lambda}(\sigma)_{TT}
   E_{j_1 j_{\sigma(1)}}(-c_T(1))
   \cdots
   E_{j_p j_{\sigma(p)}}(-c_T(p)) \\
\notag
   & = \frac{\chi_{\lambda}(1)}{p!} 
   \langle 
   \Xi(-c_T(1)) \Xi(-c_T(2)) \cdots \Xi(-c_T(p))
   \rangle_T \\
\notag
   & = \frac{1}{p!} 
   \langle 
   \Xi(-c_T(1)) \Xi(-c_T(2)) \cdots \Xi(-c_T(p))
   \rangle_{\lambda}. 
\end{align}

Combining these, we see that $G^{\circ}_{\lambda} = G^{\circ\prime}_{\lambda}$
and in particular $G^{\circ}_p = G^{\circ\prime}_p$.
The quantity $G^{\circ}_{\lambda} = G^{\circ\prime}_{\lambda}$ 
is known as the ``quantum immanants'' (\cite{O1}, \cite{OO}).
Thus it is natural to call $G^{\circ}_p = G^{\circ\prime}_p$ the ``quantum preimmanant.''

We can also deduce these relations from the results 
in Sections~\ref{subsec:first_expression_of_G} and \ref{subsec:second_expression_of_G}
by applying the automorphism of $U(\mathfrak{gl}_n)$ 
defined by $E_{ij} \mapsto -E_{ji}$.

Most of various relations in this Section~\ref{subsec:expression_of_G^circ}
were already given in \cite{O1}.
However, it is interesting that we can deduce these relations
by simple calculations using our algebras.

\subsection{}
%
Combining these relations, 
we can also express $G^{\circ}_{\lambda}$ as
\begin{equation}\label{eq:expression_of_G^circ_lambda_corresponding_to_the_column_immanant}
   G^{\circ}_{\lambda} 
   = \sum_{I \in \bibinom{[n]}{p}^{\circ}} \frac{1}{I!} 
   \sum_{\sigma \in S_p} 
   \rho_{\lambda}(\sigma E_{i_1 i_{\sigma(1)}}(-x_1) \cdots E_{i_p i_{\sigma(p)}}(-x_p)).
\end{equation}
Here we put
$$
   \bibinom{[n]}{p}^{\circ}
   = \{ (i_1,\ldots,i_p) \in [n]^p \,|\, i_1 \geq \cdots \geq i_n \}.
$$
This expression was given in \cite{O1}.
Using this expression,
we can calculate the eigenvalue of $G^{\circ}_{\lambda}$ 
on the irreducible representation $\pi_{\mu}$ of $\mathfrak{gl}_n$
determined by a partition $\mu$.
The result is as follows
(see Section~3.7 of \cite{O1} for the details of this calculation):
$$
   \pi_{\mu}(G_{\lambda}) = \sum_{T} \prod_{\alpha} (\lambda_{T(\alpha)} - c_{\alpha}).
$$
Here $T$ runs over all reverse semistandard tableaux of shape $\lambda$ 
with entries in $\{ 1,\ldots,n \}$,
and $\alpha$ runs over all cells in the Young tableau $\mu$.
Moreover $T(\alpha)$ means the number in $\alpha$, 
and $c_{\alpha}$ means the content of the cell $\alpha$.

Similarly, $G_{\lambda}$ is expressed as
\begin{equation}\label{eq:expression_of_G_lambda_corresponding_to_the_column_immanant}
   G_{\lambda} = \sum_{I \in \bibinom{[n]}{p}} \frac{1}{I!} 
   \sum_{\sigma \in S_p} \rho_{\lambda}(\sigma E_{i_{\sigma(1)} i_1}(x_1) \cdots E_{i_{\sigma(p)} i_p}(x_p)),
\end{equation}
and we can calculate its eigenvalue as follows:
$$
   \pi_{\mu}(G_{\lambda}) = \sum_{T} \prod_{\alpha} (\lambda_{T(\alpha)} + c_{\alpha}).
$$
The proof is almost the same.
Here $T$ runs over all semistandard tableaux of shape $\lambda$ 
with entries in $\{ 1,\ldots,n \}$,
and $\alpha$ runs over all cells in the Young tableau $\mu$.

On one hand,
these expressions
(\ref{eq:expression_of_G^circ_lambda_corresponding_to_the_column_immanant}) 
and
(\ref{eq:expression_of_G_lambda_corresponding_to_the_column_immanant}) 
can be regarded as the counterparts of the definition of the Capelli elements 
(\ref{eq:definition_of_Capelli_el}),
and we can calculate the eigenvalues easily under these expressions.
On the other hand, the expressions of 
$G_p$, $G_{\lambda}$ and $G^{\circ}_p$, $G^{\circ}_{\lambda}$ 
given in Proposition~\ref{prop:expressions_of_G^IJ_and_G_p},
Theorem~\ref{thm:expression_of_G_lambda_corresponding_to_the_symmetrized_immanant}
and 
(\ref{eq:expression_of_G^prime_p_corresponding_to_symm_imm}),
(\ref{eq:expression_of_G^prime_lambda_corresponding_to_symm_imm}),
(\ref{eq:expression_of_G^circ_p_corresponding_to_symm_imm}),
(\ref{eq:expression_of_G^circ_lambda_corresponding_to_symm_imm}),
(\ref{eq:expression_of_G^circprime_p_corresponding_to_symm_imm}),
(\ref{eq:expression_of_G^circprime_lambda_corresponding_to_symm_imm})
are corresponding to the expression of the Capelli elements given 
in Corollary~\ref{cor:relation_between_column_det_and_symm_det}.
Under these expressions,
we can show the centrality of these elements easily.

Similar phenomena on contrastive expressions are also known
in the universal enveloping algebras $U(\mathfrak{o}_n)$ and $U(\mathfrak{sp}_n)$.
First, central elements of $U(\mathfrak{o}_n)$ expressed in terms of the column-determinant
were given in \cite{Wa},
and we can easily calculate their eigenvalues.
These elements can also be expressed in terms of the symmetrized determinant,
and we can easily show their centrality under this expression (\cite{I5}).
Similar central elements are given in $U(\mathfrak{sp}_n)$  in terms of permanents (\cite{I6}).
It is natural to expect good bases of the centers of $U(\mathfrak{o}_n)$ and $U(\mathfrak{sp}_n)$
which contains these elements 
and can be regarded as analogues of the quantum immanants.
The author hopes that our extensions of the tensor algebra are useful to construct such bases. 

%
\section{Higher Capelli identity and its analogue on tensor algebras}
\label{sec:higher_Capelli}
%
%
The method in the previous section 
is also useful to show a Capelli type identity for the quantum immanants,
namely the ``higher Capelli identity.'' 
Several proofs are already known for this identity
(\cite{O1}, \cite{O2}, \cite{N2}, \cite{M2}),
and the proof given in \cite{M2} is particularly simple. 
Our proof is similarly simple and parallel to
the easy proof of the original Capelli identities using the exterior calculus given in \cite{I5} or \cite{I3} (cf. Section~\ref{sec:Capelli_and_FFT} in this article).

Furthermore, we give the ``higher version'' of Theorem~\ref{thm:Capelli_identity_in_T}
in $\mathcal{L}(\mathbb{C}^n \otimes \mathbb{C}^{n'})$.
We can also assemble them in terms of the quantum preimmanants.

\subsection{}\label{subsec:higher_Capelli_identity}
%
We work in the following situation.
The general linear group $GL_n(\mathbb{C})$ naturally acts on 
$\mathbb{C}^n \otimes \mathbb{C}^{n'}$ as in Section~\ref{sec:Capelli_and_FFT}
and moreover the space $\mathcal{P}(\mathbb{C}^n \otimes \mathbb{C}^{n'})$ 
of all polynomial functions on $\mathbb{C}^n \otimes \mathbb{C}^{n'}$.
The infinitesimal action $\nu$ is expressed as follows:
$$
   \nu(E_{ij}) = \sum_{k=1}^{n'} x_{ik} \partial_{jk}.
$$
Here $x_{ij}$ is  the standard coordinate of $\mathbb{C}^n \otimes \mathbb{C}^{n'}$,
and $\partial_{ij}$ means the partial differentiation $\partial_{ij} = \frac{\partial}{\partial x_{ij}}$.
Let us express this relation as
$\nu(E) = X \, {}^t\!\partial$ using the following matrices 
$E \in \operatorname{Mat}_n(U(\mathfrak{gl}_n))$ and 
$X$, $\partial \in \operatorname{Mat}_{n,n'}(\mathcal{PD}(\mathbb{C}^n \otimes \mathbb{C}^{n'}))$:
$$
   E = (E_{ij})_{1 \leq i,j \leq n}, \qquad
   X = (x_{ij})_{1 \leq i \leq n, \, 1 \leq j \leq n'}, \qquad
   \partial = (\partial_{ij})_{1 \leq i \leq n, \, 1 \leq j \leq n'}.
$$
Then we have the following relation.
This is called the ``higher Capelli identity'' \cite{O1}.

\begin{theorem}\label{thm:higher_Capelli}\sl
   We have
   $$
      \nu(G^{\circ}_{\lambda}) 
	  = \frac{\chi_{\lambda}(1)}{p!^2}
	  \sum_{I \in [n]^p, K \in [n']^p}
	  \operatorname{imm}_{\lambda} X_{IK}
	  \operatorname{imm}_{\lambda} \partial_{IK}.
   $$
\end{theorem}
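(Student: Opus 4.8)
The strategy is to mimic the proof of Theorem~\ref{thm:Capelli_identity_in_T} given in Section~\ref{subsec:proof_of_Capelli_type_id}, but using the algebra $\bar{T}(V,V^*)$ and the generating-function technology from Section~\ref{sec:quantum_immanants} in place of the exterior algebra. The key point is that $\mathcal{L}(\mathbb{C}^n \otimes \mathbb{C}^{n'})$ surjects onto the Weyl algebra $\mathcal{PD}(\mathbb{C}^n \otimes \mathbb{C}^{n'})$ via $\sigma \mapsto 1$, under which $L(w_{ij}) \mapsto x_{ij}$ and $L(w^*_{ij}) \mapsto \partial_{ij}$; so it suffices to prove the corresponding identity inside $\bar{T}(V \oplus V^*) \otimes \mathcal{L}(\mathbb{C}^n \otimes \mathbb{C}^{n'})$ (or its $\bar{T}(V,V^*)$ variant) and then push it down. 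Writing $\eta_j = \sum_{a=1}^n e_a L(w_{aj})$ and $\eta^*_j = \sum_{a=1}^n e^*_a L(w^*_{aj})$ as in the proof of Theorem~\ref{thm:Capelli_identity_in_T}, the commutation relations~(\ref{eq:commutation_relations_for_eta}) hold, and in particular $\Xi = \Xi_{\pi(E)} = \sum_{j=1}^{n'} \eta_j \eta^*_j$ by~(\ref{eq:relation_between_Xi_and_eta}).

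\emph{First steps.} I would start from the content-shifted expression of $G^{\circ}_{\lambda}$, namely
$G^{\circ}_{\lambda} = \frac{1}{p!} \langle \Xi(-c_T(p)) \cdots \Xi(-c_T(1)) \rangle_{\lambda}$
from~(\ref{eq:expression_of_G^circ_lambda_corresponding_to_symm_imm}), with $\Xi(u) = \Xi + u\tau$ now built from $\pi(E)$. Applying $\pi$ and expanding each $\Xi(-c_T(k)) = \sum_{j} \eta_j \eta^*_j - c_T(k)\tau$, I would use the analogue of~(\ref{eq:comm_relation_between_Xi_and_eta}), i.e.\ $\Xi(u+1)\eta_j = \eta_j \Xi(u)$, which comes straight from~(\ref{eq:commutation_relations_for_eta}) exactly as in the determinant case. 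The contents $c_T(k)$ of a standard tableau satisfy the recurrence built into the Jucys--Murphy elements, so the telescoping manipulation that in Theorem~\ref{thm:Capelli_identity_in_T} pulled all the $\eta_j$'s to the left and all the $\eta^*_j$'s to the right should go through here as well, now with the symmetric-group weights $\rho_\lambda(\sigma)_{TT}$ appearing through the coupling $\langle\,\cdot\,\rangle_\lambda$ rather than signs. After the telescoping I would be left with a sum over $J \in [n']^p$ of terms $\eta_{j_p}\cdots\eta_{j_1} (\text{weighted } S_p \text{ element}) \eta^*_{j_1}\cdots\eta^*_{j_p}$, and then I would compute the couplings of $\eta_{j_p}\cdots\eta_{j_1}$ and $\eta^*_{j_1}\cdots\eta^*_{j_p}$ with the relevant tensors using the analogue of Lemma~\ref{lem:expression_of_column-imm}: these produce precisely $\operatorname{column-imm}_\lambda$ (resp.\ $\operatorname{row-imm}_\lambda$) of the matrices $X_{IK}$ and $\partial_{IK}$. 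Since $\mathcal{A} = \mathcal{PD}(\mathbb{C}^n\otimes\mathbb{C}^{n'})$ is not commutative one must be careful which immanant one gets, but the final answer is symmetric in the relevant sense; matching the normalization $\chi_\lambda(1)/p!^2$ and the index ranges $I \in [n]^p$, $K \in [n']^p$ via Proposition~\ref{prop:Cauchy_Binet_for_immanants} (or its preimmanant analogue in Proposition~\ref{prop:Cauchy_Binet_for_preimmanants}) completes the identification with $\operatorname{imm}_\lambda X_{IK}\,\operatorname{imm}_\lambda \partial_{IK}$.

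\emph{The main obstacle.} The delicate point is bookkeeping the content shifts: in the exterior-algebra proof the shift parameters were $r-1, r-2, \ldots, 0$, a single arithmetic progression, and the commutation $\xi_j(u+1)\xi_{j'}(u) = -\xi_{j'}(u+1)\xi_j(u)$ produced a clean sign; here the shifts are $-c_T(1), \ldots, -c_T(p)$, which jump around, and one must verify that the telescoping $\Xi(u+1)\eta_j = \eta_j\Xi(u)$ still applies at each step given the order in which the $\Xi(-c_T(k))$ appear — i.e.\ that consecutive content differences are handled correctly by the relation, exactly as the recurrence $x_{i+1} = s_i x_i s_i + s_i$ underlies Proposition~\ref{prop:entries_of_JM_elements}. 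I expect this to be the step requiring the most care, but it is morally the same computation as in Section~\ref{subsec:first_expression_of_G} where $G^{\circ}_p$ was built from $\gamma^*_j(-y_k)$; indeed, one could alternatively prove the $\mathbb{C}S_p\otimes\mathcal{PD}$-valued ``higher Capelli identity'' $\pi(G^{\circ}_p) = \frac{1}{p!^2}\sum_{I\in[n]^p,\,K\in[n']^p} \operatorname{column-preimm} X_{IK}\,\operatorname{row-preimm}\partial_{IK}$ directly in this framework and then apply $\chi_\lambda$ to obtain the theorem as stated. A secondary subtlety is verifying that the map $\mathcal{L}(\mathbb{C}^n\otimes\mathbb{C}^{n'}) \to \mathcal{PD}(\mathbb{C}^n\otimes\mathbb{C}^{n'})$ intertwines $\pi$ on $\bar{T}$ with $\pi$ on $\mathcal{P}$ and is compatible with the couplings $\langle\,\cdot\,\rangle_\lambda$, but this is routine given Section~\ref{subsec:comparison_with_CCR} and the construction of extended bilinear maps at the start of Section~\ref{sec:immanants_and_algebras}.
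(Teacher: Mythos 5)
Your ``first steps'' route, starting from
$G^{\circ}_{\lambda} = \frac{1}{p!}\langle\Xi(-c_T(p))\cdots\Xi(-c_T(1))\rangle_{\lambda}$
and telescoping with $\Xi(u+1)\eta_j = \eta_j\Xi(u)$, has a genuine gap that you half-notice but do not resolve. That commutation relation lowers the parameter by exactly $1$ each time it is applied, so the telescoping in the proof of Theorem~\ref{thm:Capelli_identity_in_T} worked because the shifts there were the consecutive string $r-1, r-2, \ldots, 0$. The contents $-c_T(p),\ldots,-c_T(1)$ of a standard tableau are not consecutive in general (already for $T$ with $1,2$ in the first row one has $c_T(2)=1$, so the shifts jump from $0$ to $-1$); after moving $\eta_{j_1}$ once to the left you land on $\Xi(-c_T(2)-1)$, which need not equal $\Xi(-c_T(1))=\Xi(0)$ and so cannot be re-expanded as $\sum_j\eta_j\eta^*_j$ without an unmanaged residue. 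There is no fix that stays inside the numeric-content formalism, and ``the recurrence built into the Jucys--Murphy elements'' is not enough to save it as stated.

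The alternative you mention in passing --- prove the $\mathbb{C}S_p\otimes\mathcal{PD}$-valued identity for $G^{\circ}_p$ first, working with $\gamma^*_j(-y_k)$, and apply $\chi_\lambda$ only at the very end --- is in fact the paper's proof, and it sidesteps your obstacle precisely because the key commutation $\gamma^*_j(-y_{k+1})\eta^*_k = s_1\eta^*_k\gamma^*_j(-y_k)$ is a formal identity in the shift elements $y_k\in\mathbb{C}S_p$, not a numeric statement about contents. Concretely, the paper works in $\bar{T}^{\circ}(V^*)\otimes\mathcal{PD}(\mathbb{C}^n\otimes\mathbb{C}^{n'})$ with $\eta^*_k=\sum_i x_{ik}e^*_i$ and $\gamma^*_j(u)=\sum_i\pi(E_{ij}(u))e^*_i=\sum_k\eta^*_k\partial_{jk}+ue^*_j$, peels off the $\eta^*_k$'s one by one to leave a string of $\partial$'s, reinstates $e_{j_p}\cdots e_{j_1}$ and applies $\langle\cdot\rangle$ in $\bar{T}(V,V^*)\otimes\mathcal{PD}$, and then uses the $\circ$-invariance of $G^{\circ}_p$ together with the character orthogonality relation in~(\ref{eq:relations_for_characters}) to split the result into a product of two immanants. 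Your remark that Theorem~\ref{thm:higher_Capelli} can be obtained from Theorem~\ref{thm:higher_Capelli_on_T} via the quotient $\mathcal{L}(\mathbb{C}^n\otimes\mathbb{C}^{n'})\to\mathcal{PD}(\mathbb{C}^n\otimes\mathbb{C}^{n'})$, $\sigma\mapsto 1$, is correct and a legitimate reorganization, though the paper proves both directly and simply notes the two arguments are almost identical. In short, the route you flag as an aside is the one that works; it should have been your main argument.
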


This relation can be regarded as a generalization of the Capelli identity,
and has an interpretation in the dual pair theory as follows
(see also \cite{O1}, \cite{Ho}, \cite{HU}, \cite{I3}, \cite{I4}, \cite{MN} \cite{U1}).
The general linear group $GL_{n'}(\mathbb{C})$ acts on 
$\mathcal{P}(\mathbb{C}^n \otimes \mathbb{C}^{n'})$ naturally,
and these two actions of $GL_n(\mathbb{C})$ and $GL_{n'}(\mathbb{C})$ form the dual pair.
This tells us the relation
$$
   \nu(U(\mathfrak{gl}_n)^{GL_n(\mathbb{C})}) 
   = \mathcal{PD}(\mathbb{C}^n \otimes \mathbb{C}^{n'})^{GL_n(\mathbb{C}) \times GL_{n'}(\mathbb{C})}
   = \nu'(U(\mathfrak{gl}_{n'})^{GL_{n'}(\mathbb{C})}).
$$
Here $\nu'$ means the infinitesimal action of $GL_{n'}(\mathbb{C})$.
Theorem~\ref{thm:higher_Capelli} can be regarded as a beautiful description of this relation
in terms of bases as vector spaces
(from the symmetry of $\nu$ and $\nu'$, we see that the quantity in this theorem
is also equal to $\nu'(G^{\circ}_{\lambda})$).
Note that we can check that the right hand side of Theorem~\ref{thm:higher_Capelli}
is $GL_n(\mathbb{C}) \times GL_{n'}(\mathbb{C})$-invariant
using the Cauchy--Binet type formula (Proposition~\ref{prop:Cauchy_Binet_for_immanants}). 

Furthermore we can rewrite this in terms of the quantum preimmanants:

\begin{theorem}\label{thm:higher_Capelli_for_preimmanants}\sl
   We have
   $$
      \nu(G^{\circ}_p) 
	  = \frac{1}{p!^2}
	  \sum_{I \in [n]^p, K \in [n']^p}
	  \operatorname{preimm} X_{IK}
	  \operatorname{preimm}^{\circ} \partial_{IK}.
   $$
\end{theorem}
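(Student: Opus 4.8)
The plan is to deduce Theorem~\ref{thm:higher_Capelli_for_preimmanants} from Theorem~\ref{thm:higher_Capelli} by the same device that relates $\operatorname{imm}_{\lambda}$ to $\operatorname{preimm}$, namely applying the irreducible characters $\chi_{\lambda}$ to a single preimmanant-valued identity. Recall that the quantum preimmanant $G^{\circ}_p$ lives in $\mathbb{C}S_p \otimes U(\mathfrak{gl}_n)$ and satisfies $G^{\circ}_{\lambda} = \chi_{\lambda}(G^{\circ}_p)$, while $\operatorname{preimm}X_{IK}$ and $\operatorname{preimm}^{\circ}\partial_{IK}$ live in $\mathbb{C}S_p \otimes \mathcal{PD}(\mathbb{C}^n \otimes \mathbb{C}^{n'})$ with $\operatorname{imm}_{\lambda}X_{IK} = \chi_{\lambda}(\operatorname{preimm}X_{IK})$ and similarly for the row-preimmanant. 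Since $G^{\circ}_p$ is central in $\mathbb{C}S_p \otimes U(\mathfrak{gl}_n)$ (so all its ``Fourier components'' $\chi_{\lambda}(G^{\circ}_p)$ determine it, by Lemma~\ref{lem:tilde_s_lambda}), and since the right-hand side of the claimed identity should likewise be central in $\mathbb{C}S_p$, it suffices to check that the two sides agree after applying every $\chi_{\lambda}$; that reduction is exactly Theorem~\ref{thm:higher_Capelli}, once one matches $\chi_{\lambda}(\operatorname{preimm}X_{IK}\,\operatorname{preimm}^{\circ}\partial_{IK})$ with $\chi_{\lambda}(1)^{-1}\operatorname{imm}_{\lambda}X_{IK}\operatorname{imm}_{\lambda}\partial_{IK}$.

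The cleaner route, and the one I would actually carry out, is to prove Theorem~\ref{thm:higher_Capelli_for_preimmanants} directly by the generating-function method of Section~\ref{sec:quantum_immanants}, which simultaneously reproves Theorem~\ref{thm:higher_Capelli}. First I would work in $\bar{T}(V,V^*) \otimes \mathcal{PD}(\mathbb{C}^n \otimes \mathbb{C}^{n'})$ and set up the polarized generating elements: let $\eta_k = \sum_{a=1}^n e_a L(w_{ak})$ and $\eta^*_k = \sum_{a=1}^n e^*_a L(w^*_{ak})$ (for the $k$th copy of $\mathbb{C}^{n'}$), mirroring (\ref{eq:relation_between_Xi_and_eta})--(\ref{eq:commutation_relations_for_eta}) but now with $\pi(E_{ij}) = \sum_k x_{ik}\partial_{jk}$ in place of the degree-$0$ action. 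Using the expression of $G^{\circ}_p$ from (\ref{eq:expression_of_G^circprime_p_corresponding_to_symm_imm}) (the $G^{\circ\prime}_p$ form, built from the $\gamma$'s with parameters $-x_k$ replaced by the Jucys--Murphy shifts), I would substitute $\pi(E_{ij})$, push the creation pieces $\eta_k$ to the left and the annihilation pieces $\eta^*_k$ to the right using the commutation relation analogous to (\ref{eq:comm_relation_between_Xi_and_eta}), and collect. The key point is that the Jucys--Murphy elements $x_k$ appearing in $G^{\circ}_p$ are precisely what is produced by the permutation factors $(p\,\,p-1\,\cdots\,k+1\,\,k)$ generated in the derivation operators $L(w^*)$, so the shifts match up automatically. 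After taking the coupling $\langle\,\cdot\,\rangle$ over the $V\oplus V^*$ variables, the $\eta$-strings collapse to $\operatorname{preimm}X_{IK}$ (via Lemma~\ref{lem:expression_of_column-imm}, suitably transposed) and the $\eta^*$-strings to $\operatorname{preimm}^{\circ}\partial_{IK}$ (via the row-preimmanant lemma), with the $1/p!^2$ emerging from the two divided powers and the sum over $[n]^p \times [n']^p$.

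The main obstacle will be bookkeeping the Jucys--Murphy shifts correctly through the reordering — in the pure determinant case of Section~\ref{sec:Capelli_and_FFT} the shifts were the scalars $r-1, r-2, \ldots, 0$ and the $\Xi(u)$'s simply anticommuted, whereas here the ``shift parameters'' are the noncommuting elements $x_1, \ldots, x_p \in \mathbb{C}S_p$ (or $y_1,\ldots,y_p$ after applying $\alpha^{\,\cdot}$, cf. (\ref{eq:relation_between_x_and_y})), and one must verify that the analogue of (\ref{eq:comm_relation_between_Xi_and_eta}), namely that conjugating $\Xi$ by $\eta_k$ shifts its parameter by the right Jucys--Murphy increment, genuinely holds in $\bar{T}(V,V^*)\otimes\mathcal{PD}$. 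This is the computation parallel to Lemma~\ref{lem:comm_relation_of_xi_3}, and it rests on the identities $s_1\alpha(y_l)s_1 = y_{l+1}-s_1$ and $s_1 y_{l+1} s_1 = \alpha(y_l)+s_1$ already established there, together with the new mixed relation $\eta^*_i \eta_j = -\eta_j s_1 \eta^*_i - \delta_{ij}\tau$ (the same as in (\ref{eq:commutation_relations_for_eta})). Once that lemma is in hand, the telescoping calculation is formally identical to the one in the proof of Theorem~\ref{thm:Capelli_identity_in_T}, and applying $\chi_{\lambda}$ to the resulting identity recovers Theorem~\ref{thm:higher_Capelli} as a corollary, with the $GL_n(\mathbb{C})\times GL_{n'}(\mathbb{C})$-invariance of the right-hand side following from Proposition~\ref{prop:Cauchy_Binet_for_preimmanants} exactly as indicated after Theorem~\ref{thm:higher_Capelli}.
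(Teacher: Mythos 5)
Your overall strategy — generating functions, commutation relations encoding Jucys--Murphy shifts, telescoping collapse, then applying $\langle\cdot\rangle$ — is indeed the route the paper takes, and the fall-back sketched in your first paragraph (deduce from Theorem~\ref{thm:higher_Capelli} by centrality) is a legitimate alternative, though it would still require you to show independently that the right-hand side is central in $\mathbb{C}S_p$; the ``matching'' you defer is exactly the first relation in~(\ref{eq:relations_for_characters}), applied with care because $\chi_\lambda$ of a product is not in general the product of $\chi_\lambda$'s. However, several of your concrete ingredients are confused between this theorem's polynomial setting and the tensor-algebra setting of Theorems~\ref{thm:higher_Capelli_on_T} and \ref{thm:higher_Capelli_for_preimmanants_on_T}. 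You work in $\bar{T}(V,V^*)\otimes\mathcal{PD}$ with $\eta_k = \sum_a e_a L(w_{ak})$ and $\eta^*_k = \sum_a e^*_a L(w^*_{ak})$, but those $L(w_{ak})$, $L(w^*_{ak})$ live in $\mathcal{L}(\mathbb{C}^n\otimes\mathbb{C}^{n'})$ and are the right building blocks only for the tensor-algebra version. The paper's proof of the present statement takes place entirely in $\bar{T}^{\circ}(V^*)\otimes\mathcal{PD}(\mathbb{C}^n\otimes\mathbb{C}^{n'})$, using only the single family $\eta^*_k = \sum_i x_{ik} e^*_i$ (there is no $\eta_k$), together with $\gamma^*_j = \sum_i \pi(E_{ij}) e^*_i = \sum_k \eta^*_k\,\partial_{jk}$; the $V$-side vectors $e_{j_p}\cdots e_{j_1}$ appear only at the very last step, when the whole string is coupled against $\langle\cdot\rangle$.

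The more substantive slip is your claim that the Jucys--Murphy shifts ``are precisely what is produced by the permutation factors $(p\,\,p-1\cdots k+1\,\,k)$ generated in the derivation operators $L(w^*)$.'' In the proof actually carried out, no $L(w^*)$ appears at all — the only derivations are the honest $\partial_{ij}$ — and the Jucys--Murphy elements (via the $y_k$'s) are built into the definition of $G^{\circ}_p$ from the start as the shift parameters in $\gamma^*_j(-y_k)$. What propagates them through the telescoping is the single commutation relation $\gamma^*_j(-y_{k+1})\eta^*_k = s_1\eta^*_k\gamma^*_j(-y_k)$, derived directly from $\pi(E_{ij})x_{ka} = x_{ka}\pi(E_{ij}) + x_{ia}\delta_{kj}$. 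This is a mixed $\gamma^*$-vs-$\eta^*$ relation, not a two-$\Xi$ relation like (\ref{eq:comm_relation_between_Xi_and_eta}), and not really parallel to Lemma~\ref{lem:comm_relation_of_xi_3} (which is two $\xi$'s with shift parameters). The final steps you describe — coupling, divided powers, the factor $1/p!^2$, and applying $\chi_\lambda$ to deduce Theorem~\ref{thm:higher_Capelli} — are correct.
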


We can also regard these relations
as noncommutative analogues of the Cauchy--Binet type identities in Section~\ref{sec:immanants}
(Propositions~\ref{prop:Cauchy_Binet_for_immanants} and \ref{prop:Cauchy_Binet_for_preimmanants}).

These theorems are shown as follows.
This proof is parallel to that of the original Capelli identity
given in \cite{U5} and \cite{I3} in the exterior calculus.

\begin{proof}[Proof of Theorems~{\sl \ref{thm:higher_Capelli}} 
and {\sl \ref{thm:higher_Capelli_for_preimmanants}}]
Put $V = \mathbb{C}^n$
and consider a basis $e_1,\ldots,e_n$ of $V$
and its dual basis $e^*_1,\ldots,e^*_n$.
We consider the following elements 
in $\bar{T}^{\circ}(V^*) \otimes \mathcal{PD}(\mathbb{C}^n \otimes \mathbb{C}^{n'})$:
$$
   \eta^*_k 
   = \sum_{i=1}^n x_{ik} e^*_i, \qquad
   \gamma^*_j 
   = \sum_{i=1}^n \nu(E_{ij}) e^*_i 
   = \sum_{i=1}^n\sum_{k=1}^{n'} x_{ik} \partial_{jk} e^*_i 
   = \sum_{k=1}^{n'} \eta^*_k \partial_{jk}.
$$
Moreover we put
$$
   \gamma^*_j(u) 
   = \sum_{i=1}^n \nu(E_{ij}(u)) e^*_i 
   = \gamma^*_j + u e^*_j.
$$
A simple calculation tells us
$\nu(E_{ij}) x_{ka} = x_{ka} \nu(E_{ij}) + x_{ia} \delta_{kj}$.
From this, we see the commutation relation
$$
   \gamma^*_j(-y_{k+1}) \eta^*_k 
   = s_1 \eta^*_k \gamma^*_j(-y_k).
$$
Using this and the relation 
$\gamma^*_j(-y_1) = \gamma^*_j = \sum_{k=1}^{n'} \eta^*_k \partial_{jk}$ repeatedly, 
we have
\begin{align*}
   \gamma^*_{j_1}(-y_p) \cdots \gamma^*_{j_{p-1}}(-y_2) \gamma^*_{j_p}(-y_1) 
   & = \sum_{k_p = 1}^{n'} \gamma^*_{j_1}(-y_p) \cdots \gamma^*_{j_{p-1}}(-y_2) 
   \eta^*_{k_p} \partial_{j_p k_p} \\
   & = \sum_{k_p = 1}^{n'} \eta^*_{k_p} 
   \gamma^*_{j_1}(-y_{p-1}) \cdots \gamma^*_{j_{p-1}}(-y_1) \partial_{j_p k_p} \\
   &\qquad \vdots \\
   & = \sum_{k_1,\ldots,k_p = 1}^{n'} \eta^*_{k_p} \cdots \eta^*_{k_1} 
   \partial_{j_1 k_1} \cdots \partial_{j_p k_p}.
\end{align*}
Thus, we have
\begin{align*}
   &
   \sum_{J \in [n]^p} 
   e_{j_p} \cdots e_{j_1} 
   \gamma^*_{j_1}(-y_p) \cdots \gamma^*_{j_p}(-y_1) \\
   & \qquad
   = \sum_{I,J \in [n]^p} \sum_{K \in [n']^p} 
   e_{j_p} \cdots e_{j_1} 
   x_{i_p k_p} \cdots x_{i_1 k_1} 
   \partial_{j_1 k_1} \cdots \partial_{j_p k_p}
   e^*_{i_1} \cdots e^*_{i_p}
\end{align*}
in $\bar{T}(V,V^*) \otimes \mathcal{PD}(\mathbb{C}^n \otimes \mathbb{C}^{n'})$.
Applying $\langle \cdot \rangle$ and dividing by $p!$,
we obtain
\begin{equation}\label{eq:key_relation_for_higher_Capelli}
   \nu(G^{\circ}_p) 
   = \frac{1}{p!^2}
   \sum_{I \in [n]^p} \sum_{K \in [n']^p} \sum_{\sigma,\sigma' \in S_p} 
   \sigma'
   x_{i_{\sigma(p)} k_p} \cdots x_{i_{\sigma(1)} k_1} 
   \partial_{i_{\sigma'(1)} k_1} \cdots \partial_{i_{\sigma'(p)} k_p}
   \sigma^{-1}.
\end{equation}
The left hand side is invariant under the antiautomorphism $t \mapsto t^{\circ}$ 
given in Section~\ref{subsec:preimmanants},
because  $G^{\circ}_p \in Z\mathbb{C}S_p \otimes U(\mathfrak{gl}_n)$.
Hence, we have
$$
   \nu(G^{\circ}_p) 
   = \frac{1}{p!^2}
   \sum_{I \in [n]^p} \sum_{K \in [n']^p} \sum_{\sigma,\sigma' \in S_p} 
   \sigma
   x_{i_{\sigma(p)} k_p} \cdots x_{i_{\sigma(1)} k_1} 
   \partial_{i_{\sigma'(1)} k_1} \cdots \partial_{i_{\sigma'(p)} k_p}
   \sigma^{\prime -1}.
$$
This means Theorem~\ref{thm:higher_Capelli_for_preimmanants}.
Moreover, applying $\chi_{\lambda}$ to (\ref{eq:key_relation_for_higher_Capelli}), we have
$$
   \nu(G^{\circ}_{\lambda}) 
   = \frac{1}{p!^2}
   \sum_{I \in [n]^p} \sum_{K \in [n']^p} \sum_{\sigma,\sigma' \in S_p} 
   \chi_{\lambda}(\sigma'\sigma^{-1})
   x_{i_{\sigma(p)} k_p} \cdots x_{i_{\sigma(1)} k_1} 
   \partial_{i_{\sigma'(1)} k_1} \cdots \partial_{i_{\sigma'(p)} k_p}.
$$
As seen from the first relation of (\ref{eq:relations_for_characters}), 
the right hand side is equal to
\begin{equation*}
   \frac{\chi_{\lambda}(1)}{p!^2}
   \sum_{I \in [n]^p} \sum_{K \in [n']^p}
   \sum_{\sigma \in S_p} 
   \chi_{\lambda}(\sigma^{-1})
   x_{i_{\sigma(p)} k_p} \cdots x_{i_{\sigma(1)} k_1} 
   \sum_{\sigma' \in S_p} 
   \chi_{\lambda}(\sigma')
   \partial_{i_{\sigma'(1)} k_1} \cdots \partial_{i_{\sigma'(p)} k_p}.
\end{equation*}
This means Theorem~\ref{thm:higher_Capelli}.
\end{proof}

\begin{remark}
   The right hand side of Theorem~\ref{thm:higher_Capelli} is also equal to 
   $$
      \frac{1}{p!}
      \sum_{I \in [n]^p} \sum_{K \in [n']^p} \sum_{\sigma \in S_p} \chi_{\lambda}(\sigma)
      x_{i_{\sigma(p)} k_p} \cdots x_{i_{\sigma(1)} k_1} 
      \partial_{i_1 k_1} \cdots \partial_{i_p k_p}.
   $$
\end{remark}

\subsection{}\label{subsec:higher_Capelli_identity_on_T}
%
In the same situation as Section~\ref{sec:Capelli_and_FFT},
we have the following relation as an analogue of the higher Capelli identity:

\begin{theorem}\label{thm:higher_Capelli_on_T}\sl
   We have
   $$
      \nu(G^{\circ}_{\lambda}) 
	  = \frac{\chi(1)}{p!^2}
	  \sum_{I \in [n]^p, K \in [n']^p}
	  \operatorname{column-imm}_{\lambda} X_{I^{\circ}K^{\circ}}
	  \operatorname{column-imm}_{\lambda} X^*_{IK}.
   $$
\end{theorem}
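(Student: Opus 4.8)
The plan is to reprove the higher Capelli identity, Theorem~\ref{thm:higher_Capelli}, word for word in the operator realization $\mathcal{L}(\mathbb{C}^n \otimes \mathbb{C}^{n'})$ rather than in $\mathcal{PD}(\mathbb{C}^n \otimes \mathbb{C}^{n'})$. Put $V = \mathbb{C}^n$ with basis $e_1,\dots,e_n$ and dual basis $e^*_1,\dots,e^*_n$, and work in $\bar{T}^{\circ}(V^*) \otimes \mathcal{L}(\mathbb{C}^n \otimes \mathbb{C}^{n'})$ (equivalently $\bar{T}(V,V^*) \otimes \mathcal{L}(\mathbb{C}^n \otimes \mathbb{C}^{n'})$). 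Since $\pi(E_{ij}) = \sum_{k=1}^{n'} L(w_{ik}) L(w^*_{jk})$ by~(\ref{eq:expression_of_action_of_gl}), the role of $x_{ik}$ is played by $L(w_{ik})$ and that of $\partial_{jk}$ by $L(w^*_{jk})$. I set $\eta^*_k = \sum_{i=1}^n L(w_{ik})\, e^*_i$ and $\gamma^*_j(u) = \sum_{i=1}^n \pi(E_{ij}(u))\, e^*_i = \gamma^*_j + u\, e^*_j$, where $\gamma^*_j = \sum_{k=1}^{n'} \eta^*_k L(w^*_{jk})$.

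The only algebra-specific input in the proof of Theorem~\ref{thm:higher_Capelli} is the commutation relation $\pi(E_{ij})\, x_{ka} = x_{ka}\, \pi(E_{ij}) + \delta_{kj}\, x_{ia}$; its analogue $\pi(E_{ij})\, L(w_{ka}) = L(w_{ka})\, \pi(E_{ij}) + \delta_{kj}\, L(w_{ia})$ holds here, being immediate from Theorem~\ref{thm:analogue_of_CCR} together with $L(s_1)^2 = 1$ (it merely records the $GL_n(\mathbb{C})$-equivariance of the left multiplications). With this, the derivation of $\gamma^*_j(-y_{k+1})\, \eta^*_k = s_1\, \eta^*_k\, \gamma^*_j(-y_k)$ and the ensuing telescoping
$$
   \gamma^*_{j_1}(-y_p) \cdots \gamma^*_{j_{p-1}}(-y_2)\, \gamma^*_{j_p}(-y_1)
   = \sum_{k_1,\dots,k_p = 1}^{n'} \eta^*_{k_p} \cdots \eta^*_{k_1}\, L(w^*_{j_1 k_1}) \cdots L(w^*_{j_p k_p})
$$
go through unchanged; the noncommutativity of the $L(w_{ik})$ among themselves (and of the $L(w^*_{jk})$ among themselves) in $\mathcal{L}(\mathbb{C}^n \otimes \mathbb{C}^{n'})$ is harmless, since these factors are never reordered among themselves. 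Multiplying by $\sum_{J \in [n]^p} e_{j_p} \cdots e_{j_1}(\,\cdot\,)$ in $\bar{T}(V,V^*) \otimes \mathcal{L}(\mathbb{C}^n \otimes \mathbb{C}^{n'})$, applying the coupling $\langle\,\cdot\,\rangle$, dividing by $p!$, and invoking the expression of $G^{\circ}_p$ through the Jucys--Murphy shifts from Section~\ref{subsec:expression_of_G^circ}, I obtain the exact counterpart of~(\ref{eq:key_relation_for_higher_Capelli}):
$$
   \pi(G^{\circ}_p)
   = \frac{1}{p!^2} \sum_{I \in [n]^p} \sum_{K \in [n']^p} \sum_{\sigma,\sigma' \in S_p}
   \sigma'\, L(w_{i_{\sigma(p)} k_p}) \cdots L(w_{i_{\sigma(1)} k_1})\,
   L(w^*_{i_{\sigma'(1)} k_1}) \cdots L(w^*_{i_{\sigma'(p)} k_p})\, \sigma^{-1}.
$$

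To finish, I apply $\chi_{\lambda}$, use $G^{\circ}_p \in Z\mathbb{C}S_p \otimes U(\mathfrak{gl}_n)$ to symmetrize the left-hand side under $t \mapsto t^{\circ}$, and split $\chi_{\lambda}(\sigma'\sigma^{-1})$ by the first identity of~(\ref{eq:relations_for_characters}) into a $\sigma$-sum times a $\sigma'$-sum. The $\sigma$-sum over the $L(w_{\cdot})$-factors is then recognised as $\operatorname{column-imm}_{\lambda} Z_{I^{\circ}K^{\circ}}$ and the $\sigma'$-sum over the $L(w^*_{\cdot})$-factors as $\operatorname{column-imm}_{\lambda} Z^*_{IK}$, exactly via the mechanism of Lemma~\ref{lem:expression_of_column-imm} (using that $\chi_{\lambda}$ is a class function and $\chi_{\lambda}(\sigma) = \chi_{\lambda}(\sigma^{-1})$), which also produces the index reversals $I^{\circ}$, $K^{\circ}$ in the first factor. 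I expect the main obstacle to be purely a matter of bookkeeping — keeping track of the $\mathbb{C}S_{\infty}$ and $s_1$ factors supplied by the relations of $\bar{T}$, $\bar{T}^{\circ}$ and $\mathcal{L}$ as they pass through the telescoping and the coupling, and confirming that they cancel as in the commutative computation — but this is the same bookkeeping already carried out in the proofs of Theorems~\ref{thm:Capelli_identity_in_T} and~\ref{thm:higher_Capelli}, so no essentially new difficulty should appear.
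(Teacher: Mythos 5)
Your proposal is correct and coincides with the paper's own proof of Theorem~\ref{thm:higher_Capelli_on_T}, which likewise transplants the argument for Theorem~\ref{thm:higher_Capelli} into $\bar{T}^{\circ}(V^*)\otimes\mathcal{L}(\mathbb{C}^n\otimes\mathbb{C}^{n'})$ by replacing $x_{ik}$ and $\partial_{jk}$ with $L(w_{ik})$ and $L(w^*_{jk})$ and then reuses the same commutation relation, telescoping, coupling $\langle\cdot\rangle$, and character manipulations. Your additional observation that the key relation $\pi(E_{ij})L(w_{ka}) = L(w_{ka})\pi(E_{ij}) + \delta_{kj}L(w_{ia})$ is nothing but the $GL_n(\mathbb{C})$-equivariance of the left multiplications and follows directly from Theorem~\ref{thm:analogue_of_CCR} is a nice concise justification of the step the paper dismisses as a ``direct calculation.''
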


We can regard this as the ``higher version'' of Theorem~\ref{thm:Capelli_identity_in_T}.
Namely this describes
the relation between the centers of $U(\mathfrak{gl}_n)$ and $\mathcal{Q}_2$ 
in Corollary~\ref{cor:relation_between_ZU_and_Q2}
in terms of a basis of the center of $U(\mathfrak{gl}_n)$.

Moreover, we have the following relation for the quantum preimmanants:

\begin{theorem}\label{thm:higher_Capelli_for_preimmanants_on_T}\sl
   We have
   $$
      \nu(G^{\circ}_p) 
	  = \frac{1}{p!^2}
	  \sum_{I \in [n]^p, K \in [n']^p}
	  \operatorname{column-preimm} X_{I^{\circ}K^{\circ}}
	  \operatorname{column-preimm}^{\circ} X^*_{IK}.
   $$
\end{theorem}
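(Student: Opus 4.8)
The plan is to transcribe, step for step, the proof of Theorem~\ref{thm:higher_Capelli_for_preimmanants}, replacing the Weyl‑algebra realization $\pi(E_{ij})=\sum_{a=1}^{n'}x_{ia}\partial_{ja}$ of $\mathfrak{gl}_n$ on $\mathcal{P}(\mathbb{C}^n\otimes\mathbb{C}^{n'})$ by the realization on $\bar T(\mathbb{C}^n\otimes\mathbb{C}^{n'})$, namely $\pi(E_{ij})=\sum_{a=1}^{n'}L(w_{ia})L(w^*_{ja})$; the roles of $x_{ij}$ and $\partial_{ij}$ are then played by the entries of $X=(L(w_{ij}))$ and $X^*=(L(w^*_{ij}))$ (the matrices written $Z$, $Z^*$ in Section~\ref{sec:Capelli_and_FFT}), which no longer commute among themselves. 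First I would fix an auxiliary $n$‑dimensional space $V$ with basis $e_1,\dots,e_n$ and dual basis $e^*_1,\dots,e^*_n$, work in $\bar T(V,V^*)\otimes\mathcal{L}(\mathbb{C}^n\otimes\mathbb{C}^{n'})$ with the two factors commuting, and set
$$
   \eta^*_a=\sum_{i=1}^n L(w_{ia})\,e^*_i\ \ (1\le a\le n'),\qquad
   \gamma^*_j(u)=\sum_{i=1}^n\pi(E_{ij}(u))\,e^*_i=\gamma^*_j+u\,e^*_j,
$$
where $\gamma^*_j=\sum_i\pi(E_{ij})\,e^*_i=\sum_{a=1}^{n'}\eta^*_a\,L(w^*_{ja})$; these are the exact analogues of the elements of the same name in the proof of Theorem~\ref{thm:higher_Capelli_for_preimmanants}. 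Applying $\pi$ (as an algebra homomorphism $U(\mathfrak{gl}_n)\to\mathcal{L}(\mathbb{C}^n\otimes\mathbb{C}^{n'})$, extended to the $\bar T(V,V^*)$‑tensored algebras) to the generating‑function presentation of $G^{\circ}_p$ recalled in Section~\ref{subsec:expression_of_G^circ} expresses $\pi(G^{\circ}_p)$ as $\tfrac{1}{p!}\sum_{I\in[n]^p}\langle\gamma^*_{i_1}(-y_p)\cdots\gamma^*_{i_p}(-y_1),\,e_{i_p}\cdots e_{i_1}\rangle$, so everything reduces to computing the string $\gamma^*_{j_1}(-y_p)\cdots\gamma^*_{j_p}(-y_1)$.

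The single genuinely new ingredient is the commutation relation
$$
   \pi(E_{ij})\,L(w_{kb})=L(w_{kb})\,\pi(E_{ij})+\delta_{jk}\,L(w_{ib}),
$$
which I would verify directly from Theorem~\ref{thm:analogue_of_CCR}: expanding $\pi(E_{ij})=\sum_a L(w_{ia})L(w^*_{ja})$ and moving $L(w_{kb})$ to the left, the two factors $L(s_1)$ produced by $L(w^*)L(w)=L(w)L(s_1)L(w^*)+\langle\,\cdot\,\rangle$ and by $L(w)L(w')=L(w')L(w)L(s_1)$ cancel since $L(s_1)^2=1$. (This is the relation $[\pi(E_{ij}),L(w_{kb})]=\delta_{jk}L(w_{ib})$ already used in the proof that $\mathcal{L}^{(0)}_p=\pi(U(\mathfrak{gl}(V)))$, and it is forced by $\pi$ being a $\mathfrak{gl}_n$‑action under which the $L(w_{\bullet b})$ span a copy of the standard module.) From it, exactly as before, one gets $\gamma^*_j(-y_{l+1})\,\eta^*_a=s_1\,\eta^*_a\,\gamma^*_j(-y_l)$ for all $a$ and $l\ge1$, the extra $s_1$ coming from commuting covectors $e^*$; iterating and using $\gamma^*_j(-y_1)=\sum_a\eta^*_a L(w^*_{ja})$ gives
$$
   \gamma^*_{j_1}(-y_p)\cdots\gamma^*_{j_p}(-y_1)=\sum_{K\in[n']^p}\eta^*_{k_p}\cdots\eta^*_{k_1}\,L(w^*_{j_1k_1})\cdots L(w^*_{j_pk_p}).
$$
The key point is that this calculation is symbol for symbol that of Theorem~\ref{thm:higher_Capelli_for_preimmanants}: throughout one only commutes an entry of $X$ past a $\pi(E_{\bullet\bullet})$ or past a covector, never two entries of $X$ past each other, nor an entry of $X$ past an entry of $X^*$, so the extra non‑commutativity of $\mathcal{L}$ is inert here.

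Substituting back, multiplying by $e_{j_p}\cdots e_{j_1}$, summing over $J\in[n]^p$, applying the contraction $\langle\,\cdot\,\rangle$ of Section~\ref{subsec:def_of_couplings} and dividing by $p!$ (via Proposition~\ref{prop:relations_for_varphi_and_varphi*}(ii)) produces the analogue of (\ref{eq:key_relation_for_higher_Capelli}),
$$
   \pi(G^{\circ}_p)=\frac{1}{p!^2}\sum_{\substack{I\in[n]^p\\ K\in[n']^p}}\sum_{\sigma,\sigma'\in S_p}\sigma'\,L(w_{i_{\sigma(p)}k_p})\cdots L(w_{i_{\sigma(1)}k_1})\,L(w^*_{i_{\sigma'(1)}k_1})\cdots L(w^*_{i_{\sigma'(p)}k_p})\,\sigma^{-1}.
$$
Since $G^{\circ}_p\in Z\mathbb{C}S_p\otimes U(\mathfrak{gl}_n)$ (Section~\ref{sec:quantum_immanants}), the right‑hand side is invariant under the antiautomorphism $t\mapsto t^{\circ}$, hence also equals the expression with $\sigma$ on the left and $\sigma'^{-1}$ on the right; recognizing the $\sigma$‑sum and $\sigma'$‑sum as $\operatorname{column-preimm}X_{I^{\circ}K^{\circ}}$ and $\operatorname{column-preimm}^{\circ}X^*_{IK}$ respectively — the reversals $I^{\circ}$, $K^{\circ}$ and a conjugation by the longest element $\varepsilon$ (cf. (\ref{eq:relation_between_x_and_y}) and Section~\ref{subsec:first_expression_of_G}) accounting for the orders of the two products — gives the statement. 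Applying $\chi_\lambda$ to the displayed formula and using the character identities (\ref{eq:relations_for_characters}) then yields Theorem~\ref{thm:higher_Capelli_on_T} as well.

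I do not expect a conceptual obstacle here: the machinery of Sections~\ref{sec:immanants_and_algebras}--\ref{sec:quantum_immanants} does the work, and the proof is essentially a substitution into that of Theorem~\ref{thm:higher_Capelli_for_preimmanants} (equivalently, the higher‑degree version of the proof of Theorem~\ref{thm:Capelli_identity_in_T}, with $\Lambda(V\oplus V^*)$ replaced by $\bar T(V,V^*)$ and the integer shifts by the $-y_l$). The two places needing care are the short verification of $[\pi(E_{ij}),L(w_{kb})]=\delta_{jk}L(w_{ib})$ — where one must check that the $L(s_1)$'s cancel — and the final bookkeeping: keeping track of the orders of the operator products and the placement of $\sigma,\sigma'$ so that the answer lands precisely on $\operatorname{column-preimm}X_{I^{\circ}K^{\circ}}\operatorname{column-preimm}^{\circ}X^*_{IK}$ and not on some other arrangement of the same factors.
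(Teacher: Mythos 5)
Your proposal matches the paper's own proof essentially symbol for symbol: the paper also works in $\bar{T}^{\circ}(V^*)\otimes\mathcal{L}(\mathbb{C}^n\otimes\mathbb{C}^{n'})$, defines $\eta^*_k=\sum_i L(w_{ik})e^*_i$ and $\gamma^*_j(u)$, derives $\pi(E_{ij})L(w_{ka})=L(w_{ka})\pi(E_{ij})+\delta_{kj}L(w_{ia})$ and hence $\gamma^*_j(-y_{k+1})\eta^*_k=s_1\eta^*_k\gamma^*_j(-y_k)$, iterates, and then says the rest is identical to the proofs of Theorems~\ref{thm:higher_Capelli} and~\ref{thm:higher_Capelli_for_preimmanants}. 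Your spelled-out check that the two $L(s_1)$'s cancel in the key commutator, and your explicit tracking of the reversals $I^{\circ},K^{\circ}$ in the final identification with the column-preimmanants, are correct elaborations of steps the paper leaves to the reader.
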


\begin{proof}[Proof of Theorems~{\sl\ref{thm:higher_Capelli_on_T}} 
and {\sl\ref{thm:higher_Capelli_for_preimmanants_on_T}}]
The proof is almost the same as that of Theorem~\ref{thm:higher_Capelli}.
We consider the following elements in 
$\bar{T}^{\circ}(V^*) \otimes \mathcal{L}(\mathbb{C}^n \otimes \mathbb{C}^{n'})$:
\begin{gather*}
   \eta^*_k 
   = \sum_{i=1}^n L(w_{ik}) e^*_i, \quad
   \gamma^*_j 
   = \sum_{i=1}^n \nu(E_{ij}) e^*_i 
   = \sum_{i=1}^n \sum_{k=1}^{n'} L(w_{ik}) L(w^*_{jk}) e^*_i 
   = \sum_{k=1}^{n'} \eta^*_k L(w^*_{jk}), \\
   \gamma^*_j(u)
   = \sum_{i=1}^n \nu(E_{ij}(u)) e^*_i
   = \gamma^*_j +  u e^*_j. 
\end{gather*}
By a direct calculation, we have
$$
   \nu(E_{ij}) L(w_{ka}) = L(w_{ka}) \nu(E_{ij}) + L(w_{ia}) \delta_{kj}.
$$
From this, we see the commutation relation
$$
   \gamma^*_j(-y_{k+1}) \eta^*_k 
   = s_1 \eta^*_k \gamma^*_j(-y_k).
$$
Here $s_i$ and $y_k$ belong to $\bar{T}^{\circ}(V^*)$
(these should be distinguished from $L(s_i)$ and $L(y_k)$ 
in $\mathcal{L}(\mathbb{C}^n \otimes \mathbb{C}^{n'})$).
Using this and the relation 
$\gamma^*_j(-y_1) = \gamma^*_j = \sum_{k=1}^{n'} \eta^*_k L(w^*_{jk})$
repeatedly, we have
$$
   \gamma^*_{j_1}(-y_p) \cdots \gamma^*_{j_p}(-y_1) 
   = \sum_{K \in [n']^p} \eta^*_{k_p} \cdots \eta^*_{k_1} 
   L(w^*_{j_1 k_1}) \cdots L(w^*_{j_p k_p}).
$$
Thus, we have
\begin{align*}
   &
   \sum_{I,J \in [n]^p} 
   e_{j_p} \cdots e_{j_1} 
   \gamma^*_{j_1}(-y_p) \cdots \gamma^*_{j_p}(-y_1) \\
   & \qquad
   = \sum_{I,J \in [n]^p} 
   e_{j_p} \cdots e_{j_1} 
   L(w_{i_p k_p}) \cdots L(w_{i_1 k_1}) 
   L(w^*_{j_1 k_1}) \cdots L(w^*_{j_p k_p})
   e^*_{i_1} \cdots e^*_{i_p}
\end{align*}
in $\bar{T}(V,V^*) \otimes \mathcal{L}(\mathbb{C}^n \otimes \mathbb{C}^{n'})$.
The remainder is the same as the proof of Theorems~\ref{thm:higher_Capelli}
and~\ref{thm:higher_Capelli_for_preimmanants}.
\end{proof}

%
%
%


\begin{thebibliography}{GW}

\bibitem[C1]{C1}
A. Capelli,
{\it \"Uber die Zur\"uckf\"uhrung der Cayley'schen Operation $\Omega$ 
auf gew\"ohnliche Polar-Operationen},
Math.\ Ann. {\bf 29} (1887), 331--338. 

\bibitem[C2]{C2}
\bysame,
{\it Sur les op\'erations dans la th\'eorie des formes alg\'ebriques},
Math.\ Ann. {\bf 37} (1890), 1--37.

\bibitem[GW]{GW}
R. Goodman and N. R. Wallach, 
Representations and invariants of the classical groups,
Cambridge Univ.\ Press, 2003.


\bibitem[Ha]{Ha}
T. Hashimoto,
{\it A central element in the universal enveloping algebra of type $D_n$ 
via minor summation formula of Pfaffians},
J. Lie Theory {\bf 18} (2008), 581--594.

\bibitem[Ho]{Ho}
R. Howe, 
{\it Remarks on classical invariant theory}, 
Trans.\ Amer.\ Math.\ Soc.\ {\bf 313} (1989), 539--570.

\bibitem[HU]{HU}
R. Howe and T. Umeda,
{\it The Capelli identity, the double commutant theorem,
and mul\-ti\-pli\-city-free actions},
Math.\ Ann.\ {\bf 290} (1991), 565--619.

\bibitem[I1]{I1}
M. Itoh,
{\it Capelli elements for the orthogonal Lie algebras},
J. Lie Theory {\bf 10} (2000), 463--489.

\bibitem[I2]{I2}
\bysame,
{\it A Cayley-Hamilton theorem for the skew Capelli elements},
J. Algebra {\bf 242} (2001), 740--761.

\bibitem[I3]{I3}
\bysame,
{\it Capelli identities for the dual pair $(O_M, Sp_N)$},
Math.\ Z.\ {\bf 246} (2004), 125--154.

\bibitem[I4]{I4}
\bysame,
{\it Capelli identities for reductive dual pairs},
Adv.\ Math.\ {\bf 194} (2005), 345--397.

\bibitem[I5]{I5}
\bysame,
{\it Two determinants in the universal enveloping algebras of the orthogonal Lie algebras},
J.\ Algebra {\bf 314} (2007), 479--506.

\bibitem[I6]{I6}
\bysame,
{\it Two permanents in the universal enveloping algebras of the symplectic Lie algebras},
Internat.\ J.\ Math. {\bf 20}  (2009), 339--368.

\bibitem[IU]{IU}
M. Itoh and T. Umeda,
{\it On central elements in the universal enveloping algebras
of the orthogonal Lie algebras},
Compositio Math.\ {\bf 127} (2001), 333--359.

\bibitem[JK]{JK}
G. James and A. Kerber,
The representation theory of the symmetric group,
Encyclopedia of mathematics and its applications,
vol.16, Addison-Wesley, 1981.


\bibitem[J]{J}
Jucys, A.-A. A.
{\it Symmetric polynomials and the center of the symmetric group ring},
Rep.\ Mathematical Phys.\ {\bf 5} (1974), no.\ 1, 107--112. 

\bibitem[M1]{M1}
A. Molev, 
{\it Factorial supersymmetric Schur functions and super Capelli identities},
Kirillov's seminar on representation theory,  109--137,
Amer.\ Math.\ Soc.\ Transl.\ Ser.\ 2, 181, Amer.\ Math.\ Soc., Providence, RI, 1998. 

\bibitem[M2]{M2}
\bysame,
Yangians and classical Lie algebras,
Mathematical Surveys and Monographs, 143. 
American Mathematical Society, Providence, RI, 2007. 

\bibitem[MN]{MN}
A. Molev and M. Nazarov,
{\it Capelli identities for classical Lie algebras},
Math.\ Ann.\ {\bf 313} (1999), 315--357.

\bibitem[N1]{N1}
M. Nazarov,
{\it Quantum Berezinian and the classical Capelli identity},
Lett.\ Math.\ Phys.\ {\bf 21} (1991), 123--131.

\bibitem[N2]{N2}
\bysame,
{\it Yangians and Capelli identities},
Kirillov's seminar on representation theory,  139--163,
Amer.\ Math.\ Soc.\ Transl.\ Ser.\ 2, 181, Amer.\ Math.\ Soc., Providence, RI, 1998. 

\bibitem[O1]{O1}
A. Okounkov,
{\it Quantum immanants and higher Capelli identities}, 
Transform.\ Groups {\bf 1} (1996), no.~1, 99--126.

\bibitem[O2]{O2}
\bysame,
{\it Young basis, Wick formula, and higher Capelli identities},
Internat.\ Math.\ Res.\ Notices 1996, no.17, 817--839. 

\bibitem[OO]{OO}
A. Okounkov and G. Olshanski,
{\it Shifted Schur functions},
Algebra i Analiz {\bf 9} (1997), No. 2 (Russian); 
English version 
in St.\ Petersburg Math.\ J.\ {\bf 9} (1998), 239--300.

\bibitem[S]{S}
R.P. Stanley, 
Enumerative combinatorics. Vol. 2. 
Cambridge Studies in Advanced Mathematics, 62. 
Cambridge University Press, Cambridge, 1999.

\bibitem[U1]{U1}
T. Umeda,
{\it The Capelli identities, a century after},
S\={u}gaku {\bf 46} (1994), 206--227, (in Japanese);
English transl.
in ``Selected Papers on Harmonic Analysis, Groups, and Invariants,''
AMS Translations, Series 2, vol. 183 (1998), pp. 51--78,
ed. by K. Nomizu.

\bibitem[U2]{U2}
\bysame,
{\it Newton's formula for $\mathfrak{gl}_n$},
Proc.\ Amer.\ Math.\ Soc.\ {\bf 126} (1998), 3169--3175.

\bibitem[U3]{U3}
\bysame,
{\it On Turnbull identity for skew symmetric matrices},
Proc.\ Edinburgh  Math.\ Soc.\ (2) {\bf 43} (2000), 379--393.

\bibitem[U4]{U4}
\bysame,
{\it Application of Koszul complex to Wronski relations for $U(\mathfrak{gl}_n)$},
Comment.\ Math.\ Helv.\ {\bf 78} (2003), 663--680.

\bibitem[U5]{U5}
\bysame,
{\it On the proof of the Capelli identities},
Funkcialaj Ekvacioj {\bf 51} (2008), 1--15.

\bibitem[Wa]{Wa}
A. Wachi,
{\it Central elements in the universal enveloping algebras
for the split realization of the orthogonal Lie algebras},
Lett.\ Math.\ Phys.\ {\bf 77} (2006), 155--168.

\bibitem[We]{We}
H. Weyl,
The Classical Groups. their Invariants and Representations,
Princeton Univ.\ Press, 1946.

\end{thebibliography}
\end{document}